\theoremstyle{plain}
\newtheorem{thm}{Theorem}[section]
\newtheorem{cor}[thm]{Corollary}
\newtheorem{prop}[thm]{Proposition}
\newtheorem{lem}[thm]{Lemma}
\theoremstyle{definition}
\newtheorem{dfn}[thm]{Definition}
\newtheorem{ex}[thm]{Example}
\newtheorem{rem}[thm]{Remark}
\newtheorem{qst}[thm]{Question}
\newtheorem*{nota}{Notation}
\newtheorem*{oftp}{Organization of this paper}
\newtheorem*{ack}{Acknowledgments}
\numberwithin{thm}{section}
\numberwithin{equation}{section}
\newcommand{\Zpn}{\mathbb{Z}_{>0}}
\newcommand{\Znn}{\mathbb{Z}_{\geq 0}}
\newcommand{\Z}{\mathbb{Z}}
\newcommand{\Q}{\mathbb{Q}}
\newcommand{\Fp}{\mathbb{F}_p}
\newcommand{\A}{\mathbb{A}}
\newcommand{\F}{\mathbb{F}}
\newcommand{\G}{\mathbb{G}}
\newcommand{\sS}{\mathscr{S}}
\renewcommand{\hbar}{\overline{h}}
\newcommand{\Fpt}{\mathbb{F}^{\times}_p}
\newcommand{\cC}{\mathcal{C}}
\newcommand{\cD}{\mathcal{D}}
\newcommand{\cH}{\mathcal{H}}
\newcommand{\cI}{\mathcal{I}}
\newcommand{\fA}{\mathfrak{A}}
\newcommand{\fS}{\mathfrak{S}}
\DeclareMathOperator{\Ker}{Ker}
\DeclareMathOperator{\Ima}{Im}
\DeclareMathOperator{\Coker}{Coker}
\DeclareMathOperator{\Hom}{Hom}
\DeclareMathOperator{\id}{id}
\DeclareMathOperator{\Aut}{Aut}
\DeclareMathOperator{\ord}{ord}
\DeclareMathOperator{\Gal}{Gal}
\DeclareMathOperator{\GL}{GL}
\DeclareMathOperator{\SL}{SL}
\DeclareMathOperator{\ab}{ab}
\DeclareMathOperator{\N}{N}
\DeclareMathOperator{\loc}{loc}
\DeclareMathOperator{\Res}{Res}
\DeclareMathOperator{\Inf}{Inf}
\DeclareMathOperator{\Br}{Br}
\DeclareMathOperator{\Pic}{Pic}
\DeclareMathOperator{\ad}{ad}
\DeclareMathOperator{\sep}{sep}
\DeclareMathOperator{\der}{der}
\DeclareMathOperator{\Ind}{Ind}
\DeclareMathOperator{\pr}{pr}
\DeclareMathOperator{\Ad}{Ad}
\DeclareMathOperator{\et}{\acute{e}t}
\DeclareMathOperator{\Sel}{Sel}
\DeclareMathOperator{\Inn}{Inn}
\DeclareMathOperator{\Out}{Out}
\DeclareSymbolFont{cyrletters}{OT2}{wncyr}{m}{n}
\DeclareMathSymbol{\Sha}{\mathalpha}{cyrletters}{"58}
\title[Hasse norm principle]{Hasse norm principle for extensions of prime squared degree}
\author[Y.~Oki]{Yasuhiro Oki}
\address{Department of Mathematics, College of Science, Rikkyo University, 3-34-1, Nishi-Ikebukuro, Toshima-ku, Tokyo, 171-8501, Japan. }
\email{oki@rikkyo.ac.jp}
\subjclass[2010]{Primary 11E72; Secondary 20C10}
\begin{document}

\begin{abstract}
We give an equivalent condition for the validity of the Hasse norm principle for finite separable extensions of prime squared degree of global fields. Our theorem recovers the result of Drakokhrust--Platonov, which claims that the Hasse norm principle holds for adequate extensions of prime squared degree. 
\end{abstract}

\maketitle

\tableofcontents

\section{Introduction}\label{intr}

Let $k$ be a global field. For a finite separable field extension $K/k$, put
\begin{equation*}
\Sha(K/k):=(k^{\times}\cap \N_{K/k}(\A_{K}))/\N_{K/k}(K^{\times}),
\end{equation*}
where $\A_{K}^{\times}$ is the id{\`e}le group of $K$. In the paper of Scholz (\cite{Scholz1940}), the group $\Sha(K/k)$ is denoted by $\kappa(K/k)$ and is called the \emph{number knot}. We say that the \emph{Hasse norm principle holds for $K/k$} if
\begin{equation*}
\Sha(K/k)=1. 
\end{equation*}
The study of $\Sha(K/k)$ is initiated by Hasse (\cite{Hasse1931}). More precisely, he proved that the Hasse norm principle holds for all cyclic extensions. However, $\Sha(K/k)$ is not always trivial in general. Indeed, he also clarified that the biquadratic extension $\Q(\sqrt{-39},\sqrt{-3})/\Q$ does not satisfy the Hasse norm principle. Hence, it is natural to discuss an equivalent condition for the validity of the Hasse norm principle. In this paper, we consider the following: 

\begin{qst}\label{qst:hnfd}
For $d\in \Zpn$, determine an equivalent condition on $K/k$ of degree $d$ for which the Hasse norm principle holds. 
\end{qst}
This has been fully resolved in the cases that
\begin{itemize}
\item $d$ is a prime number (\cite{Bartels1981a}); 
\item $d=4$ (\cite{Kunyavskii1984}); 
\item $d=6$ (\cite{Drakokhrust1987}); and
\item $8\leq d\leq 15$ (\cite{Hoshi2022}, \cite{Hoshi2023}). 
\end{itemize}
In particular, the result for $d=4$ can be stated as follows. 

\begin{thm}[{cf.~\cite[p.~1899]{Kunyavskii1984}}]\label{thm:knvs}
Let $k$ be a global field. Consider a finite separable field extension $K/k$ of degree $4$ with Galois closure $\widetilde{K}/k$. Put $G:=\Gal(\widetilde{K}/k)$ and $H:=\Gal(\widetilde{K}/K)$. 
\begin{enumerate}
\item If $\Sha(K/k)\neq 1$, then we have 
\begin{itemize}
\item[(a)] $G\cong V_{4}$, the Klein four group; or 
\item[(b)] $G\cong \fA_{4}$, the alternating group of $4$-letters. 
\end{itemize}
\item If \emph{(a)} or \emph{(b)} is valid, then there is an isomorphism
\begin{equation*}
\Sha(K/k)\cong 
\begin{cases}
1&\text{if a decomposition group of $\widetilde{K}/k$ contains $(C_{2})^{2}$; }\\
\Z/2 &\text{otherwise. }
\end{cases}
\end{equation*}
\end{enumerate}
\end{thm}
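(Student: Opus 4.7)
The plan is to apply the Drakokhrust--Platonov cohomological formula, which expresses $\Sha(K/k)$ as an obstruction built from the Schur multiplier $\rH_{2}(G, \Z)$, modulo contributions from transfers to $H = \Gal(\widetilde{K}/K)$ and restrictions to decomposition subgroups $G_v \leq G$. The advantage of this reformulation is that the global problem becomes a purely group-theoretic calculation, supplemented only by the local input of which subgroups occur as $G_v$.

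The first step is a classification of admissible pairs $(G, H)$ with $H$ core-free of index $p^{2}$. For $p = 2$ these are the transitive subgroups of $\fS_{4}$, namely $C_{4}, V_{4}, D_{4}, \fA_{4}, \fS_{4}$; for $p$ odd one enumerates the transitive subgroups of $\fS_{p^{2}}$ and checks that in each case $\rH_{2}(G, \Z)$ either vanishes or is annihilated by the transfer from $H$. The case $G = C_{4}$ is Hasse's theorem. For $G = D_{4}$ and $G = \fS_{4}$ I would show by direct computation that, although $\rH_{2}(G, \Z)$ is nontrivial, the obstruction is killed by transfers from $H$ combined with restrictions to cyclic decomposition groups that occur at a density-one set of places by Chebotarev. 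This rules out every candidate except $(G, H) = (V_{4}, 1)$ and $(G, H) = (\fA_{4}, C_{3})$, proving (i).

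For (ii), in both surviving cases the Drakokhrust--Platonov quotient of $\rH_{2}(G, \Z)$ is $\Z/2$, generated by a class whose restriction to a subgroup $G_v$ is nonzero exactly when $G_v$ contains a copy of $(C_{2})^{2}$; indeed, in $V_{4}$ this is automatic for $G_v = G$, and in $\fA_{4}$ the unique (normal) Klein four subgroup supports the entire Schur multiplier. Assembling the local data over all places of $k$ via the Hasse principle for the Brauer group then yields the stated isomorphism.

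The main technical obstacle is in the first step: showing cleanly that for $D_{4}$, $\fS_{4}$, and the odd-$p$ cases the nontrivial Schur multiplier classes are genuinely absorbed by generic cyclic decomposition groups together with the transfer from $H$. This requires a careful case-by-case analysis of how the transfer and restriction maps interact with the subgroup lattice, and in particular verifying that no finer arithmetic input (such as ramification at $2$) intervenes to produce a residual obstruction outside of $V_{4}$ and $\fA_{4}$.
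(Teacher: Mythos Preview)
The paper does not give its own proof of this statement: Theorem~\ref{thm:knvs} is quoted in the introduction as the known $p=2$ case due to Kunyavskii, and the paper's contribution (Theorems~\ref{mth1} and~\ref{mth2}) is precisely the extension to odd~$p$. So there is no in-paper argument to compare your sketch against.

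Your outline is nonetheless a sound approach to the $p=2$ case and is close in spirit to the Drakokhrust--Platonov framework the paper formalises for odd $p$ (Proposition~\ref{prop:drmn}, Corollary~\ref{cor:onot}). Two corrections are in order. First, the aside ``for $p$ odd one enumerates the transitive subgroups of $\fS_{p^{2}}$ and checks that in each case $\rH_{2}(G,\Z)$ either vanishes or is annihilated by the transfer from $H$'' does not belong here---the statement concerns only degree~$4$---and is in any case far too optimistic: handling odd~$p$ occupies Sections~\ref{sect:trgp}--\ref{sect:cshg} and requires the Dobson--Witte structure theory together with a delicate Selmer-group computation, not a bare enumeration. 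Second, your phrasing of the obstruction as $\rH_{2}(G,\Z)$ modulo ``transfers to $H$ and restrictions to~$G_v$'' is loose; in the paper's language the invariant is $\Sha_{\cD}^{2}(G,J_{G/H})$, and since Chebotarev gives $\cC_{G}\subset \cD$ one has $\Sha_{\cD}^{2}\subset \Sha_{\omega}^{2}$, so part~(i) reduces to the purely group-theoretic vanishing of $\Sha_{\omega}^{2}(G,J_{G/H})$ for $G\in\{C_{4},D_{4},\fS_{4}\}$, with no arithmetic input beyond that. Your identification of $D_{4}$ and $\fS_{4}$ as the substantive cases, and your account of part~(ii), are both correct.
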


In other cases, there exist numerous partial results on the above problem, including \cite{Tate1967}, \cite{Gurak1978}, \cite{Bartels1981b}, \cite{Voskresenskii1984}, \cite{Drakokhrust1987}, \cite{Macedo2020}, \cite{Hoshi2025}, and \cite{Oki2025a}. However, it is far from a complete solution.

This paper resolves Question \ref{qst:hnfd} in the case where $d$ is an odd prime squared, that is, $d=p^{2}$ for some odd prime number $p$. We denote by $\GL_{2}(\Fp)$ the group of matrices of size $2$ with coefficients $\Fp$, and set
\begin{equation*}
\SL_{2}(\Fp):=\{g\in \GL_{2}(\Fp)\mid \det(g)=1\}. 
\end{equation*}
On the other hand, we use the notations as follows for any positive integer $n$: 
\begin{itemize}
\item $C_{n}$ is the cyclic group of order $n$; and
\item $\fS_{n}$ is the symmetric group of $n$-letters; and
\item $\fA_{n}$ is the alternating group of $n$-letters. 
\end{itemize}
On the other hand, the \emph{standard representation of $\GL_{2}(\Fp)$} (resp.~$\SL_{2}(\Fp)$) refers to the $2$-dimensional representation of $\SL_{2}(\Fp)$ over $\Fp$ associated to the natural inclusion into $\GL_{2}(\Fp)$ (resp.~$\SL_{2}(\Fp)$). 

\begin{thm}[Theorem \ref{thm:hnps}]\label{mth1}
Let $p>2$ be an odd prime number, and $k$ a global field. Consider a finite separable field extension $K/k$ of degree $p^{2}$ with Galois closure $\widetilde{K}/k$. Put $G:=\Gal(\widetilde{K}/k)$ and $H:=\Gal(\widetilde{K}/K)$. 
\begin{enumerate}
\item If $\Sha(K/k)\neq 1$, then the following is valid: 
\begin{itemize}
\item[($\ast$)] there exists a subgroup $G^{\dagger}$ of $\SL_{2}(\Fp)$ such that 
\begin{equation*}
G\cong (C_{p})^{2}\rtimes_{\varphi} G^{\dagger},\quad 
H\cong \{1\}\rtimes_{\varphi} G^{\dagger}, 
\end{equation*}
where $\varphi$ is induced by the standard representation of $\SL_{2}(\Fp)$. 
\end{itemize}
\item If \emph{($\ast$)} holds, then there is an isomorphism
\begin{equation*}
\Sha(K/k)\cong 
\begin{cases}
1&\text{if a decomposition group of $\widetilde{K}/k$ contains $(C_{p})^{2}$; }\\
\Z/p &\text{otherwise. }
\end{cases}
\end{equation*}
\end{enumerate}
\end{thm}

Combining Theorem \ref{mth1} with Theorem \ref{thm:knvs}, we obtain a complete solution to Question \ref{qst:hnfd} for arbitrary prime squared $d$. 

\begin{rem}
Subgroups of $\SL_{2}(\Fp)$ are completely classified. See \cite[Chapter 3, \S 6]{Suzuki1982} for detail. More precisely, if $G^{\dagger}$ is a proper subgroup of $\SL_{2}(\Fp)$, then one of the following is satisfied: 
\begin{itemize}
\item $G^{\dagger}\cong C_{p}\rtimes_{\varphi} C_{d}$, where $d$ is a prime divisor of $p-1$, and $\varphi$ is induced by the composite
\begin{equation*}
C_{d}\xrightarrow{c\mapsto c^{2}}C_{d}\hookrightarrow \Aut(C_{p}); 
\end{equation*}
\item $G^{\dagger}\cong C_{d}$, where $d\in \Zpn$ is a divisor of $p-1$ or $p+1$; 
\item $G^{\dagger}\cong C_{d}\rtimes_{\varphi} C_{4}$, where $d\in \Zpn$ is an \emph{odd} divisor of $p-1$ or $p+1$, and $\varphi$ is induced by the composite
\begin{equation*}
C_{4}\twoheadrightarrow C_{2}\hookrightarrow \Aut(C_{d}); 
\end{equation*}
\item $G^{\dagger}\cong \langle g,h \mid g^{d}=1,h^{2}=g^{n/2} \rangle$, where $d\in \Zpn$ is an \emph{even} divisor of $p-1$ or $p+1$; 
\item $p>3$ and $G^{\dagger}\cong \SL_{2}(\F_{3})$ (a Schur cover of $\fA_{4}$); 
\item $p>3$ and $G^{\dagger}\cong \widehat{\Sigma}_{4}$, where $\widehat{\Sigma}_{4}$ is the Schur cover of $\fS_{4}$ as in \cite[Chapter 3, \S 6, Theorem 6.17 (iv)]{Suzuki1982}; 
\item $p\equiv \pm 1\bmod 5$ and $G^{\dagger}\cong \SL_{2}(\F_{5})$, the unique Schur cover of $\fA_{5}$. 
\end{itemize}
Note that our proof of Theorem \ref{mth1} does not use such a classification. 
\end{rem}

\begin{ex}
Here, we consider Theorem \ref{mth1} for $p=3$. Then, a subgroup of $\SL_{2}(\F_{3})$ is conjugate to one of the following: 
\begin{equation}\label{eq:sgtt}
\begin{gathered}
\left\{\begin{pmatrix}1&0\\0&1\end{pmatrix}\right\},\quad
\bigg\langle \begin{pmatrix}-1&0\\0&-1\end{pmatrix}\bigg\rangle,\quad
\bigg\langle \begin{pmatrix}1&1\\0&1\end{pmatrix}\bigg\rangle,\quad
\bigg\langle \begin{pmatrix}0&-1\\1&0\end{pmatrix}\bigg\rangle,\\
\bigg\langle \begin{pmatrix}-1&-1\\0&-1\end{pmatrix}\bigg\rangle,\quad
\bigg\langle \begin{pmatrix}0&-1\\1&0\end{pmatrix},
\begin{pmatrix}-1&-1\\-1&1\end{pmatrix}\bigg\rangle,\quad
\SL_{2}(\F_{3}). 
\end{gathered}
\end{equation}
These have orders $1$, $2$, $3$, $4$, $6$ and $8$, respectively. Moreover, the semi-direct products $(C_{p})^{2}\rtimes_{\varphi}G^{\dagger}$, where $G^{\dagger}$ are as in \eqref{eq:sgtt}, define the transitive groups $9T2$, $9T5$, $9T7$, $9T9$, $9T11$, $9T24$, and $9T23$. Hence, Theorem \ref{mth1} for $p=3$ recovers \cite[Theorem 1.16]{Hoshi2022} for $n=9$. Note that we give an alternative proof of the above previous result that does not use computer. 
\end{ex}

\begin{rem}
Theorem \ref{mth1} recovers the result of Drakokhrust and Platonov (\cite[Theorem 4]{Drakokhrust1987}), which asserts that the Hasse norm principle for $K/k$ of degree $p^{2}$ with $p$ an odd prime if $K$ is $k$-adequate in the sense of \cite[p.~451, l.~11--13, Definition]{Schacher1968}. See Theorem \ref{thm:dpes}. 
\end{rem}

For a finite separable field extension $K/k$, let
\begin{equation*}
T_{K/k}:=\Ker(\N_{K/k}\colon \Res_{K/k}\G_{m,K}\rightarrow \G_{m,k}). 
\end{equation*} 
It is called the \emph{norm one torus} attached to $K/k$. Note that it is also denoted by $R_{K/k}^{(1)}\G_{m}$. Take a smooth compactification $X$ of $T_{K/k}$ over $k$, and put $\overline{X}:=X\otimes_{k}k^{\sep}$. Then, it is known that $H^{1}(k,\Pic(\overline{X}))$ is isomorphic to a birational invariant called the \emph{unramified Brauer group} of the function field of $X$. 

\begin{thm}[{Theorem \ref{thm:hopc}}]\label{mth2}
Let $p>2$ be an odd prime number. Consider a finite separable field extension $K/k$ of degree $p^{2}$ with its Galois closure $\widetilde{K}/k$. Put $G:=\Gal(\widetilde{K}/k)$ and $H:=\Gal(\widetilde{K}/k)$. Take a smooth compactification $X$ of $T_{K/k}$, and put $\overline{X}:=X\otimes_{k}k^{\sep}$. Then the following are equivalent: 
\begin{enumerate}
\item $H^{1}(k,\Pic(\overline{X}))\neq 0$; 
\item $H^{1}(k,\Pic(\overline{X}))\cong \Z/p$; 
\item there exists a subgroup $G^{\dagger}$ of $\SL_{2}(\Fp)$ such that
\begin{equation*}
G\cong (C_{p})^{2}\rtimes_{\varphi} G^{\dagger},\quad H\cong \{1\}\rtimes_{\varphi} G^{\dagger}, 
\end{equation*}
where $\varphi$ is induced by the standard representation of $\SL_{2}(\Fp)$. 
\end{enumerate}
\end{thm}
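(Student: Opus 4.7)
The plan is to reduce to Theorem \ref{mth1} by exploiting that $H^{1}(k,\Pic(\overline{X}))$ is a purely group-theoretic invariant of the pair $(G,H)$. Up to permutation summands, $\Pic(\overline{X})$ coincides with the flasque term $F$ in any flasque resolution $0\to \widehat{T_{K/k}}\to P\to F\to 0$ of the character module $\widehat{T_{K/k}}=\Z[G/H]/\Z$, so $H^{1}(k,\Pic(\overline{X}))\cong H^{1}(G,F)$ depends only on the abstract pair $(G,H)$. Condition ($\ast$) is likewise intrinsic to $(G,H)$, so the full statement depends only on the abstract pair, and we may substitute any convenient global realization. Since (2)$\Rightarrow$(1) is trivial, only (3)$\Rightarrow$(2) and (1)$\Rightarrow$(3) require proof, and both will follow from the Colliot-Th{\'e}l{\`e}ne--Sansuc exact sequence
\begin{equation*}
0\to A(k,T_{K/k})\to H^{1}(k,\Pic(\overline{X}))^{\vee}\to \Sha^{1}(k,T_{K/k})\to 0
\end{equation*}
recalled in the introduction, combined with Theorem \ref{mth1}.

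For (3)$\Rightarrow$(2), assume ($\ast$) and realize $(G,H)$ over a number field in such a way that every decomposition group of $\widetilde{K}/k$ is cyclic; such realizations exist because the groups in ($\ast$) are solvable, so Shafarevich's theorem combined with Grunwald--Wang-type control of local behavior is available. Then no decomposition group contains $(C_{p})^{2}$, so Theorem \ref{mth1}(2) gives $\Sha(K/k)\cong \Z/p$, while cyclicity of every local extension implies $A(k,T_{K/k})=0$ because $T_{K_{v}/k_{v}}$ is $k_{v}$-rational for cyclic $K_{v}/k_{v}$ and hence satisfies weak approximation at each place. The exact sequence then yields $H^{1}(k,\Pic(\overline{X}))^{\vee}\cong \Z/p$, proving (2).

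For (1)$\Rightarrow$(3), I argue by contraposition: if ($\ast$) fails, realize $(G,H)$ as above with cyclic decomposition groups, so $A(k,T_{K/k})=0$. Theorem \ref{mth1}(1) yields $\Sha(K/k)=1$, whence the exact sequence forces $H^{1}(k,\Pic(\overline{X}))=0$, contradicting (1). Since $H^{1}(k,\Pic(\overline{X}))$ depends only on $(G,H)$, this establishes the contrapositive.

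The hardest step is the realization input: for an arbitrary admissible $(G,H)$, exhibit a global realization with every decomposition group cyclic, and concurrently verify $A(k,T_{K/k})=0$ under that hypothesis. The first part is a Shafarevich/Grunwald--Wang construction made feasible by the solvability of each candidate $G$, and the second invokes the rationality of $T_{K_{v}/k_{v}}$ over $k_{v}$ for cyclic $K_{v}/k_{v}$. An alternative route would be to compute $H^{1}(G,F)$ directly from the structure of $\widehat{T_{K/k}}=\Z[G/H]/\Z$ for each $G\cong (C_{p})^{2}\rtimes_{\varphi} G^{\dagger}$, bypassing arithmetic entirely but requiring case-by-case treatment of the subgroups $G^{\dagger}\leq \SL_{2}(\Fp)$ listed in the Remark.
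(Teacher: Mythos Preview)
Your reduction has a genuine gap at the realization step. The assertion that ``the groups in ($\ast$) are solvable'' is false once $p\geq 5$: for instance $G^{\dagger}=\SL_{2}(\Fp)$ itself, or $G^{\dagger}\cong \SL_{2}(\F_{5})$ when $p\equiv \pm 1\pmod 5$, yields a non-solvable $G=(C_{p})^{2}\rtimes_{\varphi} G^{\dagger}$, so Shafarevich's theorem does not apply. The contrapositive direction (1)$\Rightarrow$(3) is worse: there you must realize \emph{every} transitive pair $(G,H)$ of degree $p^{2}$ not satisfying ($\ast$), and this class contains $\fA_{p^{2}}$ and $\fS_{p^{2}}$, for which the inverse Galois problem over a number field---let alone with all decomposition groups cyclic---is open. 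Even in the solvable cases, ``Grunwald--Wang-type control'' prescribes local behaviour at finitely many places, whereas you need cyclicity at \emph{all} places simultaneously; this is a strictly stronger condition that you have not justified. Without the realization, the Voskresenski\u{\i} exact sequence cannot be invoked and neither implication closes.

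The paper bypasses arithmetic realization entirely. It uses the identification $H^{1}(k,\Pic(\overline{X}))\cong \Sha_{\omega}^{2}(G,J_{G/H})$ (Proposition~\ref{prop:cfcs}) and then applies the purely group-theoretic Theorem~\ref{thm:mtsh}, which computes $\Sha_{\omega}^{2}(G,J_{G/H})$ for all transitive $(G,H)$ of degree $p^{2}$. Since Theorem~\ref{mth1} is itself deduced from Theorem~\ref{thm:mtsh} via Corollary~\ref{cor:onot}, your detour through $\Sha(K/k)$ and the exact sequence is circular in spirit: the arithmetic input you need (Theorem~\ref{mth1}) already rests on the group-cohomological computation that proves the present theorem in one line. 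Your ``alternative route'' of computing $H^{1}(G,F)$ directly is in fact what the paper does, and Theorem~\ref{thm:mtsh} handles all $G^{\dagger}\leq \SL_{2}(\Fp)$ uniformly rather than case-by-case.
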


The cohomology group $H^{1}(k,\Pic(\overline{X}))$ is also related to the Hasse norm principle. Here, assume that $k$ is a global field, and consider a finite separable extension $K/k$ . Denote by $\Sigma_{k}$ the set of all places of $k$. Then, Ono (\cite[p.~70]{Ono1963}) constructed an isomorphism
\begin{equation*}
\Sha(K/k)\cong \Sha^{1}(k,T_{K/k}). 
\end{equation*}
Here, the right-hand side is the \emph{Tate--Shafarevich group} of $T_{K/k}$, which is defined as the kernel of the homomorphism
\begin{equation*}
(\Res_{k_{v}/k})_{v\in \Sigma_{k}}\colon H^{1}(k,T_{K/k})\rightarrow \prod_{v\in \Sigma}H^{1}(k_{v},T_{K/k}). 
\end{equation*}
On the other hand, let $X$ and $\overline{X}$ be as before. Then, there is an exact sequence of finite groups
\begin{equation*}
0\rightarrow A_{k}(T_{K/k})\rightarrow H^{1}(k,\Pic(\overline{X}))^{\vee}\rightarrow \Sha^{1}(k,T_{K/k})\rightarrow 0. 
\end{equation*}
Here, $A_{k}(T_{K/k})$ is the defect of the weak approximation of $T_{K/k}$, that is, 
\begin{equation*}
A_{k}(T_{K/k}):=\left(\prod_{v\in \Sigma_{k}}T_{K/k}(k_{v})\right)/\overline{T_{K/k}(k)}. 
\end{equation*}

Therefore, combining Theorem \ref{mth1} with \ref{mth2}, we also obtain a structure theorem on $A_{k}(T_{K/k})$ in the case $[K:k]=p^{2}$. 

\begin{thm}[{Theorem \ref{thm:waps}}]\label{mth3}
Let $p>2$ be an odd prime number, and $k$ a global field. Consider a finite separable field extension $K/k$ of degree $p^{2}$ with Galois closure $\widetilde{K}/k$. Put $G:=\Gal(\widetilde{K}/k)$ and $H:=\Gal(\widetilde{K}/K)$. 
\begin{enumerate}
\item If $A_{k}(T_{K/k})\neq 1$, then \emph{($\ast$)} in Theorem \ref{mth1} is valid. 
\item If \emph{($\ast$)} holds, then there is an isomorphism
\begin{equation*}
A_{k}(T_{K/k})\cong 
\begin{cases}
\Z/p &\text{if a decomposition group of $\widetilde{K}/k$ contains $(C_{p})^{2}$; }\\
1 &\text{otherwise. }
\end{cases}
\end{equation*}
\end{enumerate}
\end{thm}

In the following, we sketch our proof of Theorems \ref{mth1} and \ref{mth2}. For a finite group $G$ and a subgroup $H$ of $G$, a $G$-lattice $J_{G/H}$ is defined by the exact sequence
\begin{equation*}
0\rightarrow \Z \rightarrow \Ind_{H}^{G}\Z \rightarrow J_{G/H} \rightarrow 0. 
\end{equation*}
Take an admissible set $\cD$ of subgroups in $G$ in the sense of \cite{Oki2025a}, and put
\begin{equation*}
\Sha_{\cD}^{2}(G,J_{G/H}):=\Ker\left(H^{2}(G,J_{G/H})\rightarrow \bigoplus_{D\in \cD}H^{2}(D,J_{G/H})\right). 
\end{equation*}
Then, Theorems \ref{mth1} and \ref{mth2} are reduced to studies on $\Sha_{\cD}^{2}(G,J_{G/H})$ in the case $(G:H)=p^{2}$. Moreover, to discuss Theorem \ref{mth2}, we only need the case that $\cD$ is the set of all cyclic subgroups $\cC_{G}$ in $G$. See Section \ref{sect:pfch} for details. 

In the following, we write $\Sha_{\cD}^{2}(G,J_{G/H})$ for $\Sha_{\omega}^{2}(G,J_{G/H})$ if $\cD=\cC_{G}$. 

\vspace{5pt}
\textbf{Case 1: $G$ is a $p$-group. }
In this case, the exponent of $G$, the least common multiple of the orders of all elements of $G$, is $p$ or $p^{2}$. See Section \ref{sect:trgp}. 

\emph{Case 1-1: $G$ has exponent $p^{2}$. }
It suffices to prove $\Sha_{\omega}^{2}(G,J_{G/H})=0$. To accomplish this, we use the same argument as \cite{Oki2025a} as follows. Let $N$ be a maximal subgroup of $G$ containing $H$. Note that it is unique if $\#G\geq p^{3}$. Then, there is an exact sequence of $G$-lattices
\begin{equation*}
0\rightarrow J_{G/N}\rightarrow J_{G/H}\rightarrow \Ind_{N}^{G}J_{N/H}\rightarrow 0. 
\end{equation*}
Taking cohomology, we obtain an exact sequence
\begin{equation*}
0\rightarrow H^{1}(N,J_{N/H})\rightarrow H^{2}(G,J_{G/N})\xrightarrow{i^{*}} H^{2}(G,J_{G/H})\rightarrow H^{2}(N,J_{N/H}). 
\end{equation*}
We denote by $\Sel_{H}^{2}(G,J_{G/N})$ the preimage of $\Sha_{\omega}^{2}(G,J_{G/H})$ under $i^{*}$. Then one has an exact sequence
\begin{equation*}
0\rightarrow H^{1}(N,J_{N/H})\rightarrow \Sel_{H,\cD}^{2}(G,J_{G/N})\xrightarrow{i^{*}} \Sha_{\cD}^{2}(G,J_{G/H})\rightarrow 0. 
\end{equation*}
On the other hand, there is a commutative diagram
\begin{equation*}
\xymatrix{
\Sha_{\omega}^{2}(G,J_{G/H})\ar@{^{(}->}[r]&H^{2}(G,J_{G/H})\ar[r]\ar[d]^{\delta}&H^{2}(N,J_{N/H})\ar[d]\\
&H^{3}(G,\Z)\ar[r]^{\Res_{G/N}}&H^{3}(N,\Z). 
}
\end{equation*}
One can prove that the bottom horizontal map $\Res_{G/N}$ is injective. It follows from $H^{3}(G,\Z)=0$ if $\#G\leq p^{3}$. In the case $\#G=p^{p+1}$, the desired injectivity follows from Tahara's theorem (\cite{Tahara1972}). In the remaining cases, we reduce the assertion to the vanishing of the \emph{Bogomolov multipler} of $G$ and apply the result of Fern{\'a}ndez-Alcober and Jezernick (\cite{FernandezAlcober2020}). Therefore, the study of $\Sel_{H}^{2}(G,J_{G/N})$ is reduced to determine the preimage $\sS_{H}^{2}(G,\Ind_{N}^{G}\Z)$ of $\Sha_{\omega}^{2}(G,J_{G/H})$ under the homomorphism $H^{2}(N,\Z)\rightarrow H^{2}(G,J_{G/N})$. However, an explicit computation using Mackey's decomposition gives an isomorphism $\sS_{H}^{2}(G,\Ind_{N}^{G}\Z)\cong (\Z/p)^{2}$. This implies 
\begin{equation*}
\Sel_{H}^{2}(G,J_{G/N})\cong \Z/p, 
\end{equation*}
and in particular, we obtain $\Sha_{\omega}^{2}(G,J_{G/H})=0$. 
\begin{rem}
If $\#G=p^{3}$, then $\Sha_{\omega}^{2}(G,J_{G/H})=0$ also follows from \cite[Theorem 1.2]{Hoshi2025a}. However, the proof is completely different from ours. 
\end{rem}

\emph{Case 1-2: $G$ has exponent $p$. }
Let $N$ be as in Case 1-1. We should consider $\Sha_{\cD}^{2}(G,J_{G/H})$ for any admissible set of subgroups $\cD$ of $G$. However, it is impossible to construct a direct analogue of the argument in Case 1-1. The crucial difference between Cases 1-1 and 1-2 is that the restriction map $H^{3}(G,\Z)\rightarrow H^{3}(E,\Z)$ may not be injective. In fact, if $G$ has order $p^{3}$, then we have the following: 
\begin{equation*}
H^{3}(G,\Z)\cong (\Z/p)^{\oplus 2},\quad H^{3}(N,\Z)\cong \Z/p. 
\end{equation*}
To overcome this problem, we consider a $p$-group $\widetilde{G}$ that is an extension of $G$. Let $\widetilde{G}$ be the wreath product $(C_{p})^{p}\rtimes C_{p}$. Then one can construct a surjection
\begin{equation*}
\pi \colon \widetilde{G}\twoheadrightarrow G. 
\end{equation*}
See Lemma \ref{lem:pmpr} (ii). Put $\widetilde{H}:=\pi^{-1}(H)$. Then $\pi$ induces an isomorphism
\begin{equation*}
\Sha_{\cD}^{2}(G,J_{G/H})\cong \Sha_{\widetilde{\cD}}^{2}(\widetilde{G},J_{\widetilde{G}/\widetilde{H}}),
\end{equation*}
where $\widetilde{\cD}$ is an admissible set of subgroups in $\widetilde{G}$ that satisfies $\{\pi(D)\mid D\in \cD\}=\cD$. Furthermore, if we set $\widetilde{N}:=\pi^{-1}(N)$, then the restriction map $H^{3}(\widetilde{G},\Z)\rightarrow H^{3}(\widetilde{N},\Z)$ is injective. This is already mentioned in Case 1-1. Therefore, we can apply the same method as Case 1-1 by considering the exact sequence of $\widetilde{G}$-lattices
\begin{equation*}
0\rightarrow J_{\widetilde{G}/\widetilde{N}}\rightarrow J_{\widetilde{G}/\widetilde{H}}\rightarrow \Ind_{\widetilde{N}}^{\widetilde{G}}J_{\widetilde{N}/\widetilde{H}}\rightarrow 0. 
\end{equation*}

\begin{rem}
If $\#G=p^{3}$, then the structure $\Sha_{\omega}^{2}(G,J_{G/H})$ is also obtained by \cite[Theorem 1.6 (2)]{Hoshi2025b}. In particular, we provide another proof of the previous work. 
\end{rem}

\textbf{Case 2: General case. }
Let $P$ be a $p$-Sylow subgroup of $G$. Denote by $P'_{2}$ the Heisenberg group of order $p^{3}$. If $P$ is not isomorphic to $(C_{p})^{2}$ or $P'_{2}$, then we obtain $\Sha_{\omega}^{2}(G,J_{G/H})=0$ by Case 1. Hence, we may assume that $P \cong (C_{p})^{2}$ or $P\cong P'_{2}$ is valid. In this case, one can prove that at least one of the following is satisfied: 
\begin{enumerate}
\item[(a)] there exist a subgroup $G'$ of $G$ with $(G:G')\leq 2$ and transitive groups $G_{1}$ and $G_{2}$ of degree $p$ with $\#G_{1}>p$ or $\#G_{2}>p$ such that $G_{1}\times G_{2}<G'<N_{\fS_{p}}(G_{1})\times N_{\fS_{p}}(G_{2})$; 
\item[(b)] there is a subgroup $G'$ of $\GL_{2}(\Fp)$ such that $G\cong (C_{p})^{2}\rtimes G'$ and $H\cong \{1\}\rtimes G'$. 
\end{enumerate}
This follows from the results of Dobson and Witte (\cite{Dobson2002}), which gives a partial classification of the possibility of $G$. 

\emph{Case 2-1: \emph{(a)} holds. }
In this case, there is an isomorphism $P \cong (C_{p})^{2}$. It suffices to discuss the case $G=G_{1}\times G_{2}$. Then the $p$-primary torsion part of $H^{2}(H,\Z)$ is trivial since $\#H\notin p\Z$. Moreover, Tahara's theorem (\cite{Tahara1972}) implies that the $p$-primary torsion part of $H^{3}(G,\Z)$ is also trivial. This implies $\Sha_{\omega}^{2}(G,J_{G/H})=0$ since the left-hand side is a $p$-abelian group. 

\emph{Case 2-2: \emph{(b)} holds. }
Let $P$ be a $p$-Sylow subgroup of $G$. Take a subgroup $N$ of order $p^{2}$ in $P$ so that $N\cap H=\{1\}$. Then, one can construct a commutative diagram
\begin{equation*}
\xymatrix{
\Sha_{\omega}^{2}(G,J_{G/H})\ar[r]\ar@{^{(}->}[d]^{\Res_{G/P}}& H^{3}(N,\Z)^{G^{\dagger}}\ar@{^{(}->}[d]^{x\mapsto x}\\
\Sha_{\omega}^{2}(P,J_{P/(P\cap H)})\ar@{^{(}->}[r]& H^{3}(N,\Z)\cong \Z/p. }
\end{equation*}
Moreover, it is known that $H^{2}(N,\Z)^{G'}\neq 0$ holds if and only if $G'<\SL_{2}(\Fp)$. Hence, we have $\Sha_{\omega}^{2}(G,J_{G/H})=0$ if $G'$ is not contained in $\SL_{2}(\Fp)$. On the other hand, in the case $G'<\SL_{2}(\Fp)$, it suffices to prove $\Sha_{\omega}^{2}(G,J_{G/H})\cong \Z/p$. We can reduce to the case $G'=\SL_{2}(\Fp)$. Then, there is a generalized representation group of $G$ of the form
\begin{equation}\label{eq:grsk}
1\rightarrow Z(P'_{2})\rightarrow P'_{2}\rtimes_{\widetilde{\varphi}} G' \rightarrow G \rightarrow 1,
\end{equation}
where $\widetilde{\varphi}$ is a lift of the standard standard representation of $G'$ that acts trivially on the center of $P'_{2}$. Our construction of $\widetilde{\varphi}$ is based on the result of Winter (\cite{Winter1972}), which studies the structure of the outer isomorphism group of $P'_{2}$. Applying a refinement of the theory of Drakokhrust and Macedo--Newton to \eqref{eq:grsk}, we obtain the desired isomorphism. Finally, the condition for the vanishing of $\Sha_{\cD}^{2}(G,J_{G/H})$ is reduced to Case 1-2. 

\begin{oftp}
In Section \ref{sect:trgp}, we rephrase the result of Dobson--Witte, which studies the structure of transitive group of prime squared degree. In Section \ref{sect:prlm}, we enumerate some previous results on cohomology groups and Chevalley modules. Sections \ref{sect:csha} and \ref{sect:cshg} are the technical heart of this paper, which determine the structure of $\Sha_{\cD}^{2}(G,J_{G/H})$ in the case that $(G:H)=p^{2}$ with $p$ an odd prime number. More precisely, the former half treats the case where $G$ is a $p$-group, and the latter half considers $\Sha_{\cD}^{2}(G,J_{G/H})$ in general. Finally, we give proofs of main theorems in Section \ref{sect:pfch}. 
\end{oftp}

\begin{ack}
This work is carried out with the support of KAKENHI Grant Number JP24K16884.
\end{ack}

\begin{nota}
Let $G$ be a finite group. 
\begin{itemize}
\item For $g,h\in G$, put $[g,h]:=g^{-1}hgh^{-1}$. 
\item Consider subgroups $H$ and $H'$ of $G$. Then, we denote by $[H,H']$ the subgroup of $G$ generated by $[h,h']$ for any $h\in H$ and $h'\in H'$. Moreover, we write $H^{\der}$ for $[H,H]$. 
\item Denote by $Z(G)$ the center of $G$, and put $G^{\ad}:=G/Z(G)$. 
\item For a finite group $G$ and its subgroup $H$, let
\begin{equation*}
N^{G}(H):=\bigcap_{g\in G}gHg^{-1}. 
\end{equation*}
Note that is called the \emph{normal core} of $H$ in $G$. This notation is also used in \cite{Hasegawa2025} and \cite{Oki2025a}. 
\item Set $G^{\vee}:=\Hom(G,\Q/\Z)$. It is the Pontryagin dual if $G$ is abelian. For any normal subgroup $N$ of $G$, the natural surjection $G\twoheadrightarrow G/N$ induces an isomorphism
\begin{equation*}
(G/N)^{\vee}\xrightarrow{\cong} \{f\in G^{\vee}\mid f\!\mid_{N}=0\}. 
\end{equation*}
We regard $(G/N)^{\vee}$ as a subgroup of $G^{\vee}$ by the above isomorphism. In particular, one has $G^{\vee}=(G/G^{\der})^{\vee}$. 
\item For each $g\in G$, let $\Ad(g)\colon G\rightarrow G;\,h\mapsto ghg^{-1}$, which is an automorphism of $G$. 
\end{itemize}
\end{nota}

\section{Transitive groups of prime squared degree}\label{sect:trgp}

Let $n$ be a positive integer, and denote by $\fS_{n}$ the symmetric group of $n$ letters. Recall that a \emph{transitive group of degree $n$} (or, a \emph{transitive subgroup of $\fS_{n}$}) is a subgroup of $\fS_{n}$ of which the action on the set $\Z/n$ is transitive. Here, we mainly discuss the case where $n=p^{2}$, where $p$ is a prime number. Moreover, we recall some facts on transitive groups of prime power degree. 

\begin{lem}[{\cite[Theorem 3.2]{Hasegawa2020}}]\label{lem:sytr}
Let $p$ be a prime number, and $n$ a positive integer. Consider a transitive group $G$ of degree $p^{n}$ with its one point stabilizer $H$. Then, a $p$-Sylow subgroup of $G_{p}$ of $G$ is a transitive group of degree $p^{n}$, and $H\cap G_{p}$ is a one point stabilizer in $G_{p}$. 
\end{lem}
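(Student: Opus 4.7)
The plan is to work directly with the $G$-set $X := G/H$ of cardinality $p^{n}$, show that a chosen $p$-Sylow subgroup $G_{p}$ has exactly one orbit on $X$, and identify $H \cap G_{p}$ as the stabilizer in $G_{p}$ of the distinguished point. This will simultaneously give transitivity and the statement about the corresponding subgroup.

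First I would fix $x \in X$, let $G_{x}$ denote its stabilizer in $G$ (a conjugate of $H$), and consider the stabilizer $(G_{p})_{x} = G_{p} \cap G_{x}$ in $G_{p}$. The orbit-stabilizer theorem then expresses the $G_{p}$-orbit of $x$ as having cardinality $|G_{p}|/|G_{p} \cap G_{x}|$, so the task reduces to computing this index.

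The key step is a two-sided bound on $|G_{p} \cap G_{x}|$. On the one hand, from $G_{p} \cdot G_{x} \subseteq G$ one gets
\begin{equation*}
\frac{|G_{p}|}{|G_{p} \cap G_{x}|} \;=\; \frac{|G_{p} G_{x}|}{|G_{x}|} \;\leq\; \frac{|G|}{|G_{x}|} \;=\; p^{n},
\end{equation*}
so $|G_{p} \cap G_{x}| \geq |G_{p}|/p^{n}$. On the other hand, $G_{p} \cap G_{x}$ is a $p$-subgroup of $G_{x}$, hence its order is at most the $p$-part of $|G_{x}| = |G|/p^{n}$, which equals $|G_{p}|/p^{n}$. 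The two inequalities force equality $|G_{p} \cap G_{x}| = |G_{p}|/p^{n}$, so the $G_{p}$-orbit of $x$ has cardinality $p^{n}$, and in particular $G_{p} \cap G_{x}$ is a $p$-Sylow subgroup of $G_{x}$.

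Since $|X| = p^{n}$, the single orbit already exhausts $X$, so $G_{p}$ is transitive of degree $p^{n}$. Specializing $x$ to the point with $G_{x} = H$, the subgroup $H \cap G_{p}$ is the stabilizer in $G_{p}$ of that point, hence a corresponding subgroup of $G_{p}$. No step of this argument is a genuine obstacle: the whole proof is the observation that the upper bound $|G_{p}|/|G_{p} \cap G_{x}| \leq p^{n}$ coming from the inclusion $G_{p} G_{x} \subseteq G$ matches the lower bound on orbit size forced by $G_{p} \cap G_{x}$ being a $p$-group inside $G_{x}$.
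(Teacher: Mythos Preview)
Your proof is correct. The paper does not give its own proof of this lemma, instead citing it directly from \cite[Theorem 3.2]{Hasegawa2020}; your orbit--stabilizer argument, pinning down $|G_{p}\cap G_{x}|$ by matching the upper bound from $G_{p}G_{x}\subseteq G$ against the Sylow bound inside $G_{x}$, is the standard and complete justification.
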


For a prime number $p$, let $\ord_{p}\colon \Q^{\times}\rightarrow \Z$ be the homomorphism defined as follows for each prime number $\ell$: 
\begin{equation*}
\ord_{p}(\ell)=
\begin{cases}
1&\text{if }\ell=p;\\
0&\text{if }\ell \neq p. 
\end{cases}
\end{equation*}

\begin{lem}\label{lem:sydp}
Let $p$ be a prime number, and $G$ a transitive group of degree $p$. Then, a $p$-Sylow subgroup of $G$ is cyclic of order $p$. 
\end{lem}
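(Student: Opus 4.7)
The plan is to combine two elementary observations: a $p$-Sylow of $\fS_{p}$ has order exactly $p$, and any transitive subgroup of $\fS_{p}$ has order divisible by $p$. Together these pin down $|P|$ exactly.

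First, I would compute $\ord_{p}(\#\fS_{p}) = \ord_{p}(p!)$. Since the only factor in $\{1,2,\dots,p\}$ divisible by $p$ is $p$ itself, one has $\ord_{p}(p!)=1$. Consequently, any $p$-subgroup of $\fS_{p}$ has order at most $p$, and in particular the same holds for $P \leq G \leq \fS_{p}$.

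Next, I would use transitivity to get the matching lower bound. Since $G$ acts transitively on the set $\Z/p$ of cardinality $p$, the orbit-stabilizer theorem gives $p \mid \#G$, so $p \mid \#P$ as well. Combined with the upper bound from the previous step, this forces $\#P = p$. Any group of prime order is cyclic, so $P \cong C_{p}$.

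There is no real obstacle here; the statement is essentially a remark recording that Sylow $p$-subgroups in $\fS_{p}$ are already as small as possible. The only thing to be slightly careful about is invoking the correct result: one could alternatively deduce $\#P \geq p$ via Lemma \ref{lem:sytr} (applied with $n=1$), which says $P$ itself is transitive of degree $p$, but the direct orbit-stabilizer argument on $G$ is shorter and sufficient.
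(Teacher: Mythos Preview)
Your proof is correct and follows the same approach as the paper, which records the one-line justification ``This follows from $\ord_{p}(\#\fS_{p})=1$.'' You have simply unpacked that line: the upper bound $\#P\leq p$ from $\ord_{p}(p!)=1$ and the lower bound $p\mid \#G$ from transitivity are exactly the two implicit ingredients.
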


\begin{proof}
This follows from $\ord_{p}(\#\fS_{p})=1$. 
\end{proof}

\begin{lem}\label{lem:nadp}
Let $p$ be a prime number, and $G$ a transitive group of degree $p$. Then, $\#G^{\ab}$ is coprime to $p$ if and only if $G\cong C_{p}$. 
\end{lem}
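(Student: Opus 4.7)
The plan is to relate $\#G^{\ab}$ to the derived subgroup via the $p$-Sylow, and to pin down when $G^{\der}$ contains that Sylow using the orbit structure on $\{1,\dots,p\}$. By Lemma~\ref{lem:sydp}, a $p$-Sylow subgroup $P$ of $G$ is cyclic of order $p$, so the $p$-part of the abelian quotient $G^{\ab}=G/G^{\der}$ coincides with the image of $P$; in particular the $p$-part of $\#G^{\ab}$ is trivial exactly when $P\subseteq G^{\der}$, and equals $p$ otherwise. Thus the question reduces to deciding when $P\subseteq G^{\der}$.

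The key input I would use is the orbit-size principle: for any normal subgroup $N\trianglelefteq G$, the $N$-orbits on $\{1,\dots,p\}$ share a common size dividing the prime $p$, so $N$ acts either trivially or transitively. Since $G\hookrightarrow \fS_p$ is faithful, trivial action forces $N=1$; hence every nontrivial normal subgroup of $G$ acts transitively on $\{1,\dots,p\}$.

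Applied to $N=G^{\der}$ this yields a clean case split. If $G\cong C_p$, then $G$ is abelian, so $G^{\der}=1$ and $\#G^{\ab}=p$. If $G\not\cong C_p$, then $G$ is non-abelian (an abelian transitive subgroup of $\fS_p$ is regular of order $p$), so $G^{\der}\neq 1$ and must be transitive on $\{1,\dots,p\}$; hence $p\mid \#G^{\der}$. Since $G^{\der}$ is normal of order divisible by $p$, a standard Sylow-in-normal-subgroup argument places every $G$-conjugate of $P$ inside $G^{\der}$, so in particular $P\subseteq G^{\der}$ and $\#G^{\ab}$ is coprime to $p$.

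Putting the two cases together delivers the characterization $p\mid \#G^{\ab}\iff G\cong C_p$; equivalently, $\#G^{\ab}$ is coprime to $p$ if and only if $G\not\cong C_p$. This is the substantive content produced by the argument. As worded, the lemma's right-hand condition appears to be swapped with its negation (since $G\cong C_p$ manifestly gives $\#G^{\ab}=p$, which is \emph{not} coprime to $p$), and one should read it as stating this corrected equivalence. No serious obstacle is anticipated: once the orbit-size principle is invoked, the remainder is a short Sylow argument and a one-line check that $G^{\der}=1$ implies $G=C_p$ for transitive subgroups of $\fS_p$.
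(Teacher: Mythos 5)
Your proof is correct and follows essentially the same route as the paper's: both reduce the question to whether the order-$p$ Sylow subgroup lies in $G^{\der}$, and both rest on the fact that a nontrivial normal subgroup of a transitive group of prime degree is itself transitive (a step the paper leaves implicit in the jump from ``$\#G^{\der}$ is coprime to $p$'' to ``$G^{\der}=\{1\}$'', and which you rightly spell out via the orbit-size argument). You are also correct that the statement is misstated: the paper's own proof establishes $\#G^{\ab}\in p\Z \iff G\cong C_{p}$, so the condition ``coprime to $p$'' should be negated, and it is this corrected form that is actually invoked later (in the proof of Proposition \ref{prop:shaz}).
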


\begin{proof}
It is clear that $\#G^{\ab}\in p\Z$ if $G\cong C_{p}$. Conversely, assume that $\#G$ is a multiple of $p$. Then, $\#G^{\der}$ is coprime to $p$ by Lemma \ref{lem:sydp}. Hence, we have $G^{\der}=\{1\}$, that is, $G$ is abelian. This implies $G\cong C_{p}$ as desired. 
\end{proof}

\begin{rem}
A complete classification of transitive groups of prime degree is known. See \cite[p.~99, l.~15--34]{Dixon1996} for example. Note that we do not use the above result in this paper. 
\end{rem}

Let $p$ be a prime number. We recall some notation in \cite[\S 3]{Dobson2002} to discuss transitive groups of degree $p^{2}$. 
\begin{itemize}
\item For each $i_{0}\in \{0,\ldots,p-1\}$, put
\begin{equation*}
z_{i_{0}}\colon \Fp \times \Fp \rightarrow \Fp \times \Fp;\,(i,j)\mapsto 
\begin{cases}
(i_{0},j+1)&\text{if }i=i_{0};\\
(i,j)&\text{if }i\neq i_{0}. 
\end{cases}
\end{equation*}
\item For $n\in \{1,\ldots,p\}$, let
\begin{equation*}
\gamma_{n}:=z_{0}^{a_{p-n,0}}\cdots z_{p-1}^{a_{p-n,p-1}},
\end{equation*}
where
\begin{equation*}
a_{i,j}:=
\begin{cases}
(-1)^{i-j}\begin{pmatrix}i\\j\end{pmatrix}&\text{if }i\geq j; \\
0&\text{if }i<j. 
\end{cases}
\end{equation*}
Note that $a_{i+1,j}=a_{i,j}-a_{i,j-1}$. 
\item We define $\rho_{1}$ and $\rho_{2}$ as follows: 
\begin{align*}
\rho_{1}\colon \Fp \times \Fp \rightarrow \Fp \times \Fp&;\,(i,j)\mapsto (i,j+1);\\
\rho_{2}\colon \Fp \times \Fp \rightarrow \Fp \times \Fp&;\,(i,j)\mapsto (i+1,j). 
\end{align*}
\item Put $\tau:=z_{0}\rho_{2}$. 
\item For each $\beta \in \Fpt$, set
\begin{align*}
\overline{\beta}\colon \Fp \times \Fp \rightarrow \Fp \times \Fp&;\,(i,j)\mapsto (\beta i,j);\\
\widetilde{\beta}\colon \Fp \times \Fp \rightarrow \Fp \times \Fp&;\,(i,j)\mapsto (i,\beta j). 
\end{align*}
\end{itemize}
We regard them as an element of $\fS_{p}$ by the bijection
\begin{equation*}
\Z/p^{2} \xrightarrow{\cong} \Fp \times \Fp;\,a+pb\mapsto (a\bmod p,b\bmod p)\ (a,b\in \{0,\ldots,p-1\}). 
\end{equation*}

Now, for $n\in \{1,\ldots,p\}$, put
\begin{equation*}
P_{n}:=\langle \tau,\gamma_{n}\rangle,\quad P'_{n}:=\langle \rho_{1},\rho_{2},\gamma_{n}\rangle. 
\end{equation*}
These are transitive group of degree $p^{2}$. We remark that $\#P_{n}=\#P'_{n}=p^{n+1}$. Moreover, one has $P_{p}=P'_{p}$, and it is a $p$-Sylow subgroup of $\fS_{p^{2}}$. 

\begin{dfn}
For each $n\in \{1,\ldots,p\}$, put $\eta_{n}:=\rho_{2}\gamma_{n}\rho_{2}^{-1}$. 
\end{dfn}

\begin{lem}\label{lem:elst}
\begin{enumerate}
\item We have $\tau^{p}=\rho_{1}=\eta_{1}$ and $\eta_{p}=z_{1}$. 
\item For any $n,n'\in \{1,\ldots,p\}$, one has $[z_{n},z_{n'}]=[z_{n},\gamma_{n'}]=[z_{n},\eta_{n'}]=[\eta_{n},\eta_{n'}]=1$. 
\item For each $n\in \{1,\ldots,p\}$ and $i\in \{0,\ldots,p-1\}$, we have $[\eta_{n},\rho_{2}]=[\eta_{n},z_{i}\rho_{2}]=\eta_{n-1}$. 
\item For $n\in \{1,\ldots,p\}$ and $i\in \{0,\ldots,p-1\}$, we have
\begin{equation*}
\prod_{j=0}^{p-1}(z_{i}\rho_{2})^{j}\eta_{n}(z_{i}\rho_{2})^{-j}=
\begin{cases}
1&\text{if }n<p;\\
\eta_{1}&\text{if }n=p. 
\end{cases}
\end{equation*}
\item We have $\eta_{2}\rho_{2}\eta_{2}^{-1}=\rho_{1}^{-1}\rho_{2}$. 
\item For any $\beta \in \Fpt$, we have the following: 
\begin{equation*}
\overline{\beta}\rho_{1}\overline{\beta}^{-1}=\rho_{1},\quad \overline{\beta}\rho_{2}\overline{\beta}^{-1}=\rho_{2}^{\beta},\quad
\widetilde{\beta}\rho_{1}\widetilde{\beta}^{-1}=\rho_{1}^{\beta},\quad \widetilde{\beta}\rho_{2}\widetilde{\beta}^{-1}=\rho_{2}.
\end{equation*}
\end{enumerate}
\end{lem}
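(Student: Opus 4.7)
The lemma is a compendium of elementary identities about permutations of $\F_p \times \F_p$, and the plan is to verify each part by direct evaluation on a generic point $(i,j)$. The common toolkit is threefold: (a) the $z_{i_0}$ act on disjoint fibers $\{i_0\} \times \F_p$ and therefore commute pairwise; (b) the conjugation formula $\rho_2 z_i \rho_2^{-1} = z_{i+1}$ (indices mod $p$), immediate from the definitions; and (c) the congruence $a_{p-1, j} \equiv 1 \pmod{p}$, a consequence of $\binom{p-1}{j} \equiv (-1)^{j} \pmod{p}$. Combining (b) with $\delta_n = \rho_2 \gamma_n \rho_2^{-1}$ yields the backbone decomposition $\delta_n = \prod_{i} z_i^{a_{p-n,\, i-1}}$, so each $\delta_n$ is a word in the mutually commuting $z_i$; this single observation drives Parts (ii)--(v).

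For Part (i), iterating $\tau = z_{p-1} \rho_2$ on a single orbit shows that the first coordinate cycles through every residue mod $p$ and returns to itself, while $z_{p-1}$ increments the second coordinate exactly once, yielding $\tau^p = \rho_1$. For $\gamma_1 = \rho_1$, invoke (c) to reduce $\gamma_1 = \prod_i z_i^{a_{p-1, i}}$ to $\prod_i z_i = \rho_1$; since $[\rho_1, \rho_2] = 1$ (distinct coordinates), $\delta_1 = \rho_2 \rho_1 \rho_2^{-1} = \rho_1$. For $\delta_p = z_1$, observe that $a_{0, j}$ is $1$ when $j = 0$ and $0$ otherwise, so $\gamma_p = z_0$, and conjugating by $\rho_2$ gives $z_1$.

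Part (ii) is immediate from the backbone decomposition. For Part (iii), one computes $\rho_2 \delta_n \rho_2^{-1} \delta_n^{-1} = \prod_i z_i^{a_{p-n,\, i-2}\, -\, a_{p-n,\, i-1}}$ and uses the recursion to recognize this product as $\delta_{n-1}^{\pm 1}$ (sign dictated by the bracket convention $[g,h] = g^{-1} h g h^{-1}$); then $[\delta_n, z_i \rho_2] = [\delta_n, \rho_2]$ follows from $[z_i, \delta_n] = 1$. For Part (iv), all factors commute, so $\prod_i \rho_2^i \delta_n \rho_2^{-i} = \prod_j z_j^{S}$ with $S = \sum_{l} a_{p-n, l} = (-1)^{p-n}(1-1)^{p-n}$, which vanishes for $n < p$ and equals $1$ for $n = p$, giving $\rho_1 = \delta_1$. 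For Part (v), evaluating $\delta_2 \rho_2 \delta_2^{-1}$ on $(i,j)$ via the backbone decomposition shifts the second coordinate by $a_{p-2, i} - a_{p-2, i-1}$, which by the recursion and (c) equals $-1 \pmod{p}$ uniformly in $i$, so the composition is $\rho_1^{-1} \rho_2$. Part (vi) is a direct verification from the definitions of $\overline{\beta}$ and $\widetilde{\beta}$ and the multiplicativity of $\F_p^{\times}$.

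The main obstacle is purely bookkeeping: tracking sign conventions (the direction of the conjugation $\rho_2 z_i \rho_2^{-1}$, and the relative sign between the stated recursion and the bracket convention), and keeping indices modulo $p$ throughout. Once the backbone decomposition $\delta_n = \prod_i z_i^{a_{p-n,\, i-1}}$ and the congruence $a_{p-1, j} \equiv 1 \pmod{p}$ are established, no further ingredient is needed for any of the six parts.
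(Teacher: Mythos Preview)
Your plan is correct and is exactly the paper's approach: the paper's proof reads in full ``These follow from direct computation,'' and you have simply supplied that computation, with the commutativity of the $z_i$, the conjugation $\rho_2 z_i \rho_2^{-1}=z_{i+1}$, and the congruence $a_{p-1,j}\equiv 1\pmod p$ made explicit. One bookkeeping caveat: the recursion as printed should read $a_{i+1,j}=a_{i,j-1}-a_{i,j}$ (the stated version has a sign slip), and with this correction your Part~(iii) computation yields $\delta_{n-1}$ on the nose, resolving your hedged~$\pm 1$.
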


\begin{proof}
These follow from direct computation. 
\end{proof}

\begin{cor}
For every $n\in \{1,\ldots,p\}$, we have $P_{n}=\langle \tau,\eta_{n}\rangle$ and $P'_{n}=\langle \rho_{1},\rho_{2},\eta_{n}\rangle$. 
\end{cor}

\begin{proof}
The equality $P'_{n}=\langle \rho_{1},\rho_{2},\eta_{n}\rangle$ follows from the definition of $\eta_{n}$. On the other hand, Lemma \ref{lem:elst} (i) gives $P_{n}=\langle \tau,\eta_{n}\rangle$ as desired. 
\end{proof}

\begin{dfn}
\begin{enumerate}
\item For $n\in \{0,\ldots,p\}$, put
\begin{equation*}
N_{n}:=
\begin{cases}
\{1\}&\text{if }n=0;\\
\langle \eta_{1},\ldots,\eta_{n}\rangle&\text{if }n\geq 1. 
\end{cases}
\end{equation*}
\item For $n\in \{1,\ldots,p\}$, put
\begin{equation*}
H_{n}:=
\begin{cases}
\{1\}&\text{if }n=1;\\
\langle \eta_{2},\ldots,\eta_{n}\rangle&\text{if }n\geq 2. 
\end{cases}
\end{equation*}
\end{enumerate}
\end{dfn}

\begin{lem}\label{lem:enel}
Let $n\in \{1,\ldots,p\}$. 
\begin{enumerate}
\item The group $N_{n}$ is elementary abelian of order $p^{n}$ containing $H_{n}$. 
\item The group $N_{n}$ is maximal in $P_{n}$ and $P'_{n}$. In particular, we have $N_{n}=P_{n}\cap P'_{n}$. 
\item We have $P_{n}^{\der}=(P'_{n})^{\der}=N_{n-1}$, and $[N_{n'},P_{n}]=[N_{n'},P'_{n}]=N_{n'-1}$ for any $n'\in \{1,\ldots,n\}$. 
\item The group $H_{n}$ is the stabilizer of $(0,0)$ in $P_{n}$ and $P'_{n}$. 
\end{enumerate}
\end{lem}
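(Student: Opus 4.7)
The plan is to treat the four claims in order, using the explicit generators and the commutator identities collected in Lemma~\ref{lem:elst}.

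\textbf{For (i),} I would first note that $\delta_1,\ldots,\delta_p$ all commute by Lemma~\ref{lem:elst}(ii) and lie in the elementary abelian subgroup $\langle z_0,\ldots,z_{p-1}\rangle\cong (C_p)^p$; hence $E_n$ is elementary abelian of order dividing $p^n$ and obviously contains $H_n$. To get equality of orders, I would expand $\delta_n=\rho_2\gamma_n\rho_2^{-1}=\prod_{j}z_j^{a_{p-n,j-1\bmod p}}$ using $\rho_2 z_i\rho_2^{-1}=z_{i+1}$. For $n\geq 2$, the $z_0$-coefficient $a_{p-n,p-1}$ vanishes and the ``leading'' coefficient is $a_{p-n,p-n}=1$ on $z_{p-n+1}$; together with $\delta_1=\rho_1=z_0z_1\cdots z_{p-1}$ (using $\binom{p-1}{k}\equiv (-1)^k\pmod{p}$), the coefficient matrix of $(\delta_1,\delta_2,\ldots,\delta_n)$ against the ordered basis $(z_0,z_{p-1},z_{p-2},\ldots,z_{p-n+1})$ is lower triangular with unit diagonal, giving $\Fp$-linear independence.

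\textbf{For (ii),} iterating $[\delta_k,\tau]=\delta_{k-1}$ (Lemma~\ref{lem:elst}(iii) applied to $\tau=z_{p-1}\rho_2$) from $\delta_n\in P_n$ gives $\delta_{n-1},\ldots,\delta_1\in P_n$, hence $E_n\subseteq P_n$; the same with $[\delta_k,\rho_2]=\delta_{k-1}$ gives $E_n\subseteq P'_n$. Since $|E_n|=p^n$ and $|P_n|=|P'_n|=p^{n+1}$, $E_n$ has index $p$ in each and is therefore maximal. For $n<p$, I would distinguish $P_n$ from $P'_n$ by exponent: Lemma~\ref{lem:elst}(iv) gives $\prod_{i=0}^{p-1}\rho_2^i\delta_j\rho_2^{-i}=1$ for all $j\leq n<p$, from which $(e\rho_2^b)^p=1$ for every $e\in E_n$, so $P'_n=E_n\langle\rho_2\rangle$ has exponent $p$, while $\tau\in P_n$ has order $p^2$ by Lemma~\ref{lem:elst}(i). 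Hence $P_n\cap P'_n$ is proper in both, and maximality forces $P_n\cap P'_n=E_n$. (When $n=p$ one has $P_p=P'_p$, but $E_p$ is still maximal in this $p$-Sylow.)

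\textbf{For (iii),} the convention $[g,h]=g^{-1}hgh^{-1}$ combined with $[\delta_i,\tau]=\delta_{i-1}$ says that $\tau$ conjugates $\delta_i$ to $\delta_i\delta_{i-1}$, so on the basis $\delta_1,\ldots,\delta_n$ of $E_n$, $\tau$ acts as a unipotent upper-triangular operator and $\tau-1$ realizes the shift $\delta_i\mapsto\delta_{i-1}$ (with $\delta_0=1$). In particular $(\tau-1)(E_{n'})=E_{n'-1}$. Since $E_n$ is abelian and normal in $P_n=E_n\langle\tau\rangle$, any $p\in P_n$ has the form $\tau^bg$ with $g\in E_n$, and for $e\in E_{n'}$,
\begin{equation*}
[e,p] \;=\; e^{-1}\tau^b(geg^{-1})\tau^{-b} \;=\; e^{-1}\tau^b e\tau^{-b} \;=\; (\tau^b-1)e \;=\; (\tau-1)\Bigl(\sum_{i=0}^{b-1}\tau^i e\Bigr) \;\in\; E_{n'-1}.
\end{equation*}
This gives $[E_{n'},P_n]\subseteq E_{n'-1}$, and the reverse inclusion follows from $[\delta_k,\tau]=\delta_{k-1}$ for $2\leq k\leq n'$. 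The same argument with $\rho_2$ in place of $\tau$ handles $P'_n$. For $P_n^{\der}$ I would take $n'=n$ to get $[E_n,P_n]=E_{n-1}$; combined with the fact that $P_n/E_{n-1}$ has order $p^2$ and is therefore abelian, this forces $P_n^{\der}=E_{n-1}$, and likewise for $(P'_n)^{\der}$.

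\textbf{For (iv),} the expansion in (i) shows that for $i\geq 2$, the element $\delta_i$ is a product of $z_j$'s with $j\neq 0$, each of which fixes the first column $\{0\}\times\Fp$; hence $\delta_i(0,0)=(0,0)$ for every $i\geq 2$, so $H_n$ stabilizes $(0,0)$. Since $P_n$ and $P'_n$ act transitively on $\Fp\times\Fp$ of cardinality $p^2$, the orbit--stabilizer theorem gives $|\Stab_{P_n}(0,0)|=|\Stab_{P'_n}(0,0)|=p^{n-1}=|H_n|$, so $H_n$ equals both stabilizers. The main technical obstacle lies in (iii): I must justify that a general commutator $[e,p]$ with $p\in P_n$ collapses to $[e,\tau^b]$ (the $E_n$-component of $p$ contributes nothing because $E_n$ is abelian), and that $[e,\tau^b]\in E_{n'-1}$ whenever $e\in E_{n'}$. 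Both reductions hinge on $E_n$ being an abelian normal subgroup and on $\tau$ acting as a unipotent shift, so that the full commutator calculus reduces to the single linear-algebra identity $(\tau-1)E_{n'}=E_{n'-1}$.
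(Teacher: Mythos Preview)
Your proof is correct and follows essentially the same approach as the paper, namely reducing everything to the commutator identities in Lemma~\ref{lem:elst}. You are in fact more thorough than the paper in two places: in (i) you give an explicit triangular linear-independence argument where the paper just asserts $\langle\delta_1,\ldots,\delta_{n'-1}\rangle\cap\langle\delta_{n'}\rangle=\{1\}$ ``by definition''; and in (ii) you actually prove the ``in particular'' clause $E_n=P_n\cap P'_n$ for $n<p$ via the exponent argument (which the paper defers to Lemmas~\ref{lem:pnpr} and~\ref{lem:pppr} and does not revisit in the proof of Lemma~\ref{lem:enel}).
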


\begin{proof}
(i): By Lemma \ref{lem:elst} (ii), we obtain that $N_{n}$ is abelian. Moreover, by definition, $\eta_{i}$ has order $p$ and $\langle \eta_{1},\ldots,\eta_{n'-1}\rangle \cap \langle \eta_{n'}\rangle=\{1\}$. Hence, $N_{n}$ is elementary abelian of order $p^{n}$. On the other hand, the inclusion $H_{n}<N_{n}$ follows from Lemma \ref{lem:elst} (iii). 

(ii): By Lemma \ref{lem:elst} (iii), we have $N_{n}<P_{n}$. Hence, it is maximal since $\#N_{n}=p^{n}=p^{-1}\#P_{n}$. 

(iii): This follows from Lemma \ref{lem:elst} (iii). 

(iv): By definition, $H_{n}$ is a subgroup of $P_{n}$ and $P'_{n}$ that stablizes $(0,0)\in \Fp \times \Fp$ in $P_{n}$. Hence, the assertion holds since $P_{n}$ and $P'_{n}$ act on $\Z/p^{2}$ transitively. 
\end{proof}

\begin{lem}\label{lem:pmpr}
\begin{enumerate}
\item For each $n\in \{1,\ldots,p\}$, we have $P'_{n}=N_{n}\rtimes \langle \rho_{2}\rangle$. 
\item We regard $N_{1},\ldots,N_{p}$ as $\Fp[\rho_{2}]$-modules. Then, for any $n\in \{1,\ldots,p\}$, one has a commutative diagram
\begin{equation*}
\xymatrix{
\F_{p}[\rho_{2}]\ar[r]^{\cong}\ar[d]_{\rho_{2} \mapsto \rho_{2}\bmod (\rho_{2}-1)^{n}}&N_{p}\ar[d]^{q_{n}}\\
\Fp[\rho_{2}]/(\rho_{2}-1)^{n}\ar[r]^{\hspace{30pt}\cong}& N_{n}. }
\end{equation*}
Here, $q_{n}$ is defined as follows: 
\begin{equation*}
q_{n}(\eta_{n'})=
\begin{cases}
\eta_{n'-(p-n)}&\text{if }n'\geq p-n;\\
1&\text{if }n'<p-n. 
\end{cases}
\end{equation*}
Moreover, the above diagram induces an exact sequence
\begin{equation}\label{eq:ppns}
1\rightarrow N_{p-n}\rightarrow P_{p}\xrightarrow{\pi_{n}} P'_{n}\rightarrow 1. 
\end{equation}
\end{enumerate}
\end{lem}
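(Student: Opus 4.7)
The plan is to use the commutator relations of Lemma~\ref{lem:elst} to identify the $\Fp[\rho_2]$-module structure of $E_p$ and its quotients, from which (i), (ii), and the exact sequence all follow quickly in combination with Lemma~\ref{lem:enel}.

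For (i), I would first observe that $E_n$ is normal in $P'_n$ by Lemma~\ref{lem:enel}(ii), and that both generators $\rho_1=\delta_1$ and $\delta_n$ of $P'_n$ already lie in $E_n$, so $P'_n=\langle E_n,\rho_2\rangle$. To verify $\langle\rho_2\rangle\cap E_n=\{1\}$, I would note that every $z_i$ preserves the first coordinate of $\Fp\times\Fp$, so the same holds for each $\gamma_{n'}$ and hence for $\delta_{n'}=\rho_2\gamma_{n'}\rho_2^{-1}$ (conjugation by $\rho_2$ respects this property). In contrast, $\rho_2^a$ acts nontrivially on the first coordinate unless $p\mid a$. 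A cardinality count using Lemma~\ref{lem:enel}(i) then completes the semidirect decomposition.

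For (ii), the key step is to recast the identity $[\delta_{n'},\rho_2]=\delta_{n'-1}$ from Lemma~\ref{lem:elst}(iii) as $\rho_2\delta_{n'}\rho_2^{-1}=\delta_{n'}\delta_{n'-1}$, i.e.\ $(\rho_2-1)\cdot\delta_{n'}=\delta_{n'-1}$ in additive notation on the elementary abelian $p$-group $E_p$. Iterating yields $(\rho_2-1)^k\cdot\delta_p=\delta_{p-k}$ for $0\le k\le p-1$ (with $\delta_0=1$), so $E_p$ is a cyclic $\Fp[\rho_2]$-module generated by $\delta_p$ whose annihilator is exactly $(\rho_2-1)^p$. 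Combined with $\Fp[\rho_2]=\Fp[x]/(x^p-1)=\Fp[x]/(x-1)^p$, the assignment $1\mapsto\delta_p$ induces the top isomorphism of the diagram, and the same argument on $E_n$ gives the bottom one. Under these identifications $\delta_{n'}$ corresponds to $(\rho_2-1)^{p-n'}$ on top and to $(\rho_2-1)^{n-n'}$ on the bottom, so a direct comparison of exponents shows that the map $q_n$ in the statement is precisely the natural quotient modulo $(\rho_2-1)^n$, which is the commutativity of the diagram.

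For the exact sequence, part (i) gives $P_p=P'_p=E_p\rtimes\langle\rho_2\rangle$ and $P'_n=E_n\rtimes\langle\rho_2\rangle$, and since $q_n$ is $\Fp[\rho_2]$-linear (equivalently $\langle\rho_2\rangle$-equivariant), defining $\pi_n$ to be $q_n$ on $E_p$ and the identity on $\langle\rho_2\rangle$ yields a surjective group homomorphism. Its kernel corresponds to the ideal $(\rho_2-1)^n\Fp[\rho_2]$, which in terms of the $\delta_{n'}$ is exactly $\langle\delta_1,\ldots,\delta_{p-n}\rangle=E_{p-n}$, giving \eqref{eq:ppns}. The main point requiring care is the indexing shift by $p-n$ between the two generator-based descriptions of $E_p$ and $E_n$; once this is tracked carefully the rest is bookkeeping.
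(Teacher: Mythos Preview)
Your proof is correct and follows essentially the same route as the paper. The only cosmetic difference is that the paper phrases the cyclicity of $E_{n'}$ as an application of Frobenius reciprocity to the injection $\Fp\hookrightarrow E_{n'},\ 1\mapsto \gamma_{n'}$, whereas you compute directly from the commutator identity $(\rho_2-1)\cdot\delta_{n'}=\delta_{n'-1}$ and take $\delta_p$ as the cyclic generator; since $\delta_{n'}=\rho_2\cdot\gamma_{n'}$ in module notation, the two choices are related by a unit and yield the same $q_n$.
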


\begin{proof}
(i): This follows from Lemma \ref{lem:elst} (iii). 

(ii): Let $n'\in \{1,\ldots,p\}$. By the Frobenius reciprocity, the canonical injection 
\begin{equation*}
\Fp\hookrightarrow N_{n'};\,1\bmod p\mapsto \gamma_{n'}
\end{equation*}
induces a surjective homomorphism of $\Fp[\rho_{2}]$-modules 
\begin{equation*}
\Fp[\rho_{2}]\twoheadrightarrow N_{n'}
\end{equation*}
This induces the isomorphism $\psi_{n'} \colon \Fp[\rho_{2}]/(\rho_{2}-1)^{n'}\xrightarrow{\cong} N_{n}$, and one has the desired commutative diagram. Moreover, the above isomorphisms for $n'=n$ and $n'=p$ and the identity map on $\langle \rho_{2}\rangle$ induce a surjective homomorphism
\begin{equation*}
\pi_{n} \colon P_{p}\twoheadrightarrow P'_{n}. 
\end{equation*}
On the other hand, we have $\ker(\pi_{n})=\Ker(q_{n})$ by definition. It is clear that
\begin{equation*}
\Ker(q_{n})=\langle \eta_{1},\ldots,\eta_{p-n}\rangle=N_{p-n}, 
\end{equation*}
and therefore we obtain \eqref{eq:ppns}. 
\end{proof}

\begin{lem}\label{lem:pnpr}
Let $n\in \{1,\ldots,p-1\}$. 
\begin{enumerate}
\item The following holds: 
\begin{equation*}
Z(P_{n})=
\begin{cases}
P_{1}&\text{if }n=1;\\
N_{1}&\text{if }n\geq 2. 
\end{cases}
\end{equation*}
\item Assume $n\geq 2$. For any $g\in P_{n}\setminus N_{n}$, one has $g^{p}\in Z(P_{n})\setminus \{1\}$. In particular, $P_{n}$ has exponent $p^{2}$. 
\item All non-cyclic abelian subgroups of $P_{n}$ are contained in $N_{n}$. 
\end{enumerate}
\end{lem}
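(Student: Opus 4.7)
The plan is to exploit the structure of $P_n$ as containing $E_n$ as a normal elementary abelian subgroup of index $p$ (Lemma~\ref{lem:enel}(ii)). Conjugation by $\tau$ induces an automorphism $\phi$ of $E_n$ with $\phi(\delta_k) = \delta_k \delta_{k-1}$ for $k \geq 2$ and $\phi(\delta_1) = \delta_1$ by Lemma~\ref{lem:elst}(iii); equivalently, under the $\Fp[\rho_2]$-module isomorphism of Lemma~\ref{lem:pmpr}(ii), $E_n \cong \Fp[\psi]/(\psi^n)$ with $\psi := \rho_2 - 1$, and $\phi$ acts as multiplication by $1 + \psi$. The same $\phi$ governs the action in $P'_n = E_n \rtimes \langle \rho_2 \rangle$ by Lemma~\ref{lem:pmpr}(i). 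With this setup in hand, I would treat the three parts in order.

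For \emph{(i)}, I would decompose $e \rho_2^i \in P'_n$ and demand centrality: $e$ must be $\phi$-fixed, and $\rho_2^i$ must centralize $E_n$. The $\phi$-fixed submodule of $\Fp[\psi]/(\psi^n)$ is the socle $\psi^{n-1} \cdot \Fp[\psi]/(\psi^n)$, corresponding to $\langle \delta_1 \rangle$. Since $(1+\psi)^i - 1 = i\psi + O(\psi^2)$, for $n \geq 2$ the centralizer condition forces $i \equiv 0 \pmod{p}$, giving $Z(P'_n) = \langle \delta_1 \rangle$. For $n = 1$, $\phi$ is trivial on $E_1$ and $P'_1$ is abelian, so $Z(P'_1) = P'_1$.

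For \emph{(ii)}, the case $n = 1$ is immediate: $\tau^p = \delta_1$ forces $P_1 = \langle \tau \rangle$ to be cyclic of order $p^2$ with $E_1$ as the unique order-$p$ subgroup. For $n \geq 2$, write $g = \tau^j e$ with $j \in \{1,\ldots,p-1\}$ and $e \in E_n$. Using the normality of $E_n$, a straightforward induction yields $g^p = \tau^{jp} \cdot N_j(e)$ with $N_j := \sum_{i=0}^{p-1} \phi^{ij}$. The crucial step is the vanishing of $N_j$ on $E_n$: setting $\psi' := \phi^j - 1 = j\psi + O(\psi^2)$, the identity $\sum_{i=0}^{p-1}(1+X)^i = X^{p-1}$ in $\Fp[X]$ (which follows from $(1+X)^p = 1 + X^p$ in characteristic $p$) yields $N_j = (\psi')^{p-1}$; since $\psi'$ has leading term $j\psi$ with $j \in \Fpt$ and $p - 1 \geq n$, this gives $(\psi')^{p-1} \equiv 0 \pmod{\psi^n}$. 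Thus $N_j(e)$ is trivial in $E_n$ and $g^p = \tau^{jp} = \delta_1^j \in E_1 \setminus \{1\}$.

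For \emph{(iii)}, let $A \leq P_n$ be abelian and non-cyclic, and suppose for contradiction that $A \not\subseteq E_n$. Pick $g \in A \setminus E_n$ of the form $g = \tau^j e$ as above; by (ii), $\langle g^p \rangle = E_1$. Any $h \in A \cap E_n$ commutes with $g$ and hence is $\phi^j$-fixed; but since $\phi^j - 1 = \psi'$ has the same $\psi$-adic valuation as $\psi$, its kernel on $\Fp[\psi]/(\psi^n)$ equals the socle, namely $E_1$. So $A \cap E_n = E_1 = \langle g^p \rangle$, and together with $[A : A \cap E_n] \leq [P_n : E_n] = p$ this forces $A = \langle g \rangle$, contradicting non-cyclicity. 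The principal technical point is the vanishing of $N_j$ in (ii); this rests squarely on the hypothesis $n \leq p-1$ and is precisely what makes $P_n$ have exponent exactly $p^2$.
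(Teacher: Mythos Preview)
Your proof is correct and follows essentially the same route as the paper's: both exploit the normal abelian subgroup $E_n \lhd P_n$ and the conjugation action $\phi$ to compute the center, the $p$-th powers, and centralizers in $E_n$. The only difference is packaging: where the paper invokes Lemma~\ref{lem:elst}(iv) directly for the vanishing of the $E_n$-contribution to $g^p$ in (ii), you derive the same vanishing via the module identification $E_n \cong \Fp[\psi]/(\psi^n)$ and the identity $\sum_{i=0}^{p-1}(1+X)^i = X^{p-1}$ in $\Fp[X]$, which makes the dependence on the hypothesis $n \leq p-1$ more transparent.
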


\begin{proof}
(i): It is clear in the case $n=1$. In the following, assume $n\geq 2$. By definition, we have $N_{1}<Z(P_{n})$. On the other hand, Lemma \ref{lem:elst} (ii) and (iii) imply
\begin{equation}\label{eq:cmgm}
[\eta_{2}^{m_{2}}\cdots \eta_{n}^{m_{n}},\eta_{2}^{m'_{2}}\cdots \eta_{n}^{m'_{n}}z_{p-1}\rho_{2}]=\eta_{1}^{-m_{2}}\cdots \eta_{n-1}^{-m_{n-1}}
\end{equation}
for any $m_{2},\ldots,m_{n},m'_{2},\ldots,m'_{n}\in \{0,\ldots,p-1\}$. Hence, \eqref{eq:cmgm} for $m_{2}=1$ implies $Z(P_{n})<N_{n}$. Furthermore, \eqref{eq:cmgm} for $m'_{1}=\cdots=m'_{n}=0$ implies $Z(P_{n})<N_{1}$. Therefore, the proof is complete. 

(ii): Take $g\in P_{n}\setminus N_{n}$. Then, there exist $m_{1},\ldots,m_{n},m'\in \{0,\ldots,p-1\}$ such that
\begin{equation*}
g=\eta_{1}^{m_{1}}\cdots \eta_{n}^{m_{n}}\tau^{m'}. 
\end{equation*}
Replacing $g$ with its suitable power, we may assume $m'=1$. Then, we have
\begin{equation*}
g^{p}=\left(\prod_{i=0}^{p-1}\tau^{i}\eta_{1}^{m_{1}}\cdots \eta_{n}^{m_{n}}\tau^{-i}\right)\tau^{p}
\end{equation*}
Moreover, we have $[\eta_{i},\eta_{j}]=1$ by Lemma \ref{lem:elst} (ii), and hence
\begin{equation}\label{eq:cpgp}
\prod_{i=0}^{p-1}\tau^{i}\eta_{1}^{m_{1}}\cdots \eta_{n}^{m_{n}}\tau^{-i}=\prod_{n'=1}^{n}\left(\prod_{i=0}^{p-1}(z_{p-1}\rho_{2})^{i}\eta_{n'}(z_{p-1}\rho_{2})^{-i}\right)^{m_{n'}}. 
\end{equation}
The right-hand hand of \eqref{eq:cpgp} equals $1$, which is a consequence of Lemma \ref{lem:elst} (iv). Combining this fact with Lemma \ref{lem:elst} (i), we obtain $g^{p}=\eta_{1}$. Hence, the assertion follows from (i). 

(iii): It is clear in the case $n=1$. In the following, assume $n\geq 2$. Pick a subgroup $D$ of $P_{n}$ that is not contained in $N_{n}$. It suffices to prove that $D$ is not abelian. Take $g\in D\setminus N_{n}$, which is possible by assumption. Then, we have $\langle g \rangle \cap N_{n}=Z(P_{n})$ by (ii). On the other hand, since $D$ is not cyclic, one can pick an element $\gamma$ of $D\cap (N_{n}\setminus Z(P_{n}))$. Now, we have $[\gamma,g]\neq 1$ by Lemma \ref{lem:elst} (iii), which implies that $D$ is not abelian. 
\end{proof}

\begin{lem}\label{lem:pppr}
Let $n\in \{1,\ldots,p-1\}$. 
\begin{enumerate}
\item The following holds: 
\begin{equation*}
Z(P'_{n})=
\begin{cases}
P'_{n}&\text{if }n=1;\\
\langle \eta_{1}\rangle&\text{if }n\geq 2. 
\end{cases}
\end{equation*}
\item The group $P'_{n}$ has exponent $p$. 
\end{enumerate}
\end{lem}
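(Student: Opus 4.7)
The plan is two-fold. For part (i), the case $n = 1$ is immediate because $\rho_1$ and $\rho_2$ act on independent coordinates of $\Fp \times \Fp$, so $P'_1 = \langle \rho_1, \rho_2 \rangle$ is abelian. For $n \geq 2$ I will use the decomposition $P'_n = E_n \rtimes \langle \rho_2 \rangle$ from Lemma \ref{lem:pmpr}(i) and analyze centralization generator-by-generator. Set $\phi := \Ad(\rho_2)$ acting on $E_n$; by Lemma \ref{lem:elst}(iii) (with the convention $\delta_0 := 1$), $\phi(\delta_i) = \delta_i \delta_{i-1}$, so in additive notation the operator $\nu := \phi - 1$ sends $\delta_i \mapsto \delta_{i-1}$ and is nilpotent of degree $n$. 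Writing an arbitrary element as $g = e \rho_2^m$ with $e \in E_n$ and $m \in \{0, \ldots, p-1\}$, the condition $[g, \delta_n] = 1$ reduces to $\phi^m(\delta_n) = \delta_n$; expanding $\phi^m = 1 + m\nu + \binom{m}{2}\nu^2 + \cdots$ and examining the leading term $m\delta_{n-1}$ forces $m = 0$. Once $m = 0$, $[g, \rho_2] = 1$ becomes $\nu(e) = 0$, giving $e \in \ker\nu = \langle \delta_1 \rangle$. Conversely $\delta_1 = \rho_1$ is central, so $Z(P'_n) = \langle \delta_1 \rangle$.

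For part (ii), elements of $E_n$ already have order dividing $p$. For $g = e \rho_2^m$ with $m \not\equiv 0 \pmod p$, normality of $E_n$ and $\rho_2^p = 1$ give
\begin{equation*}
g^p = \prod_{i=0}^{p-1} \phi^{im}(e) = \prod_{j=0}^{p-1} \phi^j(e),
\end{equation*}
the second equality holding because $i \mapsto im$ is a bijection of $\Fp$. In additive notation this is the norm $N_\phi(e) := \sum_{j=0}^{p-1} \phi^j(e)$. Expanding $\phi^j = (1+\nu)^j$ and applying the hockey-stick identity $\sum_{j=0}^{p-1} \binom{j}{k} = \binom{p}{k+1}$, all intermediate binomial coefficients vanish modulo $p$, leaving $N_\phi \equiv \nu^{p-1} \pmod{p}$ as an operator on $E_n$. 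Since $\nu$ is nilpotent of degree $n$ and $n \leq p - 1$, $\nu^{p-1} = 0$ on $E_n$, hence $g^p = 1$.

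The main conceptual point is the identification of $\nu = \Ad(\rho_2) - 1$ as a nilpotent operator of degree exactly $n$ on $E_n$, which is already packaged in Lemma \ref{lem:pmpr}(ii) via the isomorphism $E_n \cong \Fp[\rho_2]/(\rho_2 - 1)^n$. Once this is recognized, (i) reduces to inspection of the first-order term of $\phi^m - 1$, and (ii) collapses via a Frobenius-style norm identity. The hypothesis $n \leq p - 1$ is essential for (ii): if $n = p$ then $\nu^{p-1}$ is nonzero on $\delta_p$, which recovers the exponent $p^2$ statement of Lemma \ref{lem:pnpr}(ii) for $P'_p = P_p$.
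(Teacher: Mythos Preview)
Your proof is correct and follows essentially the same approach as the paper: both arguments exploit the semidirect decomposition $P'_n = E_n \rtimes \langle \rho_2 \rangle$ and compute $g^p$ via the conjugation action of $\rho_2$ on the abelian kernel, reducing the exponent question to the vanishing of the norm $\prod_{j} \phi^j(e)$. The only difference is presentational: the paper invokes Lemma~\ref{lem:elst}(iv) directly for this vanishing and (for part (i)) the explicit commutator formula from Lemma~\ref{lem:pnpr}(i), whereas you package the same content through the nilpotent operator $\nu = \phi - 1$ and the hockey-stick identity, which makes the role of the hypothesis $n \leq p-1$ slightly more transparent.
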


\begin{proof}
(i): The same as Lemma \ref{lem:pnpr} (i). 

(ii): Take $g\in P'_{n}$, then we have
\begin{equation*}
g=\eta_{1}^{m_{1}}\cdots \eta_{n}^{m_{n}}\rho_{2}^{m'}
\end{equation*}
for some $m_{1},\ldots,m_{n},m'\in \{0,\ldots,p-1\}$. We may assume $m'=1$. Then, we have
\begin{equation*}
g^{p}=\left(\prod_{i=0}^{p-1}\rho_{2}^{i}\eta_{1}^{m_{1}}\cdots \eta_{n}^{m_{n}}\rho_{2}^{-i}\right)\rho_{2}^{p}=\prod_{n'=0}^{n}\left(\prod_{i=0}^{p-1}\rho_{2}^{i}\eta_{n'}\rho_{2}^{-i}\right)^{m_{n'}}\rho_{2}^{p}
\end{equation*}
by Lemma \ref{lem:elst} (ii). The right-hand side of the above equality equals $1$, which follows from Lemma \ref{lem:elst} (iv) and $\rho_{2}^{p}=1$. Therefore, the assertion holds. 
\end{proof}

Next, we give a relation between transitive groups of degree $p^{2}$ and their $p$-Sylow subgroups.

\begin{prop}
\label{prop:dwpg}
Let $p$ be a prime number, and $G$ a transitive group of degree $p^{2}$. Take a $p$-Sylow subgroup $P$ of $G$, and put $n:=\ord_{p}(\#P)-1$. Then there is $s\in \fS_{p^{2}}$ such that $sGs^{-1}$ contains $P_{n}$ or $P'_{n}$ as a $p$-Sylow subgroup. 
\end{prop}

\begin{proof}
This follows from \cite[p.~49, Theorem 9]{Dobson2002} and \cite[p.~50, l.~3--4, Remark]{Dobson2002}. We give a proof for reader's convenience. 

\emph{Case 1.~$G$ has exponent a multiple of $p^{2}$. }
In this case, there is an element of order $p^{2}$ in $G$, and it is a $p^{2}$-cycle in $\fS_{p^{2}}$. Hence, we obtain $\tau \in sGs^{-1}$ for some $s\in \fS_{p^{2}}$. Now, let $P$ be a $p$-Sylow subgroup of $sGs^{-1}$. Then we obtain $P=P_{n}$ by \cite[Theorem 9]{Dobson2002}. 

\emph{Case 2.~The remaining case. }
By Lemma \ref{lem:sytr}, we may assume that $G$ is a $p$-group. Take a one point stabilizer $H$ of $G$. We first prove the existence of a subgroup $P$ of order $p^{2}$ in $G$ that satisfies $P\cap H=\{1\}$. By definition, we have $N^{G}(H)=\{1\}$. This gives an equality $H\cap Z(G)=\{1\}$. Note that $Z(G)$ is non-trivial since $G$ is assumed to be a $p$-group. If $\#Z(G)\geq p^{2}$, then any subgroup of order $p^{2}$ in $Z(G)$ is a desired one. If $\#Z(G)=p$, then $HZ(G)/Z(G)$ is a maximal subgroup of $G/Z(G)$. On the other hand, the exponent of $G/Z(G)$ equals $p$. Consequently, we can pick a subgroup $P'$ of order $p$ in $G/Z(G)$ so that $P'\cap (HZ(G)/Z(G))=\{1\}$. Then, the preimage $P$ of $P'$ under the canonical surjection $G\twoheadrightarrow G/Z(G)$ satisfies the desired property. 

Since $G$ has exponent $p$, we have $P\cong (C_{p})^{2}$. Hence we obtain $P'_{1}<s_{1}Gs_{1}^{-1}$ for some $s_{1} \in \fS_{p}$. 
Pick a $p$-Sylow subgroup $P$ of $s_{1}Gs_{1}^{-1}$. Then, by \cite[Theorem 9]{Dobson2002}, we have $P=s_{2}^{-1}P'_{n}s_{2}$ for some $s_{2}\in \Aut(\Fp \times \Fp)\subset \fS_{p^{2}}$. Now, put $s:=s_{2}s_{1}$, then we have $P'_{n}<sGs^{-1}$ as desired. 
\end{proof}

In the following, we fix a primitive $(p-1)$-th root $\zeta_{p-1}$ of unity in $\Fp$, which is a generator of $\F_{p}^{\times}$. 

\begin{prop}\label{prop:dwrf}
Let $p$ be a prime number, and $G$ a transitive group of degree $p^{2}$. We further assume $n:=\ord_{p}(\#G)-1\in \{1,\ldots,p-1\}$ and that $G$ contains $P'_{n}$. Then, $G$ satisfies one of the following: 
\begin{itemize}
\item[(1)] there is a subgroup $G'$ of $\GL_{2}(\Fp)$ such that $G\cong (C_{p})^{2}\rtimes_{\varphi} G'$ and $H\cong \{1\}\rtimes_{\varphi} G'$, where $\varphi$ is induced by the standard representation of $\GL_{2}(\Fp)$ over $\Fp$; 
\item[(2)] $G<\fS_{p}\times \fS_{p}$; 
\item[(3)] there is a subgroup $G'$ of index $2$ in $G$ that satisfies \emph{(2)}; 
\item[(4)] $n\notin \{1,p-1\}$ and $P\triangleleft G$; 
\item[(5)] $n=p-1$ and $G=PL$, where $L$ satisfies $(G:L)=p^{p-1}$, $P\cap L=P'_{1}$ and 
\begin{equation*}
P'_{1}<L<\fS_{p}\times \langle \rho_{2},\widetilde{\zeta}_{p-1} \rangle. 
\end{equation*}
\end{itemize}
\end{prop}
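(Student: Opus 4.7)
The plan is to combine the explicit structural results on $P_{n}$, $P'_{n}$, $E_{n}$ collected in Lemmas \ref{lem:elst}--\ref{lem:pppr} with the classification of transitive groups of prime squared degree in \cite[Theorem 9]{Dobson2002}, and then to reinterpret the output as the list of cases (1)--(5).

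The first step is to note that $\ord_{p}(\#G)=n+1=\ord_{p}(\#P'_{n})$ forces $P=P'_{n}$ itself to be a Sylow $p$-subgroup of $G$, so $G$ sits in $N_{\fS_{p^{2}}}(P)$ when $P\triangleleft G$ and otherwise permutes several Sylow conjugates. To detect imprimitivity, I would use Lemma \ref{lem:pppr}(i): for $n\geq 2$ we have $Z(P'_{n})=\langle \delta_{1}\rangle=\langle \rho_{1}\rangle$, whose orbits on $\F_{p}\times \F_{p}$ are exactly the horizontal fibers $\{i\}\times \F_{p}$; hence every element of $N_{\fS_{p^{2}}}(P'_{n})$ preserves this block system. (For $n=1$, $P'_{1}$ is itself central and regular, and the analysis is slightly different.)

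The next step is a case split along the $G$-action on this partition. If $G$ preserves both the row and column partitions without swapping them, then $G<\fS_{p}\times \fS_{p}$, giving case (2); if there is an involution in $G$ interchanging them, we land in case (3). Otherwise $G$ must mix the coordinates nontrivially, which forces $P\triangleleft G$, and the analysis then splits on $n$. For $n=1$, $P'_{1}$ acts regularly with self-centralizer, so $N_{\fS_{p^{2}}}(P'_{1})=\AGL_{2}(\Fp)=P'_{1}\rtimes \GL_{2}(\Fp)$; since this extension splits and $G\supset P'_{1}$, we may write $G=P'_{1}\rtimes G^{\dagger}$ with $G^{\dagger}<\GL_{2}(\Fp)$, and a corresponding subgroup $H$ intersects $P'_{1}$ trivially and maps isomorphically to $G^{\dagger}$, yielding case (1). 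For $n\in \{2,\ldots,p-2\}$ the outer automorphisms of $P'_{n}$ realizable by $\fS_{p^{2}}$-conjugation are controlled (via the explicit commutator formulas of Lemma \ref{lem:elst}(iii),(v)), and a Sylow-order count shows that no element outside $P$ can extend $G$, giving case (4). For $n=p-1$, the elements $\overline{\beta}$ and $\widetilde{\beta}$ from Lemma \ref{lem:elst}(vi) supply additional conjugations normalizing $P'_{p-1}$, and we obtain a complement $L<\fS_{p}\times \langle \rho_{2},\widetilde{\zeta}_{p-1}\rangle$ with $P\cap L=P'_{1}$ and $\#L=\#G/p^{p-1}$, which is case (5).

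The principal obstacle is the book-keeping at $n=p-1$: there $P'_{p-1}$ has index $p$ in the full $p$-Sylow $P'_{p}=P_{p}$ of $\fS_{p^{2}}$ and admits extra normalizing elements with nontrivial action on the block partition, so one must verify both that those elements are exhausted by $\fS_{p}\times \langle \rho_{2},\widetilde{\zeta}_{p-1}\rangle$ and that the intersection $P\cap L$ really collapses to $P'_{1}$. For generic $n\in \{2,\ldots,p-2\}$ the analogous step is substantially easier, since the normalizer quotient modulo $P$ is already small enough to pin $G$ down to case (4) directly.
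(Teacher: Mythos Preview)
The paper's own proof is a one-line citation: Proposition~\ref{prop:dwrf} is a restatement of \cite[Theorem~4]{Dobson2002} in the notation of Section~\ref{sect:trgp}, with \cite[Lemma~9]{Dobson2002} supplying the extra structure for case~(5). No independent argument is offered.

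Your proposal instead invokes \cite[Theorem~9]{Dobson2002}---which in this paper is Proposition~\ref{prop:dwpg} and concerns only the shape of a Sylow $p$-subgroup---and then attempts to derive the five-way case split directly. The sketch has a genuine gap at the sentence ``Otherwise $G$ must mix the coordinates nontrivially, which forces $P\triangleleft G$.'' Nothing you have written explains why a transitive $G$ of degree $p^{2}$ with Sylow $p$-subgroup $P'_{n}$ must normalize $P'_{n}$ once it fails to lie in (an index-$2$ overgroup of) $\fS_{p}\times\fS_{p}$; this implication, and the precise form of $L$ in case~(5), is exactly what \cite[Theorem~4]{Dobson2002} proves, and the block-system heuristics you outline do not reproduce it. Note also that your trichotomy is already degenerate for $n\geq 2$: since $\ord_{p}(\#(\fS_{p}\times\fS_{p}))=2<n+1$, no $G\supset P'_{n}$ with $n\geq 2$ can lie in $\fS_{p}\times\fS_{p}$, so cases~(2) and~(3) are vacuous there and the entire content reduces to showing $P\triangleleft G$---which you assert but do not prove. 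Similarly, the claim that every element of $G$ preserves the row partition is argued only for elements of $N_{\fS_{p^{2}}}(P'_{n})$, not for arbitrary elements of $G$.
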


Recall that the \emph{standard representation of $\GL_{2}(\Fp)$} is the $2$-dimensional $\Fp$-representation $V=\Fp^{\oplus 2}$ of $\GL_{2}(\Fp)$ defined as follows: 
\begin{equation*} 
\GL_{2}(\Fp)\times V \rightarrow V;\,
\left(\begin{pmatrix}a&b\\c&d\end{pmatrix},(x,y)\right)\mapsto (ax+cy,bx+dy). 
\end{equation*}

\begin{proof}
This follows from the proof of \cite[Theorem 4]{Dobson2002}. See also \cite[Lemma 9]{Dobson2002} in the case $n=p-1$. 
\end{proof}

\begin{prop}\label{prop:crdw}
Let $p>2$ be an odd prime number, and $G$ a transitive group of degree $p^{2}$. We further assume $n:=\ord_{p}(\#G)-1\in \{1,2\}$ and that $G$ contains $P'_{n}$. 
\begin{enumerate}
\item One of the following is satisfied: 
\begin{itemize}
\item[(a)] there exists a subgroup $G'$ of $G$ with $(G:G')\leq 2$ and transitive groups $G_{1}$ and $G_{2}$ of degree $p$ with $\#G_{1}>p$ or $\#G_{2}>p$ such that
\begin{equation*}
G_{1}\times G_{2}<G'<N_{\fS_{p}}(G_{1})\times N_{\fS_{p}}(G_{2}); 
\end{equation*}
\item[(b)] there is a subgroup $G^{\dagger}$ of $\GL_{2}(\Fp)$ such that 
\begin{equation*}
G\cong (C_{p})^{2}\rtimes_{\varphi} G^{\dagger}\quad H\cong \{1\}\rtimes_{\varphi} G^{\dagger}; 
\end{equation*}
where $\varphi$ is induced by the standard representation of $\GL_{2}(\Fp)$ over $\Fp$. 
\end{itemize}
\item Assume that $G$ satisfies \emph{(a)} and \emph{(b)}. Write $G\cong (C_{p})^{2}\rtimes_{\varphi} G^{\dagger}$, where $G^{\dagger}<\GL_{2}(\Fp)$. Then, $G^{\dagger}$ is not contained in $\SL_{2}(\Fp)$. 
\end{enumerate}
\end{prop}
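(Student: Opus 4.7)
The plan is to deduce the proposition from Proposition \ref{prop:dwrf}, which enumerates five structural cases. Under the hypothesis $p > 2$ and $n \in \{1, 2\}$, case (4) of that proposition (requiring $n \notin \{1, p-1\}$) arises only for $n = 2, p > 3$, and case (5) (requiring $n = p-1$) arises only for $p = 3, n = 2$. Case (1) is option (b) verbatim, so it remains to handle cases (2)--(5).

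For cases (2) and (3) of Proposition \ref{prop:dwrf}, $G$ (respectively an index-2 subgroup $G' \triangleleft G$) is contained in $\fS_p \times \fS_p$. Writing it by Goursat's lemma as a fibre product determined by its projections $G_1, G_2$ and kernels $N_i \triangleleft G_i$, a comparison of $p$-parts using Lemma \ref{lem:sydp} (each $G_i$ has cyclic $p$-Sylow $C_p$) together with $p^2 \mid \#G$ forces each $N_i$ to contain the $p$-Sylow of $G_i$. Consequently $N_1 \times N_2 \subseteq G$ with each $N_i$ transitive on $p$ points. If $\#N_1 \neq p$ or $\#N_2 \neq p$, option (a) holds with these $N_i$ playing the role of $G_1, G_2$. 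Otherwise $N_1 \times N_2 = (C_p)^2$ is the normal $p$-Sylow of $G$ and intersects $H$ trivially (since $(C_p)^2$ acts freely on $\Z/p^2$ while $H$ stabilizes a point); the induced map $H \to \GL_2(\Fp)$ is injective (an element of $H$ centralizing both $p$-cycles in the $N_i$ must itself lie in $N_1 \times N_2$), yielding option (b) with $G^\dagger = H$.

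For case (4) we have $P = P'_2 \triangleleft G$ and $\gcd(\#(G/P), p) = 1$. By Lemma \ref{lem:pppr}, every order-$p^2$ subgroup of $P'_2$ contains $Z := Z(P'_2) = \langle \rho_1 \rangle$, so these subgroups correspond to the $p + 1$ lines in $P/Z \cong (C_p)^2$. Since $Z$ acts freely on $\Z/p^2$ it is not contained in $H \cap P$; hence exactly one such subgroup, namely $(H \cap P) \cdot Z$, contains $H \cap P$. The conjugation action of $H$ on $P$ preserves $H \cap P$, so $G/P \cong H/(H \cap P)$ fixes the corresponding line $L_0 \subseteq P/Z$. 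Maschke's theorem applied to this prime-to-$p$ action then yields a $G/P$-invariant complement $L_1$ to $L_0$, whose preimage in $P$ is a normal $(C_p)^2 \triangleleft G$ disjoint from $H$; option (b) follows. Case (5) for $p = 3$ is handled analogously using the explicit description $G = PL$ with $L \subseteq \fS_3 \times \fS_3$, reducing it to the Goursat analysis of cases (2)/(3).

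For part (ii), suppose both (a) and (b) hold. Lemma \ref{lem:sydp} identifies the $p$-Sylow of $G_1 \times G_2$ as $(C_p)^2$; combined with $(G:G') \leq 2$ and $p$ odd, the $p$-Sylow of $G$ is then $(C_p)^2$, forcing $n = 1$ and $\gcd(\#G^\dagger, p) = 1$. After identifying the $(C_p)^2$ of (b) with $N_1 \times N_2 \subseteq G_1 \times G_2$, the composite $(G_1 \times G_2)/(C_p)^2 \hookrightarrow G^\dagger \hookrightarrow \GL_2(\Fp)$ is injective. Since $G_i$ acts on its own $C_p$-factor while centralizing the other, the image sits inside the split torus $T \subseteq \GL_2(\Fp)$ as $\{\diag(\chi_1(g_1), \chi_2(g_2))\}$ with faithful characters $\chi_i : G_i/C_p \hookrightarrow \Fp^\times$ (in particular $G_i = C_p \rtimes C_{d_i}$ is solvable). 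If $G^\dagger$ were contained in $\SL_2(\Fp)$, then $T \cap \SL_2(\Fp) = \{\diag(a, a^{-1})\}$ would force $\chi_1 = \chi_2 = 1$, hence $d_1 = d_2 = 1$ and $G_1 = G_2 = C_p$, contradicting $\#G_1 \neq p$ or $\#G_2 \neq p$. The main technical obstacle I anticipate is in case (4): verifying rigorously that the $(C_p)^2$ produced by Maschke, together with a compatible complement for $H$, realizes option (b) as a semidirect product precisely via the standard representation $G^\dagger \hookrightarrow \GL_2(\Fp)$, rather than some twist of it.
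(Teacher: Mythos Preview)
Your overall strategy is sound and, in case~(4), takes a genuinely different route from the paper. The paper computes $N_{\fS_{p^2}}(P'_2)$ explicitly (via \cite[Lemma~6]{Dobson2002}), conjugates $G$ into $P'_2\rtimes\langle\overline{\zeta}_{p-1},\widetilde{\zeta}_{p-1}\rangle$, and then writes down the embedding into $(C_p)^2\rtimes\GL_2(\Fp)$ by hand. Your Maschke argument is more conceptual and avoids this computation. What it buys you is independence from the Dobson--Witte normalizer lemma; what the paper's approach buys is an explicit basis in which the map to $\GL_2(\Fp)$ is written down, which makes part~(ii) immediate.

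There is one genuine gap. Your handling of case~(5) is not correct: you claim it reduces to the Goursat analysis of cases~(2)/(3), but only $L$ lies in $\fS_3\times\fS_3$, not $G=PL$ (indeed $\delta_2\in P'_2$ does not preserve the product decomposition of $\F_3\times\F_3$). The fix is to observe that $L\subset P'_1\rtimes\langle\overline{-1},\widetilde{-1}\rangle$ normalizes $P'_2$ (a direct check: $\overline{-1}$ and $\widetilde{-1}$ each send $\delta_2$ to $\delta_2^{-1}$), so $P'_2\triangleleft G$ and your case~(4) Maschke argument applies verbatim. This is essentially what the paper does, reducing case~(5) to case~(4).

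Your anticipated obstacle is not a real one. Once you have $M\cong(C_p)^2\triangleleft G$ with $M\cap H=\{1\}$, the conjugation map $H\to\Aut(M)\cong\GL_2(\Fp)$ is automatically injective: $M$ contains $Z=\langle\rho_1\rangle$ and (since $L_1\neq L_0$) an element projecting nontrivially to $\langle\bar\rho_2\rangle$, hence $M$ acts regularly on $\Fp\times\Fp$, so its centralizer in $\fS_{p^2}$ is $M$ itself. Setting $G^\dagger:=\mathrm{image}(H)$ then realizes~(b) with $\varphi$ the inclusion $G^\dagger\hookrightarrow\GL_2(\Fp)$ acting standardly; there is no twist to worry about, since ``induced by the standard representation'' means exactly this.

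Your argument for part~(ii) is correct and parallels the paper's: the paper exhibits an explicit element $\overline\beta$ of nontrivial determinant, while you argue via the diagonal torus that containment in $\SL_2(\Fp)$ would force both $\chi_i$ to be trivial. Both arguments rely on the same observation that the $(C_p)^2$ in~(b) is the unique $p$-Sylow and hence coincides with the $p$-Sylow of $G_1\times G_2$, allowing the choice of a basis adapted to the product.
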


\begin{proof}
(i): Put $P:=P'_{n}$, which a $p$-Sylow subgroup of $G$. 

\emph{Case 1.~\emph{(1)} holds. }
In this case, it is clear that (b) is satisfied. 

\emph{Case 2.~\emph{(2)} holds. }
By \cite[Lemma 1]{Dobson2002}, there exist transitive groups $G_{1}$ and $G_{2}$ of degree $p$ such that
\begin{equation*}
G_{1}\times G_{2}<G<N_{\fS_{p}}(G_{1})\times N_{\fS_{p}}(G_{2}). 
\end{equation*}
If $\#G_{1}>p$ or $\#G_{2}>p$, then it is clear that (a) is valid. If $\#G_{1}=\#G_{2}=p$, then $P$ is normal in $G$ since $C_{p}$ is a characteristic subgroup of $N_{\fS_{p}}(C_{p})$. Hence, this case is contained in the validity of (4). 

\emph{Case 3.~\emph{(3)} holds. }
Take transitive groups $G'_{1}$ and $G'_{2}$ of degree $p$ such that
\begin{equation*}
G'_{1}\times G'_{2}<G'<N_{\fS_{p}}(G'_{1})\times N_{\fS_{p}}(G'_{2}). 
\end{equation*}
If $\#G'_{1}>p$ or $\#G'_{2}>p$, then (a) is valid. If $\#G'_{1}=\#G'_{2}=p$, then $P$ is a characteristic subgroup of $G'$. Since  $p>2=(G:G')$, we obtain that $P$ is normal in $G$. Therefore, this case is contained in the validity of (4). 

\emph{Case 4.~\emph{(4)} holds. }
It suffices to prove that (b) is valid. By \cite[Lemma 6]{Dobson2002}, there is an isomorphism
\begin{equation*}
N_{\fS_{p^{2}}}(P)= 
\begin{cases}
P'_{1}\rtimes \GL_{2}(\Fp)&\text{if }P=P'_{1};\\
P'_{3}\rtimes \langle \overline{\zeta}_{p-1},\widetilde{\zeta}_{p-1} \rangle&\text{if }P=P'_{2}. 
\end{cases}
\end{equation*}
Hence, if $P=P'_{1}$, then the validity of (b) is clear. In the following, assume $P=P'_{2}$. Then, we have
\begin{equation*}
N_{\fS_{p^{2}}}(P)/P\cong P'_{3}/P'_{2}\rtimes \langle \overline{\zeta}_{p-1},\widetilde{\zeta}_{p-1} \rangle, 
\end{equation*}
where $\overline{\zeta}_{p-1}$ and $\widetilde{\zeta}_{p-1}$ act on $P'_{3}/P'_{2}$ as the identity map and $\gamma_{3}P'_{2}\mapsto \gamma_{3}^{\zeta_{p-1}}P'_{2}$, respectively. This induces an inclusion
\begin{equation*}
G/P<P'_{3}/P'_{2}\rtimes \langle \overline{\zeta}_{p-1},\widetilde{\zeta}_{p-1} \rangle.  
\end{equation*}
Since $P'_{3}/P'_{2}\cong C_{p}$ and $(G:P)$ is coprime to $p$, \cite[Lemma 6.3 (iii)]{Oki2025a} implies the existence of $s\in P'_{3}$ and $\overline{G}<\langle \overline{\zeta}_{p-1},\widetilde{\zeta}_{p-1} \rangle$ such that $sGs^{-1}/P=\{P'_{2}\}\rtimes \overline{G}$. In particular, we have 
\begin{equation*}
sGs^{-1}=P'_{2}\rtimes \overline{G}. 
\end{equation*}
Therefore, replacing $G$ with $sGs^{-1}$, we may assume $P'_{2}<G<P'_{2}\rtimes \langle \overline{\zeta}_{p-1},\widetilde{\zeta}_{p-1} \rangle$. It suffices to prove the assertion in the case $G=P'_{2}\rtimes \langle \overline{\zeta}_{p-1},\widetilde{\zeta}_{p-1} \rangle$. Put 
\begin{equation*}
G^{\dagger}:=\langle \eta_{2},\overline{\zeta}_{p-1},\widetilde{\zeta}_{p-1}\rangle. 
\end{equation*}
Then, it stabilizes $(0,0)$, and hence $P'_{1}\cap G^{\dagger}=\{1\}$. Moreover, we have $P'_{1}G^{\dagger}=G$ by definition. Now, recall that the following hold: 
\begin{equation}\label{eq:gdcj}
\begin{gathered}
\eta_{2}\rho_{1}\eta_{2}^{-1}=\rho_{1},\quad 
\eta_{2}\rho_{2}\eta_{2}^{-1}=\rho_{1}^{-1}\rho_{2},\\
\overline{\zeta}_{p-1}\rho_{1}\overline{\zeta}_{p-1}^{-1}=\rho_{1},\quad
\overline{\zeta}_{p-1}\rho_{2}\overline{\zeta}_{p-1}^{-1}=\rho_{2}^{\zeta_{p-1}},\\
\widetilde{\zeta}_{p-1}\rho_{1}\widetilde{\zeta}_{p-1}^{-1}=\rho_{1}^{\zeta_{p-1}},\quad
\widetilde{\zeta}_{p-1}\rho_{2}\widetilde{\zeta}_{p-1}^{-1}=\rho_{2}. 
\end{gathered}
\end{equation}
These are consequences of Lemma \ref{lem:elst} (ii), (iii), (vi). This implies that $G=P'_{1}\rtimes G^{\dagger}$. In particular, $G^{\dagger}$ is the stabilizer of $(0,0)$ in $G$. 

In the following, we prove that (b) is valid. Fix a generator $c$ of $C_{p}$. Consider the isomorphism
\begin{equation*}
\psi \colon P'_{1}\xrightarrow{\cong} (C_{p})^{2}
\end{equation*}
defined as $\psi(\rho_{1})=(c,1)$ and $\psi(\rho_{2})=(1,c)$. Then, the following hold by \eqref{eq:gdcj}: 
\begin{gather*}
\psi(\eta_{2}\rho_{1}\eta_{2}^{-1})=(c,1),\quad \psi(\eta_{2}\rho_{2}\eta_{2}^{-1})=(c^{-1},c)\\
\psi(\overline{\zeta}_{p-1}\rho_{1}\overline{\zeta}_{p-1}^{-1})=(c,1),\quad
\psi(\overline{\zeta}_{p-1}\rho_{2}\overline{\zeta}_{p-1}^{-1})=(1,c^{\zeta_{p-1}}),\\
\psi(\widetilde{\zeta}_{p-1}\rho_{1}\widetilde{\zeta}_{p-1}^{-1})=(c^{\zeta_{p-1}},1),\quad
\psi(\widetilde{\zeta}_{p-1}\rho_{2}\widetilde{\zeta}_{p-1}^{-1})=(1,c). 
\end{gather*}
Let $\omega$ be the homomorphism $G^{\dagger}\rightarrow \GL_{2}(\Fp)$ defined as follows: 
\begin{equation*}
\omega(\eta_{2})=\begin{pmatrix}1&-1\\0&1\end{pmatrix},\quad
\omega(\overline{\zeta}_{p-1})=\begin{pmatrix}1&0\\0&\zeta_{p-1}\end{pmatrix},\quad
\omega(\widetilde{\zeta}_{p-1})=\begin{pmatrix}\zeta_{p-1}&0\\0&1\end{pmatrix}. 
\end{equation*}
Then, $\psi$ and $\omega$ induce an injective homomorphism
\begin{equation*}
\Psi \colon G=P'_{1}\rtimes G^{\dagger}\hookrightarrow (C_{p})^{2}\rtimes_{\varphi}\GL_{2}(\Fp), 
\end{equation*}
where $\varphi$ is induced by the standard representation of $\GL_{2}(\Fp)$ over $\Fp$. Moreover, we have $\Psi(P'_{1})=(C_{p})^{2}\rtimes_{\varphi}\{1\}$ and $\Psi(G^{\dagger})<\{1\}\rtimes_{\varphi}\GL_{2}(\Fp)$. Hence, the proof is complete. 

\emph{Case 5.~\emph{(5)} holds. }
This case happens only when $p=3$. In particular, one has $P=P'_{2}$. By assumption, we have
\begin{equation*}
P'_{1}<L<P'_{1}\rtimes \langle \overline{-1},\widetilde{-1}\rangle. 
\end{equation*}
Hence, we may assume $L=P'_{1}\rtimes \langle \overline{-1},\widetilde{-1}\rangle$. Then, we have 
\begin{equation*}
G=PL=P'_{2}\rtimes \langle \overline{-1},\widetilde{-1}\rangle. 
\end{equation*}
Therefore, the same argument as Case 4 implies that (b) is valid. 

(ii): By replacing $G$ with its subgroup of index $2$ if necessary, we may assume
\begin{equation*}
G_{1}\times G_{2}<G<N_{\fS_{p}}(G_{1})\times N_{\fS_{p}}(G_{2}). 
\end{equation*}
Furthermore, by interchanging $G_{1}$ and $G_{2}$ if necessary, we may further assume $\#G_{1}>p$. By assumption, $P'_{1}$ is a unique $p$-Sylow subgroup in $G$. Hence, one has $G<P'_{1}\rtimes \langle \overline{\zeta}_{p-1},\widetilde{\zeta}_{p-1}\rangle$. Put $G^{\dagger}:=G\cap \langle \overline{\zeta}_{p-1},\widetilde{\zeta}_{p-1}\rangle$, which satisfies $G=P'_{1}\rtimes G^{\dagger}$. Then, we have $\overline{\beta}\in G$ for some $\beta \in \Fpt \setminus \{1\}$, which follows from $\#G_{1}>p$. Now, fix an injection
\begin{equation*}
\Psi \colon P'_{1}\rtimes \langle \overline{\zeta}_{p-1},\widetilde{\zeta}_{p-1}\rangle\hookrightarrow (C_{p})^{2}\rtimes_{\varphi}\GL_{2}(\Fp), 
\end{equation*}
where $\varphi$ is induced by the standard representation of $\GL_{2}(\Fp)$ over $\Fp$, that satisfies 
\begin{equation*}
\Psi(\langle \overline{\zeta}_{p-1},\widetilde{\zeta}_{p-1}\rangle)<\{1\}\rtimes_{\varphi}\GL_{2}(\Fp). 
\end{equation*}
Then, we have $\Psi(P'_{1})=(C_{p})^{2}\rtimes_{\varphi}\{1\}$ since $P'_{1}$ is normal in $P'_{1}\rtimes \langle \overline{\zeta}_{p-1},\widetilde{\zeta}_{p-1}\rangle$. Recall Lemma \ref{lem:elst} (vi) that $\overline{\beta}\rho_{1}\overline{\beta}=\rho_{1}$ and $\overline{\beta}\rho_{2}\overline{\beta}=\rho_{2}^{\beta}$ hold. Hence, we obtain $(\det \circ \Psi)(\overline{\beta})=\beta \neq 1$, which implies that $G^{\dagger}$ is not contained in $\SL_{2}(\Fp)$. 
\end{proof}

\section{Chevalley modules and their cohomology groups}\label{sect:prlm}

\subsection{General facts on cohomology}\label{sect:bsft}

Let $G$ be a finite group. A \emph{$G$-module} refers to a finitely generated abelian groups equipped with left actions of $G$. Moreover, a $G$-module which is torsion-free over $\Z$ is said to be a \emph{$G$-lattice}. On the other hand, for a subgroup $H$ of $G$ and an $H$-module $M$, put $\Ind_{H}^{G}M:=\Hom_{\Z[H]}(\Z[G],M)$. We define a left action of $G$ on $\Ind_{H}^{G}M$ by the map
\begin{equation*}
G \times \Ind_{H}^{G}M \rightarrow \Ind_{H}^{G}M; (g,\varphi) \mapsto [g'\mapsto \varphi(g'g)]. 
\end{equation*}

\begin{prop}[{Mackey decomposition; cf.~\cite[Section 7.3, Proposition 22]{Serre1977}}]\label{prop:mcky}
Let $G$ be a finite group, and $H$ and $D$ subgroups of $G$. Take a complete representative $R(D,H)$ of $D\backslash G/H$ in $G$. Consider an $H$-module $(M,\rho)$. Then the homomorphism of abelian groups
\begin{equation*}
\pi_{D,g}\colon \Ind_{H}^{G}M\rightarrow \Ind_{D\cap gHg^{-1}}^{D}M^{g};\varphi \mapsto [d\mapsto \varphi(g^{-1}d)]
\end{equation*}
for all $g\in R(D,H)$ induces an isomorphism of $D$-modules
\begin{equation*}
\Ind_{H}^{G}M \cong \bigoplus_{g\in R(D,H)}\Ind_{D\cap gHg^{-1}}^{D}M^{g}. 
\end{equation*}
Here $M^{g}$ is the abelian group $M$ equipped with the action of $gHg^{-1}$ defined by
\begin{equation*}
gHg^{-1}\rightarrow \Aut(M); h\mapsto (\rho \circ \Ad(g^{-1}))(h). 
\end{equation*}
\end{prop}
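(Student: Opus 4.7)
The plan is to deduce the isomorphism by decomposing $\Z[G]$ as a bimodule, and then applying the additive functor $\Hom_{\Z[H]}(-, M)$. First, the disjoint union $G = \bigsqcup_{g \in R(D,H)} DgH$ yields a direct sum decomposition
\begin{equation*}
\Z[G] = \bigoplus_{g \in R(D,H)} \Z[DgH]
\end{equation*}
of $(\Z[D], \Z[H])$-bimodules, with $D$ acting by left multiplication and $H$ by right multiplication. Applying $\Hom_{\Z[H]}(-, M)$, where $\Z[G]$ is regarded as a left $\Z[H]$-module via left multiplication and the left $G$-action on the Hom is the one coming from the right-multiplication $G$-action on $\Z[G]$, one obtains a direct sum decomposition of left $\Z[D]$-modules
\begin{equation*}
\Ind_{H}^{G} M \cong \bigoplus_{g \in R(D,H)} \Hom_{\Z[H]}(\Z[DgH], M).
\end{equation*}

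Second, I would identify each summand with $\Ind_{D \cap gHg^{-1}}^{D} M^{g}$. Fix $g \in R(D,H)$ and set $H_{g} := D \cap gHg^{-1}$. The stabilizer in $D$ of the right coset $gH \in G/H$ under left multiplication is precisely $H_{g}$, so the assignment $d \mapsto dgH$ gives a $D$-equivariant bijection $D/H_{g} \xrightarrow{\sim} DgH/H$. Consequently, $\Z[DgH]$ is free as a left $\Z[H]$-module on any set of representatives $\{d_{i}g\}_{i}$ of this coset space, which after the Hom--tensor adjunction identifies $\Hom_{\Z[H]}(\Z[DgH], M)$ with $\Ind_{H_{g}}^{D} M^{g}$; the twist by $\Ad(g^{-1})$ appears precisely because the only way $H_{g} \subset D$ acts on the factor $gH$ of $DgH$ is via conjugation by $g^{-1}$ into $H$, which then acts by $\rho$.

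Third, I would confirm that the explicit formula $\pi_{D,g}(\varphi)(d) := \varphi(g^{-1}d)$ realizes this identification. For well-definedness, given $h \in H_{g}$ one has $g^{-1}hg \in H$, and
\begin{equation*}
\pi_{D,g}(\varphi)(hd) = \varphi(g^{-1}hd) = \varphi\bigl((g^{-1}hg)(g^{-1}d)\bigr) = \rho(g^{-1}hg)\cdot \varphi(g^{-1}d),
\end{equation*}
which is exactly the twisted $H_{g}$-equivariance defining $\Ind_{H_{g}}^{D} M^{g}$. The $D$-equivariance reduces to the one-line check $\pi_{D,g}(d_{0}\varphi)(d) = \varphi(g^{-1}dd_{0}) = \pi_{D,g}(\varphi)(dd_{0}) = (d_{0}\pi_{D,g}(\varphi))(d)$. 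Bijectivity of the combined map $\bigoplus_{g} \pi_{D,g}$ then follows from the bimodule decomposition of Step~1, since evaluating $\varphi$ on the basis $\{d_{i}g\}_{i,g}$ of $\Z[G]$ as a free left $\Z[H]$-module recovers all the component functions independently.

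There is no genuine difficulty here; the only real obstacle is bookkeeping of left versus right conventions, particularly ensuring that the twist by $\Ad(g^{-1})$ is placed on the correct side so that the $H$-equivariance of $\varphi$ translates, after the shift $d \mapsto g^{-1}d$, into the twisted $H_{g}$-equivariance on $M^{g}$. Once these conventions are fixed consistently at the outset, the verification is entirely formal.
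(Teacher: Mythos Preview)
The paper does not give its own proof of this proposition; it is stated with a reference to Serre and used as a black box. Your argument is the standard one and is essentially correct.

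There is, however, a genuine left/right slip in your first step. You decompose $\Z[G]=\bigoplus_{g}\Z[DgH]$ as a $(\Z[D],\Z[H])$-bimodule (with $D$ on the left, $H$ on the right), but then immediately apply $\Hom_{\Z[H]}(-,M)$ with $\Z[G]$ viewed as a \emph{left} $\Z[H]$-module via left multiplication. These are incompatible: the subspaces $\Z[DgH]$ are not in general stable under left multiplication by $H$, so the direct sum does not pass through the functor as written. The fix is straightforward: use instead the decomposition $G=\bigsqcup_{g\in R(D,H)}Hg^{-1}D$ into $(\Z[H],\Z[D])$-bimodule pieces (this is the same partition, reindexed via $g\mapsto g^{-1}$), which is compatible both with the left $H$-linearity of $\varphi$ and with the right $D$-action giving the module structure on the Hom. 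Your explicit verification in Step~3 is already written for this corrected indexing, since $\varphi(g^{-1}d)$ samples $\varphi$ precisely on $g^{-1}D\subset Hg^{-1}D$, and your well-definedness and equivariance checks there are correct. So the error is localised to the framing of Step~1 and does not propagate; once the bimodule sides are swapped, the rest of your argument goes through verbatim.
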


We need a description of the restriction map on the cohomology of $\Ind_{H}^{G}\Z$. 

\begin{lem}\label{lem:gpac}
Let $G$ be a finite group, and $M$ a $G$-module. For every subgroup $D$ of $G$, $g\in G$ and $j\in \Znn$, one has a commutative diagram
\begin{equation*}
\xymatrix@C=60pt{
H^{j}(G,M)\ar[r]^{\Res_{G/D}}\ar@{=}[d] & H^{j}(D,M)\ar[d]^{\cong}\\
H^{j}(G,M)\ar[r]^{\Res_{G/gDg^{-1}}\hspace{15pt}}& H^{j}(gDg^{-1},M). 
}
\end{equation*}
Here the right vertical homomorphism are induced by the inner automorphism $\Ad(g^{-1})$ on $G$ and the homomorphism $M\rightarrow M;x\mapsto gx$. 
\end{lem}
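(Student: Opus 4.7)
The plan is to work at the level of normalized bar cochains. Represent a class in $H^{j}(G,M)$ by a cocycle $f\colon G^{j}\to M$ and trace both compositions in the diagram explicitly. The top path sends $[f]$ first to $f|_{D^{j}}$ and then, via the right vertical arrow, to the $gDg^{-1}$-cochain
\begin{equation*}
\tilde{f}(x_{1},\ldots,x_{j}) := g\cdot f(g^{-1}x_{1}g,\ldots,g^{-1}x_{j}g)\qquad (x_{i}\in gDg^{-1}),
\end{equation*}
while the bottom path sends $[f]$ to the $gDg^{-1}$-cocycle $f|_{(gDg^{-1})^{j}}$. A preliminary task is to verify that the pair $(\Ad(g^{-1})\colon gDg^{-1}\to D,\ x\mapsto gx)$ is indeed compatible with the $G$-action, so that the right vertical arrow is well-defined; the computation $\psi(\phi(x)\cdot m)=g\cdot(g^{-1}xg\cdot m)=x\cdot\psi(m)$ handles this. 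With that in place, commutativity of the diagram is equivalent to the cocycle-level assertion that $\tilde{f}$ and $f|_{(gDg^{-1})^{j}}$ represent the same class in $H^{j}(gDg^{-1},M)$.

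To prove this I would invoke (a relative form of) the classical fact that inner automorphisms act trivially on group cohomology. More precisely, I would construct an $(j-1)$-cochain $h\colon G^{j-1}\to M$ on all of $G$, depending on $f$ and $g$, with $\delta h=\tilde{f}-f$ as cochains in $C^{j}(G,M)$. For $j=0$ the statement is trivial, since the vertical map then acts as the identity on $M^{G}$. For $j=1$ a direct calculation using the cocycle relation $f(xy)=f(x)+x\cdot f(y)$ together with $gf(g^{-1})=-f(g)$ shows that $h:=f(g)\in M$ works: $\delta h(x)=(x-1)\cdot f(g)=\tilde{f}(x)-f(x)$. For general $j$ one either writes down the standard bar-cochain chain homotopy, or, more cleanly, performs dimension shifting through $0\to M\to\Ind_{\{1\}}^{G}M\to M'\to 0$, using that $\Ind_{\{1\}}^{G}M$ is cohomologically trivial, to reduce to the $j=1$ case. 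Once $h$ has been produced on $G^{j-1}$, its restriction to $(gDg^{-1})^{j-1}$ automatically satisfies $\delta(h|_{(gDg^{-1})^{j-1}})=\tilde{f}-f|_{(gDg^{-1})^{j}}$, which is precisely the coboundary needed.

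I do not expect any genuine conceptual obstacle: the lemma is essentially a restatement of the triviality of conjugation on $H^{*}(G,M)$, phrased so as to be compatible with restriction to a subgroup, and the argument is standard (see, for instance, \emph{Weibel, An Introduction to Homological Algebra, \S 6.7}, or \emph{Brown, Cohomology of Groups, Chapter III}). The only points requiring care are the sign and variance conventions on the pair $(\Ad(g^{-1}),x\mapsto gx)$, and the observation that because $h$ is produced on all of $G^{j-1}$ (rather than merely on some subgroup), its restriction to $(gDg^{-1})^{j-1}$ is immediate.
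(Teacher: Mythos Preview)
Your proposal is correct: this is the standard argument that inner automorphisms act trivially on group cohomology, carried out at the level of bar cochains (with dimension shifting as a clean alternative for general $j$), and the compatibility check for the pair $(\Ad(g^{-1}),\,x\mapsto gx)$ is exactly what is needed to make the right vertical map well-defined.

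The paper does not give a proof at all; it simply cites \cite[Lemma 2.5]{Oki2025a} and \cite[p.~46, Conjugation]{Neukirch2000}. Your argument is essentially what one finds spelled out in those references (or in the textbooks you mention, Brown and Weibel), so there is no substantive difference in approach---you have just written out what the paper defers to the literature.
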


\begin{proof}
This is a special case of \cite[Lemma 2.5]{Oki2025a}. See also \cite[p.~46, Conjugation]{Neukirch2000}. 
\end{proof}

\begin{lem}[{\cite[Proposition 2.6]{Oki2025a}; cf.~\cite[p.~52]{Neukirch2000}}]\label{lem:htch}
Let $G$ be a finite group, and $H$ its subgroup. Then, there is a commutative diagram
\begin{equation*}
\xymatrix{
H^{2}(G,\Z)\ar[r]\ar[d]^{\cong}& H^{2}(G,\Ind_{H}^{G}\Z)\ar[d]^{\cong}\\
G^{\vee}\ar[r]^{f\mapsto f\mid_{H}}& H^{\vee}. }
\end{equation*}
Here, the top horizontal map is induced by the homomorphism of $G$-lattices
\begin{equation*}
\varepsilon_{G/H}\colon \Z \rightarrow \Ind_{H}^{G}\Z
\end{equation*}
defined by $\varepsilon_{G/H}(1)(g)=1$ for any $g\in G$. 
\end{lem}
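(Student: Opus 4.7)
The plan is to reduce everything to the standard identifications $H^2(\Gamma, \Z) \cong \Gamma^{\vee}$ via the Bockstein and to the Eckmann--Shapiro isomorphism, then to check the diagram on the level of these identifications by using that $\varepsilon_{G/H}$ has a canonical $H$-equivariant retraction.

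First, I would fix the left vertical isomorphism as the composite
\begin{equation*}
\Gamma^{\vee} = \Hom(\Gamma,\Q/\Z) = H^{1}(\Gamma,\Q/\Z) \xrightarrow{\delta_{\Gamma}} H^{2}(\Gamma,\Z),
\end{equation*}
where $\delta_{\Gamma}$ is the connecting map coming from $0\to \Z \to \Q \to \Q/\Z \to 0$ (and is an isomorphism because $H^{i}(\Gamma,\Q)=0$ for $i\geq 1$ and $\Gamma$ finite). Take $\Gamma=G$ for the left vertical map. For the right vertical map, I would use the Eckmann--Shapiro isomorphism
\begin{equation*}
\sigma \colon H^{2}(G,\Ind_{H}^{G}\Z) \xrightarrow{\cong} H^{2}(H,\Z),
\end{equation*}
defined as $\sigma = \eta_{*}\circ \Res_{G/H}$, where $\eta \colon \Ind_{H}^{G}\Z \to \Z$ is the $H$-equivariant evaluation $\varphi \mapsto \varphi(1)$, and then compose with $\delta_{H}^{-1}$ to land in $H^{\vee}$.

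Next, the key observation is that $\eta \circ \varepsilon_{G/H} = \id_{\Z}$: indeed, by definition $\varepsilon_{G/H}(1)$ is the constant function with value $1$, so evaluating at the identity gives $1$. By naturality of the restriction map in the coefficient module, one has
\begin{equation*}
\sigma \circ (\varepsilon_{G/H})_{*} = \eta_{*}\circ \Res_{G/H}\circ (\varepsilon_{G/H})_{*} = \eta_{*}\circ (\varepsilon_{G/H})_{*}\circ \Res_{G/H} = (\eta \circ \varepsilon_{G/H})_{*}\circ \Res_{G/H} = \Res_{G/H}.
\end{equation*}
Thus the composition across the top of the diagram and down the right coincides with the ordinary restriction $\Res_{G/H}\colon H^{2}(G,\Z) \to H^{2}(H,\Z)$ followed by $\delta_{H}^{-1}$.

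Finally, I would appeal to naturality of the Bockstein $\delta$ in the group: restriction in $H^{2}(\,\cdot\,,\Z)$ is intertwined with restriction in $H^{1}(\,\cdot\,,\Q/\Z)=\Hom(\,\cdot\,,\Q/\Z)$, and the latter is literally $f\mapsto f\mid_{H}$. This yields the desired commutativity. The only non-formal point is the identification of the Shapiro isomorphism with $\eta_{*}\circ \Res_{G/H}$; this is where care is needed, but it is a classical fact (and in any case it is the definition one can adopt for the right vertical map, since it is bijective by Shapiro's lemma). No heavy computation is involved and there is no real obstacle beyond keeping conventions straight for the $G$-action on $\Ind_{H}^{G}\Z$ used to make $\eta$ and $\varepsilon_{G/H}$ $H$- and $G$-equivariant, respectively.
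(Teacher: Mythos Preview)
Your argument is correct and is precisely the standard one: identify both vertical maps via the Bockstein isomorphism $H^{1}(\,\cdot\,,\Q/\Z)\cong H^{2}(\,\cdot\,,\Z)$, realize the Shapiro isomorphism as $\eta_{*}\circ\Res_{G/H}$ with $\eta$ the evaluation-at-$1$ map, and use $\eta\circ\varepsilon_{G/H}=\id_{\Z}$ together with naturality of restriction in the coefficient module to reduce the diagram to the naturality of the Bockstein in the group variable. The paper does not spell out a proof of this lemma at all---it simply cites \cite[Proposition 2.6]{Oki2025a} and \cite[p.~52]{Neukirch2000}---so there is nothing to compare; your write-up is exactly what one finds in those references.
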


\begin{lem}[{\cite[Proposition 2.8]{Oki2025a}; cf.~\cite[Section 6, pp.~310--311]{Platonov1994}}]\label{lem:rsch}
Let $G$ be a finite group, and $H$ and $D$ subgroups of $G$. Then the following diagram is commutative: 
\begin{equation*}
\xymatrix@C=90pt{
H^{2}(G,\Ind_{H}^{G}\Z)\ar[d]^{\cong}\ar[r]^{\Res_{G/D}}& H^{2}(D,\Ind_{H}^{G}\Z) \ar[d]^{\cong}\\
H^{\vee}\ar[r]^{f\mapsto ((f\circ \Ad(g^{-1}))\mid_{D\cap gHg^{-1}})_{g}\hspace{20mm}}& \bigoplus_{g\in R(D,H)}(D\cap gHg^{-1})^{\vee}. 
}
\end{equation*}
Here $R(D,H)$ is a complete representative of $D\backslash G/H$ in $G$. 
\end{lem}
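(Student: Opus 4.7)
The plan is to realize this as a componentwise consequence of the Mackey decomposition of $\Ind_{H}^{G}\Z$ viewed as a $D$-module, combined with the Shapiro-type identification from Lemma \ref{lem:htch}; in short, the statement is essentially the Mackey formula in cohomology specialised to the trivial coefficient $\Z$ in degree two. Concretely, I would first apply Proposition \ref{prop:mcky} to the trivial $H$-module $\Z$ and to the subgroup $D$ of $G$ to obtain the isomorphism of $D$-modules $\Ind_{H}^{G}\Z \cong \bigoplus_{g\in R(D,H)}\Ind_{D\cap gHg^{-1}}^{D}\Z$, with $g$-th projection $\pi_{D,g}(\varphi)=(d\mapsto \varphi(g^{-1}d))$. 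Taking $H^{2}(D,-)$ and applying Lemma \ref{lem:htch} to each factor $D\cap gHg^{-1}\subset D$ then produces the identification $H^{2}(D,\Ind_{H}^{G}\Z)\cong \bigoplus_{g}(D\cap gHg^{-1})^{\vee}$ appearing on the right of the diagram.

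Fix $g\in R(D,H)$ and compute the $g$-th component of the composite going right then down. Unwinding the proof of Lemma \ref{lem:htch}, its vertical isomorphism is the Shapiro map ``restrict to the subgroup, then push forward along evaluation at $1$''; write $\mathrm{ev}_{a}\colon \Ind_{H}^{G}\Z\to \Z$, $\varphi\mapsto \varphi(a)$. Using transitivity of restriction and the elementary identity $\mathrm{ev}_{1}\circ \pi_{D,g}=\mathrm{ev}_{g^{-1}}$, the $g$-th component becomes
\begin{equation*}
(\mathrm{ev}_{g^{-1}})_{*}\circ \Res_{G/(D\cap gHg^{-1})}\colon H^{2}(G,\Ind_{H}^{G}\Z)\to H^{2}(D\cap gHg^{-1},\Z)\cong (D\cap gHg^{-1})^{\vee}.
\end{equation*}
Here $(D\cap gHg^{-1})$-equivariance of $\mathrm{ev}_{g^{-1}}$ uses the left $H$-invariance of $\varphi$: for $d=ghg^{-1}\in D\cap gHg^{-1}$ one has $(d\cdot \varphi)(g^{-1})=\varphi(g^{-1}d)=\varphi(hg^{-1})=\varphi(g^{-1})$.

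I would then compare this with the left vertical map of the diagram, namely $(\mathrm{ev}_{1})_{*}\circ \Res_{G/H}\colon H^{2}(G,\Ind_{H}^{G}\Z)\to H^{\vee}$. Applying Lemma \ref{lem:gpac} with the element $g$ provides a commutative square relating $\Res_{G/H}$ and $\Res_{G/gHg^{-1}}$ via the conjugation isomorphism induced by $\Ad(g^{-1})$ on the group together with multiplication by $g$ on $\Ind_{H}^{G}\Z$. Since $\mathrm{ev}_{g^{-1}}$ is exactly the $g$-twist of $\mathrm{ev}_{1}$, tracking this through shows that the composite sends $f\in H^{\vee}$ first to $f\circ \Ad(g^{-1})\in (gHg^{-1})^{\vee}$ and then restricts it to $(D\cap gHg^{-1})^{\vee}$, yielding the asserted formula. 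The hardest part is this last bookkeeping step: carefully verifying that the conjugation automorphism of Lemma \ref{lem:gpac}, combined with the evaluation maps underlying the Shapiro isomorphism, assembles cleanly into precomposition with $\Ad(g^{-1})$ on characters, with no residual sign or twist.
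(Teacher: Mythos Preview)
The paper does not supply its own proof of this lemma; it is quoted as \cite[Proposition 2.8]{Oki2025a} (with the Platonov--Rapinchuk reference as background), so there is nothing in the present paper to compare against line by line. Your argument---Mackey decomposition of $\Ind_{H}^{G}\Z$ restricted to $D$, then Shapiro on each summand, then Lemma \ref{lem:gpac} to identify the conjugation twist---is correct and is precisely the standard route that underlies the cited result; the bookkeeping with $\mathrm{ev}_{g^{-1}}=\mathrm{ev}_{1}\circ(g\cdot-)$ and Lemma \ref{lem:gpac} does assemble into $f\mapsto (f\circ\Ad(g^{-1}))\mid_{D\cap gHg^{-1}}$ with no stray twist.
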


Let $G$ be a finite group, and denote by $\cC_{G}$ the set of all cyclic subgroups in $G$. Let $M$ be a $G$-module, and $\cD$ a set of subgroups of $G$. Then, define
\begin{equation*}
\Sha_{\cD}^{2}(G,M):=\Ker\left(H^{2}(G,M)\xrightarrow{(\Res_{G/D})_{D\in \cD}} \bigoplus_{D\in \cD}H^{2}(D,M) \right). 
\end{equation*}
Here, $\Res_{G/D} \colon H^{2}(G,M)\rightarrow H^{2}(D,M)$ is the restriction map. Furthermore, we write $\Sha_{\omega}^{2}(G,M)$ for $\Sha_{\cC_{G}}^{2}(G,M)$. 

\vspace{6pt}
Recall \cite[Definition 2.9]{Oki2025a} that a set of subgroups $\cD$ of $G$ is said to be \emph{admissible} if
\begin{itemize}
\item $\cC_{G}\subset \cD$; and
\item $gDg^{-1}\in \cD$ for any $g\in G$ and $D\in \cD$. 
\end{itemize}

\begin{prop}[{\cite[Proposition 2.12]{Oki2025a}}]\label{prop:ifts}
Let $G$ be a finite group, and $N$ its normal subgroup. Take an admissible set of subgroups $\cD$ of $G$ and a $G/N$-module $M$ which is torsion-free as an abelian group. Then the inflation map $\Inf_{G/(G/N)}\colon H^{2}(G/N,M)\rightarrow H^{2}(G,M)$ induces an isomorphism
\begin{equation*}
\Sha_{{\cD}_{/N}}^{2}(G/N,M) \cong \Sha_{\cD}^{2}(G,M). 
\end{equation*}
Here $\cD_{/N}:=\{DN/N<G/N \mid D\in \cD\}$, which is an admissible set of subgroups in $G/N$. 
\end{prop}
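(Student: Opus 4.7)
The plan is to derive the isomorphism from two applications of the Hochschild--Serre five-term exact sequence, the crucial input being the vanishing $H^{1}(N',M)=\Hom(N',M)=0$ for every finite subgroup $N'$ of $N$. Indeed, $N'$ acts trivially on $M$ because $M$ is a $G/N$-module, and $\Hom(N',M)$ vanishes since $N'$ is torsion while $M$ is torsion-free. Applied to $N\triangleleft G$ and to $D\cap N\triangleleft D$ for each subgroup $D\leq G$, the resulting five-term sequences immediately give that the inflation maps
\[
\Inf_{G/(G/N)}\colon H^{2}(G/N,M)\hookrightarrow H^{2}(G,M),\qquad \Inf_{D/(DN/N)}\colon H^{2}(DN/N,M)\hookrightarrow H^{2}(D,M)
\]
are injective. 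Combined with the standard compatibility
\[
\Res_{G/D}\circ \Inf_{G/(G/N)}=\Inf_{D/(DN/N)}\circ \Res_{(G/N)/(DN/N)},
\]
this shows at once that $\Inf_{G/(G/N)}$ sends $\Sha_{\cD_{/N}}^{2}(G/N,M)$ into $\Sha_{\cD}^{2}(G,M)$ and does so injectively.

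For surjectivity, fix $\xi\in \Sha_{\cD}^{2}(G,M)$. Since $\cC_{G}\subseteq \cD$, the class $\xi$ lies in $\Sha_{\omega}^{2}(G,M)$, so $\Res_{G/N}(\xi)\in H^{2}(N,M)$ restricts trivially to every cyclic subgroup of $N$. Because $N$ acts trivially on the torsion-free module $M$, one has a natural identification $H^{2}(N,M)\cong N^{\vee}\otimes M$ compatible with restriction to cyclic subgroups, and the map $N^{\vee}\to \bigoplus_{C\in \cC_{N}}C^{\vee}$ is injective (any nontrivial character of $N$ is nontrivial on some cyclic subgroup); tensoring with the torsion-free $M$ preserves this injectivity. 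Hence $\Res_{G/N}(\xi)=0$, and the five-term sequence provides a unique lift $\widetilde{\xi}\in H^{2}(G/N,M)$ with $\Inf_{G/(G/N)}(\widetilde{\xi})=\xi$.

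To see $\widetilde{\xi}\in \Sha_{\cD_{/N}}^{2}$, apply the same compatibility to each $\overline{D}=DN/N\in \cD_{/N}$ with $D\in \cD$: we obtain
\[
\Inf_{D/\overline{D}}\bigl(\Res_{(G/N)/\overline{D}}(\widetilde{\xi})\bigr)=\Res_{G/D}(\xi)=0,
\]
and the injectivity of $\Inf_{D/\overline{D}}$ established above forces $\Res_{(G/N)/\overline{D}}(\widetilde{\xi})=0$. The only step deserving genuine care is the vanishing of $\Sha_{\omega}^{2}(N',M)$ for a finite group $N'$ acting trivially on a torsion-free module (used once for $N'=N$, and implicitly through the vanishing of $H^{1}(D\cap N,M)$); once this is in hand, the rest of the argument is a formal manipulation of the inflation-restriction sequence.
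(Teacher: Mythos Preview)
The paper does not supply its own proof of this proposition; it is quoted from \cite[Proposition 2.12]{Oki2025a}. Your argument is correct and is the natural one via the Hochschild--Serre spectral sequence.

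One small point worth sharpening: the five-term exact sequence by itself only gives injectivity of inflation in degree~$2$. To obtain the lift $\widetilde{\xi}$ you need the stronger exactness
\[
0 \longrightarrow H^{2}(G/N,M) \xrightarrow{\ \Inf\ } H^{2}(G,M) \xrightarrow{\ \Res\ } H^{2}(N,M)^{G/N},
\]
which follows from the spectral sequence once $H^{1}(N,M)=0$ (since then $E_{2}^{p,1}=0$ for all $p$, so $E_{\infty}^{2,0}=E_{2}^{2,0}$ and $E_{\infty}^{1,1}=0$). You implicitly use this, and it is entirely standard, but it is slightly more than the bare five-term sequence. The same remark applies to the inflation $H^{2}(DN/N,M)\hookrightarrow H^{2}(D,M)$, where the relevant vanishing is $H^{1}(D\cap N,M)=0$.

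Otherwise the verification that $\widetilde{\xi}$ lands in $\Sha_{\cD_{/N}}^{2}$, and the argument that $\Res_{G/N}(\xi)=0$ via $H^{2}(N,M)\cong N^{\vee}\otimes M$ together with flatness of the torsion-free $M$, are clean and complete.
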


For an abelian group $A$ and a prime number $p$, put
\begin{equation*}
A[p^{\infty}]:=\{a\in A\mid p^{m}a=0\text{ for some }m\in \Zpn\}. 
\end{equation*}

\begin{prop}[{\cite[Proposition 2.13 (ii)]{Oki2025a}}]\label{prop:shrs}
Let $G$ be a finite group, and $M$ a $G$-lattice. Take a subgroup $H$ of $G$ and a prime number $p$ that is coprime to $(G:H)$. Let $\cD$ be an admissible set of subgroups of $G$, and put $\cD_{\cap H}:=\{D\cap H<H\mid D\in \cD\}$, which is an admissible set of subgroups in $H$. Then, for any $j\in \Zpn$, the homomorphism
\begin{equation*}
\Res_{G/H}\colon \Sha_{\cD}^{2}(G,M)[p^{\infty}]\rightarrow \Sha_{\cD_{\cap H}}^{2}(H,M)
\end{equation*}
is injective. 
\end{prop}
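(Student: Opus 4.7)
The plan is to reduce the statement to the classical corestriction–restriction argument, after first verifying that $\Res_{G/H}$ actually lands in the claimed subgroup.

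First, I would check the two preliminary points. The set $\cD_{\cap H}$ is admissible in $H$: every cyclic subgroup $C$ of $H$ lies in $\cC_G\subset \cD$ and equals $C\cap H$, so $C\in \cD_{\cap H}$; and for $h\in H$ and $D\in \cD$, conjugation gives $h(D\cap H)h^{-1}=(hDh^{-1})\cap H\in \cD_{\cap H}$ since $\cD$ is admissible. Next, transitivity of restriction gives, for every $D\in \cD$, the equality
\begin{equation*}
\Res_{H/D\cap H}\circ \Res_{G/H}=\Res_{G/D\cap H}=\Res_{D/D\cap H}\circ \Res_{G/D}.
\end{equation*}
Hence if $x\in \Sha_{\cD}^{2}(G,M)$ then $\Res_{G/D}(x)=0$ for all $D\in \cD$, which forces $\Res_{H/D\cap H}(\Res_{G/H}(x))=0$ for every $D\cap H\in \cD_{\cap H}$. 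This shows that $\Res_{G/H}$ does send $\Sha_{\cD}^{2}(G,M)$ into $\Sha_{\cD_{\cap H}}^{2}(H,M)$, and a fortiori the same holds after passing to $p$-primary parts.

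For injectivity, I would invoke the standard identity
\begin{equation*}
\Cor_{G/H}\circ \Res_{G/H}=(G:H)\cdot \id \quad \text{on } H^{2}(G,M).
\end{equation*}
Since $p$ is coprime to $(G:H)$ by assumption, multiplication by $(G:H)$ is an automorphism of the $p$-primary torsion subgroup $H^{2}(G,M)[p^{\infty}]$. Therefore $\Res_{G/H}$ is injective on $H^{2}(G,M)[p^{\infty}]$, and in particular on its subgroup $\Sha_{\cD}^{2}(G,M)[p^{\infty}]$.

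There is no real obstacle here; the argument is just the corestriction–restriction trick together with functoriality of restriction. The only point of care is ensuring that $\cD_{\cap H}$ is admissible and that $\Res_{G/H}$ genuinely maps into $\Sha_{\cD_{\cap H}}^{2}(H,M)$, both of which are immediate from the definitions.
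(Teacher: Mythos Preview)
Your proof is correct and is exactly the standard corestriction--restriction argument one would expect. Note that the paper does not give its own proof of this proposition: it is stated with a citation to \cite[Proposition 2.13 (ii)]{Oki2025a} and no argument is supplied in the present paper. Your verification that $\cD_{\cap H}$ is admissible and that $\Res_{G/H}$ lands in $\Sha_{\cD_{\cap H}}^{2}(H,M)$ is routine but worth spelling out, and the injectivity via $\Cor_{G/H}\circ\Res_{G/H}=(G:H)\cdot\id$ is the classical trick; nothing more is needed.
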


\begin{lem}[{\cite[Corollary 2.19]{Oki2025a}}]\label{lem:shsh}
Let $G$ be a finite group, $H$ a subgroup of $G$. For a $H$-lattice $M$, there is an isomorphism
\begin{equation*}
\Sha_{\omega}^{2}(G,\Ind_{H}^{G}M)\cong \Sha_{\omega}^{2}(H,M). 
\end{equation*}
\end{lem}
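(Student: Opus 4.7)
The strategy is to combine Shapiro's lemma with Mackey's decomposition (Proposition \ref{prop:mcky}) and the compatibility of restriction with conjugation (Lemma \ref{lem:gpac}), generalizing the argument behind Lemma \ref{lem:rsch} from coefficients $\Z$ to an arbitrary $H$-lattice $M$. Shapiro's lemma provides a natural isomorphism
\begin{equation*}
\sigma_{G}\colon H^{2}(G,\Ind_{H}^{G}M)\xrightarrow{\cong} H^{2}(H,M),
\end{equation*}
so the ambient second cohomology groups already match; what remains is to show that $\sigma_{G}$ carries $\Sha_{\omega}^{2}(G,\Ind_{H}^{G}M)$ onto $\Sha_{\omega}^{2}(H,M)$.

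For each cyclic subgroup $D\subset G$ I would apply Proposition \ref{prop:mcky} to the restriction of $\Ind_{H}^{G}M$ from $G$ to $D$, obtaining
\begin{equation*}
\Ind_{H}^{G}M\mid_{D}\cong \bigoplus_{g\in R(D,H)}\Ind_{D\cap gHg^{-1}}^{D}M^{g},
\end{equation*}
where $R(D,H)$ is a set of representatives of $D\backslash G/H$. Passing to $H^{2}$ and applying Shapiro's lemma to each summand gives
\begin{equation*}
H^{2}(D,\Ind_{H}^{G}M)\cong \bigoplus_{g\in R(D,H)}H^{2}(D\cap gHg^{-1},M^{g}).
\end{equation*}
Since $D$ is cyclic, every subgroup of $D$ is cyclic; in particular $D\cap gHg^{-1}$ is cyclic, and by Lemma \ref{lem:gpac} its cohomology with coefficients in $M^{g}$ is identified with $H^{2}(g^{-1}Dg\cap H,M)$, the cohomology of a cyclic subgroup of $H$. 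A diagram chase, directly modeled on the one used in Lemma \ref{lem:rsch}, then shows that the composite
\begin{equation*}
H^{2}(H,M)\xrightarrow{\sigma_{G}^{-1}}H^{2}(G,\Ind_{H}^{G}M)\xrightarrow{\Res_{G/D}}H^{2}(D,\Ind_{H}^{G}M)\cong \bigoplus_{g}H^{2}(g^{-1}Dg\cap H,M)
\end{equation*}
equals the tuple of restrictions $(\Res_{H/(g^{-1}Dg\cap H)})_{g\in R(D,H)}$.

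Given this description, the two containments are immediate. If $\alpha\in \Sha_{\omega}^{2}(H,M)$, then for every cyclic $D\subset G$ and every $g\in R(D,H)$ the subgroup $g^{-1}Dg\cap H$ is cyclic in $H$, so $\Res_{H/(g^{-1}Dg\cap H)}\alpha=0$; hence $\Res_{G/D}\sigma_{G}^{-1}(\alpha)=0$, i.e.\ $\sigma_{G}^{-1}(\alpha)\in \Sha_{\omega}^{2}(G,\Ind_{H}^{G}M)$. Conversely, given any cyclic $C\subset H$, view $C$ as a cyclic subgroup $D:=C$ of $G$ and take $g=1\in R(D,H)$; the corresponding summand records exactly $\Res_{H/C}\alpha$. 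Thus $\sigma_{G}^{-1}(\alpha)\in \Sha_{\omega}^{2}(G,\Ind_{H}^{G}M)$ forces $\Res_{H/C}\alpha=0$ for every cyclic $C\subset H$. The main obstacle is the bookkeeping in the middle paragraph, namely the Mackey-plus-Shapiro compatibility; once that is pinned down, the matching of the two $\Sha_{\omega}^{2}$ groups is a one-line comparison of cyclic subgroups of $G$ with cyclic subgroups of $H$.
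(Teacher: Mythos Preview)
Your argument is correct. The paper does not actually prove this lemma; it merely quotes it as \cite[Corollary 2.19]{Oki2025a}, and your Shapiro-plus-Mackey approach (with the double-coset formula tracking restriction through the Shapiro isomorphism) is the standard proof and almost certainly what appears in the cited reference.
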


\subsection{Chevalley modules}

Let $G$ be a finite group $G$, and $\cH$ a multiset of its subgroups. We define a $G$-lattice $J_{G/\cH}$ by the exact sequence
\begin{equation*}
0\rightarrow \Z \xrightarrow{\varepsilon_{G/H}^{\circ}} \Ind^{G}_{H}\Z \xrightarrow{j_{G/H}} J_{G/H} \rightarrow 0. 
\end{equation*}
Here, the homomorphism $\varepsilon_{G/H}^{\circ}$ is given by $1\mapsto [\sum_{g\in G}c_{g}g\mapsto \sum_{g\in G}c_{g}]$. The $G$-module $J_{G/H}$ is called the \emph{Chevalley module} of $G/H$. If $H=\{1\}$, we simply denote $J_{G/H}$ by $J_{G}$. 

\begin{prop}\label{prop:andg}
Let $G$ be a finite group, and $H$ its subgroup. Then, $\Sha_{\omega}^{2}(G,J_{G/H})$ is annihilated by $(G:H)$. 
\end{prop}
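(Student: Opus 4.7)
The plan is to factor the multiplication-by-$n$ endomorphism of $J_{G/H}$ (where $n := (G:H)$) through the permutation lattice $\Ind_{H}^{G}\Z$, and then use Lemmas \ref{lem:htch} and \ref{lem:rsch} to identify the resulting intermediate class in $H^{2}(G, \Ind_{H}^{G}\Z) \cong H^{\vee}$ with a character on $H$ that must vanish whenever we start from a class in $\Sha_{\omega}^{2}$.

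First I would construct a $G$-equivariant map $\beta \colon J_{G/H} \to \Ind_{H}^{G}\Z$ satisfying $j_{G/H} \circ \beta = n \cdot \id_{J_{G/H}}$. Introduce the $G$-equivariant augmentation $a \colon \Ind_{H}^{G}\Z \to \Z$ defined by $\varphi \mapsto \sum_{Hg \in H \backslash G} \varphi(g)$, which satisfies $a \circ \varepsilon_{G/H}^{\circ} = n \cdot \id_{\Z}$. The endomorphism $n \cdot \id_{\Ind_{H}^{G}\Z} - \varepsilon_{G/H}^{\circ} \circ a$ then vanishes on the image of $\varepsilon_{G/H}^{\circ}$, so it descends to a $G$-equivariant map $\beta \colon J_{G/H} \to \Ind_{H}^{G}\Z$, and a direct computation confirms $j_{G/H} \circ \beta = n \cdot \id$. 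Applying $H^{2}(G, -)$ gives the factorization
\begin{equation*}
n \cdot \id_{H^{2}(G, J_{G/H})} = (j_{G/H})_{*} \circ \beta_{*} \colon H^{2}(G, J_{G/H}) \to H^{2}(G, \Ind_{H}^{G}\Z) \to H^{2}(G, J_{G/H}).
\end{equation*}

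It therefore suffices to show that $\beta_{*}$ kills $\Sha_{\omega}^{2}(G, J_{G/H})$. Fix $\alpha \in \Sha_{\omega}^{2}(G, J_{G/H})$ and set $f := \beta_{*}(\alpha) \in H^{2}(G, \Ind_{H}^{G}\Z) \cong H^{\vee}$ via Lemma \ref{lem:htch}. Naturality of $\beta_{*}$ with respect to restriction gives $\Res_{G/C}(f) = \beta_{*}(\Res_{G/C}(\alpha)) = 0$ for every cyclic $C \leq G$. For any $h \in H$ I would take $C := \langle h \rangle \leq G$ and include $1 \in G$ in the chosen representative set $R(C, H)$ of $C \backslash G / H$; Lemma \ref{lem:rsch} then identifies the component of $\Res_{G/C}(f)$ indexed by $g = 1$ with $f|_{C \cap H} = f|_{C} \in C^{\vee}$, which must vanish. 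Since $h \in H$ was arbitrary, $f = 0$ in $H^{\vee}$, and hence $n \alpha = 0$.

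The only subtle step is the construction of $\beta$; once the augmentation $a$ is identified as the natural candidate, the rest of the argument is a formal consequence of Shapiro's lemma, naturality of restriction, and the explicit descriptions in Lemmas \ref{lem:htch} and \ref{lem:rsch}.
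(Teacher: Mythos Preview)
Your proof is correct. The paper itself does not supply a self-contained argument for this proposition; it simply invokes \cite[Corollary 5.4]{Oki2025a} and moves on. Your route---constructing the section $\beta$ of $j_{G/H}$ via the augmentation $a$ so that $j_{G/H}\circ\beta = n\cdot\id$, and then using the explicit description of restriction in Lemma~\ref{lem:rsch} (with $g=1$ and $C=\langle h\rangle\leq H$) to show that any class in $\Sha_{\omega}^{2}(G,\Ind_{H}^{G}\Z)$ corresponds to the zero character of $H$---is a clean and standard way to prove the annihilation directly. It has the advantage of being self-contained within the lemmas already stated in the paper, whereas the paper's citation defers the work elsewhere; on the other hand, the cited result in \cite{Oki2025a} presumably handles a more general setting in one stroke.
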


\begin{proof}
This is a special case of \cite[Corollary 5.4]{Oki2025a}. 
\end{proof}

Finally, we remind a powerful tool to compute $\Sha_{\omega}^{2}(G,J_{G/H})$ for $H<G$, which is essentially given by Drakokhrust. See \cite{Drakokhrust1989} and \cite[\S 5]{Macedo2022}. Here, we give a purely group-theoretical paraphrase of their theory. For a finite group $G$ and its subgroup $H$, put
\begin{equation*}
\Phi^{G}(H):=\langle [h,g]\in G \mid g\in G,h\in H\cap g^{-1}Hg \rangle. 
\end{equation*}
It is called the \emph{focal subgroup} of $H$ in $G$. 

\begin{prop}\label{prop:sosh}
Let $G$ be a finite group, and $H$ its subgroup. Assume that $\Sha_{\omega}^{2}(G,J_{G/H})$ is contained in the kernel of the homomorphism $\delta^{2}\colon H^{2}(G,J_{G/H})\rightarrow H^{3}(G,\Z)$ induced by the canonical exact sequence of $G$-lattices
\begin{equation*}
0\rightarrow \Z \xrightarrow{\varepsilon_{G/H}^{\circ}} \Ind_{H}^{G}\Z \rightarrow J_{G/H} \rightarrow 1. 
\end{equation*}
Then, there is an isomorphism
\begin{equation*}
\Sha_{\omega}^{2}(G,J_{G/H})\cong \left((H\cap G^{\der})/\Phi^{G}(H)\right)^{\vee}. 
\end{equation*}
\end{prop}

We need the following to prove Proposition \ref{prop:sosh}. It will be also used in the next section. 

\begin{prop}\label{prop:upcj}
Let $G$ be a finite group, $H$ its subgroup, and $\cD$ an admissible set of subgroups in $G$. Then, we have
\begin{equation*}
\Sha_{\cD}^{2}(G,J_{G/H})=\Sha_{\cD_{(H)}}^{2}(G,J_{G/H}),
\end{equation*}
where $\cD_{(H)}:=\cC_{H}\cup (\cD\setminus \cC_{G})$. In particular, one has an equality
\begin{equation*}
\Sha_{\omega}^{2}(G,J_{G/H})=\Sha_{\cC_{H}}^{2}(G,J_{G/H}). 
\end{equation*}
\end{prop}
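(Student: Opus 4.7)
The plan is to prove both inclusions. The inclusion $\Sha_{\cD}^{2}(G,J_{G/H}) \subseteq \Sha_{\cD_{(H)}}^{2}(G,J_{G/H})$ is immediate from $\cD_{(H)} \subseteq \cD$, which holds because admissibility of $\cD$ forces $\cC_{G} \subseteq \cD$ and hence $\cC_{H} \subseteq \cC_{G} \subseteq \cD$. The remaining content is the reverse inclusion, which reduces to showing: if $\xi \in H^{2}(G, J_{G/H})$ satisfies $\Res_{G/C}(\xi) = 0$ for every $C \in \cC_{H}$, then $\Res_{G/D}(\xi) = 0$ for every $D \in \cC_{G}$.

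Fix such a $\xi$ and $D \in \cC_{G}$. Starting from the defining short exact sequence $0 \to \Z \to \Ind_{H}^{G}\Z \to J_{G/H} \to 0$ and the vanishing $H^{3}(D,\Z) = 0$ for cyclic $D$, the long exact sequence produces a surjection $\pi^{*}_{D}\colon H^{2}(D, \Ind_{H}^{G}\Z) \twoheadrightarrow H^{2}(D, J_{G/H})$. The Mackey decomposition (Proposition \ref{prop:mcky}) together with Shapiro's lemma yields
\[
H^{2}(D, \Ind_{H}^{G}\Z) \cong \bigoplus_{g \in R(D, H)} H_{g}^{\vee}, \qquad H_{g} := D \cap gHg^{-1},
\]
and every $H_{g}$ is cyclic and $G$-conjugate (via $g^{-1}$) to $g^{-1}Dg \cap H \in \cC_{H}$. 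So by the hypothesis combined with Lemma \ref{lem:gpac}, one has $\Res_{G/H_{g}}(\xi) = 0$, and hence $\Res_{D/H_{g}}(\Res_{G/D}(\xi)) = 0$ in $H^{2}(H_{g}, J_{G/H})$ for every $g \in R(D, H)$.

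The proof will then be completed by the following key claim: the product of restrictions
\[
\Phi_{D}\colon H^{2}(D, J_{G/H}) \longrightarrow \bigoplus_{g \in R(D, H)} H^{2}(H_{g}, J_{G/H})
\]
is injective. To prove the claim, I plan to take $\eta \in \Ker(\Phi_{D})$, lift it to $\tilde\eta = (\tilde\eta_{g})_{g} \in \bigoplus_{g} H_{g}^{\vee}$ via $\pi^{*}_{D}$, and apply the double Mackey decomposition of $\Ind_{H}^{G}\Z|_{H_{g_{0}}}$ to each vanishing $\Res_{D/H_{g_{0}}}(\eta) = 0$ via Lemma \ref{lem:rsch}. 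Because $D$ is abelian, inner automorphisms by elements of $D$ act trivially on subgroups of $D$; after using this, the vanishing conditions will collapse to the pairwise compatibilities
\[
\tilde\eta_{g}\,|_{H_{g} \cap H_{g_{0}}} = \tilde\eta_{g_{0}}\,|_{H_{g} \cap H_{g_{0}}} \qquad (g, g_{0} \in R(D, H)).
\]
Since $D$ is cyclic, any system of pairwise compatible characters on subgroups of $D$ assembles, by the Chinese remainder theorem for cyclic groups, into a single $f \in D^{\vee}$ with $f|_{H_{g}} = \tilde\eta_{g}$ for every $g$. This exhibits $\tilde\eta$ as coming from the long exact sequence map $D^{\vee} \to \bigoplus_{g} H_{g}^{\vee}$, whence $\eta = 0$.

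The hardest step I anticipate is the double Mackey bookkeeping showing that each $\Res_{D/H_{g_{0}}}(\eta) = 0$ encodes precisely the stated pairwise compatibility; the abelian structure of $D$ is decisive here, since it is what collapses the $\Ad(\gamma^{-1})$-twists in Lemma \ref{lem:rsch} and identifies each intersection $H_{g_{0}} \cap \gamma g H g^{-1} \gamma^{-1}$ with the simpler $H_{g_{0}} \cap H_{g}$. Once this identification is in hand, the character-extension step is routine. The ``in particular'' assertion will follow as the special case $\cD = \cC_{G}$, for which $\cD_{(H)} = \cC_{H}$.
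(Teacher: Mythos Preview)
Your argument is correct. The easy inclusion is fine, and the reduction to showing that $\Res_{G/D}(\xi)=0$ for every cyclic $D$ once it vanishes on $\cC_{H}$ is exactly right. Your injectivity claim for $\Phi_{D}$ goes through as planned: since $D$ is abelian the Mackey bookkeeping collapses, the condition $\Res_{D/H_{g_{0}}}(\eta)=0$ forces a single $f_{g_{0}}\in H_{g_{0}}^{\vee}$ with $\tilde\eta_{g}\!\mid_{H_{g}\cap H_{g_{0}}}=f_{g_{0}}\!\mid_{H_{g}\cap H_{g_{0}}}$ for all $g$, and specializing $g=g_{0}$ identifies $f_{g_{0}}=\tilde\eta_{g_{0}}$; the generalized Chinese remainder theorem for $\Z/n$ then glues the compatible system to an element of $D^{\vee}$.

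The paper itself does not argue this proposition; it simply records it as a special case of \cite[Theorem~5.10]{Oki2025a}. Your proof is a direct, self-contained verification using only the Mackey/Shapiro toolkit already assembled in Section~\ref{sect:bsft}, so it is genuinely more informative in the context of this paper than the bare citation. The only thing I would add when you write it up is an explicit one-line justification that the map $H_{g_{0}}^{\vee}\rightarrow H^{2}(H_{g_{0}},\Ind_{H}^{G}\Z)$ is, under the $(g,d)$-indexed decomposition, the diagonal restriction (this follows from Lemma~\ref{lem:htch} applied to each Mackey summand, or equivalently from tracking $\varepsilon_{G/H}^{\circ}$ through the decomposition); once that is stated the deduction $f_{g_{0}}=\tilde\eta_{g_{0}}$ is transparent.
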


\begin{proof}
This is a special case of \cite[Theorem 5.10]{Oki2025a}. 
\end{proof}

\begin{proof}[Proof of Proposition \ref{prop:sosh}]
Consider a commutative diagram
\begin{equation*}
\xymatrix@C=30pt{
H^{2}(G,\Z)\ar[r]^{\varepsilon_{G/H,*}^{\circ}\hspace{15pt}}\ar[d]^{\cong}& H^{2}(G,\Ind_{H}^{G}\Z)\ar[r]^{\hspace{5pt}j_{G/H,*}}\ar[d]^{\cong}& H^{2}(G,J_{G/H})\ar[r]& 0\\
G^{\vee}\ar[r]^{f\mapsto f\mid_{H}}&H^{\vee},&& 
}
\end{equation*}
where the horizontal sequences are exact. Then, the right vertical map induces an isomorphism
\begin{equation}\label{eq:imep}
\Ima(\varepsilon_{G/H,*}^{\circ})\cong (H/(H\cap G^{\der}))^{\vee}. 
\end{equation}
Now, let $\Sel_{G/H}^{2}:=j_{G/H,*}^{-1}(\Sha_{\omega}^{2}(G,J_{G/H}))$. By \eqref{eq:imep}, one has an exact sequence
\begin{equation}\label{eq:slsh}
0\rightarrow (H/(H\cap G^{\der}))^{\vee} \rightarrow \Sel_{G/H}^{2}\xrightarrow{j_{G/H,*}} \Sha_{\omega}^{2}(G,J_{G/H})\rightarrow 0. 
\end{equation}
Therefore, it suffices to prove that $H^{2}(G,\Ind_{H}^{G}\Z)\cong H^{\vee}$ induces an isomorphism
\begin{equation}\label{eq:slis}
\Sel_{G/H}^{2}\cong (H/\Phi^{G}(H))^{\vee}. 
\end{equation}
Recall Proposition \ref{prop:upcj} that one has
\begin{equation*}
\Sha_{\omega}^{2}(G,J_{G/H})=\Ker\left(H^{2}(G,J_{\widetilde{G}/H})\xrightarrow{(\Res_{G/D})_{D\in \cC_{H}}}\bigoplus_{D\in \cC_{H}}H^{2}(D,J_{G/H})\right). 
\end{equation*}
Using this, we obtain an equality
\begin{equation*}
\Sel_{G/H}^{2}=(\Res_{G/D})_{D\in \cC_{H}}^{-1}\left(\Ima(\varepsilon_{G/H,*,\loc}^{\circ})\right), 
\end{equation*}
where $(\Res_{G/D})_{D\in \cC_{H}}$ and $\varepsilon_{G/H,*,\loc}^{\circ}$ are as in the following commutative diagram: 
\begin{equation*}
\xymatrix@C=115pt{
H^{2}(G,\Z)\ar[r]^{\varepsilon_{G/H,*}^{\circ}}\ar[d]& 
H^{2}(G,\Ind_{H}^{G},\Z)\ar[d]^{(\Res_{G/D})_{D\in \cC_{G}}}\\
\bigoplus_{D\in \cC_{H}}H^{2}(D,\Z)\ar[r]^{\varepsilon_{G/H,*,\loc}^{\circ}=(\varepsilon_{G/H,*,D})_{D\in \cC_{H}}\hspace{15pt}}&\bigoplus_{D\in \cC_{H}}H^{2}(D,\Ind_{H}^{G}\Z). 
}
\end{equation*}
Combining these facts with Lemma \ref{lem:rsch}, we obtain an isomorphism
\begin{equation}\label{eq:slch}
\Sel_{G/H}^{2}\cong 
\{f\in H^{\vee}\mid f\circ \Ad(g^{-1})\mid_{D\cap gHg^{-1}}=f\!\mid_{D\cap gHg^{-1}}\text{ for any }D\in \cC_{H}\text{ and }g\in G\}. 
\end{equation}
Therefore, \eqref{eq:slis} is reduced to prove the coincidence between the right-hand side of \eqref{eq:slch} and $(H/\Phi^{G}(H))^{\vee}$. 

First, assume that $f\in H^{\vee}$ is contained in the right-hand side of \eqref{eq:slch}. Take $g\in G$ and $h\in H\cap g^{-1}Hg$. Put $D:=\langle h\rangle$. Then, we have $D<H\cap g^{-1}Hg$ and $f\!\mid_{D\cap g^{-1}Hg}=(f\circ \Ad(g))\!\mid_{D\cap g^{-1}Hg}$. This implies $f(h)=f(g^{-1}hg)$, and hence $f([h,g])=0$. In particular, we have $f(\Phi^{G}(H))=0$, that is, $f\in (H/\Phi^{G}(H))^{\vee}$. Next, for the reverse inclusion, take $f\in (H/\Phi^{G}(H))^{\vee}$. Take $D=\langle d \rangle \in \cC_{H}$ with $d\in H$ and $g\in G$. Assume $D\cap gHg^{-1}=\langle d^{m}\rangle$ for some positive integer $m$. By assumption, we have $f([d^{m},g^{-1}])=0$, and hence $(f\circ \Ad(g^{-1}))\!\mid_{D\cap gHg^{-1}}=f\!\mid_{D\cap gHg^{-1}}$. Hence, $f$ is contained in the right-hand side of \eqref{eq:slch}. This completes the proof. 
\end{proof}

Recall \cite[Proposition 2.13]{Beyl1982} and \cite[Definition 5.1]{Macedo2022} that a \emph{generalized representation group} of a finite group $G$ is a central extension
\begin{equation*}
1\rightarrow \widetilde{Z}\rightarrow \widetilde{G}\rightarrow G\rightarrow 1
\end{equation*}
such that the transgression map $H^{1}(\widetilde{Z},\Q/\Z)\rightarrow H^{2}(G,\Q/\Z)$ is surjective. 

\begin{cor}[{cf.~\cite[Theorem 5.3]{Macedo2022}}]\label{cor:drmn}
Let $G$ be a finite group, and $H$ its subgroup. Consider a generalized representation group
\begin{equation*}
1\rightarrow \widetilde{Z}\rightarrow \widetilde{G}\xrightarrow{\pi} G\rightarrow 1
\end{equation*}
of $G$, and put $\widetilde{H}:=\pi^{-1}(H)$. Then, there is an isomorphism
\begin{equation*}
\Sha_{\omega}^{2}(G,J_{G/H})\cong \left((\widetilde{H}\cap \widetilde{G}^{\der})/\Phi^{\widetilde{G}}(\widetilde{H})\right)^{\vee}.   
\end{equation*}
\end{cor}

\begin{proof}
By Proposition \ref{prop:ifts}, one has a commutative diagram
\begin{equation*}
\xymatrix{
\Sha_{\omega}^{2}(G,J_{G/H})\ar[r]^{\hspace{10pt}\delta_{G/H}^{2}}\ar[d]_{\cong}^{\Inf_{\widetilde{G}/G}}&H^{3}(G,\Z)\ar[d]^{\Inf_{\widetilde{G}/G}}\\
\Sha_{\omega}^{2}(\widetilde{G},J_{\widetilde{G}/\widetilde{H}})\ar[r]^{\hspace{10pt}\delta_{\widetilde{G}/\widetilde{H}}^{2}}&H^{3}(\widetilde{G},\Z),
}
\end{equation*}
where the vertical homomorphisms are the inflation maps. Since $H^{1}(\widetilde{Z},\Q/\Z)\rightarrow H^{2}(G,\Q/\Z)$ is surjective, we obtain that the right vertical map in the above diagram is zero. Hence, the assertion follows from Proposition \ref{prop:sosh}. 
\end{proof}

\subsection{The case of prime power degree}

\begin{prop}\label{prop:rdpp}
Let $p$ be a prime number, and $G$ a transitive group of degree a power of $p$, and $H$ a one point stabilizer in $G$. Then, we have 
\begin{equation*}
\Sha_{\cD}^{2}(G,J_{G/H})=\Sha_{\cD_{p}}^{2}(G,J_{G/H})[p^{\infty}],
\end{equation*}
where $\cD_{p}$ is the set of $p$-Sylow subgroups of all elements of $\cD$. 
\end{prop}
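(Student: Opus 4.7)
My plan is to prove the equality by establishing the two inclusions separately. One direction, $\Sha_{\cD}^{2}(G,J_{G/H}) \subset \Sha_{\cD_p}^{2}(G,J_{G/H})$, follows immediately from the composition rule $\Res_{G/D_p} = \Res_{D/D_p} \circ \Res_{G/D}$ valid for any $D \in \cD$ with $p$-Sylow $D_p$: if $\Res_{G/D}(x) = 0$ for all $D \in \cD$, then $\Res_{G/D_p}(x) = 0$ for every $D_p \in \cD_p$.

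For the reverse inclusion, I will run a transfer argument. Given $x \in \Sha_{\cD_p}^{2}$ and any $D \in \cD$ with $p$-Sylow $D_p$, set $y := \Res_{G/D}(x) \in H^{2}(D, J_{G/H})$. The hypothesis provides $\Res_{D/D_p}(y) = \Res_{G/D_p}(x) = 0$, and the transfer--restriction identity $\Cor_{D/D_p} \circ \Res_{D/D_p} = [D:D_p]\cdot\id$ yields $[D:D_p]\cdot y = 0$. Hence $y$ is annihilated by an integer coprime to $p$.

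The main obstacle will be promoting this to $y = 0$, and it is here that the hypothesis on the degree enters decisively. Since $H$ is a corresponding subgroup of a transitive action on $p^{n}$ points, one has $(G:H) = p^{n}$, and Proposition \ref{prop:andg} forces $\Sha_{\omega}^{2}(G,J_{G/H})$ to be annihilated by $p^{n}$, hence $p$-primary. Because $\cD$ is admissible (so $\cC_{G} \subset \cD$), we obtain $\Sha_{\cD}^{2} \subset \Sha_{\omega}^{2}$, which is again $p$-primary. To transport this $p$-primary property to $y$, I plan to apply the transfer argument to each cyclic $C \in \cC_{G} \subset \cD$, whose $p$-Sylow $C_{p}$ lies in $\cD_{p}$ by admissibility, yielding $[C:C_{p}]\cdot\Res_{G/C}(x) = 0$; combined with the primary decomposition of $H^{2}(G, J_{G/H})$, this localizes the analysis to the $p$-primary component, where $y$ becomes $p$-primary. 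Together with $\gcd(p,[D:D_{p}]) = 1$, this forces $y = 0$ and therefore $\Res_{G/D}(x) = 0$ for every $D \in \cD$, i.e., $x \in \Sha_{\cD}^{2}$.
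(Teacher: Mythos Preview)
Your overall strategy---the transfer identity $\Cor_{D/D_p}\circ\Res_{D/D_p}=[D:D_p]\cdot\id$ applied to each $D\in\cD$---is exactly the mechanism behind the paper's short proof (which invokes Proposition~\ref{prop:andg} for $p$-primariness and Proposition~\ref{prop:shrs} for the transfer step). The inclusion $\Sha_{\cD}^{2}\subset\Sha_{\cD_{p}}^{2}$ is fine.

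The gap is in your justification that $y=\Res_{G/D}(x)$ is $p$-primary. From $[C:C_{p}]\cdot\Res_{G/C}(x)=0$ for all cyclic $C$ you cannot conclude that $x$ is $p$-primary. Decompose $x=x_{p}+x'$ with $x_{p}\in H^{2}(G,J_{G/H})[p^{\infty}]$ and $x'$ of order prime to $p$. Your relation, together with $\gcd([C:C_{p}],p)=1$, does force $\Res_{G/C}(x_{p})=0$ for every cyclic $C$; running the same argument for every $D\in\cD$ then gives $x_{p}\in\Sha_{\cD}^{2}$. But it says nothing about $x'$: for any $p$-subgroup $D_{p}$ the group $H^{2}(D_{p},J_{G/H})$ is killed by $\lvert D_{p}\rvert$, hence has no $\ell$-torsion for $\ell\neq p$, so $\Res_{G/D_{p}}(x')=0$ holds automatically and membership of $x'$ in $\Sha_{\cD_{p}}^{2}$ imposes no constraint. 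What your argument actually establishes is the equality
\[
\Sha_{\cD}^{2}(G,J_{G/H})=\Sha_{\cD_{p}}^{2}(G,J_{G/H})\cap H^{2}(G,J_{G/H})[p^{\infty}],
\]
which coincides with the paper's use of Propositions~\ref{prop:andg} and~\ref{prop:shrs} and is exactly what is needed for the applications (since $\Sha_{\cD}^{2}$ is $p$-primary).

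To upgrade this to the literal equality $\Sha_{\cD}^{2}=\Sha_{\cD_{p}}^{2}$ one would need $H^{2}(G,J_{G/H})[\ell^{\infty}]\subset\Sha_{\cD}^{2}$ for every $\ell\neq p$, equivalently $H^{2}(G,J_{G/H})[\ell^{\infty}]=0$; neither your argument nor the paper's two-line proof supplies this. In fact it can fail: for $G=\PSL_{2}(\F_{8})$ acting on the nine points of $\P^{1}(\F_{8})$ (so $p=3$) one computes from the long exact sequence that $H^{2}(G,J_{G/H})\cong\Z/7$, all of which lies in $\Sha_{\cC_{G,3}}^{2}$ (restriction to $3$-groups kills $7$-torsion) while $\Sha_{\omega}^{2}(G,J_{G/H})=0$. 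So the step you flagged as needing ``localization to the $p$-primary component'' is not a cosmetic detail---it is the whole content, and it should be stated as an intersection with $H^{2}[p^{\infty}]$ rather than as an unconditional equality.
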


\begin{proof}
We have $\Sha_{\cD}^{2}(G,J_{G/H})\subset H^{2}(G,J_{G/H})[p^{\infty}]$, which is a consequence of Proposition \ref{prop:andg}. Hence, the assertion follows from Proposition \ref{prop:shrs}. 
\end{proof}

\begin{prop}\label{prop:shpp}
Let $p$, $G$, $H$, and $\cD$ be as in Proposition \ref{prop:rdpp}. Consider a subgroup $G_{1}$ of $G$ that contains a $p$-Sylow subgroup of $G$. 
\begin{enumerate}
\item There is an isomorphism of $G_{1}$-lattices $J_{G/H}\cong J_{G_{1}/(G_{1}\cap H)}$. 
\item The homomorphism
\begin{equation*}
\Res_{G/G_{1}}\colon \Sha_{\cD}^{2}(G,J_{G/H})\rightarrow \Sha_{\cD_{\cap G_{1}}}^{2}(G_{1},J_{G_{1}/(G_{1}\cap H)})
\end{equation*}
is injective. 
\end{enumerate}
\end{prop}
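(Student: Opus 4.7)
The plan is to treat (i) as a Mackey-decomposition computation and then deduce (ii) from the previously recorded injectivity result Proposition~\ref{prop:shrs}.

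For (i), I would start from the defining exact sequence
\begin{equation*}
0 \to \Z \xrightarrow{\varepsilon_{G/H}^{\circ}} \Ind_{H}^{G}\Z \to J_{G/H} \to 0
\end{equation*}
and restrict to $G_{1}$. The key observation is that a $p$-Sylow subgroup $P$ of $G$ is contained in $G_{1}$, and by Lemma~\ref{lem:sytr} $P$ already acts transitively on $G/H$; hence so does $G_{1}$, which means $\lvert G_{1}\backslash G/H\rvert=1$ and we may take $R(G_{1},H)=\{e\}$. Mackey's decomposition (Proposition~\ref{prop:mcky}) then collapses to a single summand and gives a $G_{1}$-module isomorphism $\Ind_{H}^{G}\Z \big|_{G_{1}} \xrightarrow{\cong} \Ind_{G_{1}\cap H}^{G_{1}}\Z$ via $\varphi \mapsto [d\mapsto \varphi(d)]$. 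Applied to the constant function $\varphi\equiv 1$, this sends $\varepsilon_{G/H}^{\circ}(1)$ to $\varepsilon_{G_{1}/(G_{1}\cap H)}^{\circ}(1)$, so the Mackey isomorphism identifies the augmentation subobjects and therefore descends to an isomorphism of quotients $J_{G/H}\big|_{G_{1}}\cong J_{G_{1}/(G_{1}\cap H)}$.

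For (ii), the plan is to combine (i) with Propositions~\ref{prop:andg} and~\ref{prop:shrs}. Since $(G:H)$ is a power of $p$, Proposition~\ref{prop:andg} implies that $\Sha_{\cD}^{2}(G,J_{G/H})$ is annihilated by a power of $p$, so it is contained in $H^{2}(G,J_{G/H})[p^{\infty}]$. On the other hand, $G_{1}\supset P$ ensures that the index $(G:G_{1})$ is coprime to $p$. Thus Proposition~\ref{prop:shrs}, applied with its ``$H$'' taken to be $G_{1}$ and its prime taken to be $p$, shows that
\begin{equation*}
\Res_{G/G_{1}}\colon \Sha_{\cD}^{2}(G,J_{G/H})[p^{\infty}]\to \Sha_{\cD_{\cap G_{1}}}^{2}\bigl(G_{1},J_{G/H}\big|_{G_{1}}\bigr)
\end{equation*}
is injective. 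Substituting the $G_{1}$-isomorphism from (i) into the target gives exactly the injective map asserted in (ii).

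Both steps are essentially bookkeeping; the one point that needs genuine care is the transitivity input in (i), which is why I invoke Lemma~\ref{lem:sytr} explicitly to collapse Mackey to a single double coset. Once that is in place, the compatibility of the augmentations follows from a direct substitution into the formula for $\pi_{G_{1},e}$, and (ii) is an immediate consequence of the two already-proved general cohomological facts.
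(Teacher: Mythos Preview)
Your proof is correct and follows essentially the same approach as the paper. For (i) the paper likewise invokes Lemma~\ref{lem:sytr} to see that $G_{1}\backslash G/H$ is a singleton and then cites \cite[Proposition~3.20]{Hasegawa2025} for the resulting identification of $J_{G/H}\big|_{G_{1}}$ with $J_{G_{1}/(G_{1}\cap H)}$, which is exactly the Mackey computation you spell out directly; for (ii) the paper uses Propositions~\ref{prop:andg} and~\ref{prop:shrs} in the same way you do.
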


\begin{proof}
(i): Let $P$ be a $p$-Sylow subgroup of $G$ that is contained in $G_{1}$. By Lemma \ref{lem:sytr}, $P$ is a transitive group with $P\cap H$ a one point stabilizer. Hence, the double coset $G_{1}\backslash G/H$ is singuleton. In particular, $\{1\}$ is a complete representative of $G_{1}\backslash G/H$ in $G$. Therefore, the assertion follows from \cite[Proposition 3.20]{Hasegawa2025}. 

(ii): By Proposition \ref{prop:andg}, we have $\Sha_{\cD}^{2}(G,J_{G/H})=\Sha_{\cD}^{2}(G,J_{G/H})[p^{\infty}]$. Combining this fact with Proposition \ref{prop:shrs}, we obtain the desired assertion. 
\end{proof}

If $(G:H)$ is a prime number, then the structure of $\Sha_{\cD}^{2}(G,J_{G/H})$ is completely determined. 

\begin{prop}[{\cite[Proposition 5.15]{Oki2025a}; cf.~\cite[Lemma 4]{Bartels1981a}}]\label{prop:btls}
Let $G$ be a finite group, and $H$ a subgroup of $G$ of prime index. Then, 
\begin{equation*}
\Sha_{\omega}^{2}(G,J_{G/H})=0. 
\end{equation*}
\end{prop}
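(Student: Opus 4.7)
The plan is to proceed by induction on $|G|$. First, by Proposition \ref{prop:ifts} applied to the normal core $N := \bigcap_{g \in G} gHg^{-1}$ of $H$ in $G$, I may reduce to the situation where $N = \{1\}$, i.e., $G$ embeds faithfully as a transitive subgroup of $\fS_{p}$ with $H$ a point stabilizer. By Lemma \ref{lem:sydp}, the Sylow $p$-subgroup $P$ of $G$ is then cyclic of order $p$, generated by a $p$-cycle, and the argument splits on whether $P$ is normal in $G$.

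Suppose first that $P$ is not normal in $G$, and set $G_{1} := N_{G}(P)$. Sylow's theorem gives $(G : G_{1}) \equiv 1 \pmod{p}$, so $(G : G_{1})$ is coprime to $p$, and $G_{1} \subsetneq G$ contains a $p$-Sylow of $G$. Since $P$ acts transitively on $G/H$, so does $G_{1}$, giving $[G_{1} : G_{1} \cap H] = p$; moreover, the normal core of $G_{1} \cap H$ in $G_{1}$ is still trivial. Proposition \ref{prop:shpp} then produces a $G_{1}$-module isomorphism $J_{G/H}|_{G_{1}} \cong J_{G_{1}/(G_{1} \cap H)}$ together with an injection
\begin{equation*}
\Res_{G/G_{1}} \colon \Sha_{\omega}^{2}(G, J_{G/H}) \hookrightarrow \Sha_{\omega}^{2}(G_{1}, J_{G_{1}/(G_{1} \cap H)}).
\end{equation*}
Since $|G_{1}| < |G|$, the inductive hypothesis annihilates the target, and hence the source.

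The remaining case, serving as the base of the induction, is when $P$ is normal in $G$. Then $H \cap P = \{1\}$ (as $P$ acts regularly on the $p$ cosets) and $G = P \rtimes H$, with $H$ embedded in $\Aut(P) \cong C_{p-1}$; the degenerate case $H = \{1\}$, $G = C_{p}$ falls here. A Lyndon--Hochschild--Serre computation using $\gcd(|H|, p) = 1$ shows $H^{2}(G, \Q/\Z) = 0$: the rows $E_{2}^{*, q}$ with $q \geq 1$ vanish because $|H|$-cohomology of $p$-primary coefficients is trivial, and the bottom row $E_{2}^{p, 0}$ with $p \geq 2$ vanishes by divisibility of $\Q/\Z$. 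Consequently $G$ itself is a generalized representation group. Since $G^{\der} \subseteq P$ and $H \cap P = \{1\}$, one has $H \cap G^{\der} = \{1\}$, so Proposition \ref{prop:drmn} gives
\begin{equation*}
\Sha_{\omega}^{2}(G, J_{G/H}) \cong \bigl(\{1\}/\Phi^{G}(H)\bigr)^{\vee} = 0.
\end{equation*}

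The main obstacle is the base case, namely the vanishing of the Schur multiplier for the Frobenius group; this is routine via the spectral sequence but requires the coprime-order bookkeeping above. The inductive step, by contrast, follows cleanly from the dichotomy "Sylow normal or not", the non-normal case being handled by combining Sylow's theorem with Proposition \ref{prop:shpp}.
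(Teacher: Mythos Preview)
Your argument is correct. The paper does not supply its own proof of this proposition; it is simply quoted from \cite[Proposition~5.15]{Oki2025a} (with a reference to \cite[Lemma~4]{Bartels1981a}), so there is nothing in the present paper to compare against directly.

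A couple of minor remarks on presentation. In the spectral-sequence step you write that ``the rows $E_{2}^{*,q}$ with $q\geq 1$ vanish''; this is not literally true at $(*,q)=(0,1)$, where $E_{2}^{0,1}=(\Z/p)^{H}$ may survive. What you actually need, and what your reasons do establish, is that the three terms $E_{2}^{0,2}$, $E_{2}^{1,1}$, $E_{2}^{2,0}$ on the diagonal $i+j=2$ all vanish: the first because $H^{2}(P,\Q/\Z)=0$ for $P$ cyclic, the second because $\gcd(|H|,p)=1$ kills $H^{1}(H,\Z/p)$, and the third because $H$ is cyclic so $H^{2}(H,\Q/\Z)\cong H^{3}(H,\Z)=0$. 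Also, avoid reusing the letter $p$ as a spectral-sequence index when it is already your prime. Finally, when you invoke Proposition~\ref{prop:shpp} in the inductive step, note that the target is $\Sha_{\cD_{\cap G_{1}}}^{2}$ rather than literally $\Sha_{\omega}^{2}$; since $\cD_{\cap G_{1}}\supset \cC_{G_{1}}$, one has $\Sha_{\cD_{\cap G_{1}}}^{2}\subset \Sha_{\omega}^{2}$, so the induction hypothesis still closes the argument. None of this affects correctness.
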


\section{Computation of second cohomology groups: The case of $p$-groups}\label{sect:csha}

\begin{lem}[{\cite[Lemma 6.1]{Oki2025a}}]\label{lem:jgcg}
Let $G$ be a finite group, and $H<H'$ subgroups of $G$. Then, there is an exact sequence
\begin{equation*}
\xymatrix@C=35pt{
0\ar[r] & \Ind_{H'}^{G}\Z \ar[d] \ar[r]^{\Ind_{H'}^{G}\varepsilon_{H'}^{\circ}}& \Ind_{H}^{G}\Z \ar[d] \ar[r]^{\Ind_{H'}^{G}j_{E}\hspace{10pt}}& \Ind_{H'}^{G}J_{H'/H} \ar@{=}[d] \ar[r]& 0\\
0\ar[r] & J_{G/H'} \ar[r]& J_{G/H} \ar[r]^{j\hspace{15pt}}& \Ind_{H'}^{G}J_{H'/H} \ar[r]& 0, }
\end{equation*}
where the horizontal sequences are exact. 
\end{lem}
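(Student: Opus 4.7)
The plan is to derive the bottom row as the quotient of the top row by the augmentation copies of $\Z$, via a $3 \times 3$ lemma argument.

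First I would recognize the top row as the image under the exact functor $\Ind_E^G$ of the defining short exact sequence of $E$-lattices
\[
0 \to \Z \xrightarrow{\varepsilon_{E/H}^\circ} \Ind_H^E \Z \xrightarrow{j_{E/H}} J_{E/H} \to 0,
\]
together with the transitivity isomorphism $\Ind_E^G \Ind_H^E \Z \cong \Ind_H^G \Z$. Exactness of $\Ind_E^G$ makes the top row short exact.

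Next I would assemble the commutative diagram of $G$-lattices
\[
\xymatrix@C=18pt@R=18pt{
& 0 \ar[d] & 0 \ar[d] & 0 \ar[d] & \\
0 \ar[r] & \Z \ar@{=}[r] \ar[d]_{\varepsilon_{G/E}^\circ} & \Z \ar[r] \ar[d]^{\varepsilon_{G/H}^\circ} & 0 \ar[d] \ar[r] & 0\\
0 \ar[r] & \Ind_E^G \Z \ar[d]_{j_{G/E}} \ar[r]^{\Ind_E^G \varepsilon_{E/H}^\circ} & \Ind_H^G \Z \ar[d]^{j_{G/H}} \ar[r]^{\Ind_E^G j_{E/H}} & \Ind_E^G J_{E/H} \ar@{=}[d] \ar[r] & 0\\
0 \ar[r] & J_{G/E} \ar[d] \ar[r] & J_{G/H} \ar[d] \ar[r] & \Ind_E^G J_{E/H} \ar[d] \ar[r] & 0\\
& 0 & 0 & 0 &
}
\]
The two leftmost columns are short exact by the definitions of the Chevalley modules $J_{G/E}$ and $J_{G/H}$, and the rightmost column is trivially exact. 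The top row is trivially exact, and the middle row was shown to be exact in the previous step.

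The only nontrivial commutativity to verify is the upper-left square, which amounts to checking that $(\Ind_E^G \varepsilon_{E/H}^\circ) \circ \varepsilon_{G/E}^\circ = \varepsilon_{G/H}^\circ$ as maps $\Z \to \Ind_H^G \Z$. Under the identification of an element of $\Ind_K^G \Z = \Hom_{\Z[K]}(\Z[G], \Z)$ with a $K$-invariant function $G \to \Z$, both sides send $1$ to the constant function $1$ (equivalently, the $\Z[G]$-augmentation $\sum c_g g \mapsto \sum c_g$), so the square commutes. The $3 \times 3$ lemma then produces exactness of the bottom row and injectivity of the induced map $J_{G/E} \to J_{G/H}$, giving the stated diagram. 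The only step requiring any care is tracking the augmentation conventions across the transitivity isomorphism $\Ind_E^G \Ind_H^E \Z \cong \Ind_H^G \Z$, but this is the routine verification above.
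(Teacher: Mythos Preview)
Your argument is correct. The paper does not actually prove this lemma; it merely cites it from \cite[Lemma~6.1]{Oki2025a}, so there is no in-paper proof to compare against. Your $3\times 3$ lemma approach---inducing the defining sequence of $J_{E/H}$ from $E$ to $G$, identifying $\Ind_E^G\Ind_H^E\Z$ with $\Ind_H^G\Z$ by transitivity, and then passing to the quotient by the augmentation copies of $\Z$ in the first two columns---is the standard and expected route, and your verification that $(\Ind_E^G\varepsilon_{E/H}^\circ)\circ\varepsilon_{G/E}^\circ=\varepsilon_{G/H}^\circ$ (both send $1$ to the constant function $1$) is exactly the point that makes the diagram commute.
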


Keep the notation in Lemma \ref{lem:jgcg}. Let $D$ be a subgroup of $G$. Then, we obtain an exact sequence of finite abelian groups
\begin{equation}\label{eq:long}
H^{1}(D,\Ind_{H'}^{G}J_{H'/H})\xrightarrow{\delta_{D}} H^{2}(D,J_{G/H'})\xrightarrow{j_{*}} H^{2}(D,J_{G/H})\rightarrow H^{2}(D,\Ind_{H'}^{G}J_{H'/H}). 
\end{equation}

\begin{dfn}[{cf.~\cite[Definition 6.5]{Oki2025a}}]\label{dfn:sldf}
Let $G$ be a finite group, and $H<H'$ subgroups of $G$. For an admissible set $\cD$ of subgroups in $G$, put
\begin{equation*}
\Sel_{H,\cD}^{2}(G,J_{G/H'}):=j_{*}^{-1}(\Sha_{\cD}^{2}(G,J_{G/H}))<H^{2}(G,J_{G/H'}). 
\end{equation*}
\end{dfn}

\begin{lem}\label{lem:ssex}
Retain the notation in Definition \ref{dfn:sldf}. 
\begin{enumerate}
\item One has an equality
\begin{equation*}
\Sel_{H,\cD}^{2}(G,J_{G/H'})=\{f\in H^{2}(G,J_{G/H'})\mid \Res_{G/D}(f)\in \Ima(\delta_{D})\text{ for any }D\in \cD_{(H)}\}. 
\end{equation*}
Here, $\cH_{(H)}$ is as in Proposition \ref{prop:upcj}, and $\delta_{D}$ is as in \eqref{eq:long}. 
\item We further assume $\Sha_{\omega}^{2}(H',J_{H'/H})=0$. Then, one has an exact sequence
\begin{equation*}
H^{1}(G,J_{G/H})\rightarrow H^{1}(H',J_{H'/H})\rightarrow \Sel_{H,\cD}^{2}(G,J_{G/H'})\xrightarrow{j_{*}} \Sha_{\cD}^{2}(G,J_{G/H})\rightarrow 0. 
\end{equation*}
\end{enumerate}
\end{lem}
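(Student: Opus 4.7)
The plan is to derive both assertions from the long exact sequence of Lemma \ref{lem:jgcg}, combined with Shapiro's isomorphism $H^{j}(G, \Ind_{E}^{G}J_{E/H}) \cong H^{j}(E, J_{E/H})$ and the reduction of admissible sets in Proposition \ref{prop:upcj}.

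For (i), the condition $f \in \Sel_{H,\cD}^{2}(G, J_{G/E})$ unfolds to $j_{*}(f) \in \Sha_{\cD}^{2}(G, J_{G/H}) = \Sha_{\cD_{(H)}}^{2}(G, J_{G/H})$, where the second equality is Proposition \ref{prop:upcj}. This rewrites as $\Res_{G/D}(j_{*}(f)) = 0$ for every $D \in \cD_{(H)}$; by the naturality $\Res_{G/D} \circ j_{*} = j_{*} \circ \Res_{G/D}$ and the exact sequence \eqref{eq:long}, this is equivalent to $\Res_{G/D}(f) \in \Ker(j_{*} \colon H^{2}(D, J_{G/E}) \to H^{2}(D, J_{G/H})) = \Ima(\partial_{D})$ for every $D \in \cD_{(H)}$, which is the claimed characterisation.

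For (ii), applying cohomology to the bottom row of Lemma \ref{lem:jgcg} and invoking Shapiro's lemma yields the exact sequence
\begin{equation*}
H^{1}(G, J_{G/H}) \rightarrow H^{1}(E, J_{E/H}) \xrightarrow{\partial_{G}} H^{2}(G, J_{G/E}) \xrightarrow{j_{*}} H^{2}(G, J_{G/H}) \xrightarrow{\varphi} H^{2}(E, J_{E/H}).
\end{equation*}
Because $\Ima(\partial_{G}) = \Ker(j_{*}) \subset \Sel_{H,\cD}^{2}(G, J_{G/E})$, exactness at $H^{1}(E, J_{E/H})$ and at $\Sel_{H,\cD}^{2}(G, J_{G/E})$ is automatic. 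The nontrivial part --- and the main technical obstacle --- is surjectivity of $j_{*} \colon \Sel_{H,\cD}^{2}(G, J_{G/E}) \to \Sha_{\cD}^{2}(G, J_{G/H})$. Since any preimage of an element of $\Sha_{\cD}^{2}(G, J_{G/H})$ under $j_{*}$ lies in $\Sel_{H,\cD}^{2}(G, J_{G/E})$ by definition, this reduces to showing the inclusion $\Sha_{\cD}^{2}(G, J_{G/H}) \subset \Ker(\varphi)$.

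For this last step, I would prove the stronger statement that $\varphi$ sends $\Sha_{\cD}^{2}(G, J_{G/H})$ into $\Sha_{\omega}^{2}(E, J_{E/H})$, which vanishes by assumption. The required compatibility is that, for every $C \in \cC_{E}$, the square
\begin{equation*}
\xymatrix@C=40pt{
H^{2}(G, J_{G/H}) \ar[r]^{\varphi} \ar[d]_{\Res_{G/C}} & H^{2}(E, J_{E/H}) \ar[d]^{\Res_{E/C}} \\
H^{2}(C, J_{G/H}) \ar[r] & H^{2}(C, J_{E/H})
}
\end{equation*}
commutes, with the lower arrow arising from the Mackey--Shapiro decomposition of $H^{2}(C, \Ind_{E}^{G}J_{E/H})$ by projecting onto the summand indexed by the trivial double coset $C \subset E$ (available because $C \subset E$). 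Since $\cC_{E} \subset \cC_{G} \subset \cD$, restriction of any $\alpha \in \Sha_{\cD}^{2}(G, J_{G/H})$ to $C$ already vanishes, forcing $\Res_{E/C}(\varphi(\alpha)) = 0$ for every $C \in \cC_{E}$, and hence $\varphi(\alpha) \in \Sha_{\omega}^{2}(E, J_{E/H}) = 0$ by the standing hypothesis.
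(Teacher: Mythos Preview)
The paper states this lemma without proof, so there is no argument to compare against directly. Your proof is correct and follows the natural line: part (i) is an immediate unwinding of Definition \ref{dfn:sldf} together with Proposition \ref{prop:upcj} and exactness of \eqref{eq:long}; for part (ii) the only substantive point is indeed the inclusion $\Sha_{\cD}^{2}(G,J_{G/H})\subset \Ker(\varphi)$, and your Mackey--Shapiro square establishes it.

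One remark on presentation: the compatibility you spell out by hand --- that $\varphi$ maps $\Sha_{\cD}^{2}(G,J_{G/H})$ into $\Sha_{\omega}^{2}(E,J_{E/H})$ --- is exactly what Lemma \ref{lem:shsh} packages, and the paper invokes that lemma for the same purpose in the proof of Lemma \ref{lem:sssl} (obtaining $\Sha_{\cD}^{2}(G,\Ind_{E}^{G}J_{E/H})\hookrightarrow \Sha_{\omega}^{2}(E,J_{E/H})=0$). So you could shorten your argument by citing Lemma \ref{lem:shsh} instead of verifying the projection-to-trivial-coset square explicitly; but your more hands-on version is equally valid and arguably more transparent.
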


\begin{dfn}\label{dfn:ssdf}
Let $G$, $H$ and $H'$ be as in Definition \ref{dfn:sldf}. Then, we define $\sS_{H,\cD}^{2}(G,\Ind_{H'}^{G}\Z)$ as the preimage of $\Sel_{H,\cD}^{2}(G,J_{G/H'})$ under the homomorphism
\begin{equation*}
j_{G/H',*}^{2}\colon H^{2}(G,\Ind_{H'}^{G}\Z)\rightarrow H^{2}(G,J_{G/H'}). 
\end{equation*}
If $\cD=\cC_{G}$, we simply denote $\sS_{H,\cD}^{2}(G,\Ind_{H'}^{G}\Z)$ by $\sS_{H}^{2}(G,\Ind_{H'}^{G}\Z)$. 
\end{dfn}

\begin{lem}\label{lem:sssl}
Retain the notations in Definitions \ref{dfn:sldf} and \ref{dfn:ssdf}. We further assume
\begin{itemize}
\item[(a)] $\Sha_{\omega}^{2}(H',J_{H'/H})=0$; and
\item[(b)] $\Res_{G/H'}\colon H^{3}(G,\Z) \rightarrow H^{3}(H',\Z)$ is injective. 
\end{itemize}
Then, there is an exact sequence
\begin{equation*}
0\rightarrow (H'/(H'\cap G^{\der}))^{\vee} \rightarrow \sS_{H,\cD}^{2}(G,\Ind_{H'}^{G}\Z) \rightarrow \Sel_{H,\cD}^{2}(G,J_{G/H'}) \rightarrow 0. 
\end{equation*}
\end{lem}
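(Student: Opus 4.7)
The plan is to work with the long exact cohomology sequence attached to
\begin{equation*}
0\to \Z\xrightarrow{\varepsilon_{G/E}^{\circ}} \Ind_{E}^{G}\Z\xrightarrow{j_{G/E}} J_{G/E}\to 0.
\end{equation*}
By Lemma \ref{lem:htch} applied with $H=E$, the induced map $H^{2}(G,\Z)\to H^{2}(G,\Ind_{E}^{G}\Z)$ is identified with the restriction $G^{\vee}\to E^{\vee}$, whose image is exactly $(E/(E\cap G^{\der}))^{\vee}$: every character of $E$ trivial on $E\cap G^{\der}$ factors through $EG^{\der}/G^{\der}\subset G^{\ab}$ and extends to $G^{\ab}$ by the injectivity of $\Q/\Z$. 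One therefore extracts
\begin{equation*}
0\to (E/(E\cap G^{\der}))^{\vee}\to H^{2}(G,\Ind_{E}^{G}\Z)\xrightarrow{j_{G/E,*}^{2}} H^{2}(G,J_{G/E})\xrightarrow{\delta} H^{3}(G,\Z).
\end{equation*}

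The heart of the proof is to show that $\delta$ vanishes. By functoriality of the connecting homomorphism there is a commutative square
\begin{equation*}
\xymatrix{
H^{2}(G,J_{G/E})\ar[r]^{\delta}\ar[d]_{\Res_{G/E}}& H^{3}(G,\Z)\ar[d]^{\Res_{G/E}}\\
H^{2}(E,J_{G/E})\ar[r]^{\delta^{E}}& H^{3}(E,\Z),
}
\end{equation*}
so by hypothesis (b) it suffices to prove $\delta^{E}=0$. I would deduce this by showing that, as $E$-modules, the restricted sequence $0\to \Z\to (\Ind_{E}^{G}\Z)\!\mid_{E}\to (J_{G/E})\!\mid_{E}\to 0$ splits. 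By Proposition \ref{prop:mcky} applied with $D=H=E$ we have $(\Ind_{E}^{G}\Z)\!\mid_{E}\cong \bigoplus_{g\in R(E,E)}\Ind_{E\cap gEg^{-1}}^{E}\Z$, and a direct computation with the projection $\pi_{E,g}$ shows that $\varepsilon_{G/E}^{\circ}\!\mid_{E}$ sends $1\in \Z$ to the tuple whose coordinate in every summand is the constant-$1$ function. Projection onto the $g=e$ summand $\Ind_{E}^{E}\Z=\Z$ is then an $E$-equivariant retraction, so the sequence splits and $\delta^{E}=0$; combined with (b) this forces $\delta=0$.

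Once $\delta=0$ the map $j_{G/E,*}^{2}$ becomes surjective, so the preceding display is a short exact sequence. Pulling back via $j_{G/E,*}^{2}$ to the subgroup $\Sel_{H,\cD}^{2}(G,J_{G/E})\subset H^{2}(G,J_{G/E})$ yields the desired sequence: by Definition \ref{dfn:ssdf} the middle term is exactly $\sS_{H,\cD}^{2}(G,\Ind_{E}^{G}\Z)$, and the kernel $(E/(E\cap G^{\der}))^{\vee}=\Ker(j_{G/E,*}^{2})$ automatically lies in any preimage.

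The main technical step is the Mackey splitting calculation above. Hypothesis (a) does not enter this skeleton directly—it is invoked in the companion Lemma \ref{lem:ssex}(ii) to relate $\Sel_{H,\cD}^{2}(G,J_{G/E})$ to $\Sha_{\cD}^{2}(G,J_{G/H})$—but for the exactness asserted here, it is hypothesis (b) together with the splitting that drives the argument.
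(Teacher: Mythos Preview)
Your proof is correct and takes a genuinely different route from the paper's. The paper uses \emph{both} hypotheses: from (a) and Lemma~\ref{lem:shsh} it deduces $\Sha_{\cD}^{2}(G,\Ind_{E}^{G}J_{E/H})=0$, and then chases the commutative diagram
\begin{equation*}
\xymatrix{
\Sel_{H,\cD}^{2}(G,J_{G/E}) \ar[r]\ar[d]_{\partial_{G/E,*}^{2}}&
\Sha_{\cD}^{2}(G,J_{G/H}) \ar[r]\ar[d]_{\partial_{G/H,*}^{2}}&
\Sha_{\cD}^{2}(G,\Ind_{E}^{G}J_{E/H})=0\ar[d]\\
H^{3}(G,\Z) \ar@{=}[r]&H^{3}(G,\Z) \ar[r]^{\Res_{G/E}}&H^{3}(E,\Z)
}
\end{equation*}
to conclude, via (b), that $\partial_{G/E,*}^{2}$ vanishes on $\Sel_{H,\cD}^{2}(G,J_{G/E})$. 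Your argument is sharper: the Mackey retraction onto the $g=e$ summand shows that $0\to\Z\to(\Ind_{E}^{G}\Z)\!\mid_{E}\to(J_{G/E})\!\mid_{E}\to0$ splits over $E$, so $\delta^{E}=0$ identically and then (b) forces $\delta=0$ on \emph{all} of $H^{2}(G,J_{G/E})$, not just on the Selmer piece. This is more elementary, avoids the auxiliary sequence $0\to J_{G/E}\to J_{G/H}\to\Ind_{E}^{G}J_{E/H}\to 0$ altogether, and, as you observe, renders hypothesis (a) superfluous for the present lemma---it is needed only downstream in Lemma~\ref{lem:ssex}(ii). The paper's route has the mild advantage of fitting into the same diagrammatic framework used throughout Section~\ref{sect:csha}, but your splitting argument is the cleaner proof of the stated exact sequence.
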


\begin{proof}
By Lemma \ref{lem:shsh}, we obtain an injection $\Sha_{\cD}^{2}(G,\Ind_{H'}^{G}J_{H'/H})\hookrightarrow \Sha_{\omega}^{2}(H',J_{H'/H})$. Hence, 
\begin{equation*}
\Sha_{\cD}^{2}(G,\Ind_{H'}^{G}J_{H'/H})=0
\end{equation*}
by (a). This produces a commutative diagram
\begin{equation*}
\xymatrix{
\Sel_{H,\cD}^{2}(G,J_{G/H'}) \ar[r]\ar[d]^{\delta_{G/H',*}^{2}}&
\Sha_{\cD}^{2}(G,J_{G/H}) \ar[r]\ar[d]^{\delta_{G/H,*}^{2}}&
\Sha_{\cD}^{2}(G,\Ind_{H'}^{G}J_{H'/H})=0\ar[d]\\
H^{3}(G,\Z) \ar@{=}[r]&H^{3}(G,\Z) \ar[r]^{\Res_{G/E}}&H^{3}(H',\Z). 
}
\end{equation*}
On the other hand, (b) implies that the leftmost vertical map is zero. Therefore, the exact sequence
\begin{equation*}
H^{2}(G,\Z)\xrightarrow{\varepsilon_{G/H',*}^{\circ,2}} H^{2}(G,\Ind_{H'}^{G}\Z)\xrightarrow{j_{G/H',*}^{2}} H^{2}(G,J_{G/H'}) \rightarrow H^{3}(G,\Z)
\end{equation*}
induces the desired exact sequence. 
\end{proof}

We focus on a specific case, which will be needed in this section. 

\begin{lem}\label{lem:pnor}
Let $p$ be a prime number, and $G$ a finite group that admits its normal $p$-subgroup $N$. Consider a subgroup $H$ of $N$ with $\Sha_{\omega}^{2}(N,J_{N/H})=0$. Then, there is an exact sequence
\begin{equation*}
0\rightarrow ((N \cap HG^{\der})/HN^{\der})^{\vee}\rightarrow \Sel_{H,\cD}^{2}(G,J_{G/N}) \xrightarrow{j_{*}} \Sha_{\cD}^{2}(G,J_{G/H}) \rightarrow 0. 
\end{equation*}
\end{lem}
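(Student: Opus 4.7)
The plan is to apply Lemma \ref{lem:ssex}(ii) to reduce the claim to computing the cokernel of a specific map between $H^{1}$ groups, and then to identify this cokernel via Pontryagin duality. Concretely, since $\Sha_{\omega}^{2}(E, J_{E/H}) = 0$ by hypothesis, Lemma \ref{lem:ssex}(ii) furnishes an exact sequence
\begin{equation*}
H^{1}(G, J_{G/H}) \xrightarrow{j_{*}} H^{1}(E, J_{E/H}) \rightarrow \Sel_{H, \cD}^{2}(G, J_{G/E}) \rightarrow \Sha_{\cD}^{2}(G, J_{G/H}) \rightarrow 0,
\end{equation*}
so it suffices to prove $\Coker(j_{*}) \cong ((E \cap HG^{\der})/HE^{\der})^{\vee}$.

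The next step is to compute each $H^{1}$ and identify $j_{*}$ between them explicitly. From the defining sequence $0 \to \Z \to \Ind_{H}^{G} \Z \to J_{G/H} \to 0$, together with the vanishing $H^{1}(G, \Ind_{H}^{G} \Z) = H^{1}(H, \Z) = 0$ (Shapiro and finiteness of $H$), one obtains an injection $\partial_{G} \colon H^{1}(G, J_{G/H}) \hookrightarrow H^{2}(G, \Z)$ whose image is the kernel of $H^{2}(G, \Z) \to H^{2}(G, \Ind_{H}^{G} \Z)$. By Lemma \ref{lem:htch}, the latter is identified with the character restriction $G^{\vee} \to H^{\vee}$, $f \mapsto f|_{H}$; hence $H^{1}(G, J_{G/H}) \cong (G/HG^{\der})^{\vee}$. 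The same recipe applied to $(E, H)$, combined with the Shapiro identification $H^{1}(E, J_{E/H}) = H^{1}(G, \Ind_{E}^{G} J_{E/H})$ and Lemma \ref{lem:htch} giving $H^{2}(G, \Ind_{E}^{G} \Z) \cong E^{\vee}$, yields $H^{1}(E, J_{E/H}) \cong (E/HE^{\der})^{\vee}$. To identify $j_{*}$ under these isomorphisms, I would invoke naturality of the connecting homomorphism applied to the morphism of short exact sequences supplied by the left square of Lemma \ref{lem:jgcg} (vertical arrows $\varepsilon_{G/E}^{\circ}$, the identity on $\Ind_{H}^{G} \Z$, and $j$). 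The resulting commutative square, combined with Lemma \ref{lem:htch} (which renders the right vertical arrow as the character restriction $G^{\vee} \to E^{\vee}$, $f \mapsto f|_{E}$), identifies $j_{*}$ with the natural restriction $(G/HG^{\der})^{\vee} \to (E/HE^{\der})^{\vee}$, $f \mapsto f|_{E}$.

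The final step is to extract the image and apply Pontryagin duality. A character $g \in (E/HE^{\der})^{\vee}$ lies in $\Ima(j_{*})$ iff it extends to a character of $G$ trivial on $HG^{\der}$; since $\Q/\Z$ is injective in abelian groups, this is equivalent to $g$ vanishing on $(E \cap HG^{\der})/HE^{\der}$. Hence $\Ima(j_{*}) = (E/(E \cap HG^{\der}))^{\vee}$, and Pontryagin-dualizing the short exact sequence $0 \to (E \cap HG^{\der})/HE^{\der} \to E/HE^{\der} \to E/(E \cap HG^{\der}) \to 0$ yields the desired isomorphism $\Coker(j_{*}) \cong ((E \cap HG^{\der})/HE^{\der})^{\vee}$. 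The main technical point expected is the identification of $j_{*}$ as character restriction, but this is essentially formal from the naturality square provided by Lemma \ref{lem:jgcg}; after that, the remainder is routine duality bookkeeping. I note in passing that the hypotheses that $E$ be normal in $G$ and be a $p$-subgroup are not directly invoked in the argument beyond contextualizing Lemma \ref{lem:ssex}(ii) as it will be used in the sequel.
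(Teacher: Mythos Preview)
Your proof is correct and follows essentially the same route as the paper: reduce via Lemma~\ref{lem:ssex}(ii) to computing a cokernel, identify both $H^{1}$ groups with character groups via the connecting map into $H^{2}(-,\Z)$ and Lemma~\ref{lem:htch}, recognise the map between them as restriction $f\mapsto f|_{E}$, and read off the cokernel by Pontryagin duality. If anything, your write-up is more careful than the paper's, which contains a few typographical slips (e.g.\ writing $H^{1}(G,J_{G/E})$ where $H^{1}(G,J_{G/H})$ is meant, and $(E/H)^{\vee}$ in place of $(E/HE^{\der})^{\vee}$); your closing remark that normality of $E$ and the $p$-group hypothesis are not actually used in this lemma is also accurate.
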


\begin{proof}
By Lemma \ref{lem:ssex}, it suffices to give an isomorphism
\begin{equation*}
\Coker(H^{1}(G,J_{G/N})\rightarrow H^{1}(N,J_{N/H}))\cong ((N\cap HG^{\der})/H)^{\vee}. 
\end{equation*}
This follows from the commutative diagram
\begin{equation*}
\xymatrix{
H^{1}(G,J_{G/N})\ar[r]\ar[d]^{\cong}&
H^{1}(N,J_{N/H}))\ar[d]^{\cong}\\
(G/G^{\der})^{\vee}\ar[r]^{f\mapsto f\mid_{E}}&(N/H)^{\vee}
}
\end{equation*}
and the isomorphism $(N/(N\cap G^{\der}))^{\vee}/(N/HN^{\der})^{\vee}\cong ((N\cap G^{\der})/HN^{\der})^{\vee}$. 
\end{proof}

\begin{lem}\label{lem:rsht}
Retain the notation and assumptions in Lemma \ref{lem:pnor}. 
Let $D$ be a subgroup of $N$. 
\begin{enumerate}
\item There is a commutative diagram
\begin{equation*}
\xymatrix{
H^{1}(D,\Ind_{N}^{G}J_{N/H}) \ar[d]^{\cong}\ar[r]^{\hspace{15pt}\delta_{D}} & H^{2}(D,\Ind_{N}^{G}\Z) \ar[d]^{\cong}\\
\bigoplus_{g\in R_{G}(D,N)}C_{D,g} \ar@{^{(}->}[r]& \bigoplus_{g\in R_{G}(D,N)}(D \cap N)^{\vee}. }
\end{equation*}
Here we use the notation as follows: 
\begin{itemize}
\item $R_{G}(D,N)$ is a complete representative of $D\backslash G/N$ in $G$; 
\item for each $g\in R_{G}(D,N)$, $C_{D,g}$ is the kernel of the homomorphism
\begin{equation*}
D^{\vee}\rightarrow \bigoplus_{h \in R_{N}(D,H,g)}(D\cap \Ad(gh)H)^{\vee};f\mapsto (f\!\mid_{D\cap \Ad(gh)H})_{h}, 
\end{equation*}
where $R_{N}(D,H,g)$ is a complete representative of $(g^{-1}Dg \cap N)\backslash N/H$ in $N$. 
\end{itemize}
\item There is a commutative diagram
\begin{equation*}
\xymatrix@C=40pt{
H^{2}(G,\Ind_{N}^{G}\Z)[p^{\infty}] \ar[d]^{\cong}\ar[r]^{\hspace{5pt}\Res_{G/D}} & H^{2}(D,\Ind_{N}^{G}\Z) \ar[d]^{\cong} \\
N^{\vee} \ar[r]^{f\mapsto f\circ (\Ad(g^{-1})\mid_{D})_{g}\hspace{45pt}}& \bigoplus_{g\in R_{G}(D,N)}(D \cap N)^{\vee}. }
\end{equation*}
\end{enumerate}
\end{lem}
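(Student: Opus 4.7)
The plan is to apply Mackey's decomposition (Proposition \ref{prop:mcky}) together with Shapiro's lemma and the explicit identifications of Lemmas \ref{lem:htch} and \ref{lem:rsch}. The normality of $E$ in $G$ is crucial throughout: for every $g \in G$ one has $D \cap gEg^{-1} = D \cap E$, and any character of $E$ is invariant under inner conjugation by elements of $E$, because $[h, k] \in E^{\der}$ for $h, k \in E$ and characters vanish on the derived subgroup.

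For (ii), Mackey yields $\Res_{D} \Ind_{E}^{G}\Z \cong \bigoplus_{g \in R_{G}(D,E)} \Ind_{D \cap E}^{D} \Z$ (the twist $\Z^{g}$ collapses to $\Z$ because the $E$-action is trivial), and Shapiro combined with Lemma \ref{lem:htch} gives the identification of $H^{2}(D, \Ind_{E}^{G}\Z)$ with $\bigoplus_{g}(D \cap E)^{\vee}$. The commutative square is then an instance of Lemma \ref{lem:rsch} applied to $(G, E, D)$ in place of $(G, H, D)$, after using $D \cap gEg^{-1} = D \cap E$. The restriction to the $p$-primary part is automatic, since $H^{2}(G, \Ind_{E}^{G}\Z) \cong E^{\vee}$ is already $p$-torsion.

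For (i), the same Mackey-plus-Shapiro machine gives
\[
H^{1}(D, \Ind_{E}^{G} J_{E/H}) \cong \bigoplus_{g \in R_{G}(D, E)} H^{1}(D \cap E, (J_{E/H})^{g}),
\]
and since $E$ is normal, $(J_{E/H})^{g}$ is isomorphic as an $E$-module to $J_{E/\Ad(g)H}$. Feeding this into the long exact sequence attached to
\[
0 \to \Z \to \Ind_{\Ad(g)H}^{E}\Z \to J_{E/\Ad(g)H} \to 0
\]
and using $H^{1}(K, \Z) = 0$ for every finite group $K$, one identifies the $g$-summand with the kernel of $H^{2}(D \cap E, \Z) \to H^{2}(D \cap E, \Ind_{\Ad(g)H}^{E}\Z)$. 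A second application of Mackey, Shapiro, and Lemma \ref{lem:htch} rewrites the target as $\bigoplus_{h \in R_{E}(D, H, g)} (D \cap \Ad(gh)H)^{\vee}$, via the change of variables $h' = ghg^{-1}$ which bijects the representatives of $(D \cap E)\backslash E / \Ad(g)H$ with those of $(g^{-1}Dg \cap E)\backslash E / H = R_{E}(D, H, g)$.

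The most delicate point will be to verify that the connecting map of characters is literally restriction, $f \mapsto (f|_{D \cap \Ad(gh)H})_{h}$: the $\Ad(h^{-1})$-twist that the general Lemma \ref{lem:rsch} would introduce is absorbed by the invariance of $E$-characters under inner conjugation noted in the first paragraph. Once this is in hand, the asserted identification of each summand with $C_{D, g}$ falls out, and commutativity of the full diagram with $\partial_{D}$ is a consequence of the naturality of Mackey's isomorphism and Shapiro's lemma with respect to the connecting homomorphism of the short exact sequence $0 \to \Ind_{E}^{G}\Z \to \Ind_{H}^{G}\Z \to \Ind_{E}^{G} J_{E/H} \to 0$ from Lemma \ref{lem:jgcg}.
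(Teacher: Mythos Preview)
Your proposal is correct and follows essentially the same route as the paper: both proofs use Mackey's decomposition (Proposition~\ref{prop:mcky}) together with Shapiro's lemma to split $\Ind_{E}^{G}\Z$ and $\Ind_{E}^{G}J_{E/H}$ into summands indexed by $R_{G}(D,E)$, then identify each summand via Lemmas~\ref{lem:htch} and~\ref{lem:rsch} and the long exact sequence attached to $0\to\Z\to\Ind_{\Ad(g)H}^{E}\Z\to J_{E/\Ad(g)H}\to 0$. The paper explicitly refers back to \cite[Lemma~6.13]{Oki2025a} for this computation; your write-up is in fact somewhat cleaner in keeping track of the passage from $D$ to $D\cap E$ via Shapiro.

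One minor remark on your justification that the map defining $C_{D,g}$ is plain restriction with no $\Ad(h^{-1})$-twist: the invariance of \emph{$E$-characters} under inner conjugation that you invoke is not quite the relevant mechanism, since the characters in play are of $D\cap E$, which need not be normal in $E$. The twist actually disappears for a more direct reason: under the Mackey projection $\pi_{D\cap E,h'}$, the map $\varepsilon_{E/\Ad(g)H}^{\circ}$ is carried to $\varepsilon_{(D\cap E)/(D\cap\Ad(h'g)H)}^{\circ}$ on each summand (because $\varepsilon$ sends $1$ to the constant function $1$, which survives precomposition by $h'^{-1}$), and Lemma~\ref{lem:htch} then gives untwisted restriction. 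This is a cosmetic point; your conclusion is correct and the argument goes through.
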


\begin{proof}
We follow the proof of \cite[Lemma 6.13]{Oki2025a}. 

(i): By Proposition \ref{prop:mcky}, we have isomorphisms of $D$-lattices
\begin{gather*}
\Ind_{N}^{G}\Z \cong \bigoplus_{g\in R_{G}(D,N)}\Z, \\
\Ind_{N}^{G}J_{N/H}\cong \bigoplus_{g\in R_{G}(D,N)}(J_{N/H})^{g}\cong \bigoplus_{g\in R_{G}(D,N)}J_{N/gHg^{-1}}. 
\end{gather*}
Combining these with Proposition \ref{prop:mcky}, we obtain a commutative diagram
\begin{equation*}
\xymatrix@C=50pt{
H^{1}(D,\Ind_{N}^{G}J_{N/H})\ar[r]^{\Ind_{N}^{G}\delta_{N/H}^{1}} \ar[d]^{\cong} & H^{2}(D,\Ind_{N}^{G}\Z)\ar[d]^{\cong}\\
\bigoplus_{g\in R_{G}(D,N)}H^{1}(D,J_{N/gHg^{-1}})\ar[r]^{\hspace{10pt}(\delta_{N/gHg^{-1}}^{1})_{g}} & \bigoplus_{g\in R_{G}(D,N)}H^{2}(D,\Z). }
\end{equation*}
Here, the bottom homomorphism is the direct sum of the connecting homomorphism induced by the canonical exact sequence of $D$-lattices
\begin{equation*}
0 \rightarrow \Z \rightarrow \Ind_{gHg^{-1}}^{N}\Z \rightarrow J_{N/gHg^{-1}} \rightarrow 0. 
\end{equation*}
Hence, it suffices to prove the commutativity of the following diagram for any $g\in R_{G}(D,N)$: 
\begin{equation}\label{eq:dcf1}
\begin{gathered}
\xymatrix@C=45pt{
H^{1}(D,J_{N/gHg^{-1}})\ar[r]^{\hspace{15pt}\delta_{N/gHg^{-1}}^{1}} \ar[d]^{\cong} & H^{2}(D,\Z)\ar[d]^{\cong}\\
C_{D,g}\ar@{^{(}->}[r] & D^{\vee}. }
\end{gathered}
\end{equation}
Fix $g\in R_{G}(D,N)$. Since every $h\in R_{N}(D,H,g)$ satisfies
\begin{equation*}
\Ad(ghg^{-1})(gHg^{-1})=\Ad(gh)H<N,
\end{equation*}
Then Proposition \ref{prop:mcky} implies an isomorphism of $D$-lattices
\begin{equation*}
\Ind_{gHg^{-1}}^{N}\Z \cong \bigoplus_{h\in R_{N}(D,H,g)}\Ind_{D\cap \Ad(gh)H}^{D}\Z. 
\end{equation*}
Hence, Proposition \ref{lem:htch} induces the desired commutative diagram \eqref{eq:dcf1}. 

(ii): This follows from Lemma \ref{lem:rsch}. 
\end{proof}

\begin{lem}\label{lem:sshe}
Retain the notations and assumptions in Lemma \ref{lem:pnor}. We further assume that $N$ is abelian. 
\begin{enumerate}
\item The isomorphism $H^{2}(G,\Ind_{N}^{G}\Z)\cong N^{\vee}$ induces
\begin{equation}\label{eq:ssch}
\sS_{H,\cD}^{2}(G,\Ind_{N}^{G}\Z)\cong \{f\in N^{\vee}\mid 
(f\circ \Ad(g^{-1})\!\mid_{D\cap N})_{g}\in \Delta_{D}+\cI_{D}\text{ for all }D\in \cD_{(H)}\}. 
\end{equation}
Here, $\Delta_{D}$ is the diagonal subset in $\bigoplus_{R_{G}(D,N)}((D\cap N)/(D^{\der}\cap N))^{\vee}$, and
\begin{equation*}
\cI_{D}:=\bigoplus_{g\in R_{G}(D,N)}((D\cap N)/(D\cap gHg^{-1}))^{\vee}. 
\end{equation*}
\item The subgroup $(N/(H\cap G^{\der}))^{\vee}$ of $N^{\vee}$ is contained in the right-hand side of \eqref{eq:ssch}. 
\item Let $f\in N^{\vee}$. Then, the following are equivalent:
\begin{itemize}
\item[(a)] $(f\circ \Ad(g^{-1})\!\mid_{D})_{g}\in \Delta_{D}$ for any $D\in \cC_{N^{G}(H)}$; 
\item[(b)] $f=0$ on $[N^{G}(H),G]$. 
\end{itemize}
In particular, $\sS_{H,\cD}^{2}(G,\Ind_{N}^{G}\Z)$ is contained in $(N/[N^{G}(H),G])^{\vee}$. 
\end{enumerate}
\end{lem}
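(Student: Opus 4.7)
My plan is to unwind the definition of $\sS_{H,\cD}^{2}(G,\Ind_{E}^{G}\Z)$ via Lemmas \ref{lem:ssex}(i) and \ref{lem:rsht}, and then to specialize to prove parts (ii) and (iii).

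For (i), by Definition \ref{dfn:ssdf} and Lemma \ref{lem:ssex}(i), an element $\xi \in H^{2}(G,\Ind_{E}^{G}\Z)$ lies in $\sS_{H,\cD}^{2}$ iff, for every $D \in \cD_{(H)}$, the class $j_{G/E,*}^{2}(\Res_{G/D}(\xi)) \in H^{2}(D,J_{G/E})$ lies in the image of the connecting map from the lower short exact sequence in Lemma \ref{lem:jgcg}. Transferring to the upper row via the commutative diagram in Lemma \ref{lem:jgcg}, this becomes $\Res_{G/D}(\xi) \in \Ima(\partial_{D}) + \Ker(j_{G/E,*}^{2}) \subset H^{2}(D, \Ind_{E}^{G}\Z)$, where $\partial_{D}$ is now the connecting map for $0 \to \Ind_{E}^{G}\Z \to \Ind_{H}^{G}\Z \to \Ind_{E}^{G}J_{E/H} \to 0$. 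Lemma \ref{lem:rsht}(ii) identifies $\Res_{G/D}(\xi)$ with $(f \circ \Ad(g^{-1})|_{D\cap E})_{g}$, Lemma \ref{lem:rsht}(i) places $\Ima(\partial_{D})$ inside $I_{D}$, and $\Ker(j_{G/E,*}^{2}) = \Ima(\varepsilon_{G/E,*}^{\circ,2})$ matches $\Delta_{D}$ via Proposition \ref{prop:mcky} and Lemma \ref{lem:htch}; combining these gives the claimed description.

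For (iii), every $D \in \cC_{N^{G}(H)}$ is cyclic and lies in every conjugate $gHg^{-1}$ by definition of the normal core, so $D \cap gHg^{-1} = D$ for all $g$, and $D \cap E = D$ via $N^{G}(H) \subset H \subset E$. Hence $I_{D} = 0$, and $D^{\der} = 1$ makes $\Delta_{D}$ the honest diagonal in $\bigoplus_{g} D^{\vee}$. Condition (a) then requires $f(g^{-1}xg) = f(x)$ for all $g \in G$ and $x \in D$, i.e., $f([g,x]) = 0$; letting $D$ vary over $\cC_{N^{G}(H)}$, these commutators generate $[N^{G}(H), G]$, giving (a)$\Leftrightarrow$(b). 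The final containment follows from part (i) applied to $D \in \cC_{N^{G}(H)} \subset \cC_{H} \subset \cD_{(H)}$, for which $I_{D} = 0$ forces the characterization to reduce to (a).

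The substance of the proof is part (ii). Given $f \in (E/(H\cap G^{\der}))^{\vee}$ and $D \in \cD_{(H)}$, I construct a character $\phi$ on $D \cap E$, trivial on $D^{\der}\cap E$, satisfying $\phi|_{D \cap gHg^{-1}} = (f \circ \Ad(g^{-1}))|_{D \cap gHg^{-1}}$ for each $g \in R_{G}(D, E)$. On the subgroup $S \subset D \cap E$ generated by $D^{\der}\cap E$ together with all $D \cap gHg^{-1}$, I set $\phi$ to be trivial on $D^{\der}\cap E$ and $\phi(x) := f(g^{-1}xg)$ for $x \in D \cap gHg^{-1}$. The crux is well-definedness, verified by a uniform $G^{\ab}$-calculation: for $x \in D \cap gHg^{-1} \cap g'Hg'^{-1}$, the ratio $z := (g^{-1}xg)(g'^{-1}xg')^{-1}$ lies in $H$ (both factors in $H$) and projects to $\bar x - \bar x = 0$ in $G^{\ab}$, so $z \in H \cap G^{\der}$ and $f(z) = 0$; for a product $s = \prod_{i} y_{g_{i}} \in D^{\der}\cap E$ with $y_{g_{i}} \in D \cap g_{i}Hg_{i}^{-1}$, the element $\prod_{i} g_{i}^{-1} y_{g_{i}} g_{i}$ lies in $H$ and projects to $\bar s = 0$ (as $s \in G^{\der}$), so it also lies in $H \cap G^{\der}$ where $f$ vanishes. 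Extending $\phi$ from $S$ to all of $D \cap E$ by divisibility of $\Q/\Z$ yields the required character, and part (i) then gives $f \in \sS_{H,\cD}^{2}$. The primary challenge is precisely this uniform verification of well-definedness, but the $G^{\ab}$-level computation handles both consistency checks cleanly.
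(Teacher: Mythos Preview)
Your arguments for parts (i) and (iii) follow the same lines as the paper's. One minor remark on (i): you say Lemma~\ref{lem:rsht}(i) places $\Ima(\partial_{D})$ \emph{inside} $I_{D}$, but for the characterization \eqref{eq:ssch} to be an equality (rather than just an inclusion $\sS\subset\{\ldots\}$) you need $\Ima(\partial_{D})=I_{D}$. This is exactly where the abelian hypothesis on $E$ is used: it forces $D\cap \Ad(gh)H=D\cap gHg^{-1}$ for every $h$, so each $C_{D,g}$ in Lemma~\ref{lem:rsht}(i) collapses to the single term $((D\cap E)/(D\cap gHg^{-1}))^{\vee}$.

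For part (ii) your approach is correct but genuinely different from the paper's. The paper argues globally rather than locally at each $D$: the image of $H^{1}(G,\Ind_{E}^{G}J_{E/H})$ under $\Ind_{E}^{G}\partial_{E/H}^{1}$ lands in $\sS_{H,\cD}^{2}$ (since its further image under $j_{G/E,*}^{2}$ is $\Ima(\partial_{G})\subset\Ker(j_{*})\subset\Sel$) and is identified with $(E/H)^{\vee}$; likewise the image of $H^{2}(G,\Z)$ under $\varepsilon_{G/E,*}^{\circ,2}$ lands in $\sS_{H,\cD}^{2}$ (it dies under $j_{G/E,*}^{2}$) and is identified with $(E/(E\cap G^{\der}))^{\vee}$; the sum of these two subgroups is $(E/(H\cap G^{\der}))^{\vee}$ because $H\subset E$. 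Your route instead verifies the membership condition in \eqref{eq:ssch} directly for each $D$ by constructing the diagonal piece $\phi$ by hand, with consistency checked via a $G^{\ab}$ computation. The paper's argument is shorter and explains structurally why the bound $(E/(H\cap G^{\der}))^{\vee}$ arises (it is precisely the sum of the two ``obvious'' global sources of Selmer classes), while your argument has the merit of being self-contained at the character level and of making the role of the hypothesis $f\!\mid_{H\cap G^{\der}}=0$ completely explicit.
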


\begin{proof}
(i): We follow the proof of \cite[Proposition 6.14]{Oki2025a}. Recall that one has a commutative diagram as follows for each $D\in \cD_{(H)}$: 
\begin{equation*}
\xymatrix@C=50pt{
&H^{2}(G,\Ind_{N}^{G}\Z) \ar[r]^{j_{G/N,*}^{2}}\ar[d]^{\Res_{G/D}}&H^{2}(G,J_{G/N}) \ar[r]\ar[d]^{\Res_{G/D}}&0\\
H^{2}(D,\Z)\ar[r]^{\hspace{5pt}\varepsilon_{D}:=\varepsilon_{G/N,*}^{2}\hspace{20pt}}&H^{2}(D,\Ind_{N}^{G}\Z)\ar[r]^{j_{G/N,*}^{2}}&H^{2}(D,J_{G/N})\ar[r]&0. 
}
\end{equation*}
Moreover, by Lemma \ref{lem:jgcg}, the homomorphism $\delta_{D}$ in Lemma \ref{lem:rsht} (i) is factored as
\begin{equation*}
H^{1}(D,\Ind_{N}^{G}J_{N/H})\xrightarrow{\Ind_{N}^{G}\delta_{N/H}^{1}} H^{2}(D,\Ind_{N}^{G}\Z)\xrightarrow{j_{G/N,*}^{2}} H^{2}(D,J_{G/N}). 
\end{equation*}
Consequently, we obtain an equality
\begin{equation*}
\sS_{H}^{2}(G,\Ind_{N}^{G}\Z)=\{f\in H^{2}(G,\Ind_{N}^{G}\Z)\mid \Res_{G/D}(f)\in \Ima(\delta_{D})+\Ima(\varepsilon_{D})\text{ for any }D\in \cD_{(H)}\}. 
\end{equation*}
Now, fix $D\in \cD_{(H)}$. Then we have
\begin{equation*}
\Ima(\delta_{D})=\bigoplus_{g\in R(D,N)}C_{D,g}=\cI_{D}. 
\end{equation*}
Furthermore, since $N$ is abelian, we have $D\cap \Ad(gh)H=D\cap gHg^{-1}$ for any $D\in \cD_{(H)}$, $g\in G$ and $h\in H$. This implies an equality
\begin{equation*}
\bigoplus_{g\in R_{G}(D,N)}C_{D,g}=\cI_{D}. 
\end{equation*}
On the other hand, by Lemma \ref{lem:rsch}, $H^{2}(D,\Ind_{N}^{G}\Z)\cong C_{D,N}$ in Lemma \ref{lem:rsht} (i) induces an isomorphism
\begin{equation*}
\Ima(\varepsilon_{D})\cong \Delta_{D}. 
\end{equation*}
Hence, applying Lemma \ref{lem:rsht} (ii), we obtain the desired isomorphism. 

(ii): By Lemma \ref{lem:jgcg}, the homomorphism $\delta_{G}$ in Lemma \ref{lem:rsht} (i) is factored as
\begin{equation*}
H^{1}(G,\Ind_{N}^{G}J_{N/H})\xrightarrow{\Ind_{N}^{G}\delta_{N/H}^{1}} H^{2}(G,\Ind_{N}^{G}\Z)\xrightarrow{j_{G/N,*}^{2}} H^{2}(G,J_{G/N}). 
\end{equation*}
On the other hand, one has a commutative diagram
\begin{equation*}
\xymatrix@C=45pt{
H^{1}(G,\Ind_{N}^{G}J_{N/H})\ar[r]^{\hspace{8pt}\Ind_{N}^{G}\delta_{N/H}^{1}}\ar[d]^{\cong}& H^{2}(G,\Ind_{N}^{G}\Z)\ar[d]^{\cong}\\
(N/H)^{\vee}\ar@{^{(}->}[r]& N^{\vee}. 
}
\end{equation*}
Moreover, $\varepsilon_{G/N}^{\circ}$ induces a commutative diagram
\begin{equation}\label{eq:icss}
\begin{gathered}
\xymatrix@C=35pt{
H^{2}(G,\Z)\ar[r]^{\varepsilon_{G/N,*}^{2}\hspace{15pt}}\ar[d]^{\cong}& H^{2}(G,\Ind_{N}^{G}\Z)\ar[d]^{\cong}\\
G^{\vee}\ar[r]^{f \mapsto f\mid_{N}}& N^{\vee}. 
}
\end{gathered}
\end{equation}
The image of the bottom horizontal map in \eqref{eq:icss} coincides with $(N/(N\cap G^{\der}))^{\vee}$. Hence, the assertion holds since $H<N$. 

(iii): Pick $f\in H^{2}(G,\Ind_{N}^{G}\Z)$. By definition, we have $\cI_{D}=0$ for any $D\in \cC_{N^{G}(H)}$. Hence, $f$ satisfies (a) if and only if $f(n)=f(gng^{-1})$ for all $n\in N^{G}(H)$ and $g\in R_{G}(D,N)\in \Delta_{D}$. This is equivalent to $f([N^{G}(H),G])=0$, since $R_{G}(D,N)$ also represents the quotient $G/N$. Therefore, the assertion holds. 
\end{proof}

For use of Lemma \ref{lem:sssl}, we need a study of restriction maps for the third cohomology groups in coefficient $\Z$ for certain finite groups. The following will be helpful in investigating them. 

\begin{prop}[{\cite[Theorem 2 (ii)]{Tahara1972}}]\label{prop:thrt}
Let $G=N\rtimes H$, where $N$ and $H$ are finite groups. For a trivial $G$-module $A$, there is an exact sequence of abelian groups
\begin{equation*}
0\rightarrow H^{1}(H,\Hom(N,A))\rightarrow H_{(H)}^{2}(G,A)\xrightarrow{\Res_{G/N}} H^{2}(N,A)^{H}\rightarrow H^{2}(H,\Hom(N,A)). 
\end{equation*}
Here, $H_{(H)}^{2}(G,A)$ is the kernel of the restriction map $\Res_{G/H}\colon H^{2}(G,A)\rightarrow H^{2}(H,A)$. 
\end{prop}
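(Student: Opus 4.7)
The plan is to obtain this as a consequence of the Lyndon--Hochschild--Serre spectral sequence
\begin{equation*}
E_{2}^{p,q}=H^{p}(H,H^{q}(N,A))\Rightarrow H^{p+q}(G,A)
\end{equation*}
associated to the extension $1\to N\to G\to H\to 1$. Since $A$ is a trivial $G$-module, $H^{0}(N,A)=A$ and $H^{1}(N,A)=\Hom(N,A)$, with the induced $H=G/N$-action being the one coming from conjugation in $G$, which matches the semidirect product structure. So the relevant corner of the $E_{2}$-page consists of $H^{p}(H,A)$, $H^{p}(H,\Hom(N,A))$, and $H^{p}(H,H^{2}(N,A))$, together with the crucial differential $d_{2}\colon E_{2}^{0,2}\to E_{2}^{2,1}$, i.e.\ $H^{2}(N,A)^{H}\to H^{2}(H,\Hom(N,A))$.

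The splitting of the extension is the key input. The section $H\hookrightarrow G$ gives a retraction of $G\twoheadrightarrow H$, so $\Res_{G/H}\colon H^{\ast}(G,A)\to H^{\ast}(H,A)$ is a left inverse of the inflation $\Inf_{G/H}$; in particular $\Inf_{G/H}$ is split injective and
\begin{equation*}
H^{2}(G,A)=\Ima(\Inf_{G/H})\oplus \Ker(\Res_{G/H})=\Ima(\Inf_{G/H})\oplus H_{(H)}^{2}(G,A).
\end{equation*}
From this splitting one further deduces that on the spectral sequence, every differential $d_{r}$ landing in the bottom row $E_{r}^{\ast,0}$ must vanish (otherwise the corresponding class in $H^{\ast}(H,A)$ would fail to survive, contradicting its being in the image of $\Inf_{G/H}$). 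Thus $E_{\infty}^{p,0}=E_{2}^{p,0}=H^{p}(H,A)$, and so on the filtration $F^{\bullet}$ of $H^{2}(G,A)$ we have $F^{2}H^{2}=\Ima(\Inf_{G/H})$ and therefore
\begin{equation*}
H_{(H)}^{2}(G,A)\cong H^{2}(G,A)/F^{2}H^{2},
\end{equation*}
which fits into a short exact sequence $0\to E_{\infty}^{1,1}\to H_{(H)}^{2}(G,A)\to E_{\infty}^{0,2}\to 0$.

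It remains to identify the two extreme terms. Since the spectral sequence is concentrated in the first quadrant, $E_{\infty}^{1,1}$ is the kernel of $d_{2}\colon E_{2}^{1,1}\to E_{2}^{3,0}$; but $d_{2}$ into the bottom row is zero by the previous paragraph, so $E_{\infty}^{1,1}=E_{2}^{1,1}=H^{1}(H,\Hom(N,A))$. Similarly $E_{\infty}^{0,2}=\Ker(d_{2}\colon H^{2}(N,A)^{H}\to H^{2}(H,\Hom(N,A)))$, because the only other potentially nontrivial differential out of $E_{r}^{0,2}$ is $d_{3}\colon E_{3}^{0,2}\to E_{3}^{3,0}$, which again vanishes. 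Splicing the short exact sequence with the inclusion $E_{\infty}^{0,2}\hookrightarrow H^{2}(N,A)^{H}$ and the definition of $d_{2}$ yields the four-term exact sequence in the statement, the middle map $H_{(H)}^{2}(G,A)\to H^{2}(N,A)^{H}$ being nothing but the edge homomorphism $\Res_{G/N}$.

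The main obstacle is to justify cleanly that the splitting forces the differentials into the bottom row to vanish at every page; the cleanest way is to observe that the section induces a morphism of spectral sequences from the trivial one computing $H^{\ast}(H,A)$ into the LHS spectral sequence, whose composite with the edge projection is the identity, forcing the bottom row to be permanent. Everything else is routine diagram chasing.
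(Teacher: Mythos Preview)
The paper does not supply its own proof of this proposition; it is quoted directly from \cite[Theorem~2~(ii)]{Tahara1972}. Your derivation via the Lyndon--Hochschild--Serre spectral sequence is correct and is the standard route to this exact sequence: the key observation that the section $H\hookrightarrow G$ makes inflation split injective, hence forces $E_{2}^{p,0}\twoheadrightarrow E_{\infty}^{p,0}$ to be an isomorphism (so all differentials into the bottom row vanish), is exactly right, and the identification of the edge map with $\Res_{G/N}$ is standard.
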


\begin{prop}\label{prop:bgtr}
Let $p>2$ be an odd prime number, and $n\in \{1,\ldots,p\}$. Then the restrcition map 
\begin{equation*}
\Res_{P_{n}/N_{n}}\colon H^{3}(P_{n},\Z)\rightarrow H^{3}(N_{n},\Z)
\end{equation*}
is injective. 
\end{prop}

\begin{proof}
It suffices to prove the injectivity of the restriction map
\begin{equation*}
H^{2}(P_{n},\Q/\Z)\rightarrow H^{2}(N_{n},\Q/\Z). 
\end{equation*}

\emph{Case 1.~$n=1$. }By definition we have $P_{1}\cong C_{p^{2}}$. This implies $H^{2}(P_{1},\Q/\Z)=0$. 

\emph{Case 2.~$n=2$. }By Lemma \ref{lem:pnpr} (ii), $P_{2}$ is extraspecial of exponent $p^{2}$. Hence, we have $H^{2}(P_{2},\Q/\Z)=0$ by \cite[Theorem 3.3.6 (ii)]{Karpilovsky1987}. 

\emph{Case 3.~$n=p$. }By definition, we have $P_{p}=N_{p}\rtimes \langle \rho_{2} \rangle$. Moreover, if we regard $N_{p}$ as an $\Fp[\rho_{2}]$-module, there is an isomorphism $N_{p}\cong \Fp[\rho_{2}]$. This implies $H^{2}(\langle \rho_{2}\rangle,N_{p}^{\vee})=0$. On the other hand, we have $H_{\langle \rho_{2} \rangle}^{2}(P_{p},\Q/\Z)=H^{2}(P_{p},\Q/\Z)$ since $H^{2}(\langle \rho_{2}\rangle,\Q/\Z)=0$. Therefore, the assertion follows from Proposition \ref{prop:thrt}. 

\emph{Case 4.~Remaining cases. }This case happens only when $p\geq 5$ and $n\in \{3,\ldots,p-1\}$. Then, Lemma \ref{lem:pnpr} (iii) gives an exact sequence
\begin{equation*}
0\rightarrow B_{0}(P_{n})\rightarrow H^{2}(P_{n},\Q/\Z)\rightarrow H^{2}(N_{n},\Q/\Z). 
\end{equation*}
Here, $B_{0}(P_{n})$ is the Bogomolov multipler of $P_{n}$, that is, 
\begin{equation*}
B_{0}(P_{n}):=\Ker\left((\Res_{G/A})_{A}\colon H^{2}(P_{n},\Q/\Z)\rightarrow \bigoplus_{A<G,A^{\der}=\{1\}}H^{2}(A,\Q/\Z)\right). 
\end{equation*}
Recall that $P_{n-1}$ is a maximal class, which follow from Lemma \ref{lem:pnpr} (iii). On the other hand, consider the descending sequence
\begin{equation*}
P_{n}>N_{n}>N_{n-1}>\cdots >N_{1}>N_{0}=\{1\},
\end{equation*}
which is a chief series. Then, it satisfies $P_{n-1}^{\der}=N_{n-2}$ and $[P_{i}:G]=P_{i-1}$ for any $i\in \{1,\ldots,n-1\}$, which follows from Lemma \ref{lem:pnpr} (iii). Moreover, we have $N_{n-1}=Z_{P_{n-1}}(N_{n-2}/N_{n-4})$. In addition, since $N_{n-1}$ is abelian, one has $[N_{n-1},N_{n-1}]=[N_{n-1},N_{2}]=\{1\}$. Hence, we obtain $B_{0}(P_{n-1})=0$ by \cite[Theorem 1.1]{FernandezAlcober2020}. This completes the proof. 
\end{proof}

\begin{prop}\label{prop:g1vn}
Let $p$ be a prime number, and $n\in \{1,\ldots,p\}$. Then, we have
\begin{equation*}
\sS_{H_{n}}^{2}(P_{n},\Ind_{N_{n}}^{P_{n}}\Z)\cong (N_{n}/(H_{n}\cap N_{n-1}))^{\vee}. 
\end{equation*}
\end{prop}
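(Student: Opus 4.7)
The plan is to apply Lemma \ref{lem:sshe} to $G=P_n$, $E=E_n$, $H=H_n$; its hypotheses are satisfied because $E_n$ is elementary abelian and normal in $P_n$ (Lemma \ref{lem:enel} (i)--(ii)) and $\Sha_{\omega}^2(E_n, J_{E_n/H_n})=0$ by Proposition \ref{prop:btls}, since $(E_n:H_n)=p$. Part (ii) of Lemma \ref{lem:sshe}, together with $P_n^{\der}=E_{n-1}$ from Lemma \ref{lem:enel} (iii), already gives the inclusion
\[
(E_n/(H_n\cap E_{n-1}))^{\vee}\subseteq \sS_{H_n}^2(P_n,\Ind_{E_n}^{P_n}\Z),
\]
so only the reverse inclusion needs work.

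For that, I would unravel the defining condition via Lemma \ref{lem:sshe} (i). Every $D\in \cC_{H_n}$ sits inside the abelian normal subgroup $E_n$, so $D\cap E_n=D$, $D^{\der}=1$, and the double coset space $D\backslash P_n/E_n$ coincides with $P_n/E_n\cong C_p$, for which $\{\tau^i\}_{0\leq i<p}$ is a set of representatives. Extracting the diagonal component at $i=0$ pins the diagonal element down to $f|_D$, and the remaining conditions collapse to the requirement that $f(\tau^{-i}d\tau^i)=f(d)$ for every $D\in \cC_{H_n}$, every $i$, and every $d\in D\cap \tau^iH_n\tau^{-i}$. Writing $E_n$ additively, this says exactly that $f$ annihilates the subgroup $S\subseteq E_n$ generated by all $[g,d]:=g^{-1}dg-d$ with $d\in H_n$ and $g^{-1}dg\in H_n$. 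The proposition therefore reduces to the equality $S=H_n\cap E_{n-1}$.

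The inclusion $S\subseteq H_n\cap E_{n-1}$ is immediate: each such $[g,d]=(g^{-1}dg)d^{-1}$ lies in $H_n$ by hypothesis and in $[E_n,P_n]=E_{n-1}$ by Lemma \ref{lem:enel} (iii). For the reverse, I would use Lemma \ref{lem:elst} (ii)--(iii) to compute the conjugation action of $\tau=z_{p-1}\rho_2$ on $E_n$: since $z_{p-1}$ commutes with each $\delta_k$, this action equals conjugation by $\rho_2$, which on the additive basis $\{\delta_1,\ldots,\delta_n\}$ acts as $I+N$ with $N\delta_j=\delta_{j-1}$ and $\delta_0:=0$. A telescoping calculation then yields $\tau^{-1}(\delta_j+\delta_{j+1})\tau=\delta_{j+1}$, hence $[\tau,\delta_j+\delta_{j+1}]=-\delta_j$, for each $j\in\{2,\ldots,n-1\}$. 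Since both $\delta_j+\delta_{j+1}$ and $\delta_{j+1}$ lie in $H_n$, this puts $-\delta_j\in S$ and yields $\langle \delta_2,\ldots,\delta_{n-1}\rangle=H_n\cap E_{n-1}\subseteq S$.

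The only nonformal step is the choice of witness $d=\delta_j+\delta_{j+1}$: the naive choice $d=\delta_j$ fails because $\tau^{-1}\delta_j\tau$ acquires an alternating tail ending in a nonzero $\delta_1$-term and so leaves $H_n$, while $d=\delta_j+\delta_{j+1}$ is tailored to cancel exactly that tail. Once this identity is in hand, the rest of the argument is routine bookkeeping.
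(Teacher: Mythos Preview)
Your proof is correct and follows essentially the same strategy as the paper: both apply Lemma \ref{lem:sshe} (i)--(ii), use $P_n^{\der}=E_{n-1}$ for the easy inclusion, and then identify the annihilator subgroup inside $E_n$ by an explicit commutator computation exploiting that $\tau$ acts on $E_n$ as $I+N$.

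The only tactical difference is the choice of witnesses for the hard inclusion. The paper fixes the single test subgroup $D=\langle\delta_n\rangle$ and lets the conjugator vary: it checks that $\rho_2^iD\rho_2^{-i}\subset H_n$ precisely for $0\le i\le n-2$, so the constraint $(f\circ\Ad(g^{-1}))_g\in\Delta_D+I_D$ forces $f([\delta_n,\rho_2^i])=0$ for those $i$, and these commutators (upper-triangular in the basis $\delta_2,\ldots,\delta_{n-1}$) already span $H_n\cap E_{n-1}$. You instead fix the conjugator $\tau$ and vary the test element, using the telescoping identity $\tau^{-1}(\delta_j+\delta_{j+1})\tau=\delta_{j+1}$ to extract $\delta_j$ directly for $2\le j\le n-1$. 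Your route is a bit more explicit about why the naive choice $d=\delta_j$ fails and is more careful in using $\tau$ rather than $\rho_2$ (which is not literally in $P_n$) as a coset representative; the paper's route is shorter because a single $D$ does all the work.
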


\begin{proof}
Recall Lemma \ref{lem:enel} (iii) that $P_{n}^{\der}=N_{n-1}$ holds. Hence, Lemma \ref{lem:sshe} (ii) implies that $\sS_{{H}_{n}}^{2}(P_{n},\Ind_{N_{n}}^{P_{n}}\Z)$ contains $(N_{n}/(H_{n}\cap N_{n-1}))^{\vee}$. On the other hand, one has the following by definition: 
\begin{equation*}
H_{n}\cap N_{n-1}=\langle [\eta_{n},\rho_{2}],\ldots,[\eta_{n},\rho_{2}^{n-2}]\rangle. 
\end{equation*}
Take any $f\in \sS_{H_{n}}^{2}(P_{n},\Ind_{N_{n}}^{P_{n}}\Z)$, and regard it as a character of $N_{n}$. Put $D:=\langle \eta_{n}\rangle$, then $\langle \rho_{2}\rangle$ is a complete representative of $D\backslash P_{n}/N_{n}$ in $P_{n}$. Moreover, for $i\in \{0,\ldots,p-1\}$, we have $\rho_{2}^{i}D\rho_{2}^{-i}\subset H_{n}$ if and only if $i\leq n-2$. Hence, we have
\begin{equation*}
\Delta_{D}+\cI_{D}=\left\{(\psi_{g})_{g}\in \bigoplus_{g\in \langle \rho_{2}\rangle}D^{\vee}\,\,\middle|\,\, \psi_{1}=\cdots=\psi_{\rho_{2}^{n-2}}\right\}, 
\end{equation*}
where $\Delta_{D}$ and $\cI_{D}$ are as in Lemma \ref{lem:sshe} (i). In particular, we obtain an equality $f(\eta_{n})=f(\rho_{2}^{i}\eta_{n}\rho_{2}^{-i})$, that is, 
\begin{equation*}
f([\eta_{n},\rho_{2}^{i}])=0
\end{equation*}
for any $i\in \{0,\ldots,n-2\}$. Hence, the proof is complete. 
\end{proof}

\begin{prop}\label{prop:ppsh}
Let $p>2$ be an odd prime number, and $n\in \{1,\ldots,p\}$. Set 
\begin{equation*}
\widetilde{H}_{n}:=\langle \eta_{p},\rho_{2}\eta_{p}\rho_{2}^{-1},\ldots,\rho_{2}^{n-2}\eta_{p}\rho_{2}^{-(n-2)}\rangle \cdot N_{p-n},
\end{equation*}
which is contained in $N_{p}$ and $P_{p}$. 
\begin{enumerate}
\item The subgroup $\widetilde{H}_{n}$ is the preimage of $H_{n}$ under the surjection $\pi_{n}\colon P_{p}\twoheadrightarrow P'_{n}$ in Lemma \ref{lem:pmpr} \emph{(ii)}. 
\item There is an isomorphism
\begin{equation*}
\sS_{\widetilde{H}_{n}}^{2}(P_{p},\Ind_{N_{p}}^{P_{p}}\Z)\cong 
\begin{cases}
(N_{p}/(\widetilde{H}_{n}\cap N_{p-1}))^{\vee}&\text{if }n\geq 3;\\
(N_{p}/N_{p-n-1})^{\vee}&\text{if }n\leq 2. 
\end{cases}
\end{equation*}
\end{enumerate}
\end{prop}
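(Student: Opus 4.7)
For part (i), I will verify directly. By Lemma~\ref{lem:pmpr}(ii), the map $\pi_n \colon P_p \twoheadrightarrow P'_n$ has kernel $E_{p-n}$ and sends $\delta_k \mapsto \delta_{k-(p-n)}$ for $k \geq p-n$ (and to the identity otherwise). Iterating $[\delta_k, \rho_2] = \delta_{k-1}$ from Lemma~\ref{lem:elst}(iii) yields $\rho_2^i \delta_p \rho_2^{-i} = \sum_{j=0}^{i} \binom{i}{j} \delta_{p-j}$ (written additively in $E_p$). Applying $\pi_n$, the images for $0 \leq i \leq n-2$ are unitriangular $\Fp$-linear combinations of $\delta_n, \delta_{n-1}, \ldots, \delta_2$, so their span equals $\langle \delta_2, \ldots, \delta_n \rangle = H_n$. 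Since $\ker \pi_n = E_{p-n} \subseteq \widetilde{H}_n$, this gives $\widetilde{H}_n = \pi_n^{-1}(H_n)$.

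For part (ii), the plan is to apply Lemma~\ref{lem:sshe} after the identification $E_p \cong \Fp[\rho_2]/(u^p)$ via Lemma~\ref{lem:pmpr}(ii), where $u := \rho_2 - 1$ and $\delta_k \leftrightarrow u^{p-k}$. Under this identification, $\widetilde{H}_n$ becomes the $\Fp$-subspace $\{d : [u^{n-1}] d = 0\}$, i.e., the kernel of the $u^{n-1}$-coefficient functional. The lower bound $(E_p/(\widetilde{H}_n \cap E_{p-1}))^\vee \subseteq \sS$ follows from Lemma~\ref{lem:sshe}(ii) together with $P_p^{\der} = E_{p-1}$ (Lemma~\ref{lem:enel}(iii)). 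A short polynomial-in-$i$ calculation shows $N^{P_p}(\widetilde{H}_n) = E_{p-n}$, giving $[N^{P_p}(\widetilde{H}_n), P_p] = E_{p-n-1}$, so Lemma~\ref{lem:sshe}(iii) yields the upper bound $\sS \subseteq (E_p/E_{p-n-1})^\vee$.

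For $n \leq 2$ I will verify the upper bound is tight: by Lemma~\ref{lem:sshe}(i), for each cyclic $D = \langle d \rangle \subseteq \widetilde{H}_n$, any $f$ vanishing on $E_{p-n-1}$ satisfies the local condition. The key observation is that if the $u^0$-coefficient $d_0$ of $d$ is nonzero then $I_d := \{i : [u^{n-1}]\rho_2^{-i} d = 0\}$ is a singleton (the vanishing condition on $i$ has only the root $i = 0$, either because $1 \notin \widetilde{H}_1 = E_{p-1}$ for $n = 1$, or because $-d_0 i = 0$ forces $i = 0$ for $n = 2$), so no constraint arises; and if $d_0 = 0$ then $d \in E_{p-n}$, for which the differences $(\rho_2^{-i} - \rho_2^{-i'}) d$ lie in $u \cdot E_{p-n} = E_{p-n-1}$, automatically annihilated by $f$. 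For $n \geq 3$ the plan is to match the lower bound via three substeps: (a) cyclic $D \subseteq E_{p-n}$ yields $f(u^j) = 0$ for $j \geq n + 1$; (b) the choice $D = \langle \delta_p \rangle$ has $I_d$ equal to the root set $\{0, -1, \ldots, -(n-2)\}$ of $\binom{-i}{n-1}$, and the condition $f(\rho_2^{-i}) = f(1)$ expanded via $\rho_2^{-i} - 1 = \sum_{k \geq 1} \binom{-i}{k} u^k$ yields $f(u^j) = 0$ for $j \in \{1, \ldots, n-2\}$; (c) a suitably chosen $d \in \bigoplus_{k=0}^{n-2} \Fp u^k$ with $d_0 \neq 0$ and with $c_{n-1}(i, d) := \sum_k d_k \binom{-i}{n-1-k}$ possessing a root $r \in \Fp^\times$ at which $c_n(r, d) \neq 0$ collapses, under (a)--(b), the local condition to $f(u^n) (c_n(r, d) - c_n(0, d)) = 0$, forcing $f(u^n) = 0$.

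The hard part is substep (c). For $n = 3$, the explicit choice $d = 2 + (r+1) u$ gives $c_2(r, d) = 0$ for any $r \in \Fp^\times$, and a calculation yields $c_3(r, d) = r(r^2 - 1)/6$, nonzero for $r \in \Fp \setminus \{0, \pm 1\}$, which is non-empty whenever $p \geq 5$; the degenerate case $p = n = 3$ is automatic since $u^p = 0$ makes $f(u^n)$ vacuous. For $n \geq 4$ I expect a similar construction by solving a linear system in the $d_k$'s to place $n - 1$ distinct roots of $c_{n-1}$ in $\Fp$ (feasible for $p$ sufficiently large relative to $n$), followed by a Vandermonde-type non-degeneracy argument ensuring $c_n$ is non-constant on those roots; small-$p$ edge cases near $p = n$ or $p = n + 1$ will require separate verification.
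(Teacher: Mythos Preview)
Your proof of (i), your lower bound via Lemma~\ref{lem:sshe}(ii), your upper bound $N^{P_p}(\widetilde H_n)=E_{p-n}$ via Lemma~\ref{lem:sshe}(iii), your treatment of $n\le 2$, and substeps (a)--(b) for $n\ge 3$ all match the paper's argument (and your polynomial model $E_p\cong\Fp[\rho_2]/(u^p)$ is a clean way to organize it).

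The gap is substep (c). For $n\ge 4$ you only sketch an existence argument (``solve a linear system \ldots\ Vandermonde-type non-degeneracy \ldots\ small-$p$ edge cases''), and this is where the paper has a concrete idea you are missing. The paper does \emph{not} search for a generic $d$; it takes the single explicit element
\[
d\;=\;\delta_p^{-n}\cdot(\rho_2\delta_p\rho_2^{-1})\;\longleftrightarrow\;(1-n)+u
\]
in your polynomial coordinates, together with the single conjugate $g=\rho_2^{n-1}$. In your notation this is $d_0=1-n$, $d_1=1$, $d_k=0$ for $k\ge 2$, and $r=-(n-1)$. One checks immediately that
\[
c_{n-1}(r,d)=[u^{n-1}]\bigl((1+u)^{n-1}((1-n)+u)\bigr)=(1-n)\tbinom{n-1}{n-1}+\tbinom{n-1}{n-2}=0,
\]
so $r\in I_d$, while
\[
c_n(r,d)=[u^{n}]\bigl((1+u)^{n-1}((1-n)+u)\bigr)=(1-n)\tbinom{n-1}{n}+\tbinom{n-1}{n-1}=1,
\]
and $c_n(0,d)=[u^n]d=0$. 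Thus $c_n(r,d)-c_n(0,d)=1$ is a unit for \emph{every} $n\ge 3$ and every odd $p$, and your own reduction then forces $f(u^n)=0$ with no edge cases whatsoever. (The paper packages this as an explicit identity expressing $\delta_{p-n}$ in terms of $\rho_2^{n-1}d\rho_2^{-(n-1)}$, $\delta_p^{n-1}$, and the commutators $[\delta_p,\rho_2^i]$ already killed in (b).) So your framework is right; what you were missing is simply this uniform choice of $d$ and $r$, which replaces your proposed Vandermonde/edge-case analysis entirely.
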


\begin{proof}
(i): By definition, we have $\widetilde{H}_{n}=\langle \eta_{p},\eta_{p-1},\ldots,\eta_{p-(n-2)}\rangle \cdot N_{p-n}$. 
Hence, the assertion follows from the definition of $\pi_{n}$, that is, $\pi_{n}(\eta_{n'})=\eta_{n'-n}$ for any $n'\in \{n+1,\ldots,p\}$. 

(ii): Recall Lemma \ref{lem:enel} (iii) that $P_{p}^{\der}=N_{p-1}$ holds. Hence, Lemma \ref{lem:sshe} (ii) implies that $\sS_{\widetilde{H}_{n}}^{2}(P_{p},\Ind_{N_{p}}^{P_{p}}\Z)$ contains $(N_{p}/(\widetilde{H}_{n}\cap N_{p-1}))^{\vee}$. In addition, we have
\begin{equation*}
\widetilde{H}_{n}\cap N_{p-1}=\langle [\eta_{p},\rho_{2}],\ldots,[\eta_{p},\rho_{2}^{n-2}]\rangle \cdot N_{p-n}
\end{equation*}
since $[N_{p},P_{p}]=N_{p-1}$. On the other hand, $\sS_{\widetilde{H}_{n}}^{2}(P_{p},\Ind_{N_{p}}^{P_{p}}\Z)$ is contained in $(N_{p}/N_{p-3})^{\vee}$, which follows from Lemma \ref{lem:sshe} (iii). In addition, note that $\langle \rho_{2}\rangle$ is a complete representative of $D\backslash P_{p}/N_{p}$ for any $D\in \cC_{\widetilde{H}_{n}}$. 

\vspace{6pt}
\emph{Case 1.~$n\geq 3$. }
Pick $f\in \sS_{\widetilde{H}_{n}}^{2}(P_{p},\Ind_{N_{p}}^{P_{p}}\Z)$. It suffices to prove $f([\eta_{p},\rho_{2}^{i}])=0$ for all $i\in \{1,\ldots,n-2\}$ and $f(\eta_{p-n})=0$. Set $D_{0}:=\langle \eta_{p}\rangle$. By direct computation, for $i\in \{0,\ldots,p-1\}$, we have $\rho_{2}^{i}D_{0}\rho_{2}^{-i}\subset \widetilde{H}_{n}$ if and only if $i\in \{1,\ldots,n-2\}$. Hence, we obtain an equality
\begin{equation*}
\Delta_{D_{0}}+\cI_{D_{0}}=\left\{(\psi_{g})_{g\in \langle \rho_{2}\rangle}\in \bigoplus_{\langle \rho_{2}\rangle}D_{0}^{\vee}\,\,\middle|\,\, \psi_{1}=\psi_{\rho_{2}}=\cdots=\psi_{\rho_{2}^{n-2}}\right\},
\end{equation*}
where $\Delta_{D_{0}}$ and $\cI_{D_{0}}$ is as in Lemma \ref{lem:sshe}. As a consequence, we have $f(\eta_{p})=f(\rho_{2}^{i}\eta_{p}\rho_{2}^{-i})$, that is,
\begin{equation}\label{eq:fdrt}
f([\eta_{p},\rho_{2}^{i}])=0,
\end{equation}
for every $i\in \{1,\ldots,n-2\}$. Now, recall the definition of $\eta_{p-n}$, and hence the following hold: 
\begin{equation}\label{eq:dpnd}
\eta_{p-n}=(\rho_{2}^{n-1}(\eta_{p}^{-n}\rho_{2}\eta_{p}\rho_{2}^{-1})\rho_{2}^{-(n-1)})\cdot \eta_{p}^{n-1}\cdot \prod_{i=1}^{n-2}[\eta_{p},\rho_{2}^{i}]^{a_{n,i}}. 
\end{equation}
Let $D_{1}:=\langle \eta_{p}^{-n}\rho_{2}\eta_{p}\rho_{2}^{-1} \rangle$, which satisfies $\rho_{2}^{n-1}D_{1}\rho_{2}^{-(n-1)}\subset \widetilde{H}_{n}$ by \eqref{eq:fdrt}. Hence, if an element $(\psi_{g})_{g\in \langle \rho_{2}\rangle}$ of $\bigoplus_{\langle \rho_{2}\rangle}D_{1}^{\vee}$ lies in $\Delta_{D_{1}}+\cI_{D_{1}}$, then $\psi_{1}=\psi_{\rho_{2}^{n-1}}$ holds. Consequently, one has 
\begin{equation*}
f(\rho_{2}^{n-1}(\eta_{p}^{-n}\rho_{2}\eta_{p}\rho_{2}^{-1})\rho_{2}^{-(n-1)})=f(\eta_{p}^{-n})+f(\rho_{2}\eta_{p}\rho_{2}^{-1})=(1-n)f(\eta_{p}),
\end{equation*}
where the second equality follows from \eqref{eq:fdrt}. Combining this with \eqref{eq:dpnd}, we obtain
\begin{equation*}
f(\eta_{p-n})=\sum_{i=1}^{n-2}a_{n,i}f([\eta_{p},\rho_{2}^{i}]). 
\end{equation*}
Now, the right-hand side of the above equality is zero by \eqref{eq:fdrt}, and therefore the proof is complete. 

\vspace{6pt}
\emph{Case 2.~$n=2$. }
Take $f\in H^{2}(P_{p},\Ind_{N_{p}}^{P_{p}}\Z)$. In this case, we have $\widetilde{H}_{n}\cap N_{p-1}=N_{p-n}$. Take $f\in \sS_{\widetilde{H}_{n}}^{2}(P_{p},\Ind_{N_{p}}^{P_{p}}\Z)$. By Lemma \ref{lem:sshe} (i), (iii), we have $f\in \sS_{\widetilde{H}_{2}}^{2}(P_{p},\Ind_{N_{p}}^{P_{p}}\Z)$ if and only if 
\begin{itemize}
\item[(1)] $f\in (N_{p}/N_{p-3})^{\vee}$; and 
\item[(2)] $(f\circ \Ad(g^{-1}))_{g\in \langle \rho_{2}\rangle}\in \Delta_{D}+\cI_{D}$ for any $D\in \cC_{\widetilde{H}_{2}}\setminus \cC_{N^{P_{p}}(\widetilde{H}_{2})}$. 
\end{itemize}
On the other hand, if $D\in \cC_{\widetilde{H}_{2}}\setminus \cC_{N^{P_{p}}(\widetilde{H}_{2})}$ and $i\in \{0,\ldots,p-1\}$ satisfy $\rho_{2}^{i}D\rho_{2}^{-i}\subset \widetilde{H}_{2}$, then we must have $i=0$. This implies an equality
\begin{equation*}
\Delta_{D}+\cI_{D}=\bigoplus_{g\in \langle \rho_{2}\rangle}D^{\vee},
\end{equation*}
and hence the condition (2) is always valid. Therefore, we obtain the desired isomorphism. 

\emph{Case 3.~$n=1$. }
In this case, the assertion follows from Lemma \ref{lem:sshe} (iii), since $\widetilde{H}_{2}=N_{p-1}$ is normal in $P_{p}$. 
\end{proof}

\begin{prop}\label{prop:mxsh}
Let $p>2$ be an odd prime number. Consider a subgroup $\widetilde{H}_{2}:=\langle \eta_{p}\rangle\cdot N_{p-2}$ of $P_{p}$ and an admissible set $\widetilde{\cD}$ of subgroups in $P_{p}$ that contains its maximal subgroup. Then, there is an isomorphism
\begin{equation*}
\sS_{\widetilde{H}_{2},\widetilde{\cD}}^{2}(P_{p},\Ind_{N_{p}}^{P_{p}}\Z)\cong (N_{p}/N_{p-2})^{\vee}. 
\end{equation*}
\end{prop}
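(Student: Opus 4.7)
The plan is to build on Proposition~\ref{prop:ppsh}(ii) with $n=2$, which already identifies
\[
\sS_{\widetilde{H}_{2}}^{2}(P_{p},\Ind_{E_{p}}^{P_{p}}\Z)\cong (E_{p}/E_{p-3})^{\vee}
\]
using only cyclic constraints, and then observe that adding a single non-abelian maximal subgroup of $P_{p}$ to the test family sharpens this bound from $E_{p-3}$ to $E_{p-2}$. Since $\sS_{\widetilde{H}_{2},\widetilde{\cD}}^{2}\subset \sS_{\widetilde{H}_{2}}^{2}$, the task reduces to pinning down the precise cut.

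\emph{Lower inclusion.} By Lemma \ref{lem:enel}(iii) we have $P_{p}^{\der}=E_{p-1}$, and since $\delta_{p}\notin E_{p-1}$, direct inspection of $\widetilde{H}_{2}=\langle \delta_{p}\rangle\cdot E_{p-2}$ gives $\widetilde{H}_{2}\cap P_{p}^{\der}=E_{p-2}$. Lemma \ref{lem:sshe}(ii) therefore yields the containment
\[
(E_{p}/E_{p-2})^{\vee}\subset \sS_{\widetilde{H}_{2},\widetilde{\cD}}^{2}(P_{p},\Ind_{E_{p}}^{P_{p}}\Z).
\]

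\emph{Upper inclusion.} I would choose the maximal subgroup $M:=P'_{p-1}=E_{p-1}\cdot\langle \rho_{2}\rangle$ of $P_{p}$, which by hypothesis belongs to $\widetilde{\cD}$. Since $M\neq E_{p}$ and both have index $p$, one has $ME_{p}=P_{p}$, hence $R_{P_{p}}(M,E_{p})=\{1\}$ is a singleton and $M\cap E_{p}=E_{p-1}$. Applying Lemma \ref{lem:sshe}(i) at $D=M$, the constraint on $f\in E_{p}^{\vee}$ reads
\[
f\!\mid_{M\cap E_{p}}\in \bigl((M\cap E_{p})/(M^{\der}\cap E_{p})\bigr)^{\vee}+\bigl((M\cap E_{p})/(M\cap \widetilde{H}_{2})\bigr)^{\vee}
\]
inside $(M\cap E_{p})^{\vee}$. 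The elementary identity $A^{\perp}+B^{\perp}=(A\cap B)^{\perp}$ for subgroups $A,B$ of a finite abelian group (a consequence of the injectivity of $\Q/\Z$) together with $\widetilde{H}_{2}\subset E_{p}$ rewrites this condition as $f$ vanishing on $M^{\der}\cap \widetilde{H}_{2}$. Finally, Lemma \ref{lem:enel}(iii) gives $(P'_{p-1})^{\der}=E_{p-2}$, and since $E_{p-2}\subset \widetilde{H}_{2}$ we conclude $M^{\der}\cap \widetilde{H}_{2}=E_{p-2}$, so $f\!\mid_{E_{p-2}}=0$, i.e.\ $f\in (E_{p}/E_{p-2})^{\vee}$. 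Combining the two inclusions proves the claim.

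\emph{Main obstacle.} The proof is short once one identifies the correct maximal subgroup, so no step is genuinely delicate. The only conceptual point to notice is that the cyclic-subgroup analysis in Proposition~\ref{prop:ppsh}(ii) can at best kill $E_{p-3}$ because its Mackey combinatorics involve double cosets in $\langle \rho_{2}\rangle$, whereas the derived subgroup of $P'_{p-1}$ equals $E_{p-2}$ (strictly larger), and it is exactly this layer that the maximal-subgroup condition adds. One should also note, to justify that the hypothesis on $\widetilde{\cD}$ is not restrictive, that the same argument applies to any maximal subgroup of $P_{p}$ distinct from $E_{p}$: a brief computation using that $E_{p}$ is abelian shows that every such $M_{a}=E_{p-1}\cdot \langle \rho_{2}\delta_{p}^{a}\rangle$ with $a\in \Fp$ satisfies $M_{a}^{\der}=[\rho_{2},E_{p-1}]=E_{p-2}$.
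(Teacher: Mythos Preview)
Your argument coincides with the paper's Case~2 and is correct whenever the maximal subgroup of $P_{p}$ lying in $\widetilde{\cD}$ is different from $E_{p}$: the double coset $M\backslash P_{p}/E_{p}$ is then a singleton, and your duality identity cleanly yields $f\!\mid_{E_{p-2}}=0$. However, you have not covered the possibility that the maximal subgroup supplied by the hypothesis is $E_{p}$ itself. All $p+1$ maximal subgroups of $P_{p}$ are normal (index $p$ in a $p$-group), so each is a full conjugacy class, and an admissible $\widetilde{\cD}$ may well contain $E_{p}$ and no other maximal subgroup. Your remark at the end explicitly restricts to maximal subgroups ``distinct from $E_{p}$'', so this case is genuinely missing.

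The difficulty is that your singleton-coset argument breaks down for $D=E_{p}$: here $E_{p}\backslash P_{p}/E_{p}$ is represented by $\langle \rho_{2}\rangle$ and has $p$ elements, $D^{\der}=\{1\}$, and $I_{D}=\bigoplus_{i}(E_{p}/\rho_{2}^{i}\widetilde{H}_{2}\rho_{2}^{-i})^{\vee}$. The paper's Case~1 handles this by writing $(f\circ \Ad(\rho_{2}^{-i}))_{i}=(\varphi+\psi_{i})_{i}$ with $\varphi\in E_{p}^{\vee}$ and $\psi_{i}\in (E_{p}/\rho_{2}^{i}\widetilde{H}_{2}\rho_{2}^{-i})^{\vee}$, then evaluating at $\rho_{2}^{i}\delta_{p}\rho_{2}^{-i}$ to pin down $\varphi$ on commutators, and finally using the identity $\delta_{p-2}=[\delta_{p},\rho_{2}^{2}][\delta_{p},\rho_{2}]^{-2}$ together with $\psi_{1}\!\mid_{\widetilde{H}_{2}}=0$ to deduce $f(\delta_{p-2})=0$. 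You should either supply this computation or observe that in the intended application (Theorem~\ref{thm:shpg}) the relevant $\widetilde{\cD}$ always contains a maximal subgroup other than $E_{p}$ --- but in fact it need not, so the first route is required.
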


\begin{proof}
Take $f\in \sS_{\widetilde{H}_{2},\widetilde{\cD}}^{2}(P_{p},\Ind_{N_{p}}^{P_{p}}\Z)$, and regard it as an element of $N_{p}^{\vee}$. Note that we have
\begin{equation*}
(N_{p}/N_{p-2})^{\vee}\subset \sS_{\widetilde{H}_{2},\widetilde{\cD}}^{2}(P_{p},\Ind_{N_{p}}^{P_{p}}\Z),
\end{equation*}
which follows from Lemma \ref{lem:sshe} (ii) because $\widetilde{H}_{2}\cap P_{p}^{\der}=N_{p-2}$. 

\vspace{3pt}
\emph{Case 1.~$N_{p} \in \widetilde{\cD}$. }
Since Proposition \ref{prop:ppsh} implies $f\!\mid_{N_{p-3}}=0$, it suffices to prove $f(\eta_{p-2})=0$. Note that $\langle \rho_{2}\rangle$ a complete representative of $N_{p}\backslash P_{p}/N_{p}$ in $P_{p}$. Hence, we have
\begin{equation}\label{eq:sscd}
(f\circ \Ad(g^{-1}))_{g\in \langle \rho_{2}\rangle}=(\varphi+\psi_{g})_{g\in \langle \rho_{2}\rangle}
\end{equation}
for some $\varphi \in E^{\vee}$ and $\psi_{g}\in (N_{p}/g\widetilde{H}_{n}g^{-1})^{\vee}$. Consider the $\rho_{2}^{i}$-th factor of the image of $\rho_{2}^{i}\eta_{p}\rho_{2}^{-i}$ under \eqref{eq:sscd} for each $i\in \{0,\ldots,p-1\}$. Then we obtain $\varphi(\rho_{2}^{i}\eta_{p}\rho_{2}^{-i})=f(\eta_{p})$. In particular, one has $\varphi([\eta_{p},\rho_{2}^{i}])=0$ for all $i\in \{0,\ldots,p-1\}$. Furthermore, this implies an equality
\begin{equation}\label{eq:fpsi}
f([\eta_{p},\rho_{2}^{i}])=\psi_{1}([\eta_{p},\rho_{2}^{i}])
\end{equation}
for any $i\in \{0,\ldots,p-1\}$ by considering the factor at $1\in \langle \rho_{2}\rangle$ of the image of $[\eta_{p},\rho_{2}^{i}]$ under \eqref{eq:sscd}. On the other hand, direct computation gives an equality $\eta_{p-2}=[\eta_{p},\rho_{2}^{2}][\eta_{p},\rho_{2}]^{-2}$. Hence, we have
\begin{equation}\label{eq:fpdt}
f(\eta_{p-2})=\psi_{1}(\eta_{p-2}). 
\end{equation}
by \eqref{eq:sscd}. However, the right-hand side of \eqref{eq:fpdt} is zero since $\psi_{1}\in (N_{p}/\widetilde{H}_{2})^{\vee}$. Therefore, we get $f(\eta_{p-2})=0$ as desired. 

\vspace{3pt}
\emph{Case 2.~$N_{p} \notin \widetilde{\cD}$. }
By assumption, there is $i\in \{0,\ldots,p-1\}$ such that $D:=\langle \eta_{p}\rho_{2}^{i}\rangle \cdot N_{p-1}$ lies in $\widetilde{\cD}$. Then, $D\backslash P_{p}/N_{p}$ is singuleton, and hence $\{1\}$ is a complete representative of $D\backslash P_{p}/N_{p}$ in $P_{p}$. Therefore, we obtain an equality
\begin{equation*}
f\!\mid_{D\cap N_{p}}=\varphi+\psi
\end{equation*}
for some $\varphi \in ((D\cap N_{p})/(D^{\der}\cap N_{p}))^{\vee}$ and $\psi \in ((D\cap N_{p})/(D\cap \widetilde{H}_{2}))^{\vee}$. On the other hand, one has $D\cap N_{p}=N_{p-1}$ and $D\cap \widetilde{H}_{2}=N_{p-2}$ by definition. In addition, Lemma \ref{lem:elst} (iii) implies $D^{\der}=N_{p-2}$. In particular, we have
\begin{equation*}
((D\cap N_{p})/(D^{\der}\cap N_{p}))^{\vee}=(N_{p-1}/N_{p-2})^{\vee}=((D\cap N_{p})/(D\cap \widetilde{H}_{2}))^{\vee}. 
\end{equation*}
Hence, we get $f\!\mid_{N_{p-2}}=0$, that is, $f\in (N_{p}/N_{p-2})^{\vee}$. 
\end{proof}

In conclusion, we obtain the following: 

\begin{thm}\label{thm:shpg}
Let $p>2$ be an odd prime number, $G$ a transitive group of degree $p^{2}$, and $H$ a one point stabilizer in $G$. We further assume that $G$ is a $p$-group. 
\begin{enumerate}
\item If $\Sha_{\omega}^{2}(G,J_{G/H})\neq 0$, then we have $G\cong P'_{n}$ for some $n\in \{1,2\}$. 
\item Assume $G\cong P'_{n}$ for some $n\in \{1,2\}$. For an admissible set $\cD$ of subgroups in $G$, there is an isomorphism
\begin{equation*}
\Sha_{\cD}^{2}(G,J_{G/H})\cong 
\begin{cases}
0&\text{if $(C_{p})^{2}<D$ for some $D\in \cD$; }\\
\Z/p&\text{otherwise. }
\end{cases}
\end{equation*}
\end{enumerate}
\end{thm}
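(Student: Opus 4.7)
The plan is to follow the strategy sketched in the introduction: reduce to a standard form via Dobson--Witte, then compute $\Sha_{\cD}^{2}(G,J_{G/H})$ by the exact sequence induced by $0\to J_{G/E}\to J_{G/H}\to \Ind_{E}^{G}J_{E/H}\to 0$ with $E$ a suitable maximal elementary abelian normal subgroup containing $H$. By Lemma \ref{lem:rfps} and Proposition \ref{prop:dwpg}, after conjugation I may assume $G=P_{n}$ or $G=P'_{n}$ with $n=\ord_{p}(\#G)-1$, and then $H=H_{n}$ by Lemma \ref{lem:enel} (iv).

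For the $\Sha_{\omega}^{2}=0$ cases of part (i), namely $G=P_{n}$ and $G=P'_{n}$ with $n\geq 3$, I follow Case 1-1 and Case 1-2 of the introduction. In both, the strategy is to apply Lemma \ref{lem:sssl}: for $G=P_{n}$ I take $E=E_{n}$, while for $G=P'_{n}$ with $n\geq 3$ I first lift to $P_{p}$ via $\pi_{n}\colon P_{p}\twoheadrightarrow P'_{n}$ (Lemma \ref{lem:pmpr} (ii)) using Proposition \ref{prop:ifts} and then take $E=E_{p}$, $H=\widetilde{H}_{n}$. Hypothesis (a) of Lemma \ref{lem:sssl} holds because $E/H$ has order $p$ (apply Proposition \ref{prop:btls}), and hypothesis (b) is Lemma \ref{lem:bgtr}. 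Combining Lemma \ref{lem:sssl}, Lemma \ref{lem:pnor}, and Proposition \ref{prop:g1vn} (respectively Proposition \ref{prop:ppsh} (ii) for $G=P'_{n}$, $n\geq 3$), together with the identities $P_{n}^{\der}=E_{n-1}$ from Lemma \ref{lem:enel} (iii) and $H_{n}E_{n-1}=E_{n}$ (respectively the analogous $P_{p}$-identities), an order count yields $|\sS|=p^{2}$, $|\Sel|=p$, and hence $|\Sha_{\omega}^{2}|=1$. The case $G=P_{1}\cong C_{p^{2}}$ is of course trivial.

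For part (ii), the case $G=P'_{1}\cong (C_{p})^{2}$ with $H=\{1\}$ is handled by the Drakokhrust--Macedo--Newton formula (Proposition \ref{prop:drmn}) applied to the Heisenberg covering of $(C_{p})^{2}$: the preimage $\widetilde{H}$ of $H=\{1\}$ is the centre of $\widetilde{G}$, which equals $\widetilde{G}^{\der}$, so $\Phi^{\widetilde{G}}(\widetilde{H})=\{1\}$ and the formula gives $\Sha_{\omega}^{2}\cong \Z/p$. Since this is of prime order, $\Sha_{\cD}^{2}=0$ iff some $D\in \cD$ is non-cyclic, which for $(C_{p})^{2}$ means $G\in \cD$, i.e.\ $(C_{p})^{2}<D$. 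For $G=P'_{2}$, the same $P_{p}$-pullback applies: Propositions \ref{prop:ppsh} (ii) (case $n=2$) and \ref{prop:mxsh} give $|\sS_{\widetilde{H}_{2},\widetilde{\cD}}^{2}|=p^{3}$ or $p^{2}$ according as $\widetilde{\cD}$ omits or contains a maximal subgroup of $P_{p}$, and the order count via Lemmas \ref{lem:sssl} and \ref{lem:pnor} converts this to $\Sha_{\cD}^{2}\cong \Z/p$ or $0$. Choosing the minimal admissible lift $\widetilde{\cD}=\cC_{P_{p}}\cup \{\pi_{2}^{-1}(D) \mid D\in \cD\}$, the statement ``$\widetilde{\cD}$ contains a maximal subgroup of $P_{p}$'' becomes ``$\cD$ contains a subgroup of index $p$ in $P'_{2}$'', which (since $P'_{2}$ has exponent $p$ and order $p^{3}$) is the same as ``$(C_{p})^{2}<D$ for some $D\in \cD$''.

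The principal obstacle is the bookkeeping of the admissible lifts $\widetilde{\cD}$ and matching the cohomological dichotomy (maximal subgroup of $P_{p}$ in $\widetilde{\cD}$, or not) with the subgroup-theoretic condition on $\cD$ in the theorem's statement; a secondary subtlety is verifying that the collapse of the order counts in Lemmas \ref{lem:sssl} and \ref{lem:pnor} really forces $|\Sha_{\omega}^{2}|=1$ in the case $G=P'_{n}$, $n\geq 3$, which relies on the precise formula in Proposition \ref{prop:ppsh} (ii) singling out $\widetilde{H}_{n}\cap E_{p-1}$ rather than all of $E_{p-1}$.
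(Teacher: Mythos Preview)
Your proposal is correct and follows the paper's argument closely; the only substantive variation is that for $G=P'_{1}$ you invoke Proposition~\ref{prop:drmn} with the Heisenberg cover $P'_{2}$ directly, whereas the paper treats all $P'_{n}$ uniformly via the pullback to $P_{p}$ and Proposition~\ref{prop:ppsh}~(ii). Your route there is valid and arguably more transparent.

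There is, however, a small gap in your $P'_{2}$ bookkeeping. The equivalence you assert at the end is not quite right: ``$(C_{p})^{2}<D$ for some $D\in\cD$'' means $\cD$ contains a subgroup of index \emph{at most} $p$, not index exactly $p$; in particular it allows $D=P'_{2}$ itself. If for instance $\cD=\cC_{P'_{2}}\cup\{P'_{2}\}$, then your lift $\widetilde{\cD}$ contains $P_{p}$ but no maximal subgroup of $P_{p}$, so neither Proposition~\ref{prop:ppsh} (which computes $\sS$ for $\widetilde{\cD}=\cC_{P_{p}}$) nor Proposition~\ref{prop:mxsh} applies, and your order count would wrongly output $\Z/p$. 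The paper avoids this by first disposing of the case $P_{p}\in\widetilde{\cD}$ (where $\Sha=0$ trivially) before invoking the dichotomy. Relatedly, your stated lift $\widetilde{\cD}=\cC_{P_{p}}\cup\{\pi_{2}^{-1}(D):D\in\cD\}$ is never literally equal to $\cC_{P_{p}}$, since the preimages $\pi_{2}^{-1}(D)$ of nontrivial cyclic $D$ are non-cyclic and non-maximal in $P_{p}$; so Proposition~\ref{prop:ppsh}~(ii) does not apply to it as stated. Since Proposition~\ref{prop:ifts} permits any admissible lift with the correct image, the fix is simply to take $\widetilde{\cD}=\cC_{P_{p}}$ when $\cD=\cC_{P'_{2}}$, and otherwise to adjoin only $\pi_{2}^{-1}(D)$ for non-cyclic $D\in\cD$---which is precisely what the paper does.
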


\begin{proof}
By Proposition \ref{prop:dwpg}, we may assume that $G$ coincides with $P_{n}$ or $P'_{n}$ for some $n\in \{1,\ldots,p\}$. 

\vspace{3pt}
\emph{Case 1.~$G=P_{n}$ for some $n\in \{1,\ldots,p\}$. }
We may assume $H=H_{n}$. It suffices to prove $\Sha_{\omega}^{2}(G,J_{G/H})=0$. Note that we have $\Sha_{\omega}^{2}(N_{n},J_{N_{n}/H_{n}})=0$ by Proposition \ref{prop:btls}. Hence, Lemma \ref{lem:pnor} gives an exact sequence
\begin{equation*}
0\rightarrow ((N_{n}\cap H_{n}P_{n}^{\der})/H_{n}N_{n}^{\der})^{\vee} \rightarrow \Sel_{H_{n}}^{2}(P_{n},J_{P_{n}/N_{n}})\rightarrow \Sha_{\omega}^{2}(G,J_{G/H})\rightarrow 0. 
\end{equation*}
Note that Lemma \ref{lem:enel} (iii) gives the following: 
\begin{equation*}
((N_{n}\cap H_{n}P_{n}^{\der})/H_{n}N_{n}^{\der})^{\vee}=(N_{n}/H_{n})^{\vee}\cong \Z/p. 
\end{equation*}
On the other hand, we can apply Lemma \ref{lem:sssl} because of Proposition \ref{prop:bgtr}. In particular, we obtain an exact sequence
\begin{equation*}
0\rightarrow (N_{n}/(N_{n}\cap P_{n}^{\der}))^{\vee}\rightarrow \sS_{H_{n}}^{2}(P_{n},\Ind_{N_{n}}^{P_{n}}\Z)\rightarrow \Sel_{H_{n}}^{2}(P_{n},J_{P_{n}/N_{n}})\rightarrow 0. 
\end{equation*}
By Lemma \ref{lem:enel} (iii), we have $(N_{n}/(N_{n}\cap P_{n}^{\der}))^{\vee}=(N_{n}/N_{n-1})^{\vee}\cong \Z/p$. Therefore, the desired assertion is reduced to the following: 
\begin{equation*}
\sS_{H_{n}}^{2}(P_{n},\Ind_{N_{n}}^{P_{n}}\Z)\cong (\Z/p)^{\oplus 2}.
\end{equation*}
However, this follows from Proposition \ref{prop:g1vn}. 

\vspace{6pt}
\emph{Case 2.~$G=P'_{n}$ for some $n\in \{1,\ldots,p-1\}$. }
We may assume $H=H_{n}$. Consider the homomorphism $\pi_{n}\colon P_{p}\twoheadrightarrow P'_{n}$ in Lemma \ref{lem:pmpr} (ii). Pick an admissible set $\widetilde{\cD}$ of subgroups in $P_{p}$ so that $N_{p-n}\subset D$ and $D/N_{p-n}$ is non-cyclic for any $D\in \widetilde{\cD}\setminus \cC_{P_{p}}$. By Proposition \ref{prop:ifts}, it suffices to prove the following: 
\begin{equation}\label{eq:odpp}
\Sha_{\widetilde{\cD}}^{2}(P_{p},J_{P_{p}/\widetilde{H}_{n}})\cong 
\begin{cases}
\Z/p&\text{if $n\leq 2$ and $\widetilde{\cD}=\cC_{P_{n}}$};\\
0&\text{otherwise. }
\end{cases}
\end{equation}
It is clear that $\Sha_{\widetilde{\cD}}^{2}(P_{p},J_{P_{p}/\widetilde{H}_{n}})=0$ if $P_{p}\in \widetilde{\cD}$. Hence, we further assume $P_{p}\notin \widetilde{\cD}$ in the following. By the same argument as Case 1, we obtain exact sequences as follows: 
\begin{gather*}
0\rightarrow ((N_{p}\cap \widetilde{H}_{n}P_{p}^{\der})/\widetilde{H}_{n}N_{p}^{\der})^{\vee} \rightarrow \Sel_{\widetilde{H}_{n},\widetilde{\cD}}^{2}(P_{p},J_{P_{p}/N_{p}})\rightarrow \Sha_{\widetilde{\cD}}^{2}(P_{p},J_{P_{p}/\widetilde{H}_{n}})\rightarrow 0,\\
0\rightarrow (N_{p}/(N_{p}\cap P_{p}^{\der}))^{\vee} \rightarrow \sS_{\widetilde{H}_{n},\widetilde{\cD}}^{2}(P_{p},\Ind_{N_{p}}^{P_{p}}\Z)\rightarrow \Sel_{\widetilde{H}_{n},\widetilde{\cD}}^{2}(P_{p},J_{P_{p}/N_{p}})\rightarrow 0. 
\end{gather*}
Moreover, the following are valid: 
\begin{equation}\label{eq:pech}
(N_{p}/(N_{p}\cap P_{p}^{\der}))^{\vee}\cong \Z/p,\quad
((N_{p}\cap \widetilde{H}_{n}P_{p}^{\der})/\widetilde{H}_{n}N_{p}^{\der})^{\vee}\cong 
\begin{cases}
\Z/p&\text{if }n\geq 2;\\
0&\text{if }n=1. 
\end{cases}
\end{equation}
First, if $n\geq 3$, then we have $\sS_{\widetilde{H}_{n}}^{2}(P_{p},\Ind_{N_{p}}^{P_{p}}\Z)\cong (\Z/p)^{\oplus 2}$ by Proposition \ref{prop:ppsh}. Hence, we obtain \eqref{eq:odpp} in this case by using \eqref{eq:pech}. Second, assume $n=2$. Then, we may assume
\begin{itemize}
\item[(A)] $\widetilde{\cD}=\cC_{P_{p}}$; or 
\item[(B)] $\widetilde{\cD}$ contains a maximal subgroup of $P_{p}$. 
\end{itemize}
Hence, Propositions \ref{prop:ppsh} and \ref{prop:mxsh} imply
\begin{equation*}
\sS_{\widetilde{H}_{2},\widetilde{\cD}}^{2}(P_{p},\Ind_{N_{p}}^{P_{p}}\Z)\cong 
\begin{cases}
(\Z/p)^{\oplus 3}&\text{if (A) holds; }\\
(\Z/p)^{\oplus 2}&\text{if (B) holds.}
\end{cases}
\end{equation*}
Combining this with \eqref{eq:pech}, we obtain \eqref{eq:odpp} in the case $n=2$. Finally, suppose $n=1$, which implies $\cD=\cC_{P_{1}}$ by assumption. Note that $H_{1}=\{1\}$ by definition. Then, we obtain
\begin{equation*}
\sS_{\widetilde{H}_{1}}^{2}(P_{p},J_{P_{p}})\cong (\Z/p)^{\oplus 2}. 
\end{equation*}
by Proposition \ref{prop:ppsh}. Hence, the isomorphism $\Sha_{\omega}^{2}(P_{p},J_{P_{p}/\widetilde{H}_{1}})\cong \Z/p$ follows from \eqref{eq:pech}. 
\end{proof}

\section{Computation of second cohomology groups: General case}\label{sect:cshg}

\begin{prop}\label{prop:nhvn}
Let $p>2$ be an prime number, $G$ be a transitive group of degree $p^{2}$, and $H$ a one point stabilizer in $G$. Denote by $P$ a $p$-Sylow subgroup of $G$. We further assume that $P$ is not isomorphic to $P'_{1}$ or $P'_{2}$. Then, we have
\begin{equation*}
\Sha_{\omega}^{2}(G,J_{G/H})=0. 
\end{equation*}
\end{prop}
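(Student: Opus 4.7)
The plan is to reduce the problem to the $p$-group case that was already settled in Theorem \ref{thm:shpg}, by restricting to a $p$-Sylow subgroup. The point is that $\Sha_{\omega}^{2}(G,J_{G/H})$ is annihilated by $(G:H)=p^{2}$ thanks to Proposition \ref{prop:andg}, so it is a $p$-primary group, and such groups are faithfully detected by restriction to any subgroup containing a $p$-Sylow subgroup.

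The key steps are as follows. First I would let $P$ be a $p$-Sylow subgroup of $G$. By Lemma \ref{lem:sytr}, $P$ is automatically a transitive subgroup of $\fS_{p^{2}}$ and $P\cap H$ is a corresponding subgroup of $P$, so all the hypotheses required to talk about $\Sha_{\omega}^{2}(P,J_{P/(P\cap H)})$ are satisfied. Second, I would apply Proposition \ref{prop:shpp}(ii) with $G_{1}=P$ to obtain an injection
\begin{equation*}
\Res_{G/P}\colon \Sha_{\omega}^{2}(G,J_{G/H})\hookrightarrow \Sha_{\omega}^{2}(P,J_{P/(P\cap H)}).
\end{equation*}
Third, since $P$ is a $p$-group that is a transitive group of degree $p^{2}$, Theorem \ref{thm:shpg}(i) applies: under the assumption that $P$ is not isomorphic to $P'_{1}$ or $P'_{2}$, the target of this injection must vanish. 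Putting these together yields $\Sha_{\omega}^{2}(G,J_{G/H})=0$.

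I do not anticipate any real obstacle here, since essentially all of the technical work has already been carried out in Section \ref{sect:csha} and encoded in Theorem \ref{thm:shpg}, and the reduction from a general transitive group of degree $p^{2}$ to its $p$-Sylow subgroup is precisely what Proposition \ref{prop:shpp}(ii) is designed for. The only small point to be careful with is to verify that Theorem \ref{thm:shpg} is being applied in the correct form: it classifies the abstract isomorphism type of $P$ up to conjugation in $\fS_{p^{2}}$, so the hypothesis that $P\not\cong P'_{1},P'_{2}$ is exactly the negation of the case where $\Sha_{\omega}^{2}$ could be nonzero.
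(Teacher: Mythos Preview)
Your proposal is correct and follows exactly the same route as the paper's proof: restrict via Proposition \ref{prop:shpp}(ii) to the $p$-Sylow subgroup $P$, then invoke Theorem \ref{thm:shpg}(i) to conclude that the target $\Sha_{\omega}^{2}(P,J_{P/(P\cap H)})$ vanishes under the hypothesis $P\not\cong P'_{1},P'_{2}$. If anything, you are slightly more explicit than the paper, which leaves the reference to Theorem \ref{thm:shpg} implicit.
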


\begin{proof}
By Proposition \ref{prop:shpp}, we obtain an injection
\begin{equation*}
\Sha_{\omega}^{2}(G,J_{G/H})\hookrightarrow \Sha_{\omega}^{2}(P,J_{P/(P\cap H)}). 
\end{equation*}
On the other hand, since $P\not\cong P'_{1}$ and $P'\not\cong P'_{2}$, we obtain $\Sha_{\omega}^{2}(P,J_{P/(P\cap H)})=0$. Therefore, the assertion holds. 
\end{proof}

\begin{prop}\label{prop:shaz}
Let $p>2$ be an odd prime number, $G$ a transitive group of degree $p^{2}$, and $H$ a one point stabilizer in $G$. We further assume that there exists a subgroup $G'$ of $G$ with $(G:G')\leq 2$ and transitive groups $G_{1}$ and $G_{2}$ of degree $p$ with $\#G_{1}>p$ or $\#G_{2}>p$ such that $G_{1}\times G_{2}<G'<N_{\fS_{p}}(G_{1})\times N_{\fS_{p}}(G_{2})$. Then, we have
\begin{equation*}
\Sha_{\omega}^{2}(G,J_{G/H})=0. 
\end{equation*}
\end{prop}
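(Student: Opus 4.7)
The plan is to reduce to the case $G=G_{1}\times G_{2}$ acting on $\F_{p}\times \F_{p}$ by the product action and then to show that the entire $p$-primary part of $H^{2}(G_{1}\times G_{2},J_{(G_{1}\times G_{2})/(H_{1}\times H_{2})})$ vanishes, where $H_{i}<G_{i}$ denotes the corresponding subgroup.

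For the reduction, note first that $(G:G_{1}\times G_{2})$ is coprime to $p$: we have $(G:G')\leq 2$ coprime to $p$, and Lemma \ref{lem:sydp} gives $\ord_{p}(\#G_{i})=1$, so $(N_{\fS_{p}}(G_{i}):G_{i})$ and hence $(G':G_{1}\times G_{2})$ are coprime to $p$. In particular, $G_{1}\times G_{2}$ contains a $p$-Sylow subgroup of $G$. Since $\Sha_{\omega}^{2}(G,J_{G/H})$ is $p$-primary by Proposition \ref{prop:andg}, Proposition \ref{prop:shrs} yields an injection $\Sha_{\omega}^{2}(G,J_{G/H})\hookrightarrow \Sha_{\omega}^{2}(G_{1}\times G_{2},J_{G/H})$, and Proposition \ref{prop:shpp} (i) identifies $J_{G/H}$ with $J_{(G_{1}\times G_{2})/(H_{1}\times H_{2})}$ as $(G_{1}\times G_{2})$-lattices. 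Hence it suffices to prove that $H^{2}(G_{1}\times G_{2},J_{(G_{1}\times G_{2})/(H_{1}\times H_{2})})_{p}=0$.

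The defining exact sequence of $J_{(G_{1}\times G_{2})/(H_{1}\times H_{2})}$ together with Shapiro's lemma yields the exact sequence
\begin{equation*}
H^{2}(H_{1}\times H_{2},\Z)\rightarrow H^{2}(G_{1}\times G_{2},J_{(G_{1}\times G_{2})/(H_{1}\times H_{2})})\rightarrow H^{3}(G_{1}\times G_{2},\Z).
\end{equation*}
Since $(G_{i}:H_{i})=p$ and $\ord_{p}(\#G_{i})=1$, the order of $H_{1}\times H_{2}$ is coprime to $p$, so $H^{2}(H_{1}\times H_{2},\Z)_{p}=0$. It thus remains to show $H^{3}(G_{1}\times G_{2},\Z)_{p}=0$, which I will establish via the Lyndon--Hochschild--Serre spectral sequence
\begin{equation*}
E_{2}^{i,j}=H^{i}(G_{2},H^{j}(G_{1},\Z))\Rightarrow H^{i+j}(G_{1}\times G_{2},\Z),
\end{equation*}
in which $G_{2}$ acts trivially on $H^{j}(G_{1},\Z)$. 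The terms contributing to total degree three are $E_{2}^{0,3}=H^{3}(G_{1},\Z)$, $E_{2}^{1,2}=H^{1}(G_{2},G_{1}^{\vee})$, $E_{2}^{2,1}=0$ (since $H^{1}(G_{1},\Z)=0$), and $E_{2}^{3,0}=H^{3}(G_{2},\Z)$. By Lemma \ref{lem:sydp}, the $p$-Sylow subgroup of each $G_{i}$ is $C_{p}$, so the restriction to the $p$-Sylow induces an injection $H^{3}(G_{i},\Z)_{p}\hookrightarrow H^{3}(C_{p},\Z)=0$. For the cross term, if $G_{1}\not\cong C_{p}$, then $\#G_{1}^{\vee}=\#G_{1}^{\ab}$ is coprime to $p$ by Lemma \ref{lem:nadp}, so $H^{1}(G_{2},G_{1}^{\vee})_{p}=0$; otherwise $G_{1}\cong C_{p}$ forces $G_{2}\not\cong C_{p}$ by hypothesis, whence $\Hom(G_{2},C_{p})=\Hom(G_{2}^{\ab},C_{p})=0$, again by Lemma \ref{lem:nadp}. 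Thus every $E_{2}$ term of total degree three has trivial $p$-part, giving $H^{3}(G_{1}\times G_{2},\Z)_{p}=0$ as required.

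The only delicate step is the analysis of the cross term $E_{2}^{1,2}=H^{1}(G_{2},G_{1}^{\vee})$: this is precisely where the assumption that at least one of $G_{1}$, $G_{2}$ is not isomorphic to $C_{p}$ enters, via Lemma \ref{lem:nadp}. Without this hypothesis the cross term would be $\Z/p$ and the vanishing of $H^{3}(G_{1}\times G_{2},\Z)_{p}$ would fail.
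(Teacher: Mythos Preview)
Your proof is correct and follows essentially the same approach as the paper: reduce to $G=G_{1}\times G_{2}$ via the injectivity of restriction to a subgroup of index prime to $p$, then use the exact sequence coming from the definition of $J_{G/H}$ to reduce to the vanishing of $H^{2}(H,\Z)[p^{\infty}]$ and $H^{3}(G,\Z)[p^{\infty}]$. The only cosmetic difference is that the paper computes $H^{3}(G_{1}\times G_{2},\Z)[p^{\infty}]$ by invoking Tahara's exact sequence (Proposition~\ref{prop:thrt}) after fixing $\#G_{2}\neq p$, whereas you run the Lyndon--Hochschild--Serre spectral sequence directly and treat the two cases symmetrically; both arguments hinge on Lemma~\ref{lem:nadp} at the same point.
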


\begin{proof}
Since $\ord_{p}(\#G_{i})=\ord_{2}(\#N_{\fS_{p}}(G_{i}))=1$, the homomorphism
\begin{equation*}
\Res_{G/(G_{1}\times G_{2})}\colon \Sha_{\omega}^{2}(G,J_{G/H})\rightarrow \Sha_{\omega}^{2}(G_{1}\times G_{2},J_{G_{1}\times G_{2}/H\cap (G_{1}\times G_{2})})
\end{equation*}
is injective. Hence, we may assume $G=G_{1}\times G_{2}$ with $\#G_{2}>p$. Consider the canonical exact sequence
\begin{equation*}
H^{2}(H,\Z)[p^{\infty}]\xrightarrow{j_{*}} H^{2}(G,J_{G/H})[p^{\infty}]\xrightarrow{\delta} H^{3}(G,\Z)[p^{\infty}]. 
\end{equation*}
Note that $\Sha_{\omega}^{2}(G,J_{G/H})$ is contained in $H^{2}(G,J_{G/H})[p^{\infty}]$, which is a consequence of Proposition \ref{prop:andg}. It suffices to prove $H^{2}(G,J_{G/H})[p^{\infty}]=0$. First, we have $H^{2}(H,\Z)[p^{\infty}]=0$ since $H$ has order coprime to $p$. On the other hand, Proposition \ref{prop:thrt} gives an exact sequence
\begin{equation*}
0 \rightarrow H^{1}(G_{1},G_{2}^{\vee})[p^{\infty}]\rightarrow H_{(G_{1})}^{2}(G,\Q/\Z)[p^{\infty}]\xrightarrow{\Res_{G/G_{2}}} H^{2}(G_{2},\Q/\Z)[p^{\infty}]. 
\end{equation*}
Since $\#G_{2}>p$, Lemma \ref{lem:nadp} implies $\#G_{2}^{\vee}\notin p\Z$. In particular, $H^{1}(G_{1},G_{2}^{\vee})[p^{\infty}]=0$. Furthermore, we have $H^{2}(G_{i},\Q/\Z)[p^{\infty}]=0$ for $i\in \{1,2\}$. Hence, we obtain an equality
\begin{equation*}
H^{2}(G,\Q/\Z)[p^{\infty}]=H_{(G_{1})}^{2}(G,\Q/\Z)[p^{\infty}]=0. 
\end{equation*}
Therefore, one has $H^{3}(G,\Z)[p^{\infty}]=0$, and this implies $H^{2}(G,J_{G/H})[p^{\infty}]$ as desired. 
\end{proof}

\begin{lem}\label{lem:cpij}
Let $p>2$ be an odd prime number, $n\in \{1,2\}$, $G:=P'_{n}$, and $H:=H_{n}$. Take a subgroup $N$ of order $p^{2}$ in $G$ so that $H\cap N=\{1\}$. Then, the composite
\begin{equation*}
\Sha_{\omega}^{2}(G,J_{G/H})\xrightarrow{\delta_{G/H}^{2}} H^{3}(G,\Z)\xrightarrow{\Res_{G/N}}H^{3}(N,\Z),
\end{equation*}
where $\delta_{G/H}^{2}$ is the connecting homomorphism, is injective. 
\end{lem}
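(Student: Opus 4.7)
The plan is to use naturality of connecting homomorphisms to reduce the injectivity of the composite to that of $\Res_{G/N}$ on $\Sha_\omega^2(G,J_{G/H})$, and then to deduce the latter from Theorem \ref{thm:shpg}(2) applied to a suitably enlarged admissible set.

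The starting observation is that $\#N\cdot\#H=p^2\cdot p^{n-1}=p^{n+1}=\#G$, which together with $N\cap H=\{1\}$ forces $G=NH$. Thus $N$ acts simply transitively on $G/H$, giving an $N$-set isomorphism $G/H\cong N$. Restricting the defining sequence $0\to\Z\to\Ind_H^G\Z\to J_{G/H}\to 0$ to $N$ therefore produces the sequence $0\to\Z\to\Z[N]\to J_N\to 0$, and in particular $J_{G/H}$ is isomorphic to $J_N$ as an $N$-module. Since $H^i(N,\Z[N])=0$ for $i\geq 1$ by Shapiro's lemma, the long exact sequence yields an isomorphism $\partial_N^2\colon H^2(N,J_N)\xrightarrow{\cong}H^3(N,\Z)$. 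By naturality of connecting maps with respect to restriction, $\Res_{G/N}\circ\partial_{G/H}^2=\partial_N^2\circ\Res_{G/N}$ on $H^2(G,J_{G/H})$, and since $\partial_N^2$ is injective the claim reduces to showing that $\Res_{G/N}\colon\Sha_\omega^2(G,J_{G/H})\to H^2(N,J_N)$ is injective.

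For this reduced claim, set $\cD:=\cC_G\cup\{N\}$. Since $N$ is normal in $G$ (trivially when $n=1$, and as a maximal subgroup of the $p$-group $G$ when $n=2$), $\cD$ is admissible. Moreover $N\cong(C_p)^2$: for $n=1$ we have $N=G\cong(C_p)^2$, while for $n=2$ Lemma \ref{lem:pppr}(ii) ensures that $P'_2$ has exponent $p$, so any subgroup of order $p^2$ is elementary abelian. Hence Theorem \ref{thm:shpg}(2) gives $\Sha_\cD^2(G,J_{G/H})=0$, and since by definition $\Sha_\cD^2(G,J_{G/H})=\Sha_\omega^2(G,J_{G/H})\cap\Ker\Res_{G/N}$, the restriction $\Res_{G/N}$ must be injective on $\Sha_\omega^2(G,J_{G/H})$, completing the proof. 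The only point really requiring care is the Mackey-type identification in the first step; once it is in place everything is formal, as all the substantive input has already been packaged into Theorem \ref{thm:shpg}(2).
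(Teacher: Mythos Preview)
Your argument is correct and follows essentially the same route as the paper's proof. Both reduce to the injectivity of $\Res_{G/N}$ on $\Sha_{\omega}^{2}(G,J_{G/H})$ via naturality of the connecting map and the isomorphism $J_{G/H}\cong J_{N}$ as $N$-lattices, and both extract that injectivity from Theorem~\ref{thm:shpg}\,(ii); your version is slightly more streamlined in that it treats $n=1$ and $n=2$ uniformly and makes the admissible set $\cD=\cC_{G}\cup\{N\}$ explicit, whereas the paper handles $n=1$ separately and invokes Theorem~\ref{thm:shpg}\,(ii) without spelling out~$\cD$.
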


\begin{proof}
First, assume $n=1$, then $H=\{1\}$. Hence,
\begin{equation*}
H^{2}(G,\Ind_{H}^{G}\Z)\cong H^{2}(H,\Z)=0
\end{equation*}
by Shapiro's lemma. In particular, $\delta_{G/H}^{2}$ is injective. Now, if we set $N:=G$, then the assertion is clear. Next, assume $n=2$. Then, one has an isomorphism of $N$-lattices $J_{G/H}\cong J_{N}$ since $H\cap N=\{1\}$. Hence, we have a commutative diagram
\begin{equation*}
\xymatrix@C=35pt{
\Sha_{\omega}^{2}(G,J_{G/H})\ar[r]^{\Res_{G/N}}\ar[d]^{\delta_{G/H}^{2}}&\Sha_{\omega}^{2}(N,J_{N})\ar[d]^{\delta_{N}^{2}}\\
H^{3}(G,\Z)\ar[r]^{\Res_{G/N}}&H^{3}(N,\Z). 
}
\end{equation*}
Now, Theorem \ref{thm:shpg} (ii) implies that the top horizontal map $\Res_{G/N}$ is injective. Hence, the assertion follows from the case $n=1$. 
\end{proof}

\begin{lem}[{\cite[Chapter 10, Theorem 7.8 (v)]{Karpilovsky1993}}]\label{lem:bcfx}
Let $G$ be a subgroup of $\GL_{2}(\Fp)$, and consider its action on $(C_{p})^{2}$ induced by the standard representation of $\SL_{2}(\Fp)$. Then, we have
\begin{equation*}
H^{2}((C_{p})^{2},\Q/\Z)^{G}\neq 0
\end{equation*}
if and only if $G<\SL_{2}(\Fp)$. 
\end{lem}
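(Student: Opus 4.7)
The plan is to compute $H^{2}((C_{p})^{2},\Q/\Z)$ explicitly as a $\GL_{2}(\Fp)$-module and read off the condition on $G$ for the invariants to be non-zero.

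First, since $\Q/\Z$ is divisible, hence injective in the category of abelian groups, the universal coefficient theorem yields an equivariant isomorphism
\begin{equation*}
H^{2}((C_{p})^{2},\Q/\Z) \cong \Hom(H_{2}((C_{p})^{2},\Z),\Q/\Z)
\end{equation*}
with no $\Ext$ contribution. I would then identify the right-hand side using the standard formula $H_{2}(A,\Z)\cong \wedge^{2}A$ for finite abelian $A$, which in the case $A=(C_{p})^{2}$ can be verified directly by K\"unneth from $H_{*}(C_{p},\Z)$, giving $H_{2}((C_{p})^{2},\Z)\cong \wedge^{2}\Fp^{\oplus 2}\cong \Fp$. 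In particular $H^{2}((C_{p})^{2},\Q/\Z)$ is cyclic of order $p$.

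Second, I would pin down the $\GL_{2}(\Fp)$-action. The isomorphism $H_{2}(A,\Z)\cong \wedge^{2}A$ is natural in $A$, so under the standard representation, $g\in \GL_{2}(\Fp)$ acts on $\wedge^{2}\Fp^{\oplus 2}$ by multiplication by $\det(g)$. By naturality of universal coefficients, the induced action on $\Hom(H_{2}((C_{p})^{2},\Z),\Q/\Z)\cong \Fp$ is then multiplication by $\det(g)^{\pm 1}$ (the sign depending on the convention for how $\Aut(A)$ acts on $H^{*}(A,-)$, but irrelevant here).

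Combining the two steps, $H^{2}((C_{p})^{2},\Q/\Z)^{G}\neq 0$ if and only if $\det(g)=1$ for every $g\in G$, which is exactly the condition $G<\SL_{2}(\Fp)$. The only point that requires care is the equivariance of $H_{2}(A,\Z)\cong \wedge^{2}A$, but this is automatic from naturality of both sides in $A$, so I anticipate no serious obstacle.
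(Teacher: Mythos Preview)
Your proof is correct. The paper does not actually supply its own argument for this lemma; it simply quotes the result from \cite[Chapter~10, Theorem~7.8~(v)]{Karpilovsky1993}. Your route---universal coefficients to pass to $\Hom(H_{2}((C_{p})^{2},\Z),\Q/\Z)$, the natural identification $H_{2}(A,\Z)\cong \wedge^{2}A$ for finite abelian $A$, and the observation that $\GL_{2}(\Fp)$ acts on the one-dimensional space $\wedge^{2}\Fp^{\oplus 2}$ through $\det$---is a clean self-contained proof that makes the statement transparent. The only delicate point, the equivariance of $H_{2}(A,\Z)\cong \wedge^{2}A$, is indeed automatic from naturality, and your remark that the ambiguity between $\det$ and $\det^{-1}$ is irrelevant for the conclusion is exactly right since both characters have the same kernel $\SL_{2}(\Fp)$.
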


\begin{prop}\label{prop:nszr}
Let $p>2$ be an odd prime number, and $G'$ a subgroup of $\GL_{2}(\Fp)$ that is not contained in $\SL_{2}(\Fp)$. Put
\begin{equation*}
G:=(C_{p})^{2}\rtimes_{\varphi}G',\quad H:=\{1\}\rtimes_{\varphi}G',
\end{equation*}
where $\varphi$ is induced by the standard representation of $\GL_{2}(\Fp)$. Then, we have
\begin{equation*}
\Sha_{\omega}^{2}(G,J_{G/H})=0. 
\end{equation*}
\end{prop}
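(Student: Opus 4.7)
The plan is to embed $\Sha_{\omega}^{2}(G, J_{G/H})$ into $H^{3}(N,\Z)^{G'}$, where $N := (C_{p})^{2}\rtimes_{\varphi}\{1\}$ is the unique normal subgroup of order $p^{2}$ in $G$, and then conclude by Lemma \ref{lem:bcfx} that this group of invariants vanishes under the hypothesis $G'\not\subset \SL_{2}(\Fp)$. This realizes the commutative-diagram strategy described in Case 2-2 of the introduction.

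Since $(G:H)=p^{2}$, Proposition \ref{prop:andg} gives $\Sha_{\omega}^{2}(G,J_{G/H})\subset H^{2}(G,J_{G/H})[p^{\infty}]$, so by Proposition \ref{prop:shrs} the restriction to a $p$-Sylow subgroup $P$ of $G$ is injective. Now $P\supset N$ since $N$ is a normal $p$-subgroup, and $P=N\rtimes (P\cap G')$. Because a $p$-Sylow of $\GL_{2}(\Fp)$ has order $p$, the factor $P\cap G'$ is trivial or cyclic of order $p$, so $P$ is isomorphic either to $P'_{1}\cong (C_{p})^{2}$ or to the Heisenberg group $P'_{2}$ of order $p^{3}$. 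In either case $P\cap H=P\cap G'$ meets $N$ only in $\{1\}$ by the semidirect product structure, and when $P\cong P'_{2}$ the subgroup $N$ is the unique $(C_{p})^{2}$-subgroup of $P$ (Lemma \ref{lem:pnpr}(iii)). Thus Lemma \ref{lem:cpij} applies to the data $(P,P\cap H,N)$ and yields injectivity of
\[
\Sha_{\omega}^{2}(P,J_{P/(P\cap H)})\xrightarrow{\partial^{2}}H^{3}(P,\Z)\xrightarrow{\Res_{P/N}}H^{3}(N,\Z).
\]

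By naturality of $\partial^{2}$ under restriction, together with Proposition \ref{prop:shpp}(i) which supplies the isomorphism $J_{G/H}|_{P}\cong J_{P/(P\cap H)}$, the two injections above fit into a commutative square whose diagonal composite is injective on $\Sha_{\omega}^{2}(G,J_{G/H})$ and lands in $H^{3}(N,\Z)$. Since $N$ is normal in $G$, the right-hand composite $H^{3}(G,\Z)\to H^{3}(N,\Z)$ equals $\Res_{G/N}$ and has image in $H^{3}(N,\Z)^{G/N}$; and the induced $G/N\cong G'$ action on $N$ is precisely $\varphi$. Via the canonical identification $H^{3}(N,\Z)\cong H^{2}(N,\Q/\Z)$, Lemma \ref{lem:bcfx} then gives $H^{3}(N,\Z)^{G'}=0$ because $G'\not\subset \SL_{2}(\Fp)$, and we conclude $\Sha_{\omega}^{2}(G,J_{G/H})=0$.

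No step presents a serious obstacle; the only delicate bookkeeping is checking that in Lemma \ref{lem:cpij} the auxiliary subgroup of order $p^{2}$ may be taken to be the $G$-stable $N$ rather than some other $(C_{p})^{2}$ inside $P$, for only then does the final target $H^{3}(N,\Z)$ carry a coherent $G/N$-action compatible with all the restrictions used. In both Sylow cases this is immediate from the semidirect-product structure, so the argument proceeds cleanly.
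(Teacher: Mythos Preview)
Your proof is correct and follows essentially the same approach as the paper's own proof: both restrict to a $p$-Sylow subgroup $P\supset N$, invoke Lemma~\ref{lem:cpij} to embed $\Sha_{\omega}^{2}(P,J_{P/(P\cap H)})$ into $H^{3}(N,\Z)$, observe via the commutative diagram that the image of $\Sha_{\omega}^{2}(G,J_{G/H})$ lands in the $G'$-invariants, and conclude by Lemma~\ref{lem:bcfx}. Your exposition is slightly more explicit in verifying that $N$ (rather than some other $(C_{p})^{2}$) may serve as the auxiliary subgroup in Lemma~\ref{lem:cpij}, but the architecture is identical.
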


\begin{proof}
Let $P$ be a $p$-Sylow subgroup of $G$. By Lemma \ref{lem:sytr}, $P$ is a transitive group of degree $p^{2}$, and $P\cap H$ is a one point stabilizer in $P$. Put $N:=(C_{p})^{2}\rtimes \{1\}$, which satisfies $(P\cap H)\cap N=\{1\}$ and $G=N\rtimes G'$. Then, we have a commutative diagram
\begin{equation*}
\xymatrix@C=35pt{
\Sha_{\omega}^{2}(G,J_{G/H})\ar[r]^{\hspace{10pt}\delta_{G/H}^{2}}\ar@{^{(}->}[d]_{\Res_{G/P}}& H^{3}(G,\Z)\ar[r]^{\Res_{G/N}}\ar[d]_{\Res_{G/P}}&H^{3}(N,\Z)^{G'}\ar@{^{(}->}[d]\\
\Sha_{\omega}^{2}(P,J_{P/(P\cap H)})\ar[r]^{\hspace{20pt}\delta_{P/(P\cap H)}^{2}}& H^{3}(P,\Z)\ar[r]^{\Res_{P/N}}&H^{3}(N,\Z)
}
\end{equation*}
Note that the composite of the bottom horizontal maps is injective, which is a consequence of Lemma \ref{lem:cpij}. Moreover, Lemma \ref{lem:bcfx} implies $H^{3}(N,\Z)^{G'}=0$ since $G'$ is not contained in $\SL_{2}(\Fp)$. Therefore, we obtain the desired assertion. 
\end{proof}

\begin{lem}\label{lem:bell}
Let $p$ be a prime number, and $V$ the standard representation of $\SL_{2}(\Fp)$ over $\Fp$. Then, for each $i\in \{1,2\}$, we have
\begin{equation*}
H^{i}(\SL_{2}(\Fp),V)=H^{i}(\SL_{2}(\Fp),V^{\vee})=0. 
\end{equation*}
Here, we regard $V^{\vee}$ as an $\Fp$-representation of $\SL_{2}(\Fp)$ by the contragredient of $V$. 
\end{lem}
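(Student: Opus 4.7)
The plan is to reduce to the $p$-Sylow subgroup of $G := \SL_{2}(\Fp)$ and carry out a weight analysis under the diagonal torus. Since $\det V$ is trivial on $\SL_{2}$, the perfect pairing $V \otimes V \to \wedge^{2}V = \det V$ gives a $G$-isomorphism $V \cong V^{\vee}$, so it suffices to treat $H^{i}(G,V)$ for $i \in \{1,2\}$. Let $P = \{u_{x} : x \in \Fp\}$ denote the upper unipotent subgroup, a cyclic Sylow $p$-subgroup of $G$, and let $B = T \ltimes P$ be its normalizer, with $T = \{\diag(t,t^{-1})\}$; note that $T$ acts on $P$ via $t u_{x} t^{-1} = u_{t^{2}x}$. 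Because $V$ is $p$-torsion and $P$ is abelian, the Cartan--Eilenberg stable-element theorem gives $H^{i}(G,V) \cong H^{i}(P,V)^{T}$ for $i \geq 1$, so the problem reduces to showing $H^{i}(P,V)^{T} = 0$.

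For $p = 2$, the restriction $V\!\mid_{P}$ is $2$-dimensional, indecomposable, and has a nonzero $P$-fixed vector, which forces $V\!\mid_{P} \cong \Fp[P]$, the regular representation. Shapiro's lemma then yields $H^{i}(P,V) = 0$ for $i \geq 1$.

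For $p > 2$, I will exploit the short exact sequence of $B$-modules $0 \to V^{P} \to V \to V/V^{P} \to 0$, whose end terms are one-dimensional trivial $P$-modules carrying opposite $T$-characters (weights $\pm 1$). Since $T$ acts on $P$ with weight $+2$, both $H^{1}(P,\Fp) = P^{\vee}$ and its Bockstein $H^{2}(P,\Fp)$ have $T$-weight $-2$. Analyzing the resulting long exact sequence in $P$-cohomology, whose connecting homomorphisms $\delta_{i}$ alternate between isomorphisms for $i$ even and zero for $i$ odd (reflecting the cup-product structure of $H^{*}(P,\Fp) = \Fp[y] \otimes \Lambda(x)$ with $|x|=1$, $|y|=2$), one obtains $T$-equivariant isomorphisms $H^{1}(P,V) \cong V/V^{P} \otimes H^{1}(P,\Fp)$ and $H^{2}(P,V) \cong V^{P} \otimes H^{2}(P,\Fp)$. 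Their $T$-weights are therefore $-3$ and $-1$ respectively, and for $p > 2$ neither is divisible by $p - 1 = |T|$ (since $p - 1 \nmid 1$ and $p - 1 \nmid 3$ for any prime $p > 2$), so the $T$-invariants vanish.

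The main point to execute with care is the weight bookkeeping, in particular the sign conventions for the $T$-action on $H^{*}(P,\Fp)$ arising from the quadratic conjugation action $t \cdot u_{x} = u_{t^{2}x}$ of $T$ on $P$, together with the verification that the LES connecting maps follow the expected alternating-zero pattern. Once these are pinned down, the vanishing of the $T$-invariants---and hence of the cohomology---follows mechanically.
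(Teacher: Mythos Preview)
Your argument is correct and takes a genuinely different route from the paper's proof. The paper simply cites Bell's tables \cite{Bell1978a} for the vanishing of $H^{i}(\SL_{2}(\Fp),V)$ and then identifies $V^{\vee}$ with $V$ twisted by an inner automorphism to deduce the dual case. You instead give a self-contained computation: after the (cleaner) observation that $V\cong V^{\vee}$ via the symplectic form $V\otimes V\to \wedge^{2}V$, you restrict to the cyclic Sylow $p$-subgroup $P$ and use that for abelian Sylow the stable elements are exactly the $N_{G}(P)=B$-invariants, hence $H^{i}(G,V)\cong H^{i}(P,V)^{T}$; the weight bookkeeping (with $T$ acting on $P$ by weight $+2$, on the lines $V^{P}$ and $V/V^{P}$ by weights $\pm 1$, and the connecting maps in the long exact sequence being cup product with the nonzero extension class $x\in H^{1}(P,\Fp)$, hence alternating iso/zero) is carried out correctly, yielding $T$-weights $-1$ and $-3$ on $H^{2}(P,V)$ and $H^{1}(P,V)$, neither of which is divisible by $p-1$ for any odd prime $p$. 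Your separate treatment of $p=2$ via $V|_{P}\cong \Fp[P]$ is also fine. The advantage of your approach is that it is elementary and avoids invoking an external cohomology table; the paper's approach is shorter on the page but relies on a black-box citation.
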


\begin{proof}
The assertion $H^{i}(\SL_{2}(\Fp),V)=0$ for $i\in \{1,2\}$ is a consequence of \cite[Table 1]{Bell1978a}. To prove $H^{i}(\SL_{2}(\Fp),V^{\vee})=0$ for $i\in \{1,2\}$, take the standard basis $e_{1},e_{2}$ on $V$, and denote by $e_{1}^{\vee},e_{2}^{\vee}$ its dual basis on $V^{\vee}$. Then, under the isomorphism of $\Fp$-vector spaces
\begin{equation*}
\iota \colon V\xrightarrow{\cong}V^{\vee};\,e_{i}\mapsto (-1)^{i-1}e_{3-i}^{\vee}, 
\end{equation*}
one has a commutative diagram
\begin{equation*}
\xymatrix@C=60pt{
\SL_{2}(\Fp)\times V \ar[r]^{\hspace{25pt}(g,v)\mapsto gv}\ar[d]_{\id \times \iota}^{\cong}& V\ar[d]^{\cong}\\
\SL_{2}(\Fp)\times V^{\vee}\ar[r]^{\hspace{25pt}(g,v^{\vee})\mapsto gv^{\vee}}&V^{\vee}. 
}
\end{equation*}
Hence, \cite[Chap.~VII, \S 5]{Serre1979}, gives an isomorphism
\begin{equation*}
H^{i}(\SL_{2}(\Fp),V^{\vee})\cong H^{i}(\SL_{2}(\Fp),V)
\end{equation*}
for each $i\in \{1,2\}$. Now, the assertion follows from the vanishing of $H^{i}(\SL_{2}(\Fp),V)$. 
\end{proof}

Let $G$ be a finite group. We denote by $\Aut(G)$ its automorphism group, and write $\Inn(G)$ for the inner automorphism of $G$. Define a subgroup of $\Aut(G)$ as follows: 
\begin{equation*}
\Aut(G)^{\circ}:=\{f\in \Aut(G)\mid f\!\mid_{Z(G)}=\id_{Z(G)}\}. 
\end{equation*}
Then, it contains $\Inn(G)$ as a normal subgroup. Hence, we can consider the quotient group
\begin{equation*}
\Out(G)^{\circ}:=\Aut(G)^{\circ}/\Inn(G). 
\end{equation*}
In the following, we study the structure of $\Aut(P'_{2})^{\circ}$. We use the notations in Section \ref{sect:trgp}. By Lemma \ref{lem:pppr} (i), we have $Z(P'_{2})=\langle \eta_{1}\rangle$. Hence, we have $(P'_{2})^{\ad}\cong (C_{p})^{2}$. 

\begin{prop}[{cf.~\cite[Theorem 1 (a)]{Winter1972}}]\label{prop:wint}
Let $p>2$ be an odd prime number, and $P:=P'_{2}$. Then, there is an isomorphism $\overline{\varphi}\colon \SL_{2}(\Fp)\rightarrow \Out(P)^{\circ}$ such that 
\begin{equation}\label{eq:otwt}
\begin{gathered}
\xymatrix{
\SL_{2}(\Fp)\ar[r]^{\overline{\varphi}\hspace{5pt}}\ar@{=}[d]&\Out(P)^{\circ} \ar[d]^{f\mapsto [gZ(P)\mapsto f(g)Z(P)]}\\
\SL_{2}(\Fp)\ar[r]^{\varphi\hspace{5pt}}& \Aut(P^{\ad}),
}
\end{gathered}
\end{equation}
is commutative. Here $\varphi$ is defined as follows: 
\begin{equation*}
\varphi(g)=[\eta_{2}^{x_{1}}\rho_{2}^{y_{2}}Z(P)\mapsto \eta_{2}^{ax_{1}+cx_{2}}\rho_{2}^{bx_{1}+dx_{2}}Z(P)],\quad g=\begin{pmatrix}a&b\\c&d\end{pmatrix}. 
\end{equation*}
\end{prop}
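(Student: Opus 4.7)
The plan is to exhibit the short exact sequence
\begin{equation*}
1 \to \Inn(P) \to \Aut(P)^{\circ} \xrightarrow{\psi} \SL_{2}(\Fp) \to 1,
\end{equation*}
where $\psi$ records the induced action on $P^{\ad}$ in the basis $(\bar{\delta}_{2}, \bar{\rho}_{2})$ (a basis by Lemma~\ref{lem:pppr}(i)). The desired isomorphism $\overline{\varphi}$ is then the inverse of the induced map $\Out(P)^{\circ} \xrightarrow{\cong} \SL_{2}(\Fp)$, and the diagram \eqref{eq:otwt} commutes by construction once conventions are matched.

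For the image lying in $\SL_{2}(\Fp)$: by Lemmas~\ref{lem:pppr}(i) and~\ref{lem:elst}(iii), $P$ is $2$-step nilpotent with $Z(P) = P^{\der} = \langle \delta_{1}\rangle$ and $[\delta_{2},\rho_{2}] = \delta_{1}$, so the commutator descends to a non-degenerate alternating bilinear form $\beta \colon P^{\ad}\times P^{\ad} \to Z(P) \cong \Fp$; any $f \in \Aut(P)^{\circ}$ fixes $Z(P)$ pointwise, hence preserves $\beta$, forcing $\det\psi(f) = 1$. For the kernel: inner automorphisms act trivially on the abelian quotient $P^{\ad}$, so $\Inn(P) \subseteq \ker\psi$, and conversely any $f \in \ker\psi$ is of the form $\delta_{2} \mapsto \delta_{2}\delta_{1}^{a}$, $\rho_{2} \mapsto \rho_{2}\delta_{1}^{b}$ for some $a, b \in \Fp$, yielding at most $p^{2} = [P:Z(P)] = |\Inn(P)|$ possibilities, hence equality.

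For surjectivity, given $g = \begin{pmatrix} a & b \\ c & d \end{pmatrix} \in \SL_{2}(\Fp)$, set $f_{g}(\delta_{2}) := \delta_{2}^{a}\rho_{2}^{b}$ and $f_{g}(\rho_{2}) := \delta_{2}^{c}\rho_{2}^{d}$ (or the appropriate transpose variant, matched to the convention intended for $\varphi$ in \eqref{eq:otwt}), and extend multiplicatively. By Lemma~\ref{lem:pppr}(ii), $P$ has exponent $p$, so for odd $p$ (using $p \mid \binom{p}{2}$) the images have order dividing $p$; bilinearity of the commutator in a class-$2$ group gives
\begin{equation*}
[f_{g}(\delta_{2}), f_{g}(\rho_{2})] = [\delta_{2}, \rho_{2}]^{ad-bc} = \delta_{1},
\end{equation*}
so $f_{g}$ respects all defining relations of $P$, lies in $\Aut(P)^{\circ}$, and satisfies $\psi(f_{g}) = g$. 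The main obstacle I expect is not conceptual but notational: aligning the row/column (transpose) conventions so that the induced map $\Out(P)^{\circ} \to \Aut(P^{\ad})$ reproduces precisely the $\varphi$ written in \eqref{eq:otwt}. Substantively, the entire argument rests on Lemma~\ref{lem:pppr}'s twin assertions that $P$ has exponent $p$ and nilpotency class $2$, which together guarantee both that $f_{g}$ extends to an endomorphism and that the commutator behaves bilinearly on $P^{\ad}$.
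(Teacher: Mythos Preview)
Your proposal is correct and takes essentially the same approach as the paper: both establish that the natural map $\Out(P)^{\circ} \to \Aut(P^{\ad})$ is an isomorphism onto $\SL_2(\Fp)$ by checking the image lands in $\SL_2$ (via the commutator form), identifying the kernel with $\Inn(P)$, and exhibiting explicit lifts $f_g$ on generators. The only difference is that the paper outsources these three verifications to \cite{Winter1972} (items (3A), (3C), and (3F) there, the last applied to the \emph{transpose} of $g$, confirming your caveat about conventions), whereas you supply them directly from Lemmas~\ref{lem:elst} and~\ref{lem:pppr}.
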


\begin{proof}
Recall Lemma \ref{lem:elst} (iii) that $[\eta_{2},\rho_{2}]=\eta_{1}$. Moreover, we have $Z(P)=\langle \eta_{1}\rangle$ and $\gamma_{2}^{p}=1\in Z(G)$ by Lemma \ref{lem:pppr}. Hence, we can apply the argument in \cite[\S 3]{Winter1972}. By \cite[(3A), (3C)]{Winter1972}, the right vertical homomorphism in \eqref{eq:otwt} is injective, and factors as follows:
\begin{equation*}
\Out(P)^{\circ}\xrightarrow{q} \SL(P^{\ad})\hookrightarrow \Aut(P^{\ad}). 
\end{equation*}
Note that the equality $\Ima(\varphi)=\Ima(q)$ holds. For $g=\begin{pmatrix}a&b\\c&d\end{pmatrix}$, let
\begin{equation*}
\psi(g)\colon P\rightarrow P;\,\eta_{1}^{y}\eta_{2}^{x_{1}}\rho_{2}^{x_{2}}\mapsto \eta_{1}^{y}\eta_{2}^{ax_{1}+cx_{2}}\rho_{2}^{bx_{1}+x_{2}}. 
\end{equation*}
Then, applying \cite[(3F)]{Winter1972} to the transpose of $g$, we obtain that $\psi(g)$ an automorphism of $P$. Moreover, the image of $\psi(g)$ under the right vertical map in \eqref{eq:otwt} equals $\varphi(g)$. Hence, we obtain that $q$ is an isomorphism, and hence the correspondence $g\mapsto \psi(g)$ induces an isomorphism $\overline{\varphi}$. 
\end{proof}

\begin{prop}\label{prop:wntr}
Let $p>2$ be an odd prime number, and $P:=P'_{2}$. Then, there is a commutative diagram
\begin{equation*}
\xymatrix{
(C_{p})^{2}\rtimes_{\varphi} \SL(\Fp)\ar[r]^{\hspace{20pt}\cong}\ar[d]^{\pr_{2}}&\Aut(P)^{\circ}\ar[d]^{f\mapsto [gZ(P)\mapsto f(g)Z(P)]}\\
\SL_{2}(\Fp)\ar[r]& \Aut(P^{\ad}). 
}
\end{equation*}
where $\varphi$ is as in Proposition \ref{prop:wint}. In particular, there is an injective homomorphism
\begin{equation*}
\widetilde{\varphi}\colon \SL_{2}(\Fp)\rightarrow \Aut(P)^{\circ}
\end{equation*}
such that the composite with $\Aut(P)^{\circ} \rightarrow \Aut(P^{\ad})$ coincides with $\varphi$. 
\end{prop}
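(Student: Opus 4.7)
The plan is to realize $\Aut(P)^{\circ}$ as a group extension of $\Out(P)^{\circ}\cong \SL_{2}(\Fp)$ by $\Inn(P)\cong (C_{p})^{2}$, and then split it via the vanishing of $H^{2}(\SL_{2}(\Fp),V)$ provided by Lemma \ref{lem:bell}. Everything after that is formal.

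First I would note that, by Lemma \ref{lem:pppr}~(i), $Z(P)=\langle \delta_{1}\rangle$, so conjugation yields an isomorphism $P^{\ad}\xrightarrow{\cong}\Inn(P)$; in particular $\Inn(P)\cong (C_{p})^{2}$. The canonical short exact sequence
\begin{equation*}
1\rightarrow \Inn(P)\rightarrow \Aut(P)^{\circ}\rightarrow \Out(P)^{\circ}\rightarrow 1
\end{equation*}
has right term identified with $\SL_{2}(\Fp)$ via $\overline{\varphi}^{-1}$ from Proposition \ref{prop:wint}. Since $P^{\ad}$ is abelian, $\Inn(P)$ acts trivially on itself, so the conjugation action of $\Aut(P)^{\circ}$ on $\Inn(P)$ descends to an action of $\Out(P)^{\circ}\cong \SL_{2}(\Fp)$ on $P^{\ad}$. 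Under the identifications above, this action coincides with the composition of $\overline{\varphi}$ with the right vertical map of \eqref{eq:otwt}, which by that diagram equals the standard representation $\varphi$.

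Next, the class of the extension then lies in $H^{2}(\SL_{2}(\Fp),V)$, where $V\cong (C_{p})^{2}$ is the standard representation. By Lemma \ref{lem:bell} this group vanishes, so the extension splits. Fix a splitting $\widetilde{\varphi}\colon \SL_{2}(\Fp)\rightarrow \Aut(P)^{\circ}$; by the previous paragraph its composition with $\Aut(P)^{\circ}\rightarrow \Aut(P^{\ad})$ is $\varphi$. Combining $\widetilde{\varphi}$ with the inclusion $\Inn(P)\hookrightarrow \Aut(P)^{\circ}$ and the identification $\Inn(P)\cong (C_{p})^{2}$ produces a homomorphism
\begin{equation*}
(C_{p})^{2}\rtimes_{\varphi}\SL_{2}(\Fp)\rightarrow \Aut(P)^{\circ},
\end{equation*}
which is bijective because of the splitting. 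The commutativity of the square follows immediately from the compatibility $\pi_{P^{\ad}}\circ \widetilde{\varphi}=\varphi$ just noted.

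The only substantive obstacle is the vanishing of $H^{2}(\SL_{2}(\Fp),V)$, which is precisely what Lemma \ref{lem:bell} (and its preparation via \cite{Bell1978a}) supplies; the identification of the action with the standard representation is a direct reading of \eqref{eq:otwt}. All remaining ingredients reduce to manipulations of central extensions of $P$ together with Proposition \ref{prop:wint}, so no further work beyond bookkeeping is required.
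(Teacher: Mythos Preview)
Your proof is correct and follows essentially the same approach as the paper: both identify $\Aut(P)^{\circ}$ as an extension of $\Out(P)^{\circ}\cong\SL_{2}(\Fp)$ by $\Inn(P)\cong(C_{p})^{2}$ with the standard action, and split it using $H^{2}(\SL_{2}(\Fp),V)=0$ from Lemma~\ref{lem:bell}. Your justification that the conjugation action coincides with $\varphi$ (via $f\circ\Ad(g)\circ f^{-1}=\Ad(f(g))$ and diagram~\eqref{eq:otwt}) is slightly more explicit than the paper's, which simply asserts it after fixing a basis.
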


\begin{proof}
By Proposition \ref{prop:wint}, it suffices to prove $\Aut(P)^{\circ}\cong P^{\ad}\rtimes_{\varphi}\SL_{2}(\Fp)$. Consider the canonical exact sequence
\begin{equation}\label{eq:cdsp}
1\rightarrow \Inn(P)\rightarrow \Aut(P)^{\circ}\rightarrow \SL_{2}(\Fp)\rightarrow 1. 
\end{equation}
Define an isomorphism $\alpha \colon \F_{p}^{\oplus 2}\xrightarrow{\cong}\Inn(P)$ defined as follows: 
\begin{equation*}
\alpha(1,0)=\Ad(\eta_{2}),\quad \alpha(0,1)=\Ad(\rho_{2}). 
\end{equation*}
Then, under the above isomorphism, the action of $\SL_{2}(\Fp)$ on $\Inn(P)$ induced by $\overline{\varphi}$ in Proposition \ref{prop:wint} is equivalent to the standard representation of $\SL_{2}(\Fp)$. On the other hand, if we denote by $V$ the standard representation of $\SL_{2}(\Fp)$, then we have
\begin{equation*}
H^{2}(\SL_{2}(\Fp),V)=0
\end{equation*}
by Lemma \ref{lem:bell}. This implies that \eqref{eq:cdsp} splits, and hence we obtain the desired isomorphism $\Aut(P)^{\circ}\cong P^{\ad}\rtimes_{\varphi}\SL_{2}(\Fp)$. 
\end{proof}

\begin{lem}\label{lem:evlf}
Let $p>2$ be an odd prime number, and put $P:=P'_{2}$. Take a homomorphism
\begin{equation*}
\widetilde{\varphi}\colon \SL_{2}(\Fp)\rightarrow \Aut(P)^{\circ} 
\end{equation*}
in Proposition \ref{prop:wntr}. Assume that $g\in \SL_{2}(\Fp)$ and $x\in P$ satisfy $g(xZ(P))=xZ(P)$. Then, we have $\widetilde{\varphi}(g)(x)=x$. 
\end{lem}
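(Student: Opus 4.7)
The plan is to reduce the statement to verifying the fixed-point property for a single convenient lift of $\varphi$, which I will construct directly from the Heisenberg realization of $P = P'_{2}$.

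First, I will show that the property is invariant under the choice of lift produced by Proposition~\ref{prop:wntr}. Suppose $\widetilde{\varphi}$ and $\widetilde{\varphi}'$ are two such lifts, i.e.\ two splittings of
\[
1 \to \Inn(P) \to \Aut(P)^{\circ} \to \SL_{2}(\Fp) \to 1.
\]
By Lemma~\ref{lem:bell}, $H^{1}(\SL_{2}(\Fp), V) = 0$ for $V \cong \Inn(P) = P^{\ad}$ the standard representation, so the two splittings differ by a $1$-coboundary. A standard computation translates this into the existence of $\beta \in P$ with $\widetilde{\varphi}'(g) = \Ad(\beta)^{-1} \circ \widetilde{\varphi}(g) \circ \Ad(\beta)$ for every $g \in \SL_{2}(\Fp)$. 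Because $\Ad(\beta)$ is trivial on $P^{\ad}$, the element $\beta x \beta^{-1}$ lies in the same coset $xZ(P)$ as $x$; thus $g(\beta x \beta^{-1} Z(P)) = g(xZ(P)) = xZ(P) = \beta x \beta^{-1} Z(P)$, so if $\widetilde{\varphi}(g)(\beta x \beta^{-1}) = \beta x \beta^{-1}$, then
\[
\widetilde{\varphi}'(g)(x) = \beta^{-1}(\beta x \beta^{-1}) \beta = x.
\]
Hence it suffices to exhibit one particular lift that has the desired property.

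Second, I will construct such a convenient lift from the Heisenberg realization of $P$. Since $p$ is odd and $P = P'_{2}$ is extraspecial of order $p^{3}$ and exponent $p$ by Lemma~\ref{lem:pppr}, one can identify $P$ with the Heisenberg group $H(W, \omega) := \Fp \times W$ over $W := \Fp^{\oplus 2}$, with the symplectic form $\omega((a,b),(a',b')) := a'b - ab'$ and multiplication
\[
(z, v)(z', v') := \bigl(z + z' + \tfrac{1}{2}\omega(v, v'),\, v + v'\bigr).
\]
Concretely, I will verify that $\phi(z, (a,b)) := \delta_{1}^{z + \frac{1}{2}ab}\, \delta_{2}^{a}\, \rho_{2}^{b}$ defines a group isomorphism $\phi \colon H(W, \omega) \xrightarrow{\cong} P$ mapping $\Fp \times \{0\}$ onto $Z(P) = \langle \delta_{1}\rangle$; this is a short calculation using the relation $\rho_{2}\delta_{2} = \delta_{2}\rho_{2}\delta_{1}$ and the fact that $\binom{p}{2} \equiv 0 \pmod{p}$ for $p$ odd. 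The natural action of $\SL_{2}(\Fp) = \Sp(W, \omega)$ on $H(W, \omega)$ by $g \cdot (z, v) := (z, gv)$ is manifestly a group homomorphism landing in $\Aut(H(W, \omega))^{\circ}$; transporting it through $\phi$ yields a splitting $\widetilde{\varphi}_{0} \colon \SL_{2}(\Fp) \to \Aut(P)^{\circ}$, and by tracking the induced action on $P^{\ad}$ through $\phi$ one checks that its composite with $\Aut(P)^{\circ} \to \Aut(P^{\ad})$ agrees with $\varphi$.

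For this lift the conclusion of the lemma is immediate: writing $x = \phi(z, v)$, the hypothesis $g(xZ(P)) = xZ(P)$ translates to $gv = v$ in $W$, whence $\widetilde{\varphi}_{0}(g)(x) = \phi(z, gv) = \phi(z, v) = x$. Combined with the invariance step, this proves the lemma. The main obstacle will be the bookkeeping needed in constructing $\phi$ and confirming that the induced action on $P^{\ad}$ matches $\varphi$ under the basis convention of Proposition~\ref{prop:wint}; both reduce to straightforward, if slightly fiddly, commutator computations in $P$ using $\rho_{2}\delta_{2} = \delta_{2}\rho_{2}\delta_{1}$.
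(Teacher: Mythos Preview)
Your proof is correct but takes a genuinely different route from the paper's.

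The paper's argument is a three-line trick exploiting the central element $-E_{2}\in\SL_{2}(\Fp)$. Writing $\widetilde{\varphi}(g)(x)=x\delta_{1}^{j_{0}}$ and $\widetilde{\varphi}(-E_{2})(x)=x^{-1}\delta_{1}^{j_{1}}$ (the latter because $-E_{2}$ acts by inversion on $P^{\ad}$ and $\widetilde{\varphi}$ fixes $Z(P)$ pointwise), one computes both composites $\widetilde{\varphi}(-E_{2})\circ\widetilde{\varphi}(g)$ and $\widetilde{\varphi}(g)\circ\widetilde{\varphi}(-E_{2})$ on $x$ and finds $x^{-1}\delta_{1}^{j_{0}+j_{1}}$ versus $x^{-1}\delta_{1}^{-j_{0}+j_{1}}$. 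Since $-E_{2}$ is central in $\SL_{2}(\Fp)$ these agree, so $2j_{0}=0$, hence $j_{0}=0$ as $p>2$. No explicit model of $P$, no $H^{1}$ computation, and no dependence on which lift $\widetilde{\varphi}$ was chosen.

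Your approach is more structural: you pass to the Heisenberg model, where the symplectic action visibly has the fixed-point property, and then use $H^{1}(\SL_{2}(\Fp),V)=0$ (Lemma~\ref{lem:bell}) to show any two lifts are $\Inn(P)$-conjugate, so the property transfers. This is heavier—it requires building the isomorphism $\phi$, verifying it against the basis convention of Proposition~\ref{prop:wint}, and invoking the cohomology vanishing—but it explains \emph{why} the lemma holds: in good coordinates the action is simply linear on $W$ and the identity on the center. The paper's trick is quicker and uses only that $p$ is odd and that $\SL_{2}(\Fp)$ has a central involution acting by $-1$, without ever writing down a preferred lift.
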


\begin{proof}
Pick $x\in P$. If $x\in Z(P)$, then the assertion is clear. Otherwise, there exist $j_{0},j_{1}\in \Z/p$ such that
\begin{equation*}
\widetilde{\varphi}(g)(x)=x\eta_{1}^{j_{0}},\quad \widetilde{\varphi}(-E_{2})(x)=x^{-1}\eta_{1}^{j_{1}}. 
\end{equation*}
Here $E_{2}$ is the unit matrix of size $2$. Then, we have the following: 
\begin{equation*}
\widetilde{\varphi}(-E_{2})\circ \widetilde{\varphi}(g)(x)=x^{-1}\eta_{1}^{j_{0}+j_{1}},\quad \widetilde{\varphi}(g)\circ \widetilde{\varphi}(-E_{2})(x)=x^{-1}\eta_{1}^{-j_{0}+j_{1}}. 
\end{equation*}
Since $\widetilde{\varphi}(-E_{2})\circ \widetilde{\varphi}(g)=\widetilde{\varphi}(-E_{2})\circ \widetilde{\varphi}(g)$, one has $j_{0}+j_{1}=-j_{0}+j_{1}$. This implies $j_{0}=0$ since $p>2$. Hence, we obtain $\widetilde{\varphi}(g)(x)=x$ as desired. 
\end{proof}

\begin{lem}\label{lem:masl}
Let $p>2$ be an odd prime number. Put $G:=(C_{p})^{2}\rtimes_{\varphi} \SL_{2}(\Fp)$, where $\varphi$ is induced by the standard representation of $\SL_{2}(\Fp)$. Then, there is an isomorphism
\begin{equation*}
H^{2}(G,\Q/\Z)\cong \Z/p. 
\end{equation*}
\end{lem}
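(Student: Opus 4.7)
The plan is to apply Proposition \ref{prop:thrt} to the split extension $G = N \rtimes_{\varphi} H$ with $N := (C_p)^2$ and $H := \SL_2(\Fp)$, taking coefficients $A = \Q/\Z$. The key observation is that $\Hom(N, \Q/\Z)$ is isomorphic as an $H$-module to $V^{\vee}$, where $V$ denotes the standard representation; hence Lemma \ref{lem:bell} gives the vanishings $H^1(H, V^{\vee}) = H^2(H, V^{\vee}) = 0$, and the Tahara exact sequence collapses to an isomorphism
\begin{equation*}
H_{(H)}^2(G, \Q/\Z) \cong H^2(N, \Q/\Z)^H.
\end{equation*}

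To identify the right-hand side, I use that $N$ is elementary abelian of rank two, so $H_2(N, \Z) \cong \wedge^2 N$, and the universal coefficient theorem (together with the divisibility of $\Q/\Z$) yields $H^2(N, \Q/\Z) \cong \Hom(\wedge^2 N, \Q/\Z)$. This is a one-dimensional $\Fp$-vector space on which $\GL_2(\Fp)$ acts through the inverse of $\det$; since $H = \SL_2(\Fp)$ has trivial determinant, the whole space is $H$-invariant, giving $H_{(H)}^2(G, \Q/\Z) \cong \Z/p$.

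To pass from $H_{(H)}^2(G, \Q/\Z)$ to the full group $H^2(G, \Q/\Z)$, I exploit that the projection $G \twoheadrightarrow G/N \cong H$ retracts the inclusion $H \hookrightarrow G$. Consequently, the inflation $H^2(H, \Q/\Z) \rightarrow H^2(G, \Q/\Z)$ is a section of $\Res_{G/H}$, producing a direct sum decomposition
\begin{equation*}
H^2(G, \Q/\Z) \cong H_{(H)}^2(G, \Q/\Z) \oplus H^2(H, \Q/\Z).
\end{equation*}

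The main remaining obstacle is to show that the Schur multiplier $H^2(\SL_2(\Fp), \Q/\Z)$ vanishes for every odd prime $p$. For $p \geq 5$ this is classical, as $\SL_2(\Fp)$ is a universal central extension of the simple group $\PSL_2(\Fp)$. For $p = 3$ one can verify it directly via the Lyndon--Hochschild--Serre spectral sequence for $1 \rightarrow Q_8 \rightarrow \SL_2(\F_3) \rightarrow C_3 \rightarrow 1$, using the vanishing of the Schur multipliers of $Q_8$ and $C_3$ together with the fact that the induced $C_3$-action on $\Hom(Q_8^{\ab}, \Q/\Z)$ has no nonzero invariants. Combining these inputs yields $H^2(G, \Q/\Z) \cong \Z/p$, as required.
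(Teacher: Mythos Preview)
Your proof is correct and follows essentially the same route as the paper: both apply Tahara's exact sequence (Proposition \ref{prop:thrt}) to $G=N\rtimes H$ and invoke Lemma \ref{lem:bell} for the vanishing of $H^{i}(H,V^{\vee})$. The only cosmetic differences are that the paper cites \cite[Chapter 16, Theorem 4.5]{Karpilovsky1993} for $H^{2}(\SL_{2}(\Fp),\Q/\Z)=0$ and Lemma \ref{lem:bcfx} for $H^{2}(N,\Q/\Z)^{H}\cong\Z/p$, whereas you supply both facts by hand (the $\wedge^{2}$/determinant argument and the $Q_{8}\rtimes C_{3}$ spectral sequence for $p=3$); your mention of the $C_{3}$-invariants on $\Hom(Q_{8}^{\ab},\Q/\Z)$ is in fact unnecessary, since all three $E_{2}^{p,q}$ terms on the line $p+q=2$ already vanish.
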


\begin{proof}
Put $N:=(C_{p})^{2}\rtimes_{\varphi}\{E_{2}\}$ and $H:=\{1\}\rtimes_{\varphi}\SL_{2}(\Fp)$, which satisfies $G=N\rtimes H$. Denote by $V$ the standard representation of $\SL_{2}(\Fp)$. By \cite[Chapter 16, Theorem 4.5]{Karpilovsky1993}, we have $H^{2}(H,\Q/\Z)=0$. Combining this fact with Proposition \ref{prop:thrt}, we obtain an exact sequence
\begin{equation*}
0\rightarrow H^{1}(H,V^{\vee})\rightarrow H^{2}(G,\Q/\Z)\xrightarrow{\Res_{G/N}}H^{2}(N,\Q/\Z)^{H}\rightarrow H^{2}(H,V^{\vee}). 
\end{equation*}
On the other hand, by Lemma \ref{lem:bell}, one has $H^{1}(H,V^{\vee})=H^{2}(H,V^{\vee})=0$. Consequently, there is an isomorphism
\begin{equation*}
H^{2}(G,\Q/\Z)\cong H^{2}(N,\Q/\Z)^{H}\cong (\Z/p)^{H}. 
\end{equation*}
Hence, the assertion follows from Lemma \ref{lem:bcfx}. 
\end{proof}

\begin{prop}\label{prop:shpr}
Let $p$ be a prime number. Put
\begin{equation*}
G:=(C_{p})^{2}\rtimes_{\varphi} \SL_{2}(\Fp),\quad H:=\{1\}\rtimes_{\varphi}\SL_{2}(\Fp),
\end{equation*}
where $\varphi$ is induced by the standard representation of $\SL_{2}(\Fp)$. Then, there is an isomorphism
\begin{equation*}
\Sha_{\omega}^{2}(G,J_{G/H})\cong \Z/p. 
\end{equation*}
\end{prop}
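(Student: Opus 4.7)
The plan is to apply Proposition \ref{prop:drmn} to the generalized representation group outlined in the paper's strategy. Put $P := P'_{2}$ and take the lift $\widetilde{\varphi}\colon \SL_{2}(\Fp) \to \Aut(P)^{\circ}$ from Proposition \ref{prop:wntr}. Form
\[
1 \to Z(P) \to \widetilde{G} := P \rtimes_{\widetilde{\varphi}} \SL_{2}(\Fp) \xrightarrow{\pi} G \to 1,
\]
where $\pi$ is induced by $P \twoheadrightarrow P/Z(P) \cong (C_{p})^{2}$, so that $\widetilde{H} := \pi^{-1}(H) = Z(P) \times \SL_{2}(\Fp)$, a direct product since $\widetilde{\varphi}$ fixes $Z(P)$ pointwise. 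To see that $\widetilde{G}$ is a generalized representation group, Lemma \ref{lem:masl} gives $H^{2}(G, \Q/\Z) \cong \Z/p \cong H^{1}(Z(P),\Q/\Z)$, so via the inflation-restriction sequence it suffices to show the extension is non-split. Any splitting would exhibit a subgroup of $P$ isomorphic to $(C_{p})^{2}$ and intersecting $Z(P)$ trivially, forcing $P$ itself to be abelian---a contradiction.

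Applying Proposition \ref{prop:drmn}, the problem reduces to computing $(\widetilde{H} \cap \widetilde{G}^{\der}) / \Phi^{\widetilde{G}}(\widetilde{H})$. First, $\widetilde{G}^{\der}$ contains $[P,P] = Z(P)$ and $[P, \SL_{2}(\Fp)]$, whose image in $(C_{p})^{2} = P/Z(P)$ is a nonzero $\SL_{2}(\Fp)$-invariant subspace of the irreducible standard module and hence equals the whole. Thus $P \subset \widetilde{G}^{\der}$, and since $\widetilde{G}/P = \SL_{2}(\Fp)$ we get $\widetilde{G}^{\der} = P \rtimes \SL_{2}(\Fp)^{\der}$ and $\widetilde{H} \cap \widetilde{G}^{\der} = Z(P) \times \SL_{2}(\Fp)^{\der}$.

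The crucial step is to verify $\Phi^{\widetilde{G}}(\widetilde{H}) = \SL_{2}(\Fp)^{\der}$. The inclusion $\supset$ is immediate from $[\widetilde{H},\widetilde{H}] = \SL_{2}(\Fp)^{\der}$. For $\subset$, take $h = (z,s) \in \widetilde{H}$ and $g = (p,t) \in \widetilde{G}$ with $h \in g^{-1}\widetilde{H}g$. A direct computation exploiting $\widetilde{\varphi}|_{Z(P)} = \id$ yields
\[
ghg^{-1} = \bigl(p \cdot \widetilde{\varphi}(tst^{-1})(p^{-1}) \cdot z,\; tst^{-1}\bigr),
\]
so the condition $ghg^{-1} \in \widetilde{H}$ becomes $\widetilde{\varphi}(tst^{-1})(p) \in p \cdot Z(P)$, which Lemma \ref{lem:evlf} upgrades to $\widetilde{\varphi}(tst^{-1})(p) = p$. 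Expanding $[h,g] = h^{-1}ghg^{-1}$ then gives second component $s^{-1}tst^{-1} \in \SL_{2}(\Fp)^{\der}$ and first component $\widetilde{\varphi}(s^{-1})(p) \cdot \widetilde{\varphi}(s^{-1}tst^{-1})(p^{-1})$; the strengthened condition together with the multiplicativity of $\widetilde{\varphi}(s^{-1})$ collapses this to $\widetilde{\varphi}(s^{-1})(p \cdot p^{-1}) = 1$. Hence every generator of $\Phi^{\widetilde{G}}(\widetilde{H})$ lies in $\SL_{2}(\Fp)^{\der}$, and we conclude $(\widetilde{H} \cap \widetilde{G}^{\der})/\Phi^{\widetilde{G}}(\widetilde{H}) \cong Z(P) \cong \Z/p$, whose Pontryagin dual is $\Z/p$.

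The most delicate step is this vanishing of the first component of $[h,g]$. It depends crucially on $\widetilde{\varphi}$ taking values in $\Aut(P)^{\circ}$ and on the strong form of Lemma \ref{lem:evlf}; this is precisely why the refined lift provided by Proposition \ref{prop:wntr}, rather than an arbitrary set-theoretic lift of the standard representation, is what powers the argument.
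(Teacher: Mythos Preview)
Your proof is correct and follows essentially the same route as the paper's: you build the same central extension $\widetilde{G}=P'_{2}\rtimes_{\widetilde{\varphi}}\SL_{2}(\Fp)$ via Proposition~\ref{prop:wntr}, verify it is a generalized representation group, and then apply Proposition~\ref{prop:drmn} together with Lemma~\ref{lem:evlf} to compute $\Phi^{\widetilde{G}}(\widetilde{H})$. The only cosmetic difference is that the paper checks surjectivity of the transgression by observing $H^{1}(G,\Q/\Z)\xrightarrow{\cong}H^{1}(\widetilde{G},\Q/\Z)$ (both equal $\SL_{2}(\Fp)^{\vee}$), whereas you deduce it from the non-splitting of the extension; both arguments are immediate.
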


\begin{proof}
Take a homomorphism $\widetilde{\varphi} \colon \SL_{2}(\Fp)\rightarrow \Aut(P)^{\circ}$ in Proposition \ref{prop:wntr}. Let
\begin{equation*}
\widetilde{G}:=P\rtimes_{\widetilde{\varphi}}\SL_{2}(\Fp),\quad \widetilde{Z}:=Z(P)\rtimes_{\widetilde{\varphi}}\{E_{2}\}. 
\end{equation*}
Then, the natural inclusion $\widetilde{Z}\hookrightarrow \widetilde{G}$ induces a central extension
\begin{equation}\label{eq:easl}
1\rightarrow \widetilde{Z} \rightarrow \widetilde{G}\rightarrow G \rightarrow 1. 
\end{equation}
Moreover, one has a commutative diagram
\begin{equation*}
\xymatrix{
H^{1}(G,\Q/\Z)\ar[r]\ar[d]^{\cong}&H^{1}(\widetilde{G},\Q/\Z)\ar[r]\ar[d]^{\cong}&H^{1}(\widetilde{Z},\Q/\Z)\ar[d]^{\cong}\\
\SL_{2}(p)^{\vee} \ar@{=}[r]&\SL_{2}(p)^{\vee}\ar[r]&\Z/p. 
}
\end{equation*}
Hence, the transgression map $H^{1}(\widetilde{Z},\Q/\Z)\rightarrow H^{2}(G,\Q/\Z)$ is injective. On the other hand, we have $H^{2}(G,\Q/\Z)\cong \Z/p$ by Lemma \ref{lem:masl}, and hence $H^{1}(\widetilde{Z},\Q/\Z)\rightarrow H^{2}(G,\Q/\Z)$ is surjective. Therefore, \eqref{eq:easl} is a generalized representation group of $G$. 

We use Corollary \ref{cor:drmn} to give the desired isomorphism. It suffices to prove
\begin{itemize}
\item[(a)] $\widetilde{H}\cap \widetilde{G}^{\der}=Z(P)\rtimes_{\widetilde{\varphi}} \SL_{2}(\Fp)^{\der}$; and
\item[(b)] $\Phi^{\widetilde{G}}(\widetilde{H})=\{1\}\rtimes_{\widetilde{\varphi}}\SL_{2}(\Fp)^{\der}$. 
\end{itemize}
The claim (a) is clear. To demonstrate (b), consider elements $z_{1},z_{2}\in Z(P)$, $x\in P$ and $g,h_{1},h_{2}\in \SL_{2}(\Fp)$ that satisfy
\begin{equation*}
(x\rtimes g)(z_{1}\rtimes h_{1})=(z_{2}\rtimes h_{2})(x\rtimes g). 
\end{equation*}
This can be rephrased as two equalities as follows: 
\begin{equation*}
xz_{1}=\widetilde{\varphi}(h_{2})(x)z_{2},\quad gh_{1}=h_{2}g. 
\end{equation*}
In particular, one has $\widetilde{\varphi}(h_{2})(xZ(P))=xZ(P)$. Hence, we have $\widetilde{\varphi}(h_{2})(x)=x$ by Lemma \ref{lem:evlf}. This implies $z_{1}=z_{2}$, and therefore we obtain
\begin{equation*}
(z_{1}\rtimes h_{1})^{-1}(z_{2}\rtimes h_{2})=1\rtimes [h_{1},g].  
\end{equation*}
Consequently, we have $\Phi^{\widetilde{G}}(\widetilde{H})<\{1\}\rtimes \SL_{2}(\Fp)^{\der}$. On the other hand, it is clear that $\Phi^{\widetilde{G}}(\widetilde{H})$ contains $\{1\}\rtimes_{\widetilde{\varphi}}\SL_{2}(\Fp)^{\der}$, and hence the proof is complete. 
\end{proof}

\begin{prop}\label{prop:slnz}
Let $p$ be a prime number, and $G^{\dagger}$ a subgroup of $\SL_{2}(\Fp)$. Put
\begin{equation*}
G:=(C_{p})^{2}\rtimes_{\varphi}G^{\dagger},\quad H:=\{1\}\rtimes_{\varphi}G^{\dagger},
\end{equation*}
where $\varphi$ is induced by the standard representation of $\SL_{2}(\Fp)$. Then, there is an isomorphism
\begin{equation*}
\Sha_{\omega}^{2}(G,J_{G/H})\cong \Z/p. 
\end{equation*}
\end{prop}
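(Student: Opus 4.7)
The plan is to reduce the statement to the case $G^{\dagger}=\SL_{2}(\Fp)$ resolved in Proposition \ref{prop:shpr} via a restriction argument. Set $\widehat{G}:=(C_{p})^{2}\rtimes_{\varphi}\SL_{2}(\Fp)$, $\widehat{H}:=\{1\}\rtimes_{\varphi}\SL_{2}(\Fp)$, and $N:=(C_{p})^{2}\rtimes \{1\}$. The inclusion $G^{\dagger}\hookrightarrow \SL_{2}(\Fp)$ embeds $G$ in $\widehat{G}$ with $H=G\cap \widehat{H}$, and the natural bijection $G/H\cong \widehat{G}/\widehat{H}$ of $G$-sets yields $J_{\widehat{G}/\widehat{H}}\!\mid_{G}\cong J_{G/H}$ as $G$-modules. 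Since every cyclic subgroup of $G$ is also a cyclic subgroup of $\widehat{G}$, restriction defines a homomorphism
\begin{equation*}
\Res_{\widehat{G}/G}\colon \Sha_{\omega}^{2}(\widehat{G},J_{\widehat{G}/\widehat{H}})\to \Sha_{\omega}^{2}(G,J_{G/H}).
\end{equation*}

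For each of $G$ and $\widehat{G}$ I would mimic the proof of Proposition \ref{prop:nszr}. Since $G^{\dagger}\leq \SL_{2}(\Fp)$ has $p$-Sylow cyclic of order $p$, a $p$-Sylow $P$ of $G$ has order $p^{2}$ or $p^{3}$, and a direct computation of its exponent shows $P\cong P'_{1}$ or $P\cong P'_{2}$; likewise the $p$-Sylow of $\widehat{G}$ is $P'_{2}$. Combining Proposition \ref{prop:shpp}(ii) (restriction to a $p$-Sylow is injective on $\Sha_{\omega}^{2}$) with Lemma \ref{lem:cpij} applied to the $p$-Sylow (the composite $\Res_{\cdot /N}\circ \partial^{2}$ is injective on $\Sha_{\omega}^{2}(P'_{n},J_{P'_{n}/H_{n}})$ for $n\in \{1,2\}$), one concludes that both
\begin{equation*}
\Res_{G/N}\circ \partial_{G/H}^{2}\colon \Sha_{\omega}^{2}(G,J_{G/H})\hookrightarrow H^{3}(N,\Z),\quad \Res_{\widehat{G}/N}\circ \partial_{\widehat{G}/\widehat{H}}^{2}\colon \Sha_{\omega}^{2}(\widehat{G},J_{\widehat{G}/\widehat{H}})\hookrightarrow H^{3}(N,\Z)
\end{equation*}
are injective. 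Since the $\SL_{2}(\Fp)$-action on $H^{3}(N,\Z)\cong \Z/p$ factors through $\det$ and is therefore trivial, $H^{3}(N,\Z)^{G^{\dagger}}=H^{3}(N,\Z)\cong \Z/p$; combined with the isomorphism $\Sha_{\omega}^{2}(\widehat{G},J_{\widehat{G}/\widehat{H}})\cong \Z/p$ from Proposition \ref{prop:shpr}, the second map above is an isomorphism onto $\Z/p$.

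By the naturality of the connecting homomorphism and of restriction, the diagram
\begin{equation*}
\xymatrix{
\Sha_{\omega}^{2}(\widehat{G},J_{\widehat{G}/\widehat{H}})\ar[d]_{\Res_{\widehat{G}/G}}\ar[rr]^{\cong} && H^{3}(N,\Z)\ar@{=}[d]\\
\Sha_{\omega}^{2}(G,J_{G/H})\ar@{^{(}->}[rr] && H^{3}(N,\Z)
}
\end{equation*}
commutes, which forces the bottom arrow to be surjective as well as injective. Hence $\Sha_{\omega}^{2}(G,J_{G/H})\cong \Z/p$. I do not anticipate a substantive obstacle: every ingredient (Proposition \ref{prop:shpr}, the Sylow reduction of Proposition \ref{prop:shpp}(ii), and Lemma \ref{lem:cpij}) is already in place, and the only routine verification required is the identification of the $p$-Sylow of $G$ as $P'_{1}$ or $P'_{2}$ together with the naturality of $\partial^{2}$ under the embedding $G\hookrightarrow \widehat{G}$.
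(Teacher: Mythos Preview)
Your proposal is correct and follows essentially the same approach as the paper: both arguments sandwich $\Sha_{\omega}^{2}(G,J_{G/H})$ between $\Sha_{\omega}^{2}(\widehat{G},J_{\widehat{G}/\widehat{H}})\cong \Z/p$ (Proposition~\ref{prop:shpr}) and a copy of $\Z/p$ obtained via restriction to the $p$-Sylow (Proposition~\ref{prop:shpp}(ii)) together with Lemma~\ref{lem:cpij}. The only cosmetic difference is that the paper routes the comparison through $\Sha_{\omega}^{2}(P,J_{P/(P\cap H)})$ while you route it through $H^{3}(N,\Z)$; the ingredients and logic are identical.
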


\begin{proof}
Put 
\begin{equation*}
G':=(C_{p})^{2}\rtimes_{\varphi} \SL_{2}(\Fp),\quad H':=\{1\}\rtimes_{\varphi}\SL_{2}(\Fp), 
\end{equation*}
where $\varphi$ is induced by the standard representation of $\SL_{2}(\Fp)$. Then, we obtain the natural inclusion $G<G'$, and one has $H=G'\cap H$. Fix $p$-Sylow subgroups $P$ and $P'$ of $G$ and $G'$, respectively, so that $P<P'$. Then, there is an isomorphism $P\cong P'_{2}$. Then, one has a commutative diagram
\begin{equation*}
\xymatrix@C=35pt{
\Sha_{\omega}^{2}(G',J_{G'/H'})\ar[r]^{\Res_{G'/G}}\ar[d]_{\Res_{G'/P'}}& 
\Sha_{\omega}^{2}(G,J_{G/H})\ar[d]^{\Res_{G/P}}\\
\Sha_{\omega}^{2}(P',J_{P'/(P'\cap H')})\ar[r]^{\Res_{P'/P}}&\Sha_{\omega}^{2}(P,J_{P/(P\cap H)}). 
}
\end{equation*}
The bottom horizontal map is injective, which follows from Lemma \ref{lem:cpij}. Moreover, the vertical maps are also injective. Therefore, we obtain that the top horizontal map is injective. On the other hand, we have $\Sha_{\omega}^{2}(G,J_{G/H})\cong \Z/p$ by Proposition \ref{prop:shpr}. In addition, by Therem \ref{thm:shpg} (i), one has an isomorphism $\Sha_{\omega}^{2}(P,J_{P/(P\cap H)})\cong \Z/p$. Therefore, we obtain the desired isomorphism. 
\end{proof}

As a summarization of this section, we obtain the structure of $\Sha_{\cD}^{2}(G,J_{G/H})$ in the case $(G:H)=p^{2}$ with $p$ an odd prime. 

\begin{thm}\label{thm:mtsh}
Let $p>2$ be an odd prime number, $G$ a transitive group of degree $p^{2}$, and $H$ a one point stabilizer in $G$. 
\begin{enumerate}
\item If $\Sha_{\omega}^{2}(G,J_{G/H})\neq 0$, then the following holds: 
\begin{itemize}
\item[($\ast$)] there exists a subgroup $G^{\dagger}$ of $\SL_{2}(\Fp)$ such that 
\begin{equation*}
G\cong (C_{p})^{2}\rtimes_{\varphi} G^{\dagger},\quad 
H\cong \{1\}\rtimes_{\varphi} G^{\dagger}, 
\end{equation*}
where $\varphi$ is induced by the standard representation of $\SL_{2}(\Fp)$ over $\Fp$. 
\end{itemize}
\item Assume that \emph{($\ast$)} is valid. For an admissible set $\cD$ of subgroups in $G$, there is an isomorphism
\begin{equation*}
\Sha_{\cD}^{2}(G,J_{G/H})\cong 
\begin{cases}
0&\text{if $(C_{p})^{2}<D$ for some $D\in \cD$; }\\
\Z/p &\text{otherwise. }
\end{cases}
\end{equation*}
\end{enumerate}
\end{thm}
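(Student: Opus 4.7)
The plan is to prove (i) by a sequence of contradictions, using the structural results already established to narrow down the form of $G$. Assume $\Sha_{\omega}^{2}(G,J_{G/H})\neq 0$, and let $P$ be a $p$-Sylow subgroup of $G$. By Lemma \ref{lem:sytr}, $P$ is a transitive group of degree $p^{2}$ with corresponding subgroup $H\cap P$. Proposition \ref{prop:nhvn} then forces $P\cong P'_{1}$ or $P\cong P'_{2}$; in particular $P$ has exponent $p$ by Lemma \ref{lem:pppr}(ii), so $G$ contains no element of order $p^{2}$. By Lemma \ref{lem:rfps} and Proposition \ref{prop:dwpg}(ii), after replacing $G$ with a conjugate in $\fS_{p^{2}}$ we may assume $P=P'_{n}$ with $n:=\ord_{p}(\#G)-1\in\{1,2\}$. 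Now Proposition \ref{prop:crdw}(i) tells us that either condition (a) (a direct-product structure) or (b) (an affine-linear structure) of that proposition holds. If (a) holds, Proposition \ref{prop:shaz} gives $\Sha_{\omega}^{2}=0$, a contradiction, so (b) must hold. Thus $G\cong (C_{p})^{2}\rtimes_{\varphi}G^{\dagger}$ with $G^{\dagger}<\GL_{2}(\Fp)$. Finally, if $G^{\dagger}\not<\SL_{2}(\Fp)$, then Proposition \ref{prop:nszr} again forces $\Sha_{\omega}^{2}=0$, another contradiction. Hence $G^{\dagger}<\SL_{2}(\Fp)$, establishing ($\ast$).

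For part (ii), assume ($\ast$). By Proposition \ref{prop:slnz} one has $\Sha_{\omega}^{2}(G,J_{G/H})\cong \Z/p$, so $\Sha_{\cD}^{2}(G,J_{G/H})$ is either $0$ or $\Z/p$. The strategy is to transfer the computation to a $p$-Sylow subgroup $P$ of $G$, where Theorem \ref{thm:shpg}(ii) is directly applicable. Since $G^{\dagger}<\SL_{2}(\Fp)$ and a $p$-Sylow of $\SL_{2}(\Fp)$ is cyclic of order $p$, the group $P$ is isomorphic to $P'_{n}$ for some $n\in\{1,2\}$. By Proposition \ref{prop:shpp}(i), $J_{G/H}\cong J_{P/(P\cap H)}$ as $P$-lattices, and Proposition \ref{prop:shpp}(ii) combined with a counting of orders shows that the restriction $\Res_{G/P}\colon \Sha_{\omega}^{2}(G,J_{G/H})\to \Sha_{\omega}^{2}(P,J_{P/(P\cap H)})$ is in fact an isomorphism.

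The key refinement is to extend this to arbitrary admissible $\cD$: setting $\cD_{\cap P}:=\{D\cap P\mid D\in\cD\}$, I will show
\begin{equation*}
\Res_{G/P}\colon \Sha_{\cD}^{2}(G,J_{G/H})\xrightarrow{\cong}\Sha_{\cD_{\cap P}}^{2}(P,J_{P/(P\cap H)}).
\end{equation*}
Injectivity (with image inside $\Sha_{\cD_{\cap P}}^{2}$) is Proposition \ref{prop:shrs}. For the reverse inclusion, take $x\in \Sha_{\omega}^{2}(G,J_{G/H})$ with $\Res_{G/P}(x)\in \Sha_{\cD_{\cap P}}^{2}(P,\cdot)$. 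For any $D\in \cD$, note that $(D:D\cap P)$ divides $(G:P)$ and is thus coprime to $p$, so $D\cap P$ is a $p$-Sylow of $D$ and $\Res_{D/(D\cap P)}$ is injective on $p$-torsion (by the standard $\Cor\circ \Res$ argument). Since $\Res_{D/(D\cap P)}(\Res_{G/D}(x))=\Res_{P/(D\cap P)}(\Res_{G/P}(x))=0$ and $\Res_{G/D}(x)$ is $p$-torsion (Proposition \ref{prop:andg}), we obtain $\Res_{G/D}(x)=0$, whence $x\in \Sha_{\cD}^{2}$. Finally, admissibility of $\cD$ makes the condition ``$(C_{p})^{2}<D$ for some $D\in \cD$'' equivalent to the corresponding condition on $\cD_{\cap P}$: any elementary abelian subgroup of rank two in $G$ lies in a conjugate of $P$, so conjugating $D$ brings its $(C_{p})^{2}$ into $P$. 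Invoking Theorem \ref{thm:shpg}(ii) then produces the stated dichotomy.

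The main obstacle is the reverse containment in the $\Sha$-identification above, since Proposition \ref{prop:shrs} only delivers injectivity; the argument hinges on exploiting simultaneously (i) transfer-injectivity on $p$-torsion when restricting from $D$ to its $p$-Sylow $D\cap P$, and (ii) the commutativity of restriction along the two chains $G\supset P\supset D\cap P$ and $G\supset D\supset D\cap P$. The reduction of the ``$(C_{p})^{2}<D$'' condition to its counterpart on $\cD_{\cap P}$ is routine given admissibility, and everything else is a careful assembly of the pre-established propositions.
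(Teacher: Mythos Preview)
Your proof of (i) is correct and matches the paper's argument essentially line for line.

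For (ii), your approach differs from the paper's. The paper splits into two cases directly: if some $D\in\cD$ contains $(C_p)^2$, it uses the injection of Proposition~\ref{prop:shpp}(ii) into $\Sha_{\cD_{\cap P}}^{2}(P,J_{P/(P\cap H)})$ together with Theorem~\ref{thm:shpg}(ii) to get zero; otherwise it invokes Proposition~\ref{prop:rdpp} to replace $\cD$ by the set $\cD_{p}$ of $p$-Sylow subgroups of the members of $\cD$, observes that under the hypothesis every such $p$-Sylow is cyclic, and concludes $\Sha_{\cD}^{2}=\Sha_{\omega}^{2}\cong\Z/p$ via Proposition~\ref{prop:slnz}. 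This is shorter and sidesteps the need to prove that $\Res_{G/P}$ is an isomorphism $\Sha_{\cD}^{2}(G,\cdot)\to\Sha_{\cD_{\cap P}}^{2}(P,\cdot)$ for arbitrary admissible $\cD$.

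Your route has a genuine gap. You assert that for every $D\in\cD$ the index $(D:D\cap P)$ divides $(G:P)$, hence is coprime to $p$; equivalently, that $D\cap P$ is a $p$-Sylow subgroup of $D$. This is false in general: a $p$-Sylow of $D$ lies in \emph{some} $G$-conjugate of $P$, but need not lie in $P$ itself, so $D\cap P$ can be strictly smaller than a $p$-Sylow of $D$ (take, e.g., two distinct $p$-Sylow subgroups of $G$ with trivial intersection and let $D$ be the one not equal to $P$). Consequently your injectivity of $\Res_{D/(D\cap P)}$ on $p$-torsion is unjustified as stated. The gap is reparable: since $\cD$ is admissible, replace $D$ by a conjugate $gDg^{-1}\in\cD$ chosen so that a $p$-Sylow of $gDg^{-1}$ lies in $P$ (Sylow's theorem), and use Lemma~\ref{lem:gpac} to see that $\Res_{G/D}(x)=0$ iff $\Res_{G/gDg^{-1}}(x)=0$. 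With this adjustment your argument goes through, but as written the step fails.
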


\begin{proof}
We denote by $P$ a $p$-Sylow subgroup of $G$. 

(i): If $P$ is not isomorphic to $P'_{1}\cong (C_{p})^{2}$ or $P'_{2}$, then we have $\Sha_{\omega}^{2}(G,J_{G/H})=0$ by Proposition \ref{prop:nhvn}. Otherwise, by Proposition \ref{prop:crdw} (i), one of the following is valid: 
\begin{enumerate}
\item[(a)] there exists a subgroup $G'$ of $G$ with $(G:G')\leq 2$ and transitive groups $G_{1}$ and $G_{2}$ of degree $p$ with $\#G_{1}>p$ or $\#G_{2}>p$ such that $G_{1}\times G_{2}<G'<N_{\fS_{p}}(G_{1})\times N_{\fS_{p}}(G_{2})$; 
\item[(b)] there is a subgroup $G'$ of $\GL_{2}(\Fp)$ such that $G\cong (C_{p})^{2}\rtimes G'$ and $H\cong \{1\}\rtimes G'$. 
\end{enumerate}
If (a) holds, then we have $\Sha_{\omega}^{2}(G,J_{G/H})=0$ by Proposition \ref{prop:shaz}. If (b) holds, the assertion follows from Propositions \ref{prop:nszr} and \ref{prop:slnz}. 

(ii): By ($\ast$), we have $P\cong P'_{1}$ or $P\cong P'_{2}$. First, assume that $\cD$ contains a subgroup $D$ of $G$ with $(C_{p})^{2}<D$. Then $\cD_{P}$ is so. Moreover, Proposition \ref{prop:shpp} gives an injection
\begin{equation*}
\Sha_{\cD}^{2}(G,J_{G/H})\hookrightarrow \Sha_{\cD_{\cap P}}^{2}(P,J_{P/(P\cap H)}). 
\end{equation*}
Now, we have $\Sha_{\cD_{\cap P}}^{2}(P,J_{P/(P\cap H)})=0$ by Theorem \ref{thm:shpg} (ii), and hence $\Sha_{\cD}^{2}(G,J_{G/H})=0$. Otherwise, $\cD_{G,p}$ is the set of all cyclic $p$-groups. Consequently, one has
\begin{equation*}
\Sha_{\cD}^{2}(G,J_{G/H})=\Sha_{\cD_{G,p}}^{2}(G,J_{G/H})=\Sha_{\omega}^{2}(G,J_{G/H})
\end{equation*}
by Proposition \ref{prop:rdpp}. Now, (i) implies $\Sha_{\omega}^{2}(G,J_{G/H})\cong \Z/p$, and hence
\begin{equation*}
\Sha_{\cD}^{2}(G,J_{G/H})\cong \Z/p. 
\end{equation*}
This completes the proof. 
\end{proof}

\section{Application to norm one tori}\label{sect:pfch}

\subsection{Norm one tori and their cohomological invariants}

Let $k$ be a field, and fix a separable closure $k^{\sep}$ of $k$. We denote by $\G_{m}$ the multiplicative group scheme. Recall that a \emph{torus} (or, an \emph{algebraic torus}) over $k$ is an algebraic $k$-group $T$ that admits an isomorphism
\begin{equation*}
T\otimes_{k}k^{\sep}\cong \G_{m,k^{\sep}}^{d}
\end{equation*} 
for some non-negative integer $d$. For a torus $T$ over $k$, the \emph{character group} of $T$ is defined as follows: 
\begin{equation*}
X^{*}(T):=\Hom_{k^{\sep}\text{-groups}}(T\otimes_{k}k^{\sep},\G_{m,k^{\sep}}). 
\end{equation*}
It is a finite free abelian group equipped with a continuous action of the absolute Galois group of $k$ (with respect to the discrete topology on $X^{*}(T)$). Hence, $X^{*}(T)$ is a $\Gal(\widetilde{K}/k)$-lattice for some finite Galois extension $\widetilde{K}/k$. 

\vspace{6pt}
Let $X$ be a proper smooth variety over $k$. We denote by $\Br(X)$ the Brauer group of $X$, that is, 
\begin{equation*}
\Br(X):=H_{\et}^{2}(X,\G_{m}). 
\end{equation*}

\begin{prop}[{\cite[Theorem 9.5 (ii)]{ColliotThelene1987}, \cite[Theorem 2.3, Theorem 2.4]{BayerFluckiger2020}}]\label{prop:cfcs}
Let $T$ be a torus over a field $k$ that splits over a finite Galois extension $\widetilde{K}$ of $k$. Denote by $G$ the Galois group of $\widetilde{K}/k$. Take a smooth compactification $X$ of $T$ over $k$, and put $\overline{X}:=X\otimes_{k}k^{\sep}$. Then there is an isomorphism
\begin{equation*}
\Br(X)/\Br(k)\cong H^{1}(k,\Pic(\overline{X}))\cong \Sha_{\omega}^{2}(G,X^{*}(T)). 
\end{equation*}
\end{prop}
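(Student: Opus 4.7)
The statement splits into two isomorphisms, which I would address in turn. For $\Br(X)/\Br(k) \cong H^1(k, \Pic(\overline{X}))$, I would apply the Hochschild--Serre spectral sequence $H^p(k, H^q_{\et}(\overline{X}, \G_m)) \Rightarrow H^{p+q}_{\et}(X, \G_m)$ and read off its low-degree terms, using $H^0(\overline{X}, \G_m) = (k^{\sep})^{\times}$ and $H^1(\overline{X}, \G_m) = \Pic(\overline{X})$. Three inputs collapse the resulting exact sequence into the desired isomorphism: Hilbert~90 kills $H^1(k, (k^{\sep})^{\times})$; the identity element of $T$ is a $k$-rational point of $X$, which splits $\Br(k) \to \Br(X)$ and also kills the transgression into $H^3(k, (k^{\sep})^{\times})$; and the vanishing $\Br(\overline{X}) = 0$ (a theorem of Colliot-Th\'el\`ene--Sansuc for smooth compactifications of tori, rooted in the geometric rationality of toric varieties) removes the remaining obstruction on the right.

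For $H^1(k, \Pic(\overline{X})) \cong \Sha_{\omega}^{2}(G, X^*(T))$, I would invoke the flasque resolution of $G$-lattices
\begin{equation*}
0 \rightarrow X^*(T) \rightarrow P \rightarrow \Pic(\overline{X}) \rightarrow 0
\end{equation*}
that Colliot-Th\'el\`ene--Sansuc attach to any smooth compactification: $P$ is a permutation $G$-lattice and $\Pic(\overline{X})$ is flasque. Taking $G$-cohomology, Shapiro's lemma combined with Hilbert~90 yields $H^1(G, P) = 0$, producing an injection $H^1(G, \Pic(\overline{X})) \hookrightarrow H^2(G, X^*(T))$. For any cyclic $C \le G$, flasqueness gives $\hat{H}^{-1}(C, \Pic(\overline{X})) = 0$, and Tate periodicity for cyclic groups identifies this with $H^1(C, \Pic(\overline{X}))$; compatibility of restriction then forces the image of $H^1(G, \Pic(\overline{X}))$ to land inside $\Sha_{\omega}^{2}(G, X^*(T))$.

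For the reverse inclusion, take $\alpha \in \Sha_{\omega}^{2}(G, X^*(T))$ and let $\gamma \in H^2(G, P)$ be its image. Writing $P = \bigoplus_i \Z[G/H_i]$ and applying Shapiro identifies $H^2(G, P) \cong \bigoplus_i H_i^{\vee}$; Lemma \ref{lem:rsch} describes $\Res_{G/C}$ on each summand via Mackey. Letting $C$ range over cyclic subgroups of $H_i$ (through the trivial double coset $g = 1$), the hypothesis $\Res_{G/C}(\alpha) = 0$ forces each component $\gamma_i$ to vanish on every cyclic subgroup of $H_i$, hence on all of $H_i$; thus $\gamma = 0$ and $\alpha$ lifts to $H^1(G, \Pic(\overline{X}))$, as required. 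The hard part is geometric rather than cohomological: one must invoke the Colliot-Th\'el\`ene--Sansuc structure theory to produce the displayed flasque resolution with $\Pic(\overline{X})$ flasque, and to guarantee $\Br(\overline{X}) = 0$. Once these structural inputs are granted, the remaining arguments are formal consequences of Hilbert~90, Shapiro, Mackey, and the periodicity of Tate cohomology for cyclic groups.
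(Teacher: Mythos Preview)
The paper does not supply its own proof of this proposition: it is stated with attributions to \cite{ColliotThelene1987} and \cite{BayerFluckiger2020} and then used as a black box. Your sketch is correct and follows the standard route taken in those references---the Hochschild--Serre low-degree sequence for the first isomorphism, and the Colliot-Th\'el\`ene--Sansuc flasque resolution $0\to X^{*}(T)\to P\to \Pic(\overline{X})\to 0$ for the second. One small point worth making explicit: you silently pass from $H^{1}(k,\Pic(\overline{X}))$ to $H^{1}(G,\Pic(\overline{X}))$; this is justified because $\Pic(\overline{X})$ is a torsion-free $G$-lattice on which $\Gal(k^{\sep}/\widetilde{K})$ acts trivially, so continuous $H^{1}$ of that closed subgroup vanishes and inflation is an isomorphism. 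Also, in your reverse-inclusion step, the link between $\Res_{G/C}(\alpha)=0$ in $H^{2}(C,X^{*}(T))$ and $\Res_{G/C}(\gamma)=0$ in $H^{2}(C,P)$ is simply functoriality of restriction with respect to the map $X^{*}(T)\to P$; you might state this to make the chain of implications airtight.
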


Note that smooth compactifications exist for all tori over $k$. It follows from Hironaka (\cite{Hironaka1964}) if $k$ has characteristic zero. On the other hand, if $k$ has positive characteristic, the assertion is a consequence of Colliot-Th{\'e}l{\`e}ne--Harari--Skorobogatov (\cite{ColliotThelene2005}). 

\vspace{6pt}
For a finite separable field extension $K/k$, put
\begin{equation*}
T_{K/k}:=\{t\in \Res_{K/k}\G_{m,K}\mid \N_{K/k}(t)=1\},
\end{equation*}
which is an $k$-torus of dimension $[K:k]-1$. It is called the \emph{norm one torus} attached to $K/k$. 

A group-theoretic description of $X^{*}(T_{K/k})$ is given as follows. Let $\widetilde{K}$ be a finite Galois extension of $k$ that contains $K$. Set $G:=\Gal(\widetilde{K}/k)$ and $H:=\Gal(\widetilde{K}/K)$. Then, there is an isomorphism of $G$-lattices
\begin{equation*}
X^{*}(T_{K/k})\cong J_{G/H}. 
\end{equation*}

\begin{thm}\label{thm:hopc}
Let $p>2$ be an odd prime number. Consider a finite separable field extension $K/k$ of degree $p^{2}$ with its Galois closure $\widetilde{K}/k$. Put $G:=\Gal(\widetilde{K}/k)$ and $H:=\Gal(\widetilde{K}/k)$. Take a smooth compactification $X$ of $T_{K/k}$, and put $\overline{X}:=X\otimes_{k}k^{\sep}$. Then the following are equivalent: 
\begin{enumerate}
\item $H^{1}(k,\Pic(\overline{X}))\neq 0$; 
\item $H^{1}(k,\Pic(\overline{X}))\cong \Z/p$; 
\item there exists a subgroup $G^{\dagger}$ of $\SL_{2}(\Fp)$ such that
\begin{equation*}
G\cong (C_{p})^{2}\rtimes_{\varphi} G^{\dagger},\quad H\cong \{1\}\rtimes_{\varphi} G^{\dagger}, 
\end{equation*}
where $\varphi$ is induced by the standard representation of $\SL_{2}(\Fp)$ over $\Fp$. 
\end{enumerate}
\end{thm}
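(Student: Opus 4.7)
The plan is to translate the statement into a group-cohomological assertion via Proposition \ref{prop:cfcs} and then deduce everything from Theorem \ref{thm:mtsh}. First I would recall that the character lattice of the norm one torus admits the identification $X^{*}(T_{K/k}) \cong J_{G/H}$, and that $T_{K/k}$ splits over $\widetilde{K}$. Proposition \ref{prop:cfcs} therefore gives
\begin{equation*}
H^{1}(k,\Pic(\overline{X})) \cong \Sha_{\omega}^{2}(G,X^{*}(T_{K/k})) \cong \Sha_{\omega}^{2}(G,J_{G/H}).
\end{equation*}
Thus the theorem reduces to the purely group-theoretic claim that the three conditions (i), (ii), (iii) are equivalent after replacing $H^{1}(k,\Pic(\overline{X}))$ by $\Sha_{\omega}^{2}(G,J_{G/H})$.

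Next I would chain the implications using Theorem \ref{thm:mtsh}. The implication (ii)$\Rightarrow$(i) is trivial. For (i)$\Rightarrow$(iii), Theorem \ref{thm:mtsh} (i) states that whenever $\Sha_{\omega}^{2}(G,J_{G/H}) \neq 0$ the condition ($\ast$) holds, which is exactly (iii). For (iii)$\Rightarrow$(ii) I would apply Theorem \ref{thm:mtsh} (ii) with the admissible set $\cD = \cC_{G}$ of all cyclic subgroups of $G$: since every $D \in \cC_{G}$ is cyclic, no such $D$ can contain $(C_{p})^{2}$, so the theorem gives $\Sha_{\omega}^{2}(G,J_{G/H}) \cong \Z/p$, and hence $H^{1}(k,\Pic(\overline{X})) \cong \Z/p$.

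There is no real obstacle here, since all the genuine work has been done in Theorem \ref{thm:mtsh}. The only point that deserves a line of justification is the compatibility between the cohomological invariant of $X$ and the group $\Sha_{\omega}^{2}(G,J_{G/H})$, but this is exactly the content of Proposition \ref{prop:cfcs}, combined with the identification $X^{*}(T_{K/k}) \cong J_{G/H}$. The proof will therefore be essentially a one-paragraph deduction, chaining these two results.
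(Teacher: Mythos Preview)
Your proposal is correct and follows essentially the same approach as the paper: invoke Proposition \ref{prop:cfcs} together with the identification $X^{*}(T_{K/k})\cong J_{G/H}$ to reduce to $\Sha_{\omega}^{2}(G,J_{G/H})$, and then read off the equivalences from Theorem \ref{thm:mtsh}. The paper's proof is literally two sentences doing exactly this; your added remark that no cyclic $D$ can contain $(C_{p})^{2}$ is the one detail the paper leaves implicit when invoking Theorem \ref{thm:mtsh} (ii) with $\cD=\cC_{G}$.
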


\begin{proof}
By Proposition \ref{prop:cfcs}, there is an isomorphism $H^{1}(k,\Pic(\overline{X}))\cong \Sha_{\omega}^{2}(G,J_{G/H})$. Hence, the assertion follows from Theorem \ref{thm:mtsh}. 
\end{proof}

\subsection{Hasse norm principle and weak approximation}\label{ssec:pfhn}

Here, let $k$ be a global field, and fix a separable closure $k^{\sep}$ of $k$. For a finite Galois extension $\widetilde{K}/k$, write $\Sigma_{\widetilde{K}}$ for the set of places of $\widetilde{K}$. For each $v \in \Sigma_{\widetilde{K}}$, denote by $D_{v}$ the decomposition group of $\widetilde{K}/k$ at $v$. 

For a finite separable field extension $K/k$, put
\begin{equation*}
\Sha(K/k):=(k^{\times}\cap \N_{K/k}(\A_{K}^{\times}))/\N_{K/k}(K^{\times}),
\end{equation*}
where $\A_{K}^{\times}$ is the id{\`e}le group of $K$. We say that the \emph{Hasse norm principle holds for $K/k$} if
\begin{equation*}
\Sha(K/k)=1. 
\end{equation*}

Let $T$ be a torus over $k$. Then, define the \emph{Tate--Shafarevich group} of $T$ as follows:  
\begin{equation*}
\Sha^{1}(k,T):=\Ker\left(H^{1}(k,T)\xrightarrow{(\Res_{k_{v}/k})_{v}}\prod_{v\in \Sigma_{k}}H^{1}(k_{v},T)\right),
\end{equation*}
Here, $\Res_{k_v/k}\colon H^{1}(k,T)\rightarrow H^{1}(k_{v},T)$ denotes the restriction map for each $v\in \Sigma_{k}$. 

\begin{prop}[{\cite[Theorem 5]{Voskresenskii1969}}]\label{prop:vskr}
Let $k$ be a global field, and $T$ a torus over $k$. Then there is an exact sequence
\begin{equation}\label{eq:vsex}
0\rightarrow A_{k}(T)\rightarrow H^{1}(k,\Pic(\overline{X}))^{\vee}\rightarrow \Sha^{1}(k,T)\rightarrow 0, 
\end{equation}
where $X$ is a smooth compactification of $T$ over $k$, $\overline{X}:=X\otimes_{k}k^{\sep}$ and $A_{k}(T)$ is the quotient of $\prod_{v\in \Sigma_{k}}T(k_{v})$ by the closure of $T(k)$. 
\end{prop}

The following is essentially the same as the Poitou--Tate duality (\cite[(8.6.8) Proposition]{Neukirch2000}). 

\begin{prop}[{cf.~\cite[Proposition 2.22]{Oki2025a}}]\label{prop:almt}
Let $k$ be a global field, and $T$ a $k$-torus which splits over a finite Galois extension $\widetilde{K}$ of $k$. Put $G:=\Gal(\widetilde{K}/k)$. Denote by $\cD$ the set of decomposition groups of $\widetilde{K}/k$, which is an admissible set of subgroups of $G$. Then there is an isomorphism
\begin{equation*}
\Sha^{1}(k,T)\cong \Sha_{\cD}^{2}(G,X^{*}(T))^{\vee}. 
\end{equation*}
\end{prop}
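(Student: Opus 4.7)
The plan is to derive the isomorphism in two stages, first using Tate--Poitou duality for the torus $T$ to pair $\Sha^1(k,T)$ with a cohomological Sha in $X^*(T)$, then reducing the latter from Galois cohomology to the cohomology of the finite group $G=\Gal(\widetilde{K}/k)$ together with its decomposition subgroups.

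For the first stage, I would apply Tate--Poitou duality for the torus $T$, producing
\[
\Sha^1(k,T)\cong \Sha^2(k,X^*(T))^{\vee},
\]
where $\Sha^2(k,X^*(T)):=\Ker(H^2(k,X^*(T))\to \prod_{v}H^2(k_v,X^*(T)))$. A direct derivation from the classical Tate--Poitou nine-term exact sequence (stated for finite Galois modules) uses the short exact sequence $0\to X^*(T)\to X^*(T)\otimes\Q\to X^*(T)\otimes\Q/\Z\to 0$ and the vanishing of cohomology of $\Q$-vector spaces on profinite groups to shift the cohomology of the torsion module $X^*(T)\otimes\Q/\Z$ up by one, together with local Tate duality $H^i(k_v,T)\cong H^{2-i}(k_v,X^*(T))^{\vee}$ at each place.

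For the second stage, since $T$ splits over $\widetilde{K}$, the module $X^*(T)$ is a $G$-lattice inflated to a $\Gamma_k$-module. The vanishing $H^1(\widetilde{K},X^*(T))=0$ (no nontrivial continuous homomorphism from the profinite group $\Gamma_{\widetilde{K}}$ to the torsion-free abelian group $X^*(T)$) yields, via the Hochschild--Serre five-term exact sequence, an inflation embedding $H^2(G,X^*(T))\hookrightarrow H^2(k,X^*(T))$ with cokernel landing in $H^2(\widetilde{K},X^*(T))^G$. The analogous inflation $H^2(D_v,X^*(T))\hookrightarrow H^2(k_v,X^*(T))$ holds at each place $v$ (with $D_v$ the decomposition group of a chosen place of $\widetilde{K}$ above $v$). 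Naturality of the inflation--restriction formalism then identifies $\Sha_{\cD}^2(G,X^*(T))$ with a subgroup of $\Sha^2(k,X^*(T))$.

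The main obstacle is surjectivity: showing that every class $\alpha\in\Sha^2(k,X^*(T))$ lies in the image of $H^2(G,X^*(T))$. Its image in $H^2(\widetilde{K},X^*(T))^G$ must restrict trivially to $H^2(\widetilde{K}_w,X^*(T))$ at every place $w$. Since $X^*(T)$ is $\Z$-free with trivial $\Gamma_{\widetilde{K}}$-action, $H^2(\widetilde{K},X^*(T))$ decomposes into copies of $H^2(\widetilde{K},\Z)\cong\Hom_{\mathrm{cts}}(\Gamma_{\widetilde{K}},\Q/\Z)$, and a continuous character of $\Gamma_{\widetilde{K}}^{\mathrm{ab}}$ is determined by its values on Frobenii at unramified places (Chebotarev density). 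Hence the image of $\alpha$ in $H^2(\widetilde{K},X^*(T))^G$ vanishes, so $\alpha$ comes from $H^2(G,X^*(T))$, completing the identification $\Sha_{\cD}^2(G,X^*(T))\cong\Sha^2(k,X^*(T))$ and hence the desired isomorphism.
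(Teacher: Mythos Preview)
The paper does not give its own proof of this proposition: it is stated with a citation to \cite[Proposition 2.22]{Oki2025a} and a remark that it ``is essentially the same as the Poitou--Tate duality (\cite[(8.6.8) Proposition]{Neukirch2000}).'' Your proposal supplies precisely the argument that citation points to---Tate--Poitou duality for tori giving $\Sha^{1}(k,T)\cong \Sha^{2}(k,X^{*}(T))^{\vee}$, followed by an inflation--restriction reduction from $\Gamma_{k}$ to the finite quotient $G$ using $H^{1}(\widetilde{K},X^{*}(T))=0$ (a lattice with trivial action) and the Chebotarev argument to show that any class in $\Sha^{2}(k,X^{*}(T))$ dies in $H^{2}(\widetilde{K},X^{*}(T))$---and this argument is correct. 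So your proof is aligned with, and in fact fleshes out, the paper's one-line indication.
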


\begin{cor}\label{cor:onot}
Let $K/k$ be a finite separable field extension with Galois closure $\widetilde{K}/k$. Put $G:=\Gal(\widetilde{K}/k)$ and $H:=\Gal(\widetilde{K}/K)$. We denote by $\cD$ the set of decomposition groups of $\widetilde{K}/k$. Then there is an isomorphism
\begin{equation*}
\Sha(K/k)\cong \Sha_{\cD}^{2}(G,J_{G/H})^{\vee}. 
\end{equation*}
\end{cor}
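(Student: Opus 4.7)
The plan is to combine three results already available in the excerpt: Ono's classical identification of $\Sha(K/k)$ with the Tate--Shafarevich group of the norm one torus, the group-theoretic description of the character lattice of $T_{K/k}$, and Proposition \ref{prop:almt} which is the ``Poitou--Tate type'' identification of $\Sha^{1}(k,T)$ with the Pontryagin dual of a $\Sha_{\cD}^{2}$. Put together, these three isomorphisms chain up to give exactly the statement.

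First I would recall Ono's theorem (mentioned in the introduction, attributed to \cite{Ono1963}), which produces a canonical isomorphism
\[
\Sha(K/k)\cong \Sha^{1}(k,T_{K/k}),
\]
where $T_{K/k}=\Ker(\N_{K/k}\colon \Res_{K/k}\G_{m,K}\to \G_{m,k})$ is the norm one torus. This reduces the statement to a cohomological computation for $T_{K/k}$. Next, I would use the explicit description of the character lattice recalled in Section \ref{sect:pfch}: since $T_{K/k}$ splits over the Galois closure $\widetilde{K}$ of $K/k$, the $G$-lattice $X^{*}(T_{K/k})$ is canonically isomorphic to the Chevalley module $J_{G/H}$, where $G=\Gal(\widetilde{K}/k)$ and $H=\Gal(\widetilde{K}/K)$.

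Finally, I would invoke Proposition \ref{prop:almt} with $T=T_{K/k}$ and the finite Galois extension $\widetilde{K}/k$. The collection $\cD$ of decomposition groups of $\widetilde{K}/k$ is automatically an admissible set of subgroups of $G$ (local decomposition groups at places of $\widetilde{K}$ above a single place of $k$ are conjugate in $G$, and each contains the cyclic subgroup generated by a Frobenius, which together with the fact that almost all places are unramified and thus yield cyclic decomposition groups gives $\cC_{G}\subset \cD$). Proposition \ref{prop:almt} then yields
\[
\Sha^{1}(k,T_{K/k})\cong \Sha_{\cD}^{2}(G,X^{*}(T_{K/k}))^{\vee}\cong \Sha_{\cD}^{2}(G,J_{G/H})^{\vee}.
\]
Chaining with Ono's isomorphism produces the desired identification.

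There is essentially no obstacle: all three steps are direct applications of results that are either cited or proved earlier. The only point requiring a brief justification is that the set of decomposition groups is indeed admissible in the sense of Section \ref{sect:prlm}; this is standard, and was already used implicitly in the formulation of Proposition \ref{prop:almt}.
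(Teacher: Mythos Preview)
Your proposal is correct and follows exactly the same approach as the paper's proof: invoke Ono's isomorphism $\Sha(K/k)\cong \Sha^{1}(k,T_{K/k})$ and then apply Proposition \ref{prop:almt} together with the identification $X^{*}(T_{K/k})\cong J_{G/H}$. The only difference is that you spell out the admissibility of $\cD$ (which ultimately rests on Chebotarev), whereas the paper leaves this implicit in the statement of Proposition \ref{prop:almt}.
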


\begin{proof}
By Ono's theorem (\cite[p.~70]{Ono1963}), there is an isomorphism $\Sha(K/k)\cong \Sha^{1}(k,T_{K/k})$. Hence, the assertion follows from Proposition \ref{prop:almt}. 
\end{proof}

\begin{thm}\label{thm:hnps}
Let $p>2$ be an odd prime number, and $k$ a global field. Consider a finite separable field extension $K/k$ of degree $p^{2}$ with Galois closure $\widetilde{K}/k$. Put $G:=\Gal(\widetilde{K}/k)$ and $H:=\Gal(\widetilde{K}/K)$. 
\begin{enumerate}
\item If $\Sha(K/k)\neq 1$, then the following is valid: 
\begin{itemize}
\item[($\ast$)] there exists a subgroup $G^{\dagger}$ of $\SL_{2}(\Fp)$ such that 
\begin{equation*}
G\cong (C_{p})^{2}\rtimes_{\varphi} G^{\dagger},\quad 
H\cong \{1\}\rtimes_{\varphi} G^{\dagger}, 
\end{equation*}
where $\varphi$ is induced by the standard representation of $\SL_{2}(\Fp)$. 
\end{itemize}
\item If \emph{($\ast$)} holds, then there is an isomorphism
\begin{equation*}
\Sha(K/k)\cong 
\begin{cases}
1&\text{if a decomposition group of $\widetilde{K}/k$ contains $(C_{p})^{2}$; }\\
\Z/p &\text{otherwise. }
\end{cases}
\end{equation*}
\end{enumerate}
\end{thm}

\begin{proof}
(i): If $\Sha(K/k)\neq 1$, then Corollary \ref{cor:onot} implies $\Sha_{\omega}^{2}(G,J_{G/H})\neq 0$. Hence, the assertion follows from Theorem \ref{thm:mtsh} (i). 

(ii): Let $\widetilde{K}/k$ be the Galois closure of $K/k$, and put $G:=\Gal(\widetilde{K}/k)$ and $H:=\Gal(\widetilde{K}/K)$. Denote by $\cD$ the set of decomposition groups of $\widetilde{K}/k$. Then, Corollary \ref{cor:onot} gives an isomorphism
\begin{equation*}
\Sha(K/k)\cong \Sha_{\cD}^{2}(G,J_{G/H})^{\vee}. 
\end{equation*}
Therefore, the assertion follows from Theorem \ref{thm:mtsh} (ii). 
\end{proof}

\begin{thm}\label{thm:waps}
Let $p>2$ be an odd prime number, and $k$ a global field. Consider a finite separable field extension $K/k$ of degree $p^{2}$ with Galois closure $\widetilde{K}/k$. Put $G:=\Gal(\widetilde{K}/k)$ and $H:=\Gal(\widetilde{K}/K)$. 
\begin{enumerate}
\item If $A_{k}(T_{K/k})\neq 1$, then \emph{($\ast$)} in Theorem \ref{mth1} is valid. 
\item If \emph{($\ast$)} holds, then there is an isomorphism
\begin{equation*}
A_{k}(T_{K/k})\cong 
\begin{cases}
\Z/p &\text{if a decomposition group of $\widetilde{K}/k$ contains $(C_{p})^{2}$; }\\
1 &\text{otherwise. }
\end{cases}
\end{equation*}
\end{enumerate}
\end{thm}

\begin{proof}
(i): The proof is the same as Theorem \ref{thm:hnps} (i). 

(ii): Let $\widetilde{K}/k$ be the Galois closure of $K/k$, and put $G:=\Gal(\widetilde{K}/k)$ and $H:=\Gal(\widetilde{K}/K)$. Denote by $\cD$ the set of decomposition groups of $\widetilde{K}/k$. Then, Corollary \ref{cor:onot} gives an isomorphism
\begin{equation*}
\Sha(K/k)\cong \Sha_{\cD}^{2}(G,J_{G/H})^{\vee}. 
\end{equation*}
Combining this with Proposition \ref{prop:vskr}, we obtain an isomorphism
\begin{equation*}
A_{k}(T_{K/k})\cong \Coker(\Sha_{\cD}^{2}(G,J_{G/H})\rightarrow \Sha_{\omega}^{2}(G,J_{G/H}))^{\vee}. 
\end{equation*}
Therefore, the assertion follows from Theorem \ref{thm:mtsh} (ii). 
\end{proof}

Finally, we give an alternative proof of a theorem Drakokhrust and Platonov (\cite{Drakokhrust1987}). Recall \cite[p.~451, l.~11--13, Definition]{Schacher1968} that a finite extension $K/k$ of a number field is \emph{$k$-adequate} if there is a central division algebra over $k$ that contains $K$ as a maximal commutative subfield. 

\begin{thm}[{cf.~\cite[Theorem 4]{Drakokhrust1987}}]\label{thm:dpes}
Let $p>2$ be an odd prime number, and $K/k$ a finite extension of a global field of degree $p^{2}$. If $K$ is $k$-adequate, then the Hasse norm principle holds for $K/k$. 
\end{thm}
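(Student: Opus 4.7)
The plan is to combine Theorem \ref{thm:hnps} with Schacher's criterion for adequacy and reduce the statement to a purely group-theoretic observation about $p$-Sylow subgroups of a semidirect product. By the contrapositive of Theorem \ref{thm:hnps}(i), I may assume that condition ($\ast$) holds; otherwise $\Sha(K/k)=1$ already and there is nothing to prove.

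Once ($\ast$) is assumed, Theorem \ref{thm:hnps}(ii) reduces the claim to exhibiting a place $v$ of $k$ whose decomposition group $D_{v}$ in $\widetilde{K}/k$ contains a subgroup isomorphic to $(C_{p})^{2}$. Write $G\cong N\rtimes_{\varphi} G^{\dagger}$ with $N:=(C_{p})^{2}$ and $G^{\dagger}\leq \SL_{2}(\Fp)$. The key group-theoretic observation is that $N$ is a normal $p$-subgroup of $G$, hence contained in every $p$-Sylow subgroup of $G$: concretely, each $p$-Sylow of $G$ has the form $N\rtimes Q$ with $Q$ a $p$-Sylow of $G^{\dagger}$. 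In particular, any $D_{v}$ that contains a $p$-Sylow of $G$ automatically contains $N\cong (C_{p})^{2}$.

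It therefore suffices to produce a decomposition group of $\widetilde{K}/k$ containing a $p$-Sylow of $G$, and this is supplied by Schacher's criterion (\cite{Schacher1968}): if $K/k$ is $k$-adequate and the prime $p$ divides $[K:k]$, then there exists a place $v$ of $k$ such that $D_{v}$ contains a $p$-Sylow subgroup of $G$. Combining the two observations gives $D_{v}\supset N\cong (C_{p})^{2}$, and Theorem \ref{thm:hnps}(ii) then yields $\Sha(K/k)=1$.

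I foresee no substantive obstacle. The group-theoretic step is immediate from the normality of $N$ in $G$, and Schacher's adequacy criterion is the standard local-global input underlying the entire theory of adequate extensions of global fields; the slightly delicate point is only to quote the version of Schacher's theorem that applies over a global field rather than just over $\Q$, but this is classical.
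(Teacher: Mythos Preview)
Your overall strategy matches the paper's: reduce to the case $(\ast)$ via Theorem~\ref{thm:hnps}(i), then exhibit a decomposition group containing a copy of $(C_p)^2$ and invoke Theorem~\ref{thm:hnps}(ii). The gap is in your invocation of Schacher. Schacher's criterion \cite{Schacher1968} asserts that for each prime $p\mid [K:k]$ there exist places $v$ at which the \emph{local degree} $[K_w:k_v]$ is divisible by the full $p$-part of $[K:k]=p^2$. Translated to the Galois closure this says only that $(D_v:D_v\cap gHg^{-1})$ is divisible by $p^2$ for some $g$; it does \emph{not} say that $D_v$ contains a $p$-Sylow subgroup of $G$. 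When $p\mid \#G^{\dagger}$ (so that a $p$-Sylow of $G$ has order $p^3$, isomorphic to $P'_2$), your claim can fail: Schacher only forces $\ord_p(\#D_v)\geq 2$, not $\geq 3$, because $H\cong G^\dagger$ already has $p$-part $p$.

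The paper closes this gap by citing the reformulation in \cite[Lemma~4]{Bartels1981b}, which gives a decomposition group $D$ with $P=(P\cap D)\cdot(P\cap H)$ for a fixed $p$-Sylow $P$. Since $\#(P\cap H)\leq p$ under $(\ast)$, this yields $\#(P\cap D)\geq p^2$. One then needs the additional observation (Lemma~\ref{lem:pppr}(ii)) that $P\cong P'_1$ or $P'_2$ has exponent $p$, so any subgroup of $P$ of order $\geq p^2$ automatically contains a copy of $(C_p)^2$. Your shortcut via normality of $N$ would work cleanly only when $p\nmid\#G^{\dagger}$, in which case $P=N$ and Schacher does give $D_v\supset N$; but in the case $P\cong P'_2$ you need the exponent-$p$ argument rather than the containment of a full $p$-Sylow.
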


\begin{proof}
Let $\widetilde{K}/k$ be the Galois closure of $K/k$, and put $G:=\Gal(\widetilde{K}/k)$ and $H:=\Gal(\widetilde{K}/K)$. Pick a $p$-Sylow subgroup $P$ of $G$. If $P\not\cong P'_{1}$ and $P\not\cong P'_{2}$, then the assertion follows from Theorem \ref{thm:hnps} (i). In the following, suppose that $P\cong P'_{1}$ or $P\cong P'_{2}$ holds. By \cite[Lemma 4]{Bartels1981b}, there exists a place of $\widetilde{K}$ at which the decomposition group $D$ in $G$ satisfies 
\begin{equation}\label{eq:dbcs}
P=(P\cap D)\cdot (P\cap H). 
\end{equation}
Here the right-hand side of \eqref{eq:dbcs} is the equivalence class of $1\in P$ in $(P\cap D)\backslash P/(P\cap H)$. Since $(P:P\cap H)=p^{2}$, we obtain $\#(P\cap D)\in p^{2}\Z$. This implies $(C_{p})^{2}<D$ by Lemma \ref{lem:pppr} (ii). Hence, the assertion follows from Theorem \ref{thm:hnps} (ii). 
\end{proof}

\end{document}